\documentclass{article}   
\usepackage{geometry} 
\geometry{a4paper}    
\usepackage{graphicx,color}	
\usepackage[dvipsnames]{xcolor}
\usepackage{amssymb,amsmath,amsthm,amsfonts}
\usepackage{bm} 
\usepackage[english]{babel}
\usepackage[latin1]{inputenc}
\usepackage[colorlinks]{hyperref}
\usepackage{enumitem}

\usepackage{tikz}
\usepackage{pgfplots}
\usepgflibrary{arrows.meta}
\usetikzlibrary{patterns}
\usepgfplotslibrary{colormaps}
\usetikzlibrary{pgfplots.colormaps}

\pgfplotsset{compat=1.11}

\tikzset{> = {Stealth[inset=0pt]}}

\usepackage{newtxtext,newtxmath,helvet,courier}
\usepackage{fix-cm}

\newtheorem{proposition}{Proposition}[section]
\newtheorem{theorem}[proposition]{Theorem}
\newtheorem{corollary}[proposition]{Corollary}
\newtheorem{lemma}[proposition]{Lemma}
\newtheorem{examplem}[proposition]{Example}

\theoremstyle{definition}
\newtheorem{definition}[proposition]{Definition}
\newtheorem{remark}[proposition]{Remark}

\newtheorem{case}{Case}

\numberwithin{equation}{section}

\newcommand{\eps}{\varepsilon}

\newcommand{\sphere}{{\mathbb{S}}}
\newcommand{\from}{\colon}
\newcommand{\loc}{{\mathrm{loc}}}



\def\N{\mathbb{N}}
\def\Z{\mathbb{Z}}

\def\R{\mathbb{R}}
\def\C{\mathbb{C}}
\def\HH{\mathbb{H}}
\def\II{\mathbb{I}}

\def\S{\mathbb{S}}



\def\Xh{\mathcal{X}}
\def\Acal{\mathcal{A}}
\def\Bcal{\mathcal{B}}
\def\BLC{{\mathcal{B}_{\text{LC}}}}
\def\BKS{{\mathcal{B}_{\text{KS}}}}
\def\TBKS{{\widetilde{\mathcal{B}_{\text{KS}}}}}
\def\Dcal{\mathcal{D}}
\def\Ecal{\mathcal{E}}
\def\Qcal{\mathcal{Q}}
\def\Rcal{\mathcal{R}}

\def\Lcal{\mathcal{L}}

\def\Mcal{\mathcal{M}}
\def\Ucal{\mathcal{U}}

\newcommand{\splc}{{W_{-1}}}

\newcommand{\spc}{{X}}

\newcommand{\pro}[1]{{\left. #1\right|_{(0,1)}}}

\newcommand{\scalar}[2]{\left\langle {#1},{#2}\right\rangle}

\DeclareMathOperator{\dist}{dist}


\title{Regularized variational principles\\ for the perturbed Kepler problem}
\author{Vivina Barutello \and Rafael Ortega \and Gianmaria Verzini}
\date{\today}

\begin{document}

\maketitle

\begin{abstract}
The goal of the paper is to develop a method that will combine the use of variational techniques with regularization methods in order to study  existence and multiplicity results for the periodic and the Dirichlet problem associated to the perturbed Kepler system
\[
\ddot x = -\frac{x}{|x|^3} + p(t),
\quad x \in \R^d,
\]
where $d\geq 1$, and $p:\R\to\R^d$ is smooth and  $T$-periodic, $T>0$.

The existence of critical points for the action functional associated to the problem is proved via a non-local change of variables inspired by Levi-Civita and Kustaanheimo-Stiefel techniques.
As an application we will prove that the perturbed Kepler problem has infinitely many generalized $T$-periodic solutions for $d=2$ and $d=3$, without any symmetry assumptions on $p$.
\end{abstract}
\noindent
{\footnotesize \textbf{AMS-Subject Classification}}.
{\footnotesize 70F16, 34C25, 49J35, 70F05.}\\
{\footnotesize \textbf{Keywords}}.
{\footnotesize Kepler problem, forced problem, generalized solutions, regularization, 
constrained critical points, periodic solutions.}

\section{Introduction}
Consider the perturbed Kepler problem
\begin{equation}\label{eq:perturbed_kepler}
\ddot x = -\frac{x}{|x|^3} + p(t),
\quad x \in \R^d,
\end{equation}
where $d\geq 1$, and $p:\R\to\R^d$ is smooth and  $T$-periodic, $T>0$.

Recently several papers have examined the existence of generalized $T$-periodic solutions (\cite{OrtegaPort,RebeloSimoes,ZhaoANS} for $d=1$, \cite{BOZ,BDP20} for $d=2,3$), according
to the following definition.

\begin{definition}\label{def:gen_sol}
A \emph{generalized solution} of \eqref{eq:perturbed_kepler} on the interval $J\subset \R$
is a continuous function $x: J \to \R^d$ satisfying the following conditions:
\begin{enumerate}
	\item  the set $Z= \{t \in \overline{J} : x(t) = 0\}$ of \emph{collisions} is discrete,
	\item  for any open interval $I \subset J \setminus Z$, the function $x$ is $C^2(I)$ and satisfies \eqref{eq:perturbed_kepler} on $I$,
	\item  for any $t_* \in Z$, the limits
\begin{equation}\label{eq:gensol}
\lim_{t \to t_*} \frac{x(t)}{\vert x(t) \vert} \qquad \text{and} \qquad \lim_{t \to t_*}\left( \frac12 \vert \dot x(t) \vert^2- \frac{1}{\vert x(t) \vert}\right)
\end{equation}
	of collision direction and collision energy exist and are finite.
\end{enumerate}
\end{definition}
The existence of the right and left limits in 3. of Definition \ref{def:gen_sol} is a consequence of \cite{sper2body} whenever $t_*$ is isolated in the collision set $Z$; therefore, in case of isolated collisions, the only thing to check is that left/right limits agree. In particular if $t_*\in Z \cap \partial J$ (e.g. in the Dirichlet problem) we just have to check that they are isolated collision instants.

We refer to \cite{BOZ} for a discussion on the significance of these solutions. Using the Poincar\'e-Birkhoff theorem it was proven in \cite{OrtegaPort,RebeloSimoes} that the equation \eqref{eq:perturbed_kepler} in one dimension has infinitely many generalized $T$-periodic solutions.
For $d\geq 2$ the knowledge of the periodic problem for \eqref{eq:perturbed_kepler} is more fragmentary. Authors in \cite{BDP20} proved the existence of at least one periodic solution if $d=2$, while in \cite{BOZ} the existence of any number of periodic solutions has been proved when $d=2,3$ and $p$ is small enough. The approach employed in \cite{BOZ} and \cite{BDP20} are quite different. The basic idea in \cite{BOZ} is to regularize the system \eqref{eq:perturbed_kepler} by a change of variables from $x=x(t)$ to $y=y(\tau)$, where
\begin{equation}
\label{eq:cv}
\tau = \int_0^t\frac{d\xi}{|x(\xi)|},\qquad x=\Phi(y).
\end{equation}
Here $\Phi$ is an appropriate homogeneous function of degree 2 (different choices are available, depending on the spatial dimension $d$). The definition of $\tau$ comes from the well known Sundman integral \cite{sundman1907}. In the variables $(\tau,y)$ the system has no singularities, and it is possible to apply some classical bifurcation results due to Weinstein. The approach in \cite{BDP20} is variational, searching for critical points of the action functional
\begin{equation}
\label{action}
\Acal(x) = \int_{0}^{T}\left[\frac12|\dot x(t)|^2+\frac{1}{|x(t)|} + \scalar{p(t)}{x(t)} \right]\,dt.
\end{equation}
They prove the existence of critical points by minimization among loops around the origin
having nontrivial winding number, but they also prove that these critical points do produce
generalized solutions. Incidentally, in the literature the variational framework have been
exploited also in connection with different notions of periodic generalized solutions, see
e.g. \cite{bahrab,MR1077264,MR1218100,Rab1994,SerraCZ1994}; anyhow, most of these papers deal
with
autonomous singular Hamiltonian systems, and their (weaker) notion of generalized solutions
requires only conservation of energy across collisions (i.e. only the second condition
in \eqref{eq:gensol}).

Our goal in this paper is to develop a method that will combine the use of variational techniques with the regularization method. In Section \ref{sec:euristic}, after some heuristic computations, we will show that the change of variables \eqref{eq:cv} transforms the functional $\Acal$ into a new functional $\Bcal = \Bcal(y)$.
Although local changes of variables are traditional in Calculus of Variation (see for instance \cite{olver86}), the change \eqref{eq:cv} is not local and the new functional will not be in a standard class. More precisely, $\Bcal$ takes the form
\[
\Bcal(y) = \int_0^1 \beta\left( \tau,y(\tau),y'(\tau),\int_0^\tau |y(\xi)|^2d\xi, \|y\|_2 \right)\,d\tau,
\]
where $\| \cdot \|_2$ is the norm in $L^2\left((0,1);\R^d\right)$. Furthermore, the transformation \eqref{eq:cv} does not induce a diffeomorphism between the natural classes of functions, and the consistency between \eqref{eq:perturbed_kepler}, 
$\Acal$ and $\Bcal$ becomes subtle. We will discuss such consistency in Section \ref{sec:ELeq}, establishing some regularized variational principles. As a rule of 
thumb, critical points of $\Bcal$ correspond to generalized solutions of \eqref{eq:perturbed_kepler} only imposing further conditions. In Section \ref{sec:counterexamples} we show that such conditions are sharp, providing several counterexamples.

Once the variational principles are established, the rest of the paper is devoted 
to applications, for either Dirichlet or periodic boundary conditions. First, in Section \ref{sec:Diri}, we 
consider the minimization $\Bcal$ in the Sobolev space
$H^1_0(0,1;\R^d)$. This will lead to the existence of a generalized solution of 
\eqref{eq:perturbed_kepler} satisfying the Dirichlet boundary conditions 
$x(0)=x(T)=0$. This can be done for any dimension $d\ge1$.

In our second application, illustrated in Section \ref{sec:periodicBLC}, we assume $d=2$ and adapt the definition of $\Phi$ in \eqref{eq:cv}
to the classical Levi-Civita change of variables. The consequence is the discovery of
a hidden symmetry: the functional $\Bcal = \BLC$ is even. When we look for $T$-periodic solutions of \eqref{eq:perturbed_kepler}, there are two possible choices for the domain of
$\BLC$. We can consider the Hilbert spaces $W_1$ and $W_{-1}$, where
\[
W_{\pm1} := \left\{z\in H^1(0,1;\C): z(1) =\pm z(0)\right\}.
\]
The most convenient choice is $W_{-1}$ because in this case $\BLC$ satisfies the Palais-Smale
condition. Classical minimax theory for even functionals implies that \eqref{eq:perturbed_kepler} has infinitely many generalized $T$-periodic solutions in dimension $d=2$. It is worthwhile noticing that anti-periodic functions $z$ produce periodic solutions $x$, without imposing any symmetry condition on the forcing $p$.

As we show in Section \ref{sec:periodicBKS}, an analogous conclusion is also valid in dimension $d=3$ but the proof is more delicate. The definition of $\Phi$ is now inspired by the Kustaanheimo-Stiefel change of variables. The
consistency of the periodic problem with $\Bcal = \BKS$ requires a more sophisticated
domain, a symmetric Hilbert manifold $\Mcal$ that will be defined later. Again $\BKS$ is even
but the Palais-Smale condition does not hold. Nonetheless, the more flexible Cerami
condition is satisfied, and the existence of infinitely many critical points of 
$\BKS$ over $\Mcal$ can be proved.

We already mentioned the different notions of generalized solution that have been introduced in 
the literature on variational methods. The classical  theory of holomorphic differential 
equations and the modern theory of dynamical systems have also led to other notions of 
generalized solutions (see \cite{Mc_CMH1981} for more information). The notion employed in the 
present paper is in perfect correspondence with the regularization theories by Levi-Civita and 
Kustaanheimo-Stiefel, but it has the advantage of having an intrinsic formulation. The possible 
relevance of this notion of solution for non-Newtonian potentials is a question to be analyzed. 

\tableofcontents

\subsection*{Notation}
%
\begin{itemize}
\item
$|\cdot|$ is the euclidean norm in $\R^d$ and, sometimes, the Lebesgue measure of a set.
\item
$\S^d := \{x \in \R^d : |x| = 1\}$.
\item
$\scalar{\cdot}{\cdot}$ is the euclidean scalar product in $\R^d$.
\item
$\| \cdot \|_p = \| \cdot \|_{L^p(0,1;\R^k)}$, with $k=1$ or $d$.
\item
$a_n = O (b_n)$ (as $n\to+\infty$) if, for some $N,C$, $n\geq N \implies |a_n|\leq C |b_n|$.
\end{itemize}

\section{Heuristic derivation of a class of regularized functionals}\label{sec:euristic}

Let us consider the \emph{action functional}, associated to the perturbed Kepler problem \eqref{eq:perturbed_kepler},
\[
\Acal(x) = \int_{0}^{T}\left[\frac12|\dot x(t)|^2+\frac{1}{|x(t)|} + \scalar{p(t)}{x(t)} \right]\,dt,
\]
which is (well defined and) finite in
\[
\Xh := \left\{x\in H^1(0,T;\R^d) \colon
\int_{0}^{T}\frac{dt}{|x(t)|}<\infty\right\}.
\]
We want to exploit some changes of variables (both for $t$ and $x$) in such a way that the action
functional in the new variables is \emph{regularized}, i.e. it does not contain a singular potential
term. The different choices for such changes of variables are inspired by classical regularization
techniques for the Kepler problem (Sundman, Levi-Civita, Kustaanheimo-Stiefel). Accordingly, we will
obtain different functionals.

As far as the time scale is concerned, the common change of variables we use is that introduced by
Sundman \cite{sundman1907}. For any $x\in\Xh$ let $L_{x}$ be the corresponding (strictly positive and
finite) quantity
\[
L_x := \int_{0}^{T} \frac{dt}{|x(t)|},
\]
and let us define
\begin{equation}\label{eq:ttau}
\tau = \tau(t) := \frac{1}{L_x}\int_{0}^t\frac{d\xi}{|x(\xi)|},\qquad t\in[0,T].
\end{equation}
In order to enlighten the properties of $\tau$, we use the following result.
\begin{lemma}\label{lem:elementary}
Let $a\from[0,T]\to\R$ be a continuous and non-negative function such that
\[
\frac{1}{a}\in L^1(0,T).
\]
Define
\[
A(t)=\int_0^t\frac{d\xi}{a(\xi)},\qquad t\in[0,T].
\]
Then $A$ is a homeomorphism between $[0,T]$ and
$[0,\Xi]$, with
\[
\Xi = \int_0^T\frac{d\xi}{a(\xi)}.
\]
Moreover, the inverse function $B=B(\tau)$ belongs to $C^1([0,\Xi])$ and satisfies
\[
B'(\tau)=a(B(\tau))\qquad\text{for each }\tau\in[0,\Xi].
\]
\end{lemma}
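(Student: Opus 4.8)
The plan is to prove the three assertions—that $A$ is a homeomorphism onto $[0,\Xi]$, that its inverse $B$ is $C^1$, and that $B'(\tau)=a(B(\tau))$—in that order, since each builds on the previous one. The function $a$ is continuous and nonnegative with $1/a\in L^1(0,T)$; note that $a$ may vanish, so $1/a$ is genuinely only integrable, not bounded. I would first establish that $A$ is a strictly increasing continuous function. Continuity of $A$ follows from the integrability of $1/a$ by absolute continuity of the Lebesgue integral, and monotonicity follows because the integrand $1/a$ is strictly positive almost everywhere (wherever $a$ is finite and positive). Strict monotonicity needs the observation that $1/a>0$ on a set of full measure, so $A(t_2)-A(t_1)=\int_{t_1}^{t_2} d\xi/a(\xi)>0$ whenever $t_1<t_2$; the integral over a nondegenerate interval of an a.e.-positive integrable function is strictly positive. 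Together with $A(0)=0$ and $A(T)=\Xi$, this makes $A$ a continuous strictly increasing bijection from $[0,T]$ onto $[0,\Xi]$, hence a homeomorphism, with $B=A^{-1}$ automatically continuous.

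The substantive part is the differentiability of the inverse $B$ and the identity $B'=a\circ B$. The subtlety is that $A$ need not be differentiable everywhere in the classical sense—$A'(t)=1/a(t)$ only holds where $a$ is continuous and positive, and where $a(t)=0$ the integrand $1/a$ blows up. So I cannot simply invoke the inverse function theorem for $A$. Instead I would work directly with difference quotients for $B$. Fix $\tau_0\in[0,\Xi]$ and set $t_0=B(\tau_0)$. For $\tau$ near $\tau_0$ write $t=B(\tau)$; by continuity of $B$, $t\to t_0$ as $\tau\to\tau_0$. The difference quotient is
\[
\frac{B(\tau)-B(\tau_0)}{\tau-\tau_0}=\frac{t-t_0}{A(t)-A(t_0)}=\frac{t-t_0}{\int_{t_0}^{t} d\xi/a(\xi)}.
\]
The claim $B'(\tau_0)=a(t_0)$ then amounts to showing that the reciprocal quantity
\[
\frac{1}{t-t_0}\int_{t_0}^{t}\frac{d\xi}{a(\xi)}\longrightarrow \frac{1}{a(t_0)}
\]
as $t\to t_0$, interpreted correctly when $a(t_0)=0$.

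I expect the main obstacle to be exactly the case $a(t_0)=0$, where the limit of the reciprocal must be $+\infty$, forcing $B'(\tau_0)=0=a(t_0)$, consistent with the claim. When $a(t_0)>0$, continuity of $a$ gives that $1/a(\xi)\to 1/a(t_0)$ uniformly for $\xi$ near $t_0$, so the averaged integral converges to $1/a(t_0)$ by a standard mean-value/continuity argument, and the difference quotient for $B$ converges to $a(t_0)$. When $a(t_0)=0$, continuity of $a$ makes $a(\xi)$ small near $t_0$, so $1/a(\xi)$ is large there; more precisely, given $M>0$ there is a neighborhood of $t_0$ on which $1/a(\xi)\ge M$, whence the average $\frac{1}{t-t_0}\int_{t_0}^{t} d\xi/a(\xi)\ge M$ for $t$ close enough to $t_0$, so this average tends to $+\infty$ and its reciprocal—the difference quotient for $B$—tends to $0=a(t_0)$. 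This handles the derivative pointwise and shows $B'(\tau)=a(B(\tau))$ everywhere. Finally, continuity of $B'$ follows because $B'=a\circ B$ is the composition of the continuous functions $a$ and $B$, giving $B\in C^1([0,\Xi])$ and completing the proof. One-sided versions of these limits cover the endpoints $\tau_0\in\{0,\Xi\}$.
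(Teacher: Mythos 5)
Your proposal is correct and follows essentially the same route as the paper's own proof in Appendix A: both establish strict monotonicity of $A$ from the fact that $\{a=0\}$ has measure zero, then analyze the reciprocal of the difference quotient of $B$, splitting into the cases $a(B(\tau))>0$ (where the averaged integral converges to $1/a(B(\tau))$ by continuity) and $a(B(\tau))=0$ (where the average blows up, forcing $B'(\tau)=0$), and finally obtain continuity of $B'$ from the identity $B'=a\circ B$. No substantive difference to report.
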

We postpone the proof of the above lemma to Appendix
\ref{app:lemma}. The application of such lemma to $a(t)=L_x |x(t)|$ implies that \eqref{eq:ttau} is invertible, and that the inverse function $t=t(\tau)$ is of class $C^1([0,1])$, with
\[
t'(\tau) = L_x|x(t(\tau))|,\qquad\text{for any }\tau\in[0,1].
\]

Let now $\Phi : \R^d \to \R^d$ be a map such that:
\begin{enumerate}
	\item[($\Phi 1$)] $\left.\Phi\right|_{\S^{d-1}}$ is smooth and $\Phi (\S^{d-1}) \subseteq \S^{d-1}$,
	\item[($\Phi 2$)] $\Phi(\lambda y) = \lambda^2 \Phi(y)$ $\forall y\in \R^d$ and $\lambda > 0$.
\end{enumerate}
Notice that, for every $y\neq0$,
\begin{equation}\label{eq:modPhi}
|\Phi(y)|=|y|^2\left|\Phi\left(\frac{y}{|y|}\right)\right|=|y|^2.
\end{equation}
Let $x \in \Xh$, $\tau$ defined as above, and let us assume at this point that we can find $y:[0,1] \to \R^d$ such that
\begin{equation}\label{eq:xphiy}
x(t) = \Phi(y(\tau(t))).
\end{equation}
Under such assumption we want to write $\mathcal{A}(x)$ in terms of $y$. To start with we remark that, by \eqref{eq:modPhi},
\[
\dot \tau(t) = \frac{1}{L_x\,|y(\tau(t))|^2},
\qquad
\frac{1}{L_x} = |y(\tau(t))|^2 \dot \tau(t)\qquad
\text{hence }
\frac{T}{L_x} = \int_0^1|y(\tau)|^2 d\tau.
\]
Then we can write
\[
L_x = \frac{T}{\|y\|^2_{L^2(0,1)}} =: \Lcal (y) \qquad\text{and}\qquad
t_y = t_y(\tau) = \Lcal(y) \int_{0}^{\tau}|y(\xi)|^2\,d\xi.
\]
Since, at least formally,
\[
\dot x(t) = D
\Phi(y(\tau(t)))y'(\tau(t))\dot{\tau}(t),
\]
the kinetic part transforms into
\[
\int_{0}^{T} \frac12 |\dot x(t)|^2 dt = \frac{1}{\Lcal(y)}\int_{0}^{1} \frac12
\frac{\scalar{[D
\Phi(y(\tau))]^TD
\Phi(y(\tau))y'(\tau)}{y'(\tau)}}{|y(\tau)|^2} \, d\tau=:\frac{1}{\Lcal(y)}
\Qcal(y).
\]
On the other hand,
\[
\int_{0}^{T} \frac{dt}{|x(t)|} = \Lcal(y)
\]
and
\[
\int_{0}^{T}
\scalar{p(t)}{x(t)} dt = \Lcal(y) \int_0^1 |y(\tau)|^2 \scalar{p(t_y(\tau))}{\Phi(y(\tau))}\, d\tau
=:\Lcal(y)\Rcal(y).
\]
Resuming we have that, at least formally, \eqref{eq:xphiy} implies $\Acal(x) = \Bcal(y)$, where
\begin{equation}\label{eq:preB}
\Bcal(y) := \frac{1}{\Lcal(y)}
\Qcal(y) +
\Lcal(y)\Big[ 1+ \Rcal(y)\Big].
\end{equation}
We notice that, in the functional above, only $\Qcal$ and $\Rcal$ depend on the actual choice of
$\Phi$.

As we mentioned, we will deal with three different choices of $\Phi$.

\begin{case}[the functional $\Bcal$ in any $d$]
The more direct choice for $\Phi$, which works in any dimension, consists in taking
$\left.\Phi\right|_{\S^{d-1}}$ to be the identity on the sphere. Then ($\Phi 1$) is obvious,
and ($\Phi 2$) forces
\[
\Phi(y) = |y|y.
\]
Then $D\Phi(y)$ is symmetric and
\[
[D\Phi(y)]^TD\Phi(y) = \left(\frac{y y^T }{|y|} + |y| \mathrm{Id}\right)^2 =
3 y y^T + |y|^2 \mathrm{Id}
\]
(indeed $( y y^T)^2=|y|^2yy^T $). We obtain that $\Bcal$ is as in \eqref{eq:preB}, with
\[
\begin{split}
\Qcal(y) &= \frac12\int_0^1
\left(3\frac{\scalar{y}{y'}^2}{|y|^2} + |y'|^2\right)\, d\tau,
\\
\Rcal(y) &= \int_0^1
|y|^3 \scalar{p\circ t_y}{y}\,d\tau.
\end{split}
\]
We recall that, given any $y\in H^1(0,1;\R^d)$, also $|y|\in H^1(0,1;\R)$ and its (weak)
derivative writes
\begin{equation}\label{eq:weak_der}
|y|' =
\begin{cases}
 \frac{\langle y,y'\rangle}{|y|}, \quad \text{if } y \neq 0, \\
0, \quad \text{if } y = 0.
\end{cases}
\end{equation}
Then we can write
\[
\Qcal(y) := \frac{1}{2} \int_0^1  \left[ 3(|y|')^2 + | y'|^2\right]\,d\tau,
\]
so that $\Bcal$ is well defined in $H^1(0,1; \R^d)\setminus\{0\}$ (otherwise $\Lcal(y)$ is not
defined). Actually, it is possible to read $\Bcal$ as an extended valued functional on
$H^1(0,1; \R^d)$ by choosing $\Bcal(0)=+\infty$.
\end{case}

\begin{case}[The Levi-Civita regularization in $d=2$ and the functional $\BLC$]
In dimension $d=2$ we can exploit the complex structure of the plane and define
\[
\Phi(z) = \Phi_{\text{LC}}(z) = z^2 \sim \left(\begin{array}{c} z_1^2-z_2^2 \\ 2z_1z_2\end{array}\right),\qquad z\in\C\cong\R^2.
\]
It is immediate to check that $\Phi_{\text{LC}}$ verifies ($\Phi 1$) and ($\Phi 2$), and that
\[
[D\Phi_{\text{LC}}(z)]^TD\Phi_{\text{LC}}(z)=4\left(\begin{array}{cc} z_1 & z_2 \\-z_2 & z_1\end{array}\right)
\left(\begin{array}{cc} z_1 & -z_2 \\z_2 & z_1\end{array}\right) =
4|z|^2 \mathrm{Id}.
\]
Writing $z(\tau)$ instead of $y(\tau)$ we obtain that in this case the functional $\Bcal$ writes
\[
\BLC(z) = \frac{2}{T}\int_0^1 |z|^2\,d\tau \int_0^1 |z'|^2\,d\tau + \frac{T}{\int_0^1|z|^2\,d\tau}
\left[1 + \int_0^1 |z|^2 \scalar{p\circ t_z}{z^2}\,d\tau \right].
\]
Also $\BLC$ is well defined in $H^1(0,1; \C)\setminus\{0\}$, and it can be extended as
$\BLC(0)=+\infty$.

\end{case}

\begin{case}[The Kustaanheimo-Stiefel regularization in $d=3$ and the functional $\BKS$]
As it is well known, the regularization in the three-dimensional case is more involved, as
it requires to consider $\Phi:\R^4\to\R^3$, induced by the quaternionic structure. To this
aim, following \cite{LZQuaternions}, we denote with $\HH$ the skew-field of quaternions, and
with $\II\HH$ the subset of purely imaginary quaternions:
\[
\HH:=\{z=z_0+z_1i+z_2j+z_3k:(z_0,z_1,z_2,z_3)\in\R^4\},\qquad \II\HH:=\{z\in\HH:\Re(z)=0\},
\]
where the real part of a quaternion is defined as $\Re(z_0+z_1i+z_2j+z_3k) = z_0$.
Notice that, in a trivial way, both $\HH\cong\R^4$ and $\II\HH\cong\R^3$, in the sense of (real)
vector spaces. Defining the conjugate of a quaternion as $\bar z := 2\Re(z) - z$, direct computations
show that, for any $z\in\HH$,
\[
\bar z i z =  (z^2_0 + z^2_1 - z^2_2 - z^2_3)i + 2(z_1 z_2 - z_0 z_3 )j + 2(z_1 z_3 + z_0 z_2 )k\in\II\HH.
\]
Then we define $\Phi:\R^4\to\R^3$ as
\[
\Phi(z) = \Phi_{\text{KS}}(z) = \bar z i z \sim \left(\begin{array}{c} z^2_0 + z^2_1 - z^2_2 - z^2_3 \\ 2(z_1 z_2 - z_0 z_3 )\\ 2(z_1 z_3 + z_0 z_2 )\end{array}\right),\qquad z\in\HH\cong\R^4.
\]
We observe that $\Phi_{\text{KS}}$ is invariant under the following action of $\S^1$:
\[
\Phi_{\text{KS}}(e^{i\vartheta} z) = \Phi_{\text{KS}}(z),\qquad\text{for every }\vartheta\in\R,\ z\in\HH.
\]
Since $|\bar z i z| = |z|^2$, this implies that $\Phi_{\text{KS}} \from \S^3 \to \S^2$ induces the Hopf fibration (for more details see \cite{cushman}). Although $\Phi_{\text{KS}}$ is not exactly in the previous framework ($\HH$ and $\II\HH$ have different dimension), $\Phi_{\text{KS}}$ is again homogeneous of degree 2 and ($\Phi 2$) and \eqref{eq:modPhi} make sense.
Now, assume that
\begin{equation}\label{eq:xphiz}
x(t) = \Phi_{\text{KS}}(z(\tau(t))).
\end{equation}
Then
\[
\scalar{[D\Phi_{\text{KS}}(z)]^TD\Phi_{\text{KS}}(z)z'}{z'} =\left|\bar z' i z + \bar z i z'\right|^2,
\]
and
\[
\int_{0}^{T} \frac12 |\dot x(t)|^2 dt = \frac{1}{\Lcal(z)}\int_{0}^{1} \frac12
\frac{\left|\bar z' i z + \bar z i z'\right|^2}{|z|^2} \, d\tau.
\]
Motivated by the role of the bilinear form $(z,w) \mapsto \Re(\bar wiz)$ in Kustaanheimo-Stiefel regularization, we assume that $z(\tau)$ satisfies the further condition
\begin{equation}\label{eq:varietaperKS}
\Re(\bar z' i z)=-\Re(\bar z i z')=\scalar{z'}{iz}
=-\scalar{iz'}{z}=0\qquad \text{for every }\tau,
\end{equation}
then $\left|\bar z' i z + \bar z i z'\right| = 2\left|\bar z' i z \right| = 2|z'||z|$.
As a consequence, with similar calculations as in the previous cases, we obtain that
\eqref{eq:xphiz} and \eqref{eq:varietaperKS} imply $\Acal(x) = \BKS(z)$, where
\begin{equation*}
\BKS(z) = \frac{2}{T}\int_0^1 |z|^2\,d\tau \int_0^1 |z'|^2\,d\tau + \frac{T}{\int_0^1|z|^2\,d\tau}
\left[1 + \int_0^1 |z|^2 \scalar{p\circ t_z}{\bar z i z}\,d\tau \right].
\end{equation*}
Notice that, since $\bar z i z \in \II\HH$, in the expression of $\BKS$ also $\R^3\ni p=(p_1,p_2,p_3)$ can be interpreted as an element
of $\II\HH$ (even though any other choice of $p_0$ has no effect on the functional).
\end{case}
\begin{remark}[On the connection between $\BLC$ and $\BKS$ when $p$ takes values in $\R^2$]\label{rem:lcplane}
Let $\Pi\subset \HH$ be a Levi-Civita plane. This means that $\Re(\bar w i z) = 0$
for every $w,z\in\Pi$ ($\Pi$ is indeed a Lagrangian plane with respect to the
corresponding symplectic structure). Let $\{r_1,r_2\}$ be an orthonormal basis of $\Pi$. Define
\[
\hat r_1 = \Phi_{\text{KS}} (r_1) = \bar r_1 i r_1,
\qquad
\hat r_2 = \bar r_1 i r_2.
\]
Then $\hat r_1,\hat r_2\in \II\HH$ are linearly independent unit vectors (see \cite[Lemma 3.4]{LZQuaternions}).

Given $c_1,c_2\in\R$ and $z= c_1 r_1 + c_2 r_2 \in \Pi$, it turns out that
\(
\Phi_{\text{KS}} (z) = (c_1^2 - c_2^2) \hat r_1 + 2c_1c_2 \hat r_2.
\)
Let $\hat\Pi$ be the plane spanned by $\hat r_1, \hat r_2$. Then $\Phi_{\text{KS}}
(\Pi) = \hat \Pi$. Define the isomorphisms of real vector spaces
\[
\begin{split}
\varphi &\from \C \to \Pi,
\qquad \varphi(1) = r_1, \ \varphi(i) = r_2,\\
\psi &\from \hat \Pi \to \C,
\qquad \psi(\hat r_1)=1, \ \psi(\hat r_2)= i.
\end{split}
\]
Then $\psi\circ\Phi_{\text{KS}}\circ\varphi (z) = z^2 = \Phi_{\text{LC}}(z)$.

Let $\psi^*\from \C \to \hat \Pi$ be the adjoint of $\psi$:
\[
\scalar{\psi(p)}{z}_{\C} = \scalar{p}{\psi^*(z)}_{\HH}
\qquad \text{if }z\in\C,\ p\in\hat\Pi.
\]
Let us now assume that $p\from [0,T] \to \C$, $p=p(t)$, and let us consider
$p^* \from [0,T] \to \hat\Pi \subset\II\HH$, $p^* = \psi^* \circ p$. Consider
the functionals $\BKS$, associated to $p^*(t)$,  and $\BLC$, associated to $p(t)$.
Then, given $z\in H^1(0,1;\C)$, $\varphi\circ z \in H^1(0,1;\Pi)$, for some time interval $I$, we have that
\[
\Bcal_{\text{KS},p^*}(\varphi\circ z) = \Bcal_{\text{LC},p}(z).
\]
Hence $\BLC$ can be seen as $\BKS$ restricted to $H^1(0,1;\Pi)$.
\end{remark}

\section{Critical points of the regularized functionals and generalized solutions of (\ref{eq:perturbed_kepler})}\label{sec:ELeq}

In this section we deal with critical points of the functionals defined in Section
\ref{sec:euristic}, and with their relations with generalized solutions of the perturbed Kepler problem (\ref{eq:perturbed_kepler}).

Throughout the section we assume that $p$ is a function of class $C^1$. As we will see, this will imply that the term $\Rcal$ is of class $C^1$ in each regularized functional.
On the contrary, this is not true for the term $\Qcal$: as a matter of fact, the main difference between $\Bcal$ and $\BLC$, $\BKS$, is that, while the latter are differentiable at any non identically zero function, the former needs not to be Gateaux-differentiable at $y\in H^1(0,1;\R^d)$, whenever $y$ vanishes at some point.

Along this section, we do not take into account boundary conditions. For this reason, we consider points $y$ in $H^1$, which are critical with respect to smooth variations $\varphi$, compactly supported in $(0,1)$ (or outside the collision set of $y$, for the functional $\Bcal$). Of course, to impose boundary conditions, one has to choose critical points $y$ in a suitable subspace of $H^1$, and/or variations $\varphi$ in a suitable space containing $\Dcal = C^\infty_0$. This will be done, case by case, in the subsequent sections.

For each functional, once the Euler-Lagrange equations are derived, we analyze when their solutions actually correspond to solutions of the perturbed Kepler problem. As we will see, both for $\Bcal$ and for $\BKS$ this will require further conditions. In the last subsection
we are going to show that such conditions are sharp, providing some counterexamples.

\subsection{Euler-Lagrange equations for \texorpdfstring{$\Bcal$}{B} outside collisions}
\label{sec:ELeqB}

First we choose $\Phi(y) = |y|y$, see Case 1 in the previous section, and we deal with the functional
\[
\Bcal : H^1(0,1; \R^d) \to \R \cup \{+\infty\}
\]
\begin{equation}\label{eq:B}
\Bcal(y) := \frac{1}{\Lcal(y)}
\Qcal(y) +
\Lcal(y)\Big[ 1+ \Rcal(y)\Big],
\qquad \Bcal(0):=+\infty,
\end{equation}
where
\begin{equation}\label{eq:tutti gli altri}
\begin{split}
\Lcal(y) &:= \frac{T}{\int_0^1|y|^2\,d\tau},\\
\Qcal(y) &:= \frac{1}{2} \int_0^1  \left[ 3(|y|')^2 + | y'|^2\right]\,d\tau,
\\
\Rcal(y) &:= \int_0^1
|y|^3 \scalar{p\circ t_y}{y}\,d\tau,\\
t_y(\tau) &:= \Lcal(y) \int_{0}^{\tau}|y(\xi)|^2\,d\xi.
\end{split}
\end{equation}

As we will see, in case $y(\bar\tau)=0$ and $\varphi\in\Dcal(0,1;\R^d)$, $\varphi(\bar\tau)\neq0$,
it is not clear whether the function
\[
\eps\mapsto\Qcal(y+\eps \varphi)
\]
is differentiable at $\eps=0$ (see Remark \ref{rem:diffQ} ahead). For this reason, when searching for the Euler-Lagrange equation associated to $\Bcal$,
it is natural to work on intervals where its argument $y$ is collision free.

We will show the following.
\begin{proposition}\label{prop:EL_for_B}
Let $y\in H^1(0,1; \R^d)$ be such that
\[
|y(\tau)|>0\quad\text{ for }\tau\in(\tau_1,\tau_2)\subset[0,1]
\]
and
\begin{equation}\label{eq:varBvphi}
\frac{d}{d\eps} \left[\Bcal(y+\eps \varphi)\right]_{\eps=0} = 0\qquad \text{for every }\varphi\in\Dcal(\tau_1,\tau_2;\R^d).
\end{equation}
Then the map $\tau \mapsto t_y(\tau)$ is $C^3(\tau_1,\tau_2)\cap C^1([\tau_1,\tau_2])$, with inverse $t\mapsto \tau_y(t)$
which is $C^3(t_1,t_2)\cap C([t_1,t_2])$, where $t_i = t_y(\tau_i)$. Moreover, writing
\[
x(t)=x_y(t)=|y(\tau_y(t))|y(\tau_y(t)),
\]
we have that there exists a constant $\mu\in\R$ such that
\begin{equation}\label{eq:beta}
\ddot x = -\mu \frac{x}{|x|^3} + p(t),\qquad t\in(t_1,t_2).
\end{equation}
Finally, in case $(\tau_1,\tau_2)=(0,1)$, \eqref{eq:beta} holds true in $(0,T)$ with $\mu=1$.
\end{proposition}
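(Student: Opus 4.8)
The plan is to turn the formal identity $\Acal(x)=\Bcal(y)$ of Section~\ref{sec:euristic} into a rigorous statement on the collision-free window $(\tau_1,\tau_2)$: I would compute the first variation of $\Bcal$ along $\varphi\in\Dcal(\tau_1,\tau_2;\R^d)$, read off a weak Euler--Lagrange equation for $y$, upgrade its regularity, and finally push the equation forward through the nonlocal change of variables to recover \eqref{eq:beta}. The first (easy) step is differentiability of $\eps\mapsto\Bcal(y+\eps\varphi)$ at $0$: since $\operatorname{supp}\varphi\Subset(\tau_1,\tau_2)$ and $|y|\ge\delta>0$ there, for small $\eps$ one has $|y+\eps\varphi|\ge\delta/2$ on $\operatorname{supp}\varphi$ while $y+\eps\varphi\equiv y$ elsewhere, and $\int_0^1|y+\eps\varphi|^2\,d\tau$ is a positive polynomial in $\eps$; hence $\Lcal$ and $t_y$ depend smoothly on $\eps$, and as $p\in C^1$ all integrands are dominated, so differentiation under the integral applies. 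On $\operatorname{supp}\varphi$ the identity $|y|'=\scalar{y}{y'}/|y|$ makes $\delta|y|=\scalar{y}{\varphi}/|y|$ and lets the variation commute with $d/d\tau$.

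The core computation, and the main obstacle, is assembling $\frac{d}{d\eps}\Bcal(y+\eps\varphi)|_{\eps=0}=0$ in the presence of the nonlocal couplings. Writing $\Bcal=\Qcal/\Lcal+\Lcal(1+\Rcal)$, the delicate contributions are the variation of the global normalisation, $\delta\Lcal=-\tfrac{2}{T}\Lcal^2\int_0^1\scalar{y}{\varphi}\,d\tau$, and the variation of $t_y$ inside $p\circ t_y$, producing $\int_0^1|y|^3\scalar{(p'\circ t_y)\,\delta t_y}{y}\,d\tau$ with $\delta t_y$ itself an integral of $\varphi$. I would invoke Fubini to rewrite these iterated integrals as single integrals tested against $\varphi$: the $\delta\Lcal$ piece yields a $\tau$-constant multiple of $y$, while the $\delta t_y$ piece yields a cumulative coefficient $\int_\cdot^1|y|^3\scalar{p'\circ t_y}{y}\,d\tau$ times $y$. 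Collecting everything gives $\int_{\tau_1}^{\tau_2}\scalar{\mathcal E(y)}{\varphi}\,d\tau=0$ for all admissible $\varphi$, whence $\mathcal E(y)=0$; its principal part is $\tfrac1\Lcal\tfrac{d}{d\tau}\big(M(y)y'\big)$ with $M(y)=3\,yy^{T}/|y|^2+\mathrm{Id}=[D\Phi]^{T}D\Phi/|y|^2$, whose eigenvalues lie in $[1,4]$, so $M(y)$ is uniformly elliptic where $|y|\ge\delta$.

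With this weak equation a standard bootstrap on compact subintervals gives the regularity. Ellipticity plus the fact that the lower-order/forcing terms lie in $L^2_{\loc}$ first yields $M(y)y'\in H^1_{\loc}$, hence (inverting the continuous $M(y)$) $y'\in C^0$ and $y\in C^1$; then the right-hand side is continuous and $y\in C^2(\tau_1,\tau_2)$, the appearance of $p'\circ t_y$, only continuous since $p\in C^1$, being what caps the gain. Consequently $t_y=\Lcal\int_0^{\cdot}|y|^2$ is $C^3(\tau_1,\tau_2)$, and $C^1([\tau_1,\tau_2])$ because $|y|^2\in C([\tau_1,\tau_2])$; since $t_y'=\Lcal|y|^2>0$ on $(\tau_1,\tau_2)$, the inverse function theorem gives $\tau_y\in C^3(t_1,t_2)\cap C([t_1,t_2])$.

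It remains to push $\mathcal E(y)=0$ forward. With $x=\Phi(y(\tau_y(\cdot)))=|y|y$ and $\dot\tau_y=1/(\Lcal|y|^2)$, I would compute $\dot x,\ddot x$ by the chain rule and substitute $\mathcal E(y)=0$; the algebra of $\Phi(y)=|y|y$ recorded in Section~\ref{sec:euristic} converts the elliptic operator into $\ddot x$, the $\Rcal$-terms into $+p(t)$, and the $\tau$-constant coming from $\delta\Lcal$ into a constant multiple $-\mu\,x/|x|^3$, yielding \eqref{eq:beta} on $(t_1,t_2)$ with $\mu$ independent of $t$. For the last assertion, when $(\tau_1,\tau_2)=(0,1)$ the equation holds on all of $(0,1)$ and, in the unperturbed case $p=0$, reads $\tfrac{d}{d\tau}(M(y)y')-\partial_y q=\kappa\,y$ with $\kappa=-\tfrac{2\Lcal}{T}(\Lcal^2-\Qcal)$; the system is autonomous, so the energy $q-\tfrac{\kappa}{2}|y|^2$ (with $q$ the $\Qcal$-integrand) is $\tau$-constant, and integrating it over $(0,1)$ together with the defining normalisation $\int_0^1|y|^2\,d\tau=T/\Lcal$ (equivalently $t_y(1)=T$) gives the value $\Lcal^2$. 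Since the back-transformation identifies this constant with $\mu\,\Lcal^2$, one reads off $\mu=1$; the forced case follows from the corresponding Lagrange--Jacobi identity. The hard part throughout is the bookkeeping that forces the nonlocal terms to collapse, after the change of variables, exactly onto the Keplerian force.
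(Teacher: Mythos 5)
Your overall architecture --- differentiate $\eps\mapsto\Bcal(y+\eps\varphi)$ using the positive lower bound on $|y|$ over $\operatorname{supp}\varphi$, handle the nonlocal contributions $\delta\Lcal$ and $\delta t_y$ by Fubini, bootstrap regularity from the invertibility of $M(y)=3yy^{T}/|y|^2+\mathrm{Id}$, and push the equation through $t_y$ --- is the same as the paper's (Lemmas \ref{lem:EL_for_B} through \ref{lem:regularityEL}), and those parts would go through. One small imprecision: the lower-order term $\alpha_y$ contains expressions of the type $|y'|^2/|y|$, which on a compact collision-free subinterval are only $L^1$, not $L^2$; so the first pass of the bootstrap gives $M(y)y'\in W^{1,1}_{\loc}$ rather than $H^1_{\loc}$. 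This is harmless since $W^{1,1}\hookrightarrow C^0$ in one dimension, and your subsequent iteration still reaches $y\in C^2$ (the paper reaches the same point via its Lemma \ref{lem:prelemma_regularity}, decomposing $y=rU$ and using the Banach-algebra property of absolutely continuous functions).

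The genuine gap is in the final step. After substituting $y=|x|^{-1/2}x$, the Euler--Lagrange equation does \emph{not} directly present a constant multiple of $x/|x|^3$: the operator $\frac{d}{d\tau}(M(y)y')$ produces, besides $\ddot x$, terms in $\scalar{x}{\ddot x}$ and $|\dot x|^2$, and after eliminating $\scalar{x}{\ddot x}$ (multiply by $x$ and substitute back) one arrives at
\[
\ddot x = \Big[C - \tfrac12|\dot x|^2 + \scalar{p}{x} + \int_t^T\scalar{\dot p}{x}\Big]\,|x|^{-2}x + p,
\]
where $C$ is a constant but the bracket is manifestly time-dependent. Your proposal attributes ``a constant multiple $-\mu\,x/|x|^3$'' directly to the $\delta\Lcal$ contribution and calls the rest bookkeeping; in fact $\delta\Lcal$ only produces the constant $C$, and a further argument is required to show that the entire bracket equals $-\mu/|x|$ for a \emph{constant} $\mu$. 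The paper supplies it by writing $\ddot x - p=\beta(t)x$, reading off $\beta|x|^2 = C-\tfrac12|\dot x|^2+\scalar{p}{x}+\int_t^T\scalar{\dot p}{x}$, differentiating, and using the equation again to obtain $\dot\beta|x|^2+3\beta\scalar{x}{\dot x}=0$, whence $\beta=-\mu/|x|^3$. This conservation-law step is the heart of the proposition --- it is what turns the integro-differential identity into a Keplerian equation --- and it is absent from your plan. Relatedly, once the resulting identity $C=\tfrac12|\dot x|^2-\mu/|x|-\scalar{p}{x}-\int_t^T\scalar{\dot p}{x}$ is in hand, the claim $\mu=1$ for $(\tau_1,\tau_2)=(0,1)$ follows for \emph{general} $p$ by simply integrating over $(0,T)$ and comparing with the definition of $C$; your separate treatment of the unperturbed case followed by an appeal to ``the corresponding Lagrange--Jacobi identity'' for the forced case is both unnecessary and unsubstantiated as written.
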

Notice that in Section \ref{sec:euristic} we obtained $\Bcal$ starting from the action related to the
Kepler problem \eqref{eq:perturbed_kepler}. Here, to go back to \eqref{eq:perturbed_kepler}, we have two main problems: the first one is that we have to restrict
to collisionless intervals; the second one is that ``critical points'' $y$ of $\Bcal$ (in the sense of the above
proposition) solve \eqref{eq:beta}, which agrees with \eqref{eq:perturbed_kepler} only when $\mu=1$. As we mentioned, we will show that the absence of internal collisions implies also $\mu=1$. On the other hand, in case an
internal collision occurs, one can only expect that \eqref{eq:beta} holds true on each collisionless subinterval, with
$\mu\ge0$ possibly depending on the interval. Furthermore, even though $x$ satisfies \eqref{eq:beta} with $\mu=1$ on any
collisionless subinterval, it may fail to be a generalized solution at collisions. Examples in these directions are
provided in Section \ref{sec:counterexamples}.

We will prove Proposition \ref{prop:EL_for_B} through a sequence of lemmas. As a first step, we determine the Euler-Lagrange equation associated to $\Bcal$.
\begin{lemma}\label{lem:EL_for_B}
Let $y=y(\tau)$ satisfy the assumptions of Proposition \ref{prop:EL_for_B}. Then,
in distributional sense in $(\tau_1,\tau_2)$,
\begin{equation}\label{eq:ELeqB}
\frac{d}{d\tau}\left(y' + 3 \frac{\scalar{y}{y'}}{|y|^2}y  \right) =  \alpha_y +
\frac{2}{T}\Lcal\left(\Qcal-\Lcal^2(1+\Rcal)\right) y + \beta_y,
\end{equation}
where $\Lcal$, $\Qcal$ and $\Rcal$ are evaluated at $y$ and
\[
\begin{split}
\alpha_y &:=3\left( \frac{\langle y,y' \rangle}{|y|^2}y' -
\frac{\langle y,y' \rangle^2}{|y|^4}y\right)\\
\beta_y  &:=\Lcal^2\left[ 3|y| \scalar{p\circ t_y}{y} y + |y|^3 p\circ t_y  \right]
+ \frac{2\Lcal^4}{T} y \, \Gamma_y,
\end{split}
\]
where
\[
\Gamma_y(\tau) = \int_{0}^{1}|y(\xi)|^2
\left( \int_{\tau}^{\xi}|y(s)|^3\scalar{\dot p ( t_y(s))}{y(s)}\,ds\right)\,d\xi.
\]
\end{lemma}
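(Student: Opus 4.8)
The plan is to compute the Gateaux derivative $\left.\frac{d}{d\eps}\Bcal(y+\eps\varphi)\right|_{\eps=0}$ term by term and then read off \eqref{eq:ELeqB} as the distributional identity forced by the arbitrariness of $\varphi\in\Dcal(\tau_1,\tau_2;\R^d)$. The first thing I would do is justify the differentiation under the integral sign. Since $\varphi$ is supported in a compact subset of the collision-free interval $(\tau_1,\tau_2)$, the continuous function $|y|$ is bounded away from zero there, so for $\eps$ small $y+\eps\varphi$ does not vanish on $\mathrm{supp}\,\varphi$, every integrand is smooth in $\eps$, and dominated convergence applies. This is exactly where the hypothesis $|y|>0$ on $(\tau_1,\tau_2)$ is essential: it legitimizes the chain rule $\delta|y| = \scalar{y}{\varphi}/|y|$ and the corresponding formula for $\delta(|y|')$, which fail at collisions (cf. the discussion preceding the statement).

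Next I would record the three elementary variations. From $\Lcal(y)=T/\|y\|_2^2$ one gets $\delta\Lcal = -\frac{2\Lcal^2}{T}\int_0^1\scalar{y}{\varphi}\,d\tau$. For $\Qcal$, writing $|y|'=\scalar{y}{y'}/|y|$ and expanding $\delta(|y|')=(\scalar{y}{\varphi}/|y|)'$, I would split the outcome into the part multiplying $\varphi'$, namely $\int_0^1\scalar{y'+3\frac{\scalar{y}{y'}}{|y|^2}y}{\varphi'}\,d\tau$, and the part multiplying $\varphi$, which is exactly $\int_0^1\scalar{\alpha_y}{\varphi}\,d\tau$. Integrating the first part by parts (no boundary terms, as $\varphi$ is compactly supported) produces the left-hand side of \eqref{eq:ELeqB}. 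Then, writing $\delta\Bcal = \Lcal^{-1}\delta\Qcal + (-\Qcal\Lcal^{-2}+1+\Rcal)\delta\Lcal + \Lcal\,\delta\Rcal$, setting it to zero and multiplying by $\Lcal$, the middle summand collapses to $\frac{2\Lcal}{T}\bigl(\Qcal-\Lcal^2(1+\Rcal)\bigr)\int_0^1\scalar{y}{\varphi}\,d\tau$, which is the second term on the right of \eqref{eq:ELeqB}.

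The main work, and the genuinely nonlocal step, is the variation of $\Rcal$; this will be the hard part. The variations of the explicit factor $|y|^3$ and of the $y$ inside $\scalar{p\circ t_y}{y}$ give directly the local piece $\Lcal^2\bigl[3|y|\scalar{p\circ t_y}{y}y + |y|^3\,p\circ t_y\bigr]$ of $\beta_y$. The delicate contribution is $\delta(p\circ t_y)=\dot p(t_y)\,\delta t_y$, because $t_y(\tau)=\Lcal\int_0^\tau|y|^2$ depends on $y$ both through $\Lcal$ (globally) and through $\int_0^\tau|y|^2$ (nonlocally), so that $\delta t_y$ does not vanish even where $\varphi\equiv 0$. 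Setting $h(\tau)=|y|^3\scalar{\dot p(t_y)}{y}$ and $\delta t_y(\tau)=2\Lcal\int_0^\tau\scalar{y}{\varphi} - \frac{2\Lcal}{T}t_y(\tau)\int_0^1\scalar{y}{\varphi}$, I would interchange the order of integration (Fubini) to rewrite $\int_0^1 h\,\delta t_y$ as $\int_0^1\scalar{y(s)}{\varphi(s)}\bigl[\int_s^1 h - \frac1T\int_0^1 h\,t_y\bigr]\,ds$.

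Finally I would identify the bracket. Using $H(\tau)=\int_0^\tau h$, the relation $\int_s^\xi h = H(\xi)-H(s)$, an integration by parts in $\int_0^1 h\,t_y$, and the boundary data $t_y(1)=T$ and $\int_0^1|y|^2=T/\Lcal$, the bracket simplifies to $\frac{\Lcal}{T}\Gamma_y(s)$; hence this contribution equals $\Lcal^2\cdot 2\Lcal\cdot\frac{\Lcal}{T}\,y\,\Gamma_y=\frac{2\Lcal^4}{T}\,y\,\Gamma_y$, the remaining term of $\beta_y$. Collecting the left-hand side from $\Qcal$, the term $\alpha_y$, the $\Lcal$-term, and the three $\Rcal$-contributions, and using that $\varphi$ is arbitrary, yields \eqref{eq:ELeqB} distributionally on $(\tau_1,\tau_2)$. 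I expect the only real obstacle to be this Fubini-plus-integration-by-parts bookkeeping for the nonlocal $t_y$-dependence; everything else is a routine first-variation computation.
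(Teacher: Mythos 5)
Your proposal is correct and follows essentially the same route as the paper: differentiate $\Lcal$, $\Qcal$, $\Rcal$ term by term (legitimate because $\varphi$ is supported where $|y|>0$), integrate the $\varphi'$-part of $\delta\Qcal$ by parts to produce the left-hand side together with $\alpha_y$, and handle the nonlocal variation $\delta t_y$ by a Fubini interchange that collapses the bracket to $\frac{\Lcal}{T}\Gamma_y$. Your bookkeeping via $H(\tau)=\int_0^\tau h$ and an integration by parts on $\int_0^1 h\,t_y$ is a mild repackaging of the paper's explicit triple-integral identities, but the computation and the resulting coefficients (in particular the $\frac{2\Lcal^4}{T}\,y\,\Gamma_y$ term of $\beta_y$) check out.
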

\begin{proof}
Notice that, by assumption, $\Bcal(y)<+\infty$. Let $\varphi\in\Dcal(\tau_1,\tau_2)$.
In particular, the support of $\varphi$ is contained in some $[\hat\tau_1,\hat\tau_2]\subset(\tau_1,\tau_2)$, where
$y(\tau)$ does not vanish, and the functions $\eps\mapsto \Lcal(y+\eps \varphi)$, $\eps\mapsto \Qcal(y+\eps \varphi)$
and $\eps\mapsto \Rcal(y+\eps \varphi)$ are differentiable at $\eps=0$. Recalling that $\Lcal(y) = {T}\|y\|_2^{-2}$, we have
\[
\frac{d}{d\eps} \left[\frac{1}{\Lcal(y+\eps \varphi)}\right]_{\eps=0} =
\frac{2}{{T}} \int_{\tau_1}^{\tau_2} \langle y,\varphi \rangle,
\qquad
\frac{d}{d\eps}\left[\Lcal(y+\eps \varphi)\right]_{\eps=0}  =
-\frac{2}{{T}}\Lcal^2(y) \int_{\tau_1}^{\tau_2} \langle y,\varphi \rangle;
\]
and
\[
\frac{d}{d\eps}\left[t_{y+\eps \varphi}\right]_{\eps=0}  =
-\frac{2}{{T}}\Lcal^2(y) \left(\int_{\tau_1}^{\tau_2} \langle y,\varphi \rangle \right)\int_{0}^{\tau}|y|^2 + 2\Lcal(y) \int_{\tau_1}^{\tau}\scalar{y}{\varphi}.
\]
Furthermore
\begin{equation}\label{eq:diffQ}
\frac{d}{d\eps} \left[ \Qcal(y+\eps\varphi)\right]_{\eps=0}
= \int_{\tau_1}^{\tau_2} \left( \left\langle \alpha_y,\varphi\right\rangle
+ \left\langle 3\frac{\langle y,y' \rangle}{|y|^2}y +y',\varphi'\right\rangle \right).
\end{equation}
On the other hand,
\[
\begin{split}
\frac{d}{d\eps} \left[\mathcal{R}(y+\eps \varphi)\right]_{\eps=0}
= &\int_{\tau_1}^{\tau_2} \left[
3|y|\langle y,\varphi \rangle \scalar{p\circ t_y}{y} + |y|^3
\scalar{p\circ t_y}{\varphi} \right]\\
 &+\int_{0}^{1} |y|^3 \scalar{\dot p \circ t_y}{y} \left(-\frac{2}{{T}}\Lcal^2(y) \int_{\tau_1}^{\tau_2}\langle y,\varphi \rangle \int_0^{\tau}|y|^2
+ 2\Lcal(y) \int_{\tau_1}^{\tau} \langle y,\varphi \rangle \right)
 \, d\tau\\
= & \int_{\tau_1}^{\tau_2}
\scalar{3|y| \scalar{p\circ t_y}{y} y + |y|^3 p\circ t_y}{\varphi} d\tau\\
& +\frac{2\Lcal^2(y)}{T}\int_{0}^{1}|y|^3 \scalar{\dot p \circ t_y}{y} \left(-\int_{\tau_1}^{\tau_2}\langle y,\varphi \rangle \int_0^{\tau}|y|^2
+ \int_{\tau_1}^{\tau} \langle y,\varphi \rangle \int_0^{1}|y|^2
\right) \, d\tau.
\end{split}
\]
Noticing that
\[
\begin{split}
\int_0^1 \left(\int_{\tau_1}^{\tau_2} \left(\int_0^\tau a(\tau) b(\xi) c(s) \,ds\right)\,d\xi\right)\,d\tau
&= \int_{\tau_1}^{\tau_2} b(\xi) \left(\int_0^1 c(s) \left(\int_s^1 a(\tau)\,d\tau\right)\,ds\right)\,d\xi\\
\int_0^1 \left(\int_{\tau_1}^{\tau} \left(\int_0^1 a(\tau) b(\xi) c(s) \,ds\right)\,d\xi\right)\,d\tau
&= \int_{\tau_1}^{1} b(\xi) \left(\int_0^1 c(s) \left(\int_\xi^1 a(\tau)\,d\tau\right)\,ds\right)\,d\xi\\
&= \int_{\tau_1}^{\tau_2} b(\xi) \left(\int_0^1 c(s) \left(\int_\xi^1 a(\tau)\,d\tau\right)\,ds\right)\,d\xi
\end{split}
\]
provided $b(\tau)\equiv0$ on $[\tau_2,1]$, we obtain that the last line in the previous identity can be rewritten as
\[
\frac{2\Lcal^2(y)}{T}\int_{\tau_1}^{\tau_2}\left[\langle y(\tau),\varphi(\tau)\rangle \int_{0}^{1}|y(\xi)|^2
 \left( \int_{\tau}^{\xi}|y|^3\scalar{\dot p \circ t_y}{y}\,ds\right)\,d\xi \right]\, d\tau.
\]
from which the lemma follows.
\end{proof}
\begin{remark}\label{rem:diffQ}
Notice that, in the previous lemma, the assumption $|y(\tau)|>0$ on $(\tau_1,\tau_2)$ is essential, because it is not clear
whether  $\Qcal$ may be differentiable or not in case of collisions. More precisely, the terms in $\alpha_y$, equation \eqref{eq:diffQ}, i.e.
\[
\frac{\langle y,y' \rangle}{|y|^2}y'
\qquad\text{ and }
\frac{\langle y,y' \rangle^2}{|y|^4}y,
\]
need not to be $L^1$ if $y$ vanishes somewhere in $(\tau_1,\tau_2)$. On the other hand, cancellations may
occur, so that $\alpha_y$ may be $L^1$ also when collisions occur.
\end{remark}
To show regularity of $y$ we need the following lemma.
\begin{lemma}\label{lem:prelemma_regularity}
Let $I$ be an open interval, $y\in H^1(I;\R^d)$ be such that
\[
y(\tau)\neq 0\qquad \text{if }\tau\in \overline{I}
\]
and, for some $\lambda\neq-1$,
\[
y'+\lambda\frac{\scalar{y}{y'}}{|y|^2}y \in W^{1,1}(I)
\]
(i.e. it is absolutely continuous on $\overline{I}$).
Then also $y'\in W^{1,1}(I)$.
\end{lemma}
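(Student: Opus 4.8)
The plan is to invert, pointwise and explicitly, the linear relation between $y'$ and the absolutely continuous vector field
\[
w := y'+\lambda\frac{\scalar{y}{y'}}{|y|^2}y \in W^{1,1}(I),
\]
and then to transfer the regularity of $w$ to $y'$ through the algebra of $W^{1,1}\cap L^\infty$. The key observation is that, taking the scalar product of the defining relation with $y$ and using $|y|>0$, one gets $\scalar{y}{w}=(1+\lambda)\scalar{y}{y'}$; here the hypothesis $\lambda\neq-1$ is exactly what makes this relation solvable, yielding $\scalar{y}{y'}=\scalar{y}{w}/(1+\lambda)$. Substituting back into the definition of $w$ produces the explicit formula
\[
y' = w-\frac{\lambda}{1+\lambda}\,\frac{\scalar{y}{w}}{|y|^2}\,y,
\]
which now expresses $y'$ in terms of quantities whose regularity is either assumed ($w$) or known ($y$).

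Next I would record the elementary boundedness facts on which the argument rests. Since $y\in H^1(I)\subset C(\overline I)$ and $y$ does not vanish on the compact set $\overline I$, there is a constant $c>0$ with $|y(\tau)|\ge c$ for all $\tau\in\overline I$; in particular $1/|y|^2\le 1/c^2$ is bounded. On the bounded interval $I$ one has $y\in W^{1,1}(I)\cap L^\infty(I)$ (because $y'\in L^2\subset L^1$ and $y$ is continuous), and likewise $w\in W^{1,1}(I)\cap L^\infty(I)$, since $W^{1,1}$ functions on a bounded interval are absolutely continuous, hence bounded. Consequently $|y|^2=\scalar{y}{y}\in W^{1,1}(I)\cap L^\infty(I)$ with values in $[c^2,M]$ for some $M$, and composing with the smooth, locally Lipschitz map $s\mapsto 1/s$ on $[c^2,M]$ gives $1/|y|^2\in W^{1,1}(I)\cap L^\infty(I)$.

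Finally I would conclude using that $W^{1,1}(I)\cap L^\infty(I)$ is an algebra: the product of two such functions is again in $W^{1,1}\cap L^\infty$, with the Leibniz rule valid because each factor is bounded and each derivative lies in $L^1$. Applying this repeatedly to $\scalar{y}{w}$, then to $\scalar{y}{w}/|y|^2$, and then to the vector $\big(\scalar{y}{w}/|y|^2\big)\,y$, shows that the second term in the explicit formula for $y'$ belongs to $W^{1,1}(I)$; since $w\in W^{1,1}(I)$ as well, we obtain $y'\in W^{1,1}(I)$, as claimed.

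The routine parts are the Leibniz and chain rules in $W^{1,1}\cap L^\infty$. The point deserving care — and the reason the hypotheses are stated as they are — is twofold: the inversion step needs $\lambda\neq-1$, and the regularity of the factor $1/|y|^2$ genuinely requires the uniform lower bound $|y|\ge c>0$, which is why the nonvanishing is imposed on the whole closed interval $\overline I$ rather than merely on $I$.
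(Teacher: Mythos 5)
Your proof is correct and follows essentially the same route as the paper: both hinge on projecting the relation onto $y$ to solve for $\scalar{y}{y'}$ (this is exactly where $\lambda\neq-1$ enters), and then transferring regularity through the algebra of bounded absolutely continuous functions. The only cosmetic difference is that the paper packages the computation via the polar decomposition $y=rU$ and the identity $\scalar{w}{U}=(1+\lambda)r'$, whereas you write the explicit pointwise inverse $y' = w-\tfrac{\lambda}{1+\lambda}\scalar{y}{w}|y|^{-2}y$ and invoke the product/chain rules in $W^{1,1}\cap L^\infty$.
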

\begin{remark}
The condition $\lambda\neq-1$ is essential. Consider $y(\tau) = |\tau|U$, with $U\in\S^{d-1}$ constant and
$I=(-1,1)$. Then
\[
y'-\frac{\scalar{y}{y'}}{|y|^2}y =0,
\]
but $y'$ is not continuous.
\end{remark}
\begin{proof}
Define
\[
r(\tau)=|y(\tau)|,\qquad U(\tau) = \frac{1}{r(\tau)}y(\tau).
\]
Then $r\in H^1(I)$ with $r'=\scalar{y}{y'}/|y|$, and  $U\in H^1(I;\R^d)$ with
$\scalar{U}{U'}=0$ a.e. in $\overline{I}$. By assumption
\[
w:= y'+\lambda\frac{\scalar{y}{y'}}{|y|^2}y = (1+\lambda) r' U + r U' \in W^{1,1}(I).
\]
Since the space of absolutely continuous functions is a Banach algebra, we infer that
\[
(1+\lambda)r' = \scalar{w}{U}\in W^{1,1}(I).
\]
Using the assumption $1+\lambda\neq 0$, we deduce that $\lambda r' U\in W^{1,1}(I)$ too, and finally
\[
y' = w - \lambda r' U \in W^{1,1}(I).\qedhere
\]
\end{proof}
\begin{corollary}\label{coro:first_regularity}
Let $y=y(\tau)$ satisfy the assumptions of Proposition \ref{prop:EL_for_B}, and let $[\hat\tau_1,\hat\tau_2]\subset(\tau_1,\tau_2)$.
Then $y\in W^{2,1}(\hat\tau_1,\hat\tau_2)$.
\end{corollary}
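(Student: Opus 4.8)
The plan is to combine the Euler--Lagrange identity \eqref{eq:ELeqB} from Lemma \ref{lem:EL_for_B} with the algebraic regularity Lemma \ref{lem:prelemma_regularity} applied with $\lambda=3$. The crucial preliminary fact is that $y$ is bounded away from zero on $[\hat\tau_1,\hat\tau_2]$: since $|y|>0$ on the open interval $(\tau_1,\tau_2)$, since $[\hat\tau_1,\hat\tau_2]$ is a compact subinterval, and since $y\in H^1(0,1;\R^d)$ is continuous, there is a constant $c>0$ with $|y(\tau)|\geq c$ for every $\tau\in[\hat\tau_1,\hat\tau_2]$. This lower bound is exactly what is needed to make the right-hand side of \eqref{eq:ELeqB} integrable on the subinterval.

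First I would check that each of the three terms on the right-hand side of \eqref{eq:ELeqB} lies in $L^1(\hat\tau_1,\hat\tau_2)$. For $\alpha_y=3\bigl(\frac{\scalar{y}{y'}}{|y|^2}y'-\frac{\scalar{y}{y'}^2}{|y|^4}y\bigr)$ I would use that $y'\in L^2$ while $y$ is bounded, so $\scalar{y}{y'}y'$ and $\scalar{y}{y'}^2 y$ belong to $L^1$; dividing by the powers $|y|^2$ and $|y|^4$, which are bounded below by $c^2$ and $c^4$, keeps them in $L^1$. The middle term $\frac{2}{T}\Lcal(\Qcal-\Lcal^2(1+\Rcal))y$ is a constant multiple of the continuous function $y$, hence continuous and a fortiori in $L^1$. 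Finally $\beta_y$ is continuous on $[\hat\tau_1,\hat\tau_2]$: indeed $t_y$ is continuous, $p\in C^1$, $y$ is continuous, and $\Gamma_y$ is an integral of bounded data, so the whole expression is bounded and continuous there. Hence the right-hand side of \eqref{eq:ELeqB} belongs to $L^1(\hat\tau_1,\hat\tau_2)$.

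This says precisely that the distributional derivative of $w:=y'+3\frac{\scalar{y}{y'}}{|y|^2}y$ is an $L^1$ function on $(\hat\tau_1,\hat\tau_2)$, so that $w\in W^{1,1}(\hat\tau_1,\hat\tau_2)$, i.e.\ $w$ is absolutely continuous on the closed interval. Now I would invoke Lemma \ref{lem:prelemma_regularity} on $I=(\hat\tau_1,\hat\tau_2)$ with $\lambda=3\neq-1$: its hypotheses ($y\neq0$ on $\overline{I}$ and $y'+\lambda\frac{\scalar{y}{y'}}{|y|^2}y\in W^{1,1}(I)$) are exactly what we have established, and its conclusion is $y'\in W^{1,1}(\hat\tau_1,\hat\tau_2)$, that is, $y\in W^{2,1}(\hat\tau_1,\hat\tau_2)$.

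The step I expect to demand the most care is the $L^1$ bound on $\alpha_y$, since it is the only term that genuinely exploits the lower bound on $|y|$---and it is precisely the term flagged in Remark \ref{rem:diffQ} as possibly failing to be integrable in the presence of collisions. Once the quantitative bound $|y|\geq c$ on the compact subinterval is in hand, however, this reduces to the elementary observation that a product of two $L^2$ functions is $L^1$, so no genuine difficulty remains.
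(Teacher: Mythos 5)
Your proposal is correct and follows exactly the paper's argument: the paper's proof of Corollary \ref{coro:first_regularity} likewise combines Lemma \ref{lem:EL_for_B} with Lemma \ref{lem:prelemma_regularity} (with $\lambda=3$), after observing that $\alpha_y\in L^1(\hat\tau_1,\hat\tau_2)$ and $\beta_y\in C([\hat\tau_1,\hat\tau_2])$. Your more detailed verification of the $L^1$ bounds, using the positive lower bound on $|y|$ over the compact subinterval, is precisely the check the paper leaves implicit.
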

\begin{proof}
It follows from Lemma \ref{lem:EL_for_B} and Lemma \ref{lem:prelemma_regularity} (with $\lambda=3$), after noticing that
$\alpha_y\in L^1(\hat\tau_1,\hat\tau_2)$ and $\beta_y\in C([\hat\tau_1,\hat\tau_2])$ (recall that $p$ is $C^1$).
\end{proof}
\begin{lemma}\label{lem:EL_for_B_bis}
Let $y=y(\tau)$ satisfy the assumptions of Proposition \ref{prop:EL_for_B}. Then $y\in C^2(\tau_1,\tau_2)$ satisfies
\begin{equation}\label{eq:ELeqB bis}
y'' = \gamma_y +
\frac{\Lcal}{2T}\left(\Qcal-\Lcal^2(1+\Rcal)\right) y + \delta_y,
\end{equation}
where $\Lcal$, $\Qcal$ and $\Rcal$ are evaluated at $y$ and
\[
\begin{split}
\gamma_y &:=\frac34 \frac{\langle y,y' \rangle^2 - |y'|^2|y|^2}{|y|^4}y\\
\delta_y  &:=\Lcal^2|y|^3 p\circ t_y
+ \frac{\Lcal^4}{2 T} y \,\Gamma_y,
\end{split}
\]
where $\Gamma_y$ has been introduced in Lemma \ref{lem:EL_for_B}.
\end{lemma}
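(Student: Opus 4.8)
The plan is to differentiate the distributional identity \eqref{eq:ELeqB} of Lemma \ref{lem:EL_for_B}, turn it into a pointwise second-order equation, and then solve the resulting \emph{implicit} relation for $y''$. First I would invoke Corollary \ref{coro:first_regularity}: on every compact $[\hat\tau_1,\hat\tau_2]\subset(\tau_1,\tau_2)$ we have $y\in W^{2,1}$, hence $y\in C^1$ with $y''\in L^1$, and since $|y|$ is bounded away from zero there, the quotient $\scalar{y}{y'}/|y|^2$ is absolutely continuous. Consequently the left-hand side $w:=y'+3\frac{\scalar{y}{y'}}{|y|^2}y$ of \eqref{eq:ELeqB} lies in $W^{1,1}$ and may be differentiated classically almost everywhere.

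Next I would carry out this differentiation, using $\frac{d}{d\tau}\scalar{y}{y'}=|y'|^2+\scalar{y}{y''}$ and $\frac{d}{d\tau}|y|^2=2\scalar{y}{y'}$, to obtain
\[
w' = y'' + 3\left(\frac{(|y'|^2+\scalar{y}{y''})|y|^2-2\scalar{y}{y'}^2}{|y|^4}\,y + \frac{\scalar{y}{y'}}{|y|^2}\,y'\right).
\]
Equating $w'$ with the right-hand side of \eqref{eq:ELeqB}, the two occurrences of $3\frac{\scalar{y}{y'}}{|y|^2}y'$ (the one produced here and the corresponding summand of $\alpha_y$) cancel, and collecting the remaining multiples of $y$ yields the implicit relation
\[
y'' + 3\frac{\scalar{y}{y''}}{|y|^2}\,y = F, \qquad F:=4\gamma_y+\frac{2}{T}\Lcal\bigl(\Qcal-\Lcal^2(1+\Rcal)\bigr)y+\beta_y,
\]
where I have used that the leftover $y$-multiple is $3\frac{\scalar{y}{y'}^2-|y'|^2|y|^2}{|y|^4}y=4\gamma_y$.

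The crux is resolving this for $y''$: the difficulty is that $y''$ appears implicitly through $\scalar{y}{y''}y$. The linear map $v\mapsto v+3\frac{\scalar{v}{y}}{|y|^2}y$ acts as the identity on $y^\perp$ and as multiplication by $4$ on $\R y$, hence is invertible; taking the scalar product of the relation with $y$ gives $4\scalar{y''}{y}=\scalar{F}{y}$, and substituting back yields
\[
y'' = F-\frac34\frac{\scalar{F}{y}}{|y|^2}\,y.
\]
I would then apply this projection to each piece of $F$: the summands already parallel to $y$ (namely $4\gamma_y$, the $\Lcal$-term, and the parallel part $\bigl(3\Lcal^2|y|\scalar{p\circ t_y}{y}+\frac{2\Lcal^4}{T}\Gamma_y\bigr)y$ of $\beta_y$) are simply multiplied by $\tfrac14$, while the transversal summand $\Lcal^2|y|^3\,p\circ t_y$ of $\beta_y$ survives with a correction $-\tfrac34\Lcal^2|y|\scalar{p\circ t_y}{y}y$; this correction cancels exactly against the $\tfrac14$ of the parallel summand $3\Lcal^2|y|\scalar{p\circ t_y}{y}y$. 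What remains is precisely $\gamma_y+\frac{\Lcal}{2T}\bigl(\Qcal-\Lcal^2(1+\Rcal)\bigr)y+\delta_y$, i.e. \eqref{eq:ELeqB bis} holds a.e.

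Finally, every term on the right-hand side of \eqref{eq:ELeqB bis} is continuous on $(\tau_1,\tau_2)$ (since $p\in C^1$, the maps $t_y$ and $\Gamma_y$ are continuous, and $|y|>0$), so $y''$ agrees a.e.\ with a continuous function; hence $y\in C^2(\tau_1,\tau_2)$ and the equation holds everywhere. The main obstacle throughout is the implicit $y''$-coupling through $\scalar{y}{y''}y$, dealt with by the inversion/projection step above; the remainder is bookkeeping of the algebra.
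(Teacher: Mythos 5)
Your proposal is correct and follows essentially the same route as the paper: differentiate the identity of Lemma \ref{lem:EL_for_B}, which is legitimate by the $W^{2,1}$ regularity of Corollary \ref{coro:first_regularity}, obtain an implicit relation coupling $y''$ and $\scalar{y}{y''}y$, resolve it by pairing with $y$ and substituting back, and upgrade to $C^2$ by continuity of the right-hand side. Your explicit description of the projection as inverting the map $v\mapsto v+3\scalar{v}{y}|y|^{-2}y$ and your tracking of the cancellation in the $p$-terms merely spell out the ``some cancellations'' that the paper leaves implicit.
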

\begin{proof}
By Corollary \ref{coro:first_regularity} we have that, both weakly and a.e. in $[\hat\tau_1,\hat\tau_2]\subset(\tau_1,\tau_2)$,
\begin{multline}\label{eq:ELeqBsbagliata}
\frac{1}{\Lcal}\left(-y'' +3 \frac{\langle y,y' \rangle^2 - |y'|^2|y|^2 - \langle y,y'' \rangle|y|^2 }{|y|^4}y \right) + \frac{2}{{T}}\left[\Qcal - \Lcal^2(1+\Rcal)\right] \, y +
\\
+ \Lcal \left[ 3|y| \scalar{p\circ t_y}{y} y + |y|^3 p\circ t_y  \right]
+ \frac{2\Lcal^3(y)}{T} y \int_{0}^{1}|y(\xi)|^2
 \left( \int_{\tau}^{\xi}|y|^3\scalar{\dot p \circ t_y}{y}\right)\,d\xi= 0.
\end{multline}
Then we multiply \eqref{eq:ELeqBsbagliata} with $y$ in order to solve for $\scalar{y}{y''}$ and substitute in \eqref{eq:ELeqBsbagliata} itself. After some cancellations, we deduce that \eqref{eq:ELeqB bis} holds, weakly and a.e. in $[\hat\tau_1,\hat\tau_2]$. Since
$y\in W^{2,1}$ and the functions $\gamma_y$ and $\delta_y$ are continuous, we obtain that $y\in C^2([\hat\tau_1,\hat\tau_2])$. Since in $[\hat\tau_1,\hat\tau_2]$ is arbitrary, the lemma follows.
\end{proof}

\begin{lemma}\label{lem:regularityEL}
Let $y$, $t_y$ and $x$ be as in Proposition \ref{prop:EL_for_B}. Then:
\begin{itemize}
\item $t_y\in C^3(\tau_1,\tau_2) \cap C^1([\tau_1,\tau_2])$, with inverse $\tau_y\in C^3(t_1,t_2)\cap C([t_1,t_2])$;
\item $x\in C^2(t_1,t_2)$.
\end{itemize}
\end{lemma}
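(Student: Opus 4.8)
The plan is to bootstrap regularity from the $C^2$ information on $y$ already established in Lemma \ref{lem:EL_for_B_bis}, transfer it to $t_y$ via its defining integral formula, invert using the elementary Lemma \ref{lem:elementary}, and finally read off the regularity of $x = x_y$ through the chain rule.

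First I would handle $t_y$. Recall from \eqref{eq:tutti gli altri} that
\[
t_y(\tau) = \Lcal(y) \int_0^\tau |y(\xi)|^2\,d\xi,
\]
so $t_y'(\tau) = \Lcal(y)\,|y(\tau)|^2$ on $[\tau_1,\tau_2]$. Since $y\in C^0([\tau_1,\tau_2])$ we already get $t_y\in C^1([\tau_1,\tau_2])$; moreover $|y|^2>0$ on $(\tau_1,\tau_2)$, so $t_y' > 0$ there and $t_y$ is strictly increasing. On the open interval $(\tau_1,\tau_2)$, Lemma \ref{lem:EL_for_B_bis} gives $y\in C^2$, hence $\tau\mapsto |y(\tau)|^2 = \scalar{y}{y}$ is $C^2$ (a product of $C^2$ functions), and therefore $t_y' = \Lcal(y)|y|^2$ is $C^2(\tau_1,\tau_2)$, i.e. $t_y\in C^3(\tau_1,\tau_2)$. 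This settles the first half of the first bullet.

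Next I would invert. Applying Lemma \ref{lem:elementary} with $a(\tau) = 1/t_y'(\tau) = 1/(\Lcal(y)|y(\tau)|^2)$ — which is continuous, nonnegative, and has integrable reciprocal on $[\tau_1,\tau_2]$ since $t_y$ is finite there — shows $t_y$ is a homeomorphism onto $[t_1,t_2]$ whose inverse $\tau_y$ is $C^1$ on $[t_1,t_2]$ with $\tau_y'(t) = 1/t_y'(\tau_y(t))$; in particular $\tau_y\in C([t_1,t_2])$. On the open interval, since $t_y\in C^3(\tau_1,\tau_2)$ with nonvanishing derivative, the inverse function theorem upgrades this to $\tau_y\in C^3(t_1,t_2)$. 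This completes the first bullet.

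Finally, for $x(t) = x_y(t) = |y(\tau_y(t))|\,y(\tau_y(t)) = \Phi(y(\tau_y(t)))$, I would compose: on $(\tau_1,\tau_2)$ we have $y\neq 0$ and $y\in C^2$, so $\tau\mapsto \Phi(y(\tau))$ is $C^2$ (the map $\Phi(y)=|y|y$ is smooth away from $y=0$); composing with $\tau_y\in C^3(t_1,t_2)$ yields $x\in C^2(t_1,t_2)$, which is the second bullet. I do not expect a genuine obstacle here: the proof is a routine bootstrapping, and the only points demanding care are checking that $|y|^2$ retains the full $C^2$ regularity of $y$ across the vector norm (true because $|y|^2$ avoids the square root and is a smooth function of $y$) and that the inversion is applied on the correct interval where $t_y'$ is bounded away from zero, i.e. on compact subintervals of the open interval, before passing to the endpoints via the homeomorphism statement of Lemma \ref{lem:elementary}.
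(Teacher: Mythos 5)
Your overall strategy coincides with the paper's (whose proof is a one-liner: ``chain rule and the elementary inverse function theorem'' on top of Lemma \ref{lem:EL_for_B_bis}), and the interior statements are correctly handled: $t_y'=\Lcal(y)|y|^2$ is $C^2$ on $(\tau_1,\tau_2)$ because $y\in C^2$ there, so $t_y\in C^3(\tau_1,\tau_2)$ with $t_y'>0$, the inverse function theorem gives $\tau_y\in C^3(t_1,t_2)$, and $x=\Phi(y(\tau_y))$ with $\Phi$ smooth away from the origin gives $x\in C^2(t_1,t_2)$. The claim $t_y\in C^1([\tau_1,\tau_2])$ is also fine, since $y$ is continuous up to the endpoints.

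The one step that does not survive scrutiny is your treatment of $\tau_y$ at the endpoints. You apply Lemma \ref{lem:elementary} with $a(\tau)=1/(\Lcal(y)|y(\tau)|^2)$ and assert that $a$ is continuous on $[\tau_1,\tau_2]$; but Proposition \ref{prop:EL_for_B} allows $y(\tau_1)=0$ or $y(\tau_2)=0$ (this is exactly the situation in the Dirichlet problem and at collisions), and then $a$ blows up at that endpoint, so the hypotheses of Lemma \ref{lem:elementary} fail. The intermediate conclusion you draw from it, namely $\tau_y\in C^1([t_1,t_2])$ with $\tau_y'(t)=1/t_y'(\tau_y(t))$, is actually false in that case: $t_y'(\tau_1)=0$ forces $\tau_y$ to have an infinite difference quotient at $t_1$, which is precisely why the lemma only claims $\tau_y\in C([t_1,t_2])$ and reserves higher regularity for the open interval. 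Fortunately the conclusion you need is only continuity of $\tau_y$ up to the endpoints, and that requires no lemma at all: $t_y$ is continuous and strictly increasing on the compact interval $[\tau_1,\tau_2]$ (strict monotonicity following from $t_y'>0$ on the open interval), hence a homeomorphism onto $[t_1,t_2]$. Replacing the appeal to Lemma \ref{lem:elementary} by this elementary observation repairs the argument completely.
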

\begin{proof}
Once the regularity of $y$ is proved as in Lemma \ref{lem:EL_for_B_bis}, the claims follow by the chain rule and the elementary
inverse function theorem.
\end{proof}
\begin{lemma}\label{lem:ELconmu}
Let $x$ be as in Proposition \ref{prop:EL_for_B}. Then there exists $\mu\in\R$ such that
\eqref{eq:beta} holds true.
\end{lemma}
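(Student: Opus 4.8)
The plan is to compute $\ddot x$ explicitly by combining the chain rule with the Euler--Lagrange equation \eqref{eq:ELeqB bis}, and then to read off the central-force structure. Write $r(\tau)=|y(\tau)|$, so that $x=ry$, $|x|=r^2$ and $x/|x|^3 = r^{-5}y$; recall also that $t_y'(\tau)=\Lcal r^2$, whence $d\tau/dt = (\Lcal r^2)^{-1}$. By Lemma \ref{lem:regularityEL} all the functions at play are of class $C^2$ on $(\tau_1,\tau_2)$, so differentiating $x=ry$ twice in $t$ (using $(ry)'=r'y+ry'$) is legitimate and gives, after a short computation,
\[
\ddot x = \frac{1}{\Lcal^2 r^2}\left( \frac{r''r - 2(r')^2}{r^3}\, y + \frac{1}{r}\, y'' \right).
\]

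Next I would insert \eqref{eq:ELeqB bis}. Using $\scalar{y}{y'}^2 = r^2(r')^2$ to rewrite $\gamma_y$, this equation takes the form $y'' = \Theta\, y + \Lcal^2 r^3\,(p\circ t_y)$ with
\[
\Theta(\tau) = \frac34 \frac{(r')^2 - |y'|^2}{r^2} + C + \frac{\Lcal^4}{2T}\Gamma_y,
\qquad
C := \frac{\Lcal}{2T}\bigl( \Qcal - \Lcal^2(1+\Rcal) \bigr).
\]
The forcing contributes $\frac1r\,\Lcal^2 r^3\,(p\circ t_y)$, which after multiplication by $(\Lcal^2 r^2)^{-1}$ yields exactly $p(t)$; every remaining term is a scalar multiple of $y$. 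Hence $\ddot x - p(t)$ is parallel to $x$, and collecting coefficients one finds
\[
\ddot x = -\mu(\tau)\frac{x}{|x|^3} + p(t),
\qquad
\mu(\tau) = -\frac{1}{\Lcal^2}\bigl( r''r - 2(r')^2 + \Theta r^2 \bigr).
\]
It therefore remains only to show that $\mu$ is independent of $\tau$.

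To this end I would first eliminate the second derivative: taking the scalar product of $y''=\Theta y + \Lcal^2 r^3 (p\circ t_y)$ with $y$ gives $\scalar{y}{y''} = \Theta r^2 + \Lcal^2 r^3\scalar{y}{p\circ t_y}$, while differentiating the identity $rr'=\scalar{y}{y'}$ gives $r''r = |y'|^2 + \scalar{y}{y''} - (r')^2$. Substituting, the bracket defining $\mu$ becomes a first-order expression,
\[
-\Lcal^2 \mu = -\tfrac12 |y'|^2 - \tfrac32 (r')^2 + 2C r^2 + \frac{\Lcal^4}{T}\Gamma_y\, r^2 + \Lcal^2 r^3 \scalar{y}{p\circ t_y},
\]
which by the above regularity belongs to $C^1(\tau_1,\tau_2)$. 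Differentiating once in $\tau$ and substituting the equation for $y''$ a second time, together with the elementary identity $\Gamma_y'(\tau) = -\tfrac{T}{\Lcal}\,r^3\scalar{\dot p\circ t_y}{y}$ (obtained by differentiating the definition of $\Gamma_y$ and using $\int_0^1 r^2\,d\tau = T/\Lcal$), makes all the terms cancel in pairs: the $\scalar{y'}{p}$, $\scalar{y}{p}$ and $\scalar{\dot p}{y}$ contributions annihilate one another, as do the purely kinematic terms in $(r')^3/r$, $r'|y'|^2/r$, $Crr'$ and $\Gamma_y\, rr'$. Thus $\mu'\equiv0$, $\mu$ is constant, and \eqref{eq:beta} holds.

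I expect this final cancellation to be the main obstacle: it is the variational counterpart of the conservation of the Kepler energy, and it works only because the scalar coefficient $\Theta$ (carrying the global quantities $\Qcal,\Rcal$) and the nonlocal term $\Gamma_y$ cooperate precisely. The decisive simplification is to use the Euler--Lagrange equation to recast $\mu$ as a first-order expression \emph{before} differentiating, thereby avoiding $r'''$ and making the pairwise cancellation transparent rather than brute-forcing a third-order computation.
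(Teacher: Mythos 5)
Your proof is correct and follows essentially the same strategy as the paper's: substitute the Euler--Lagrange equation \eqref{eq:ELeqB bis} to see that $\ddot x-p$ is parallel to $x$, use the equation to reduce the scalar coefficient to a first-order expression, then differentiate once more and use the equation again to show that this coefficient times $|x|^3$ is constant. The only (cosmetic) difference is that the paper carries out the last step in the $t$ variable, deriving the linear ODE $\dot\beta|x|^2+3\beta\scalar{x}{\dot x}=0$ for the coefficient $\beta$ in $\ddot x-p=\beta x$ and integrating it, whereas you stay in the $\tau$ variable and verify the cancellation $\mu'\equiv0$ directly.
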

\begin{proof}
Notice that, in $(\tau_1,\tau_2)$ and
$(t_1,t_2)$ respectively,
\[
x(t)=|y(\tau_y(t))|y(\tau_y(t))
\qquad\iff\qquad
y(\tau) = |x(t_y(\tau))|^{-1/2} x(t_y(\tau))
\]
where $y $ satisfy \eqref{eq:ELeqB bis}. Our aim is to substitute the second relation above into
\eqref{eq:ELeqB bis}; this can be done by the regularity properties obtained in Lemma
\ref{lem:regularityEL}. At the end, no explicit dependence
on $\tau$ will appear, and substituting $t=t_y(\tau)$ we will obtain the differential equation for
$x=x(t)$.

We have that $|y(\tau)| = |x(t(\tau))|^{1/2}$, while
\[
\begin{split}
y'(\tau)  &= \Lcal\left( -\frac12 |x|^{-3/2}\langle {x}/{|x|},\dot x \rangle x +
              |x|^{-1/2}\dot x \right)|x| =
         \Lcal \left( -\frac12 |x|^{-3/2}\langle x,\dot x\rangle x + |x|^{1/2}\dot x \right)\\
y''(\tau) &= \Lcal^2\left( \frac34 |x|^{-5/2}\langle x,\dot x\rangle^2 x -\frac12 |x|^{-1/2}
|\dot x|^2 x-\frac12 |x|^{-1/2}\langle x,\ddot x\rangle x +|x|^{3/2}\ddot x
\right),
\end{split}
\]
In order to substitute in the first line of \eqref{eq:ELeqB bis}, we compute separately
\[
\begin{split}
\langle y,y' \rangle ^2 &= \frac{\Lcal^2}{4}\langle x,\dot x\rangle^2
\\
|y|^2|y'|^2 &= |x| \Lcal^2 \left( |x||\dot x|^2 -\frac34|x|^{-1}\langle x,\dot x \rangle^2 \right)
= \Lcal^2 \left( |x|^2|\dot x|^2 -\frac34\langle x,\dot x \rangle^2 \right).
\end{split}
\]
Hence
\[
y'' - \gamma_y
= \Lcal^2 |x|^{3/2} \left(\ddot x - \frac12 |x|^{-2}\langle x,\ddot x\rangle x + \frac14|x|^{-2}|\dot x|^2 x
\right).
\]
On the other hand, by a change of variables in the integrals,
\[
\Lcal = \int_{0}^{T}\frac{dt}{|x|},
\qquad
\Qcal = \frac{\Lcal}{2} \int_{0}^{T}|\dot x|^2\,dt,
\qquad
\Rcal = \frac{1}{\Lcal} \int_{0}^{T}\scalar{p}{x}\,dt,
\]
while
\[
\Lcal^2 |y|^3 \left(p\circ t_y   \right)=
\left(\Lcal^2 |x|^{3/2} p\right)\circ t_y
\]
and
\[
\begin{split}
\Lcal^2\int_{0}^{1} |y(\xi)|^2
\left(\int_\tau^{\xi}|y|^3\scalar{\dot p \circ t_y}{y}\right)\,d\xi  &=
 \int_{0}^{T}\left( \int_{t}^s\scalar{\dot p }{x}\right)\,ds\\
&=
\int_{0}^{T}\left( \int_{t}^T\scalar{\dot p }{x}\right)\,ds -\int_{0}^{T}\left( \int_{s}^T\scalar{\dot p }{x}\right)\,ds\\
&=
T\int_{t}^T\scalar{\dot p }{x} -\int_{0}^{T}t \scalar{\dot p }{x}\,dt
\end{split}
\]
We conclude that equation \eqref{eq:ELeqB bis} transforms into the following equation for $x=x(t)$:
\[
\ddot x - \frac12 |x|^{-2}\langle x,\ddot x\rangle x + \frac14|x|^{-2}|\dot x|^2 x = \frac12\left[ \frac{1}{{T}} \int_{0}^{T}\left(\frac12 |\dot x|^2 - \frac{1}{|x|} -\scalar{p}{x} -
t\scalar{\dot p}{x} \right)\,dt + \int_{t}^{T}\scalar{\dot p}{x} \right]|x|^{-2}x + p,
\]
that is
\begin{equation}\label{eq:ELx}
\ddot x  = \frac12\left[ \langle x,\ddot x\rangle - \frac12|\dot x|^2   + C + \int_{t}^{T}\scalar{\dot p}{x} \right]|x|^{-2}x + p
\end{equation}
where
\begin{equation}\label{eq:defofC}
C := \frac{1}{{T}} \int_{0}^{T}\left(\frac12 |\dot x|^2 - \frac{1}{|x|} -\scalar{p}{x} -
t\scalar{\dot p}{x} \right)\,dt.
\end{equation}
In particular, multiplying \eqref{eq:ELx} by $x$ we obtain
\begin{equation*}\label{eq:senzabeta1}
 \langle x,\ddot x\rangle = -\frac12|\dot x|^2 + C + \int_{t}^{T}\scalar{\dot p}{x} +2\scalar{p}{x}
\end{equation*}
and, substituting into \eqref{eq:ELx},
\begin{equation}\label{eq:senzabeta2}
\ddot x  =
\left[ C -\frac12 |\dot x|^2+ \scalar{p}{x}+ \int_{t}^{T}\scalar{\dot p}{x}
\right]|x|^{-2} x  +p,\qquad t\in(t_1,t_2).
\end{equation}
Let $\beta=\beta(t)$ be such that $\ddot x - p = \beta x$. Then
\begin{equation}\label{eq:passaggio in ELx}
\beta |x|^2 = C - \frac12|\dot x|^2 + \scalar{p}{x} +  \int_{t}^{T}\scalar{\dot p}{x} ,\qquad t\in(t_1,t_2).
\end{equation}
Since $x$ is $C^2$, we have that $\beta$ is of class $C^1$ in $(t_1,t_2)$.
Hence we can differentiate to obtain
\[
\dot\beta|x|^2 + 2\beta \langle x,\dot x\rangle =
-\scalar{\ddot x - p}{\dot x} = -\scalar{\beta x}{\dot x} ,\qquad t\in(t_1,t_2).
\]
Hence $\dot \beta |x|^2 +3 \beta \langle x,\dot x\rangle=0$, which implies
$\beta  = -\mu/|x|^3$ for some constant $\mu\in\R$.
\end{proof}

\begin{remark}
By the proof of Proposition \ref{prop:EL_for_B} we have that the constant $\mu$ appearing in
\eqref{eq:beta} is related to the constant $C$ defined in \eqref{eq:defofC}. More precisely,
substituting \eqref{eq:beta} into \eqref{eq:senzabeta2} we infer
\begin{equation}\label{eq:muvsC}
C =  \frac12 |\dot x|^2 - \frac{\mu}{|x|} -  \scalar{p}{x}
- \int_{t}^{T}\scalar{\dot p}{x}, \qquad t\in(t_1,t_2).
\end{equation}
\end{remark}
\begin{lemma}\label{coro:mu}
Under the assumptions of Proposition \ref{prop:EL_for_B} we have that
\begin{itemize}
\item if either $y(\tau_1^+)=0$ or $y(\tau_2^-)=0$ then, in \eqref{eq:beta}, $\mu\ge0$;
\item if $(\tau_1,\tau_2)=(0,1)$ (with either collisions at the extrema or not) then, in \eqref{eq:beta}, $\mu=1$.
\end{itemize}
\end{lemma}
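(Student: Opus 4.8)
The plan is to derive both statements from the first integral identity \eqref{eq:muvsC}, which already encodes $\mu$ in a pointwise energy relation valid on $(t_1,t_2)$:
\[
C = \frac12|\dot x|^2 - \frac{\mu}{|x|} - \scalar{p}{x} - \int_t^T\scalar{\dot p}{x}, \qquad t\in(t_1,t_2),
\]
with $C$ the constant fixed in \eqref{eq:defofC}. The observation driving everything is that, apart from the singular term $-\mu/|x|$, the right-hand side is built from quantities that stay bounded near a collision; hence the sign of $\mu$ is forced by the mere requirement that the identity hold with $C$ finite, while, when $(\tau_1,\tau_2)=(0,1)$, averaging the identity against the explicit formula for $C$ pins down the value $\mu=1$.

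For the first bullet, suppose $y(\tau_1^+)=0$ (the case $y(\tau_2^-)=0$ is symmetric). Since $|x|=|y|^2$ and $t_y$ is an increasing homeomorphism with $t_y(\tau_1)=t_1$, we have $|x(t)|\to0$ as $t\to t_1^+$, and by continuity of $x$ on $[0,T]$ the terms $\scalar{p}{x}$ and $\int_t^T\scalar{\dot p}{x}$ converge to finite limits. Thus the right-hand side of \eqref{eq:muvsC} tends to a finite limit. If $\mu<0$ then $-\mu/|x|\to+\infty$ while $\frac12|\dot x|^2\ge0$, so the left-hand side would diverge to $+\infty$, a contradiction; therefore $\mu\ge0$.

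For the second bullet, when $(\tau_1,\tau_2)=(0,1)$ we have $t_1=t_y(0)=0$ and $t_2=t_y(1)=T$, so \eqref{eq:muvsC} holds throughout $(0,T)$. I would integrate it over $[0,T]$, divide by $T$, and subtract the defining formula \eqref{eq:defofC} of $C$. The kinetic and $\scalar{p}{x}$ contributions cancel identically, the singular terms combine into $\frac{1-\mu}{T}\int_0^T|x|^{-1}\,dt$, and the two first-order terms in $\dot p$ cancel as well: by Fubini, $\int_0^T\bigl(\int_t^T\scalar{\dot p}{x}\,ds\bigr)\,dt=\int_0^T t\,\scalar{\dot p(t)}{x(t)}\,dt$. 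What remains is $0=\frac{1-\mu}{T}\int_0^T|x|^{-1}\,dt=\frac{(1-\mu)\Lcal}{T}$, and since $\Lcal=\int_0^T|x|^{-1}\,dt$ is strictly positive and finite this yields $\mu=1$.

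The computations are short once \eqref{eq:muvsC} is in hand, so the only real care needed is in justifying the boundary passage in the first bullet, namely that $x$ extends continuously up to the collision instant and that the non-singular part of the right-hand side of \eqref{eq:muvsC} genuinely has a finite limit there; and in the Fubini exchange of the second bullet, which is precisely what makes the $\dot p$-terms disappear and isolates the coefficient $(1-\mu)$. I would also record that every term is integrable over $(0,T)$ (using $\Lcal<\infty$, $\Qcal<\infty$ and the continuity of $x$), so that the averaging step is legitimate.
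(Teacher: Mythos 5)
Your proof is correct and follows essentially the same route as the paper's: the first bullet is obtained by isolating $-\mu/|x|$ in \eqref{eq:muvsC}, bounding the remaining terms, and letting $t\to t_1^+$, while the second follows by integrating \eqref{eq:muvsC} over $(0,T)$ and comparing with the definition \eqref{eq:defofC} of $C$ (the Fubini computation you spell out for the $\dot p$-term is left implicit in the paper). There are no gaps.
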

\begin{proof}
First of all, for concreteness, let us assume $y(\tau_1^+)=0$. Then, by \eqref{eq:muvsC} we obtain
\[
-\frac{1}{|x|}\mu \leq C + \scalar{p}{x} +  \int_{t}^{T}\scalar{\dot p}{x}.
\]
Taking the limit as $t\to t_1^+$ we deduce that the constant $\mu$ can not be strictly negative.

On the other hand, let $(\tau_1,\tau_2)=(0,1)$. Then \eqref{eq:muvsC} holds true with $(t_1,t_2)=(0,T)$. Integrating on $(0,T)$ and recalling the definition of $C$ in \eqref{eq:defofC} we obtain that $\mu = 1$.
\end{proof}
\begin{proof}[End of the proof of Proposition \ref{prop:EL_for_B}]
The proposition follows by Lemmas \ref{lem:regularityEL}, \ref{lem:ELconmu} and
\ref{coro:mu}.
\end{proof}

\subsection{Analysis of \texorpdfstring{$\BLC$}{BLC}}\label{sec:ELeqBLC}
Now, for $z=z_1+i z_2\in \C\cong\R^2$, let $\Phi(z) = \Phi_{\text{LC}}(z) = z^2$, see Case 2 in the Section
\ref{sec:euristic}. The corresponding functional is
\[
\BLC : H^1(0,1; \C) \to \R \cup \{+\infty\}
\]
\begin{equation}\label{eq:BLC}
\BLC(z) := \frac{1}{\Lcal(z)}
\Qcal(z) +
\Lcal(z)\Big[ 1+ \Rcal(z)\Big],
\qquad \Bcal(0):=+\infty,
\end{equation}
where
\begin{equation}\label{eq:tutti gli altriLC}
\begin{split}
\Lcal(z) &:= \frac{T}{\int_0^1|z|^2\,d\tau},\\
\Qcal(z) &:= 2 \int_0^1 | z'|^2\,d\tau,
\\
\Rcal(z) &:= \int_0^1
|z|^2 \scalar{p\circ t_z}{z^2}\,d\tau,\\
t_z(\tau) &:= \Lcal(z) \int_{0}^{\tau}|z(\xi)|^2\,d\xi.
\end{split}
\end{equation}
The main difference with respect to the previous section consists in the fact that $\BLC$ is now of class $C^1$ in the whole $H^1(0,1;\C)\setminus\{0
\}$, regardless of possible collisions.
\begin{proposition}\label{prop:EL_for_B_LC}
Let $z\in H^1(0,1;\C)\setminus\{0\}$ satisfy
\begin{equation*}
\frac{d}{d\eps} \left[\BLC(z+\eps \varphi)\right]_{\eps=0} = 0\qquad \text{for every }\varphi\in\Dcal(0,1;\C).
\end{equation*}
Then $z \in C^4([0,1])$, the map $\tau \mapsto t_z(\tau)$ is invertible on $[0,1]$ with inverse $t\mapsto \tau_z(t)$,
and
\[
x(t)=z^2(\tau_z(t))
\]
is a generalized solution of equation \eqref{eq:perturbed_kepler}.
\end{proposition}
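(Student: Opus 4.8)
The plan is to follow the scheme of Proposition~\ref{prop:EL_for_B}, exploiting the simplification special to the Levi--Civita case: since $\Qcal(z)=2\int_0^1|z'|^2\,d\tau$ carries no singular factor, $\BLC$ is of class $C^1$ on all of $H^1(0,1;\C)\setminus\{0\}$, and its Euler--Lagrange equation therefore holds on the whole of $(0,1)$, collisions included. First I would compute the first variation exactly as in Lemma~\ref{lem:EL_for_B}, with the LC data. Varying $\frac1\Lcal\Qcal$ and integrating by parts once produces the term $-\tfrac{4}{\Lcal}z''$, while varying $\Lcal(1+\Rcal)$ reproduces, \emph{mutatis mutandis}, the terms of $\beta_y$. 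The outcome is the distributional identity on $(0,1)$
\[
\frac{4}{\Lcal}\,z'' = \frac{2}{T}\big[\Qcal-\Lcal^2(1+\Rcal)\big]\,z + \Lcal\Big(2\scalar{p\circ t_z}{z^2}\,z+2|z|^2\,(p\circ t_z)\,\bar z+ c_\Gamma\, z\Big),
\]
where $c_\Gamma(\tau)$ is the continuous coefficient arising from the $t_z$-dependence, exactly as $\Gamma_y$ in Lemma~\ref{lem:EL_for_B}. The decisive observation is that, because $\Qcal$ is nonsingular and quadratic, \emph{no} $\alpha_y$--type term, no factor $z'$, and no negative power of $|z|$ survives; moreover every term on the right carries a factor of $z$ (conjugation counts), so the equation reads $z''=M(\tau)z$ with $M(\tau)$ a continuous family of $\R$-linear endomorphisms of $\C$ evaluated along $z$, whose right-hand side vanishes wherever $z$ does.

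Next I would bootstrap regularity. Since the right-hand side is continuous, $z\in C^2$; then $t_z'=\Lcal|z|^2$ gives $t_z\in C^3$; feeding this back and using the smoothness of $p$, the regularity climbs with no obstruction at collisions (the source of trouble for $\Bcal$, namely $\alpha_y$, is absent), yielding $z\in C^4$ (indeed $C^\infty$ when $p$ is). In particular Lemma~\ref{lem:prelemma_regularity} is not even needed here.

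Then I would settle the geometry of $t_z$ and the structure of the collisions. Reading $z''=M(\tau)z$ as a linear homogeneous second-order system with continuous coefficients, uniqueness for the Cauchy problem of $(z,z')$ shows that a nontrivial solution cannot vanish together with its derivative; hence every zero $\tau_*$ of $z$ is \emph{simple}, and the zero set is finite. Consequently $t_z'=\Lcal|z|^2\ge0$ vanishes only at finitely many points, so $t_z$ is a strictly increasing homeomorphism of $[0,1]$ onto $[0,T]$ (as in Lemma~\ref{lem:elementary}), with continuous inverse $\tau_z$ that is $C^4$ off the discrete collision set.

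Finally I would check that $x=z^2\circ\tau_z$ is a generalized solution, which is where the main obstacle lies: obtaining the Kepler equation with constant $\mu=1$ \emph{globally} and matching the collision data across each zero. The collisions of $x$ are the images $t_z(\tau_*)$ of the zeros of $z$, hence discrete (condition~1 of Definition~\ref{def:gen_sol}). On every collision-free subinterval I transfer the equation for $z$ into one for $x$ exactly as in Lemma~\ref{lem:ELconmu}, getting $\ddot x=-\mu\,x/|x|^3+p$ with a constant $\mu$ and the energy identity \eqref{eq:muvsC} carrying the global constant $C$ of \eqref{eq:defofC}. Now \eqref{eq:muvsC} forces $\tfrac12|\dot x|^2-\mu/|x|$ to have a finite limit at each collision; but in LC coordinates $\tfrac12|\dot x|^2-\mu/|x|=(2|z'|^2/\Lcal^2-\mu)/|z|^2$, and since $z'(\tau_*)\neq0$ while $|z|^2\to0$, finiteness forces $\mu=2|z'(\tau_*)|^2/\Lcal^2$ from \emph{both} sides of $\tau_*$. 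Thus the constants on adjacent pieces coincide, all $\mu$'s are equal, and integrating \eqref{eq:muvsC} over $(0,T)$ as in Lemma~\ref{coro:mu} pins $\mu=1$. With $\mu=1$, one reads off $x/|x|=z^2/|z|^2\to z'(\tau_*)^2/|z'(\tau_*)|^2$ from both sides, and by \eqref{eq:muvsC} together with the continuity of $x$ the collision energy has equal finite one-sided limits; by the remark following Definition~\ref{def:gen_sol} (based on \cite{sper2body}) these are precisely the conditions making $x$ a generalized solution of \eqref{eq:perturbed_kepler}. The smoothness of $z$ across its simple zeros, unavailable for $\Bcal$, is exactly what makes this collision matching work.
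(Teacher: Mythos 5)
Your proposal is correct and follows essentially the same route as the paper: derive the non-singular Euler--Lagrange equation $z''=g(\tau)z$, use Cauchy uniqueness to show zeros of $z$ are simple and finite, transfer to the equation for $x$ on collision-free intervals with piecewise constants $\mu_i$, identify $\mu_i=2|z'(\tau_i)|^2/\Lcal^2$ on both sides of each zero so that all $\mu_i$ coincide, integrate the energy identity to pin $\mu=1$, and read off the collision direction and energy from the first-order expansion of $z$ at its simple zeros. The only cosmetic differences are that the paper first proves $\mu_i>0$ by contradiction and verifies the energy limit via an explicit Taylor expansion of $z'$, whereas you obtain positivity for free from the identification of $\mu_i$ and the energy continuity directly from the identity \eqref{eq:muvsC_LC}.
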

To prove Proposition \ref{prop:EL_for_B_LC}, as a first step, we determine the Euler-Lagrange equation associated to $\BLC$.
\begin{lemma}\label{lem:EL_for_B_LC}
Let $z\in H^1(0,1; \C) \setminus \{0\}$ be as in Proposition \ref{prop:EL_for_B_LC}, then
\begin{equation}\label{eq:ELeqB_LC}
z'' =
\frac{\Lcal}{2T}\left(\Qcal-\Lcal^2(1+\Rcal)\right) z + \delta_z,
\end{equation}
where $\Lcal$, $\Qcal$ and $\Rcal$ are evaluated at $z$ and
\[
\delta_z  := \frac{\Lcal^2}{2}\left[ \scalar{p\circ t_z}{z^2} +\bar z^2(p\circ t_z)
+ \frac{\Lcal^2}{T} \Delta_z\right]z,
\]
where
\[
\Delta_z  := \int_{0}^{1}|z(\xi)|^2
\left(\int_{\tau}^{\xi}|z(s)|^2\scalar{\dot p(t_z(s))}{z^2(s)}\right)\,d\xi.
\]
In particular $z \in C^3([0,1])$.
\end{lemma}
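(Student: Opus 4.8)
The plan is to follow the template of Lemma \ref{lem:EL_for_B}, taking advantage of the fact that here the kinetic term $\Qcal(z)=2\int_0^1|z'|^2\,d\tau$ is the standard Dirichlet energy. Being a continuous quadratic form on $H^1(0,1;\C)$, $\Qcal$ is differentiable everywhere, so none of the singular term $\alpha_z$ of Case 1 appears and, crucially, there is no need to restrict to collisionless subintervals: for any $z\not\equiv0$ (so that $\Lcal(z)=T\|z\|_2^{-2}$ is well defined) the whole computation can be run on $(0,1)$, against arbitrary $\varphi\in\Dcal(0,1;\C)$.

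First I would differentiate the building blocks at $z$ in direction $\varphi$. Exactly as in Lemma \ref{lem:EL_for_B}, from $1/\Lcal(z)=\tfrac1T\int_0^1|z|^2$ and $\Lcal(z)=T\|z\|_2^{-2}$ one gets $\tfrac{d}{d\eps}[1/\Lcal(z+\eps\varphi)]_0=\tfrac2T\int_0^1\scalar{z}{\varphi}$ and $\tfrac{d}{d\eps}[\Lcal(z+\eps\varphi)]_0=-\tfrac2T\Lcal^2\int_0^1\scalar{z}{\varphi}$, while $\tfrac{d}{d\eps}[\Qcal(z+\eps\varphi)]_0=4\int_0^1\scalar{z'}{\varphi'}$. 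The genuinely new bookkeeping is in $\Rcal(z)=\int_0^1|z|^2\scalar{p\circ t_z}{z^2}\,d\tau$, which contributes three terms: from $|z|^2$ one gets $2\scalar{p\circ t_z}{z^2}\scalar{z}{\varphi}$; from the complex square, whose variation is $2z\varphi$, one gets $|z|^2\scalar{p\circ t_z}{2z\varphi}=\scalar{2\bar z^2(p\circ t_z)\,z}{\varphi}$ after rewriting the real pairing on $\C$ (using $\scalar{p}{2z\varphi}=\scalar{2\bar z\,p}{\varphi}$ and $|z|^2\bar z=\bar z^2 z$); and from the nonlocal dependence of $p\circ t_z$ on $z$ through $t_z$.

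The main obstacle, and the heart of the proof, is this last term. Using $\tfrac{d}{d\eps}[t_{z+\eps\varphi}]_0=-\tfrac2T\Lcal^2\big(\int_0^1\scalar{z}{\varphi}\big)\int_0^\tau|z|^2+2\Lcal\int_0^\tau\scalar{z}{\varphi}$, the contribution $\int_0^1|z|^2\scalar{\dot p\circ t_z}{z^2}\,\tfrac{d}{d\eps}t_z\,d\tau$ becomes a double integral in $\tau$ and $\xi$ which must be reorganized, by the two Fubini-type identities already displayed in the proof of Lemma \ref{lem:EL_for_B} (valid since $\varphi$ is compactly supported), into a single integral $\int_0^1\scalar{\,\cdot\,}{\varphi}$. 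This yields precisely the nonlocal coefficient $\tfrac{2\Lcal^2}{T}\Delta_z\,z$. Collecting the $\int_0^1\scalar{z}{\varphi}$ pieces from $1/\Lcal$ and $\Lcal$ into $\tfrac2T\big(\Qcal-\Lcal^2(1+\Rcal)\big)\int_0^1\scalar{z}{\varphi}$, integrating the kinetic term by parts distributionally as $4\int_0^1\scalar{z'}{\varphi'}=-4\langle z'',\varphi\rangle$, and solving for $z''$ (which produces the overall factor $\Lcal/4$) gives \eqref{eq:ELeqB_LC}; the prefactors combine as $\tfrac{\Lcal}{4}\cdot\tfrac2T=\tfrac{\Lcal}{2T}$ and $\tfrac{\Lcal}{4}\cdot 2\Lcal=\tfrac{\Lcal^2}{2}$, matching the stated $\delta_z$.

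Finally, I would obtain the regularity by bootstrapping on \eqref{eq:ELeqB_LC}. Since $z\in H^1\hookrightarrow C^0$ and $t_z=\Lcal\int_0^\tau|z|^2\in C^1$, the right-hand side is continuous (here $p\in C^1$ enters), so $z\in C^2$; then $|z|^2\in C^2$ forces $t_z\in C^3$, so that the algebraic terms and the nonlocal term $\Delta_z$ become $C^1$, whence $z''\in C^1$ and $z\in C^3([0,1])$, as claimed. I expect the only delicate point to be the Fubini reorganization of the nonlocal term, but it is essentially identical to the one already carried out for $\Gamma_y$ in Lemma \ref{lem:EL_for_B}.
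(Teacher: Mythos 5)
Your proposal is correct and follows essentially the same route as the paper, whose proof simply retraces Lemmas \ref{lem:EL_for_B} and \ref{lem:EL_for_B_bis} after observing that $\Qcal(z)=2\int_0^1|z'|^2$ is a smooth quadratic form with $\frac{d}{d\eps}[\Qcal(z+\eps\varphi)]_{\eps=0}=4\int_0^1\scalar{z'}{\varphi'}$, so no restriction to collisionless subintervals is needed. Your bookkeeping of the variation of $\Rcal$ (including the rewriting $|z|^2\scalar{p\circ t_z}{2z\varphi}=\scalar{2\bar z^2(p\circ t_z)z}{\varphi}$, the Fubini reorganization yielding $\Delta_z$, and the prefactors $\Lcal/(2T)$ and $\Lcal^2/2$) and your bootstrap to $C^3$ all check out.
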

\begin{proof}
The proof mainly retraces the one of Lemmas \ref{lem:EL_for_B} and \ref{lem:EL_for_B_bis}. The main difference consists in the term arizing from the differentiation of $\Qcal(z)$, indeed in this case the analogous of \eqref{eq:diffQ} simplifies into
\[
\frac{d}{d\eps} \left[ \Qcal(y+\eps\varphi)\right]_{\eps=0}
= 4\int_{0}^{1} \scalar{z'}{\varphi'},
\]
for every $\varphi\in\Dcal(0,1;\C)$.
Regularity of $z$ and equation \eqref{eq:ELeqB_LC} follow at once (recall that we are assuming $p\in C^1$).
\end{proof}
\begin{remark}\label{rem:BLC'}
For future purposes we notice that, for every $z\not\equiv0$, $v\in H^1(0,1; \C)$,
\[
\BLC'(z)[v]= \frac{4}{\Lcal} \int_{0}^{1} \left[\scalar{z'}{v'} + \scalar{\frac{\Lcal}{2T}\left(\Qcal-\Lcal^2(1+\Rcal)\right) z + \delta_z}{v}\right],
\]
where $\delta_z$ has been introduced in the previous lemma.
\end{remark}
\begin{lemma}\label{le:zeri}
Let $z$ be a critical point of $\BLC$ and $\tau^* \in [0,1]$ be such that $z(\tau^*)=0$. Then $z'(\tau^*)\neq 0$ and the set
$Z := \{\tau \in [0,1] : z(\tau)=0 \}$ is finite.
\end{lemma}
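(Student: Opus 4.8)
The plan is to read the Euler--Lagrange equation \eqref{eq:ELeqB_LC} as a \emph{linear} second order ODE for $z$ and then exploit uniqueness. The crucial observation is that the whole right-hand side of \eqref{eq:ELeqB_LC} is a (scalar, indeed complex) multiple of $z$ itself: the first term is the real number $\frac{\Lcal}{2T}(\Qcal-\Lcal^2(1+\Rcal))$ times $z$, while, by the very definition of $\delta_z$ in Lemma \ref{lem:EL_for_B_LC}, one has $\delta_z = g(\tau)\,z$ with
\[
g(\tau) = \frac{\Lcal^2}{2}\left[\scalar{p\circ t_z}{z^2} + \bar z^2\,(p\circ t_z) + \frac{\Lcal^2}{T}\Delta_z\right].
\]
Hence $z$ solves $z'' = a(\tau)\,z$ on $[0,1]$, where $a(\tau)\in\C$ is the sum of the two coefficients and the product is the one of $\C$. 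Since $z\in C^3([0,1])$ by Lemma \ref{lem:EL_for_B_LC}, the map $\tau\mapsto t_z(\tau)$ is $C^1$, $\Delta_z$ is continuous, and $p\in C^1$; therefore $a\in C([0,1];\C)$.

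Next I would argue that every zero of $z$ is simple. Suppose, by contradiction, that $z(\tau^*)=0$ and $z'(\tau^*)=0$ for some $\tau^*\in[0,1]$. Rewriting $z''=a(\tau)z$ as the first order linear system for $(z,z')\in\C^2\cong\R^4$, whose coefficient (multiplication by the complex number $a(\tau)$, viewed as a real-linear map on $\R^2$) depends continuously on $\tau$, the Cauchy--Lipschitz uniqueness theorem forces $z\equiv0$ on $[0,1]$. This is impossible, since a critical point of $\BLC$ lies in $H^1(0,1;\C)\setminus\{0\}$ (recall $\BLC(0)=+\infty$). Therefore $z'(\tau^*)\neq0$, which is the first assertion.

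Finally, simplicity of the zeros yields their isolation: if $z(\tau^*)=0$ and $z'(\tau^*)\neq0$, the first order expansion $z(\tau)=z'(\tau^*)(\tau-\tau^*)+o(\tau-\tau^*)$ gives $|z(\tau)|\geq \tfrac12|z'(\tau^*)|\,|\tau-\tau^*|>0$ for $\tau$ close to, but different from, $\tau^*$. Thus $Z$ consists of isolated points of the compact interval $[0,1]$, and is therefore finite. The only delicate point is the opening structural remark, namely that the right-hand side of \eqref{eq:ELeqB_LC} is genuinely proportional to $z$ with a continuous coefficient; once this is verified, the conclusion reduces to standard facts about planar linear second order equations.
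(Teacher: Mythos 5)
Your proof is correct and follows essentially the same route as the paper: the authors likewise rewrite \eqref{eq:ELeqB_LC} as $z''=g(\tau)z$ with $g$ continuous and complex valued, invoke uniqueness for the Cauchy problem to rule out a double zero (which would force $z\equiv 0$), and conclude finiteness of $Z$ from the simplicity of the zeros. Your verification that $\delta_z$ is genuinely a continuous complex multiple of $z$ is a useful explicit check of a step the paper leaves implicit, but the argument is the same.
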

\begin{proof}
Notice that equation \eqref{eq:ELeqB_LC} can be written as
\begin{equation}
\label{eq:EL_LC_veloce}
z'' = g(\tau)z
\end{equation}
for some continuous, complex valued, function $g$.
Let us assume that $z'(\tau^*)=0$; then, by uniqueness of the Cauchy problem associated to the previous equation, $z \equiv 0$, a contradiction. Finally, if $Z$ is not finite, then it must have some accumulation point which can not be a simple zero of $z$.
\end{proof}
\begin{corollary}\label{coro:regularityEL_LC}
The function $t_z\in C^4([0,1])$ is invertible, with inverse $\tau_z\in C([0,T])$ which is $C^4$ outside the finite set
$t_z(Z)$.
\end{corollary}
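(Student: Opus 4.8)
The plan is to read off all the regularity of $t_z$ and $\tau_z$ from the single identity $t_z'(\tau)=\Lcal(z)\,|z(\tau)|^2$, treating the behaviour at the finitely many collision instants separately from the behaviour elsewhere. First I would collect the two facts already available: by Lemma \ref{lem:EL_for_B_LC} the critical point $z$ belongs to $C^3([0,1])$ (it solves the linear second-order equation \eqref{eq:EL_LC_veloce}), and by Lemma \ref{le:zeri} its zero set $Z$ is finite with every zero simple.

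Next I would show that $t_z$ is a strictly increasing homeomorphism of $[0,1]$ onto $[0,T]$ of class $C^4$. Differentiating the definition of $t_z$ in \eqref{eq:tutti gli altriLC} gives $t_z'(\tau)=\Lcal(z)|z(\tau)|^2$; since $z\in C^3$ forces $|z|^2=z_1^2+z_2^2\in C^3$, one gets $t_z'\in C^3$ and hence $t_z\in C^4([0,1])$. Because $|z|^2>0$ off the finite (hence null) set $Z$, for $\tau_1<\tau_2$ one has $t_z(\tau_2)-t_z(\tau_1)=\Lcal(z)\int_{\tau_1}^{\tau_2}|z|^2>0$, so $t_z$ is strictly increasing; together with $t_z(0)=0$ and $t_z(1)=\Lcal(z)\int_0^1|z|^2=T$ this makes $t_z$ a continuous strictly increasing bijection of $[0,1]$ onto $[0,T]$. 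Such a map has a continuous inverse, giving $\tau_z\in C([0,T])$.

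I would then upgrade the smoothness of $\tau_z$ away from collisions via the elementary inverse function theorem. Fix $t\in[0,T]\setminus t_z(Z)$ and set $\tau=\tau_z(t)$; then $\tau\notin Z$, so $z(\tau)\neq0$ and $t_z'(\tau)=\Lcal(z)|z(\tau)|^2>0$. Since $t_z\in C^4$ with non-vanishing derivative at $\tau$, there is a local $C^4$ inverse, which must agree with $\tau_z$; letting $t$ range over the open set $[0,T]\setminus t_z(Z)$ yields $\tau_z\in C^4\big([0,T]\setminus t_z(Z)\big)$, the set $t_z(Z)$ being finite as the image of a finite set under a continuous map.

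The only genuine obstacle is the behaviour at the collision instants. There $t_z'=\Lcal(z)|z|^2$ vanishes, so the inverse function theorem is unavailable and $\tau_z$ cannot be expected to be even $C^1$ at the points of $t_z(Z)$; indeed at a simple zero $\tau^*$ one has $t_z(\tau)-t_z(\tau^*)\sim \mathrm{const}\cdot(\tau-\tau^*)^3$, so $\tau_z$ is only Hölder-$1/3$ there and $\tau_z'=+\infty$ formally. What saves the statement is exactly the combination used above: strict monotonicity of $t_z$ (valid even though $t_z'$ has finitely many zeros, since $|z|^2>0$ on a set of full measure) secures a globally continuous inverse, while the finiteness of $Z$ confines the loss of smoothness to the finite set $t_z(Z)$, leaving $\tau_z\in C^4$ on its complement.
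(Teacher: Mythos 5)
Your proof is correct and takes essentially the same route as the paper, whose own argument is the one-line remark that the claim ``follows by the definition of $t_z$ and by the elementary inverse function theorem''; you have simply supplied the details (the formula $t_z'=\Lcal(z)|z|^2$, strict monotonicity from $|Z|$ finite, and the local $C^4$ inverse off $t_z(Z)$). The aside on the cubic degeneracy and H\"older-$1/3$ behaviour of $\tau_z$ at simple zeros is accurate but not needed for the statement.
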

\begin{proof}
The result follows by the definition of $t_z$ and by the elementary inverse function theorem.
\end{proof}
Motivated by the previous corollary we define, for a suitable $N \ge 1$, the points $0=t_0 < t_1 < \ldots < t_N =1$ in such a way that
\[
(0,T) \setminus t_z(Z) = \bigcup_{i=1}^N (t_{i-1},t_i).
\]
Notice that $\tau_i := \tau_z(t_i)$ is such that $z(\tau_i)=0$ at least for $i=1,\ldots,N-1$ ($\tau_0=0$ and $\tau_N=1$ may or may not be collision instants).
\begin{lemma}\label{lem:ELconmu_LC}
Let $x$ as in Proposition \ref{prop:EL_for_B_LC} and let $\{t_0,\ldots,t_N\}$ as above. Then $x \in C([0,1];\R^2)$ is $C^2$ outside collisions. Moreover, the function
\begin{equation} \label{eq:LCnuova}
t \mapsto |\dot x(t)|^2 |x(t)| = \frac{4}{\Lcal^2}|z'(\tau_z(t))|^2
\end{equation}
is continuous in $[0,1]$ and,
for every $i=1,\ldots,N$
there exists $\mu_i >0$ such that
\begin{equation}\label{eq:beta_LC_i}
\ddot x = -\mu_i \frac{x}{|x|^3} + p(t),\qquad t\in(t_{i-1},t_i).
\end{equation}
\end{lemma}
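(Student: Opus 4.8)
The plan is to transfer the Euler--Lagrange equation \eqref{eq:ELeqB_LC} for $z$ into a differential equation for $x$ on each collisionless interval, mirroring the substitution of Lemma \ref{lem:ELconmu}, and then to read off the sign of the resulting constant from the collision behaviour. I begin by recording the regularity of $x$. By Lemma \ref{lem:EL_for_B_LC} and Corollary \ref{coro:regularityEL_LC}, $z$ is of class $C^3$ and $\tau_z$ is continuous on $[0,T]$ and $C^4$ away from the finite set $t_z(Z)$; hence $x=z^2\circ\tau_z$ is a composition of continuous maps, so it is continuous on $[0,1]$ with $x(t_i)=z^2(\tau_i)=0$ at collisions, and on each $(t_{i-1},t_i)$ it is $C^2$, being the composition of the smooth nonvanishing map $z^2$ with $\tau_z$.

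Next I would prove the identity \eqref{eq:LCnuova} and the continuity of $|\dot x|^2|x|$. On a collisionless interval the chain rule gives $\dot x = 2zz'\,\tau_z'$ with $\tau_z'=1/(\Lcal|z|^2)$, whence $\dot x = 2zz'/(\Lcal|z|^2)$ and, since $|x|=|z|^2$,
\[
|\dot x|^2|x| = \frac{4|z'|^2}{\Lcal^2|z|^2}\,|z|^2 = \frac{4}{\Lcal^2}|z'|^2.
\]
The right-hand side $\tfrac{4}{\Lcal^2}|z'(\tau_z(t))|^2$ is continuous on the whole of $[0,1]$, because $z'$ and $\tau_z$ are continuous, so $|\dot x|^2|x|$ extends continuously across collisions.

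Then I would derive \eqref{eq:beta_LC_i}. On $(\tau_{i-1},\tau_i)$ I invert $x=z^2$ by a continuous branch $z=\sqrt{x\circ t_z}$ (legitimate since $x\neq0$ there), compute $z'$ and $z''$ by the chain rule, and substitute into \eqref{eq:ELeqB_LC}; converting the global integrals $\Lcal,\Qcal,\Rcal,\Delta_z$ to the $t$-variable exactly as in Lemma \ref{lem:ELconmu}, I expect to recover precisely equation \eqref{eq:senzabeta2} with the same constant $C$ of \eqref{eq:defofC}. This yields $\ddot x - p = \beta x$, and differentiating $\beta|x|^2 = C-\tfrac12|\dot x|^2+\scalar{p}{x}+\int_t^T\scalar{\dot p}{x}$ gives $\dot\beta|x|^2+3\beta\scalar{x}{\dot x}=0$, so $\beta=-\mu_i/|x|^3$ with $\mu_i$ constant on the interval; in particular the energy identity \eqref{eq:muvsC} holds with this $\mu_i$.

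The hard part is the strict positivity $\mu_i>0$. Rewriting \eqref{eq:muvsC} as
\[
\mu_i = \tfrac12|\dot x|^2|x| - |x|\Big[C+\scalar{p}{x}+\int_t^T\scalar{\dot p}{x}\Big],
\]
I let $t$ tend to a collision endpoint $\tau$ of the interval: the bracket stays bounded and is multiplied by $|x|\to0$, while by the previous step $\tfrac12|\dot x|^2|x|\to \tfrac{2}{\Lcal^2}|z'(\tau)|^2$, which is strictly positive because the zero $\tau$ is simple (Lemma \ref{le:zeri}). Thus $\mu_i=\tfrac{2}{\Lcal^2}|z'(\tau)|^2>0$. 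Every interval possesses such a collision endpoint as soon as $z$ vanishes somewhere: the interior nodes $\tau_1,\dots,\tau_{N-1}$ are collisions, and when $N=1$ a single boundary collision already serves the unique interval. In the only remaining case, $z\neq0$ throughout $[0,1]$, there are no collisions, $N=1$, and I would instead integrate \eqref{eq:muvsC} over $(0,T)$ and use the definition of $C$ to obtain $\mu_1=1$, exactly as in the second part of Lemma \ref{coro:mu}. A secondary technical point worth flagging is that the $\BLC$-substitution must reproduce the \emph{same} constant $C$ as the $\Bcal$-computation so that \eqref{eq:muvsC} is available unchanged; matching the integral terms is where the routine but delicate bookkeeping lies.
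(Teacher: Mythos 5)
Your proposal is correct and follows essentially the same route as the paper: chain rule to get \eqref{eq:LCnuova}, substitution of $z=\sqrt{x\circ t_z}$ into \eqref{eq:ELeqB_LC} to recover \eqref{eq:senzabeta2}, and then the $\dot\beta|x|^2+3\beta\scalar{x}{\dot x}=0$ argument of Lemma \ref{lem:ELconmu}. The only (harmless) variation is in the strict positivity: you pass to the limit in \eqref{eq:muvsC} at a collision endpoint and read off $\mu_i=\tfrac{2}{\Lcal^2}|z'(\tau_i)|^2>0$ directly from Lemma \ref{le:zeri}, whereas the paper first gets $\mu_i\ge0$ and then rules out $\mu_i=0$ by contradiction; your version in fact anticipates the identity \eqref{eq:pre_LClevel} used in Lemma \ref{lem:mu=1}.
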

\begin{proof}
By Corollary \ref{coro:regularityEL_LC}, for any $t \in [0,T]$ and $\tau \in [0,1]$ we have that
$x(t)=z^2(\tau_z(t))$. Then,
\[
\zeta(t) := z(\tau_z(t))
\qquad\implies\qquad
x(t)=\zeta^2(t).
\]
In particular $x$ is continuous in $[0,T]$. Restricting to $(\tau_{i-1},\tau_i)$ and $(t_{i-1},t_i)$ respectively, we compute
\[
z'(\tau_z(t)) = \frac{\Lcal}{2}|x(t)| \zeta^{-1}(t)\dot x(t)
\]
so that equation \eqref{eq:LCnuova} follows on each $(t_{i-1},t_i)$. By Corollary \ref{coro:regularityEL_LC}, $|\dot x|^2|x|$ can be extended to a continuous function in the whole $[0,T]$, still satisfying \eqref{eq:LCnuova}.
Differentiating once more and recalling that
$2\scalar{a}{b} = a\bar b + \bar a b$
we obtain
\[
\begin{split}
z''\circ\tau_z &= \frac{\Lcal^2}{2} |x|^2 \zeta^{-1}
\left( \ddot x -\frac12 x^{-1}
\dot x^2 +
\frac{\langle x,\dot x\rangle}{|x|^2} \dot x
\right) \\
& = \frac{\Lcal^2}{2} |x|^2 \zeta^{-1}
\left( \ddot x -\frac{\bar x \dot x^2}{2|x|^2} +
\frac{x\dot{\bar x} + \bar x \dot x}{2|x|^2} \dot x
\right)
=
\frac{\Lcal^2}{2} |x|^2 \zeta^{-1}
\left( \ddot x +\frac12
\frac{|\dot x|^2}{|x|^2} x
\right).
\end{split}
\]
On the other hand, reasoning as in the proof of Lemma \ref{lem:ELconmu} we obtain
\[
\frac{\Lcal^3}{2T} \left[\frac{\Qcal}{\Lcal^2}-1-\Rcal + \Lcal \int_{0}^{1}|z(\xi)|^2
\left(\int_{\tau}^{\xi}|z|^2\scalar{\dot p \circ t_z}{z^2}\right)\,d\xi\right]z
= \frac{\Lcal^2}{2}\left[ C
+ \int_{t}^{T}\scalar{\dot p}{x} \right]\zeta
\]
where
\begin{equation}
\label{eq:defofC_LC}
C:= \frac{1}{{T}} \int_{0}^{T}\left(\frac12 |\dot x|^2 - \frac{1}{|x|} -\scalar{p}{x} -
t\scalar{\dot p}{x} \right)\,dt.
\end{equation}
Finally
\[
\frac{\Lcal^2}{2}\left[ \scalar{p\circ t_z}{z^2} + \bar z^2(p\circ t_z) \right]z
=
\frac{\Lcal^2}{2}\left[ \scalar{p}{x}\zeta + |x|^2 \zeta^{-1}\,p \right].
\]
Substituting in \eqref{eq:ELeqB_LC} we obtain
\begin{equation}\label{eq:senzabeta2_LC}
\ddot x  =
\left[ C -\frac12 |\dot x|^2+ \scalar{p}{x}+ \int_{t}^{T}\scalar{\dot p}{x}
\right]|x|^{-2} x  +p,\qquad t\in(t_{i-1},t_i),
\end{equation}
which is the same equation obtained in \eqref{eq:senzabeta2}. Arguing as in Lemma \ref{lem:ELconmu} we obtain the existence of a constant $\mu_i \in \R$ such that \eqref{eq:beta_LC_i} holds; furthermore, by an analogue of Lemma \ref{coro:mu} for $\BLC$, $\mu_i\ge 0$ and, if $(\tau_0,\tau_1)=(0,1)$ (with or without collisions at the extrema), then $\mu_1=1$. Hence we are left to prove that $\mu_i>0$ in case, say, $x(t_i)=0$. Assume by contradiction that $\mu_i=0$. Then $\ddot x = p$ in $(t_{i-1},t_i)$ and we deduce that $\dot x$ is continuous up to $t_i^-$.
Equation \eqref{eq:LCnuova} implies that
\[
z(\tau_i)= z'(\tau_i)= 0,
\]
in contradiction with Lemma \ref{le:zeri}.
\end{proof}
\begin{lemma}\label{lem:mu=1}
In the same assumptions of Lemma \ref{lem:ELconmu_LC} we have, for every $i=1,\dots,N$,
\begin{equation}\label{eq:LClevel}
\mu_i =1\qquad\text{and}\qquad |z'(\tau_i)|^2=\frac{\Lcal^2}{2}.
\end{equation}
\end{lemma}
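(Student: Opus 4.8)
The plan is to combine the pointwise energy identity behind Lemma~\ref{lem:ELconmu_LC} with the continuity of the scalar quantity $|\dot x|^2|x|$ recorded in \eqref{eq:LCnuova}. Substituting \eqref{eq:beta_LC_i} into \eqref{eq:senzabeta2_LC}, exactly as in the Remark following Lemma~\ref{coro:mu} (equation \eqref{eq:muvsC}), one obtains on each collisionless interval $(t_{i-1},t_i)$ the identity
\[
C = \frac12|\dot x|^2 - \frac{\mu_i}{|x|} - \scalar{p}{x} - \int_t^T\scalar{\dot p}{x},
\]
whose crucial feature is that the constant $C$ (defined in \eqref{eq:defofC_LC}) is the \emph{same} on all subintervals, whereas a priori each $\mu_i$ may be different. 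The whole argument exploits this asymmetry.

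First I would show that all the $\mu_i$ coincide and identify their common value at collisions. Set $E(t) := C + \scalar{p(t)}{x(t)} + \int_t^T\scalar{\dot p}{x}$, a continuous function on $[0,T]$; the identity above reads $E(t) = \tfrac12|\dot x|^2 - \mu_i/|x|$ on each $(t_{i-1},t_i)$. Since $x$ is continuous and vanishes at every interior collision $t_i$, the function $E$ stays bounded near $t_i$, so multiplying by $|x|$ and letting $t\to t_i$ gives $\tfrac12|\dot x|^2|x| - \mu_i = |x|\,E \to 0$, that is $\mu_i = \lim_{t\to t_i}\tfrac12|\dot x(t)|^2|x(t)|$, the limit taken from inside the relevant arc. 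By \eqref{eq:LCnuova} the map $t\mapsto |\dot x|^2|x| = \tfrac{4}{\Lcal^2}|z'(\tau_z(t))|^2$ is continuous on the whole interval, so the left limit (governing $\mu_i$) and the right limit (governing $\mu_{i+1}$) at $t_i$ agree and equal $\tfrac{4}{\Lcal^2}|z'(\tau_i)|^2$. This yields, at every interior collision,
\[
\mu_i = \mu_{i+1} = \frac{2}{\Lcal^2}|z'(\tau_i)|^2 ,
\]
and chaining these equalities produces a common value $\mu := \mu_1 = \cdots = \mu_N$ together with $|z'(\tau_i)|^2 = \tfrac{\Lcal^2}{2}\mu$ at each collision instant.

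It remains to prove $\mu = 1$, and here I would integrate the energy identity over $(0,T)$. All integrals involved are finite: by the change of variables $t = t_z(\tau)$ one has $\int_0^T dt/|x| = \Lcal$ and $\int_0^T |\dot x|^2\,dt = 2\Qcal/\Lcal$, the collision singularities being integrable in the regularized variable $\tau$. Using $\mu_i\equiv\mu$ to pull the constant out of $\int_0^T dt/|x| = \Lcal$, and Fubini to rewrite $\int_0^T\big(\int_t^T\scalar{\dot p}{x}\big)dt = \int_0^T t\,\scalar{\dot p(t)}{x(t)}\,dt$, the integrated identity becomes
\[
CT = \int_0^T\!\Big(\tfrac12|\dot x|^2 - \scalar{p}{x} - t\scalar{\dot p}{x}\Big)\,dt - \mu\,\Lcal .
\]
Comparing with the definition \eqref{eq:defofC_LC} of $C$, which is the same expression with $-\Lcal$ in place of $-\mu\Lcal$ (since $\int_0^T dt/|x| = \Lcal$), and using $\Lcal>0$, we get $\mu=1$. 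Substituting back yields $\mu_i=1$ for every $i$ and $|z'(\tau_i)|^2 = \Lcal^2/2$ at every collision.

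The main obstacle is the matching step. A priori the Kepler coefficient $\mu_i$ is an independent constant on each collisionless arc, and the single global integral identity by itself only controls the weighted average $\sum_i(\mu_i-1)\int_{t_{i-1}}^{t_i} dt/|x|$, which cannot force $\mu_i=1$ individually. It is precisely the continuity of $|\dot x|^2|x|$ across collisions --- a genuine gain from the regularized variable $z$ --- that makes adjacent coefficients agree, after which the global identity pins the common value down to $1$. For the boundary points $\tau_0=0$ and $\tau_N=1$ the level relation $|z'|^2=\Lcal^2/2$ is obtained by the same limit argument whenever they happen to be collisions, the estimate on $|x|\,E$ being available there as well.
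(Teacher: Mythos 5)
Your argument is correct and follows essentially the same route as the paper: you first obtain $\mu_i=\frac{2}{\Lcal^2}|z'(\tau_i)|^2=\mu_{i+1}$ by multiplying the energy identity by $|x|$, letting $t\to t_i^\pm$ and invoking the continuity of $|\dot x|^2|x|$ from \eqref{eq:LCnuova}, and then you integrate the resulting identity \eqref{eq:muvsC_LC} over $[0,T]$ and compare with the definition of $C$ to force $\mu=1$. The only cosmetic differences are that you make explicit the integrability of $\tfrac12|\dot x|^2$ and $1/|x|$ across collisions (via the change of variables back to $\tau$) and the Fubini step, which the paper leaves implicit.
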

\begin{proof}
To start with, we prove that $\mu_i = \mu_{i+1}$, for every $i$. From \eqref{eq:beta_LC_i} and
\eqref{eq:senzabeta2_LC}, for every $i$,
\[
-\mu_i = C|x| -\frac12 |\dot x|^2|x|+ |x|\scalar{p}{x}+|x| \int_{t}^{T}\scalar{\dot p}{x},
\qquad t\in(t_{i-1},t_i).
\]
Letting $t\to t_i^\pm$ and using \eqref{eq:LCnuova},
\begin{equation}\label{eq:pre_LClevel}
\mu_i = \frac{2}{\Lcal^2}|z'(\tau_i)|^2 = \mu_{i+1}.
\end{equation}
This shows that $\mu_1 = \dots = \mu_N =:\mu$.
Using  \eqref{eq:beta_LC_i},  \eqref{eq:defofC_LC} and \eqref{eq:senzabeta2_LC} we obtain
\begin{equation}\label{eq:muvsC_LC}
\frac12 |\dot x|^2 - \frac{\mu}{|x|} -  \scalar{p}{x}
- \int_{t}^{T}\scalar{\dot p}{x}
= \frac{1}{{T}} \int_{0}^{T}\left(\frac12 |\dot x|^2 - \frac{1}{|x|} -\scalar{p}{x} -
t\scalar{\dot p}{x} \right)\,dt,
\qquad
t\not\in\{t_1,\dots,t_N\}.
\end{equation}
Integrating on $[0,T]$ we obtain $\mu=1$. Then the lemma follows by \eqref{eq:pre_LClevel}.
\end{proof}
\begin{proof}[End of the proof of Proposition \ref{prop:EL_for_B_LC}]
We are left to prove the third point of Definition \ref{def:gen_sol}, that is, continuity of
energy and direction across collisions. Let $t_* \in (0,T)$ be such that $x(t_*) = z(\tau_*) = 0$.
Recall that, by Lemma \ref{lem:EL_for_B_LC}, $z\in C^3([0,1],\C)$. Moreover, by \eqref{eq:LClevel}
and \eqref{eq:EL_LC_veloce}, $|z'(\tau_*)| = \Lcal/ \sqrt{2}$ and $z''(\tau_*) = g(\tau_*) z(\tau_*)=0$.
In particular, for $\tau$ approaching $\tau_*$,
\begin{equation}\label{eq:alpha}
\begin{split}
z(\tau) &= (\tau-\tau_*)\alpha(\tau) \qquad \text{with } |\alpha(\tau_*)|=\frac{\Lcal}{\sqrt{2}}\neq 0\text{ and }\alpha\text{ is continuous},\\
z'(\tau) &= z'(\tau_*) +(\tau-\tau_*)^2\beta(\tau) \qquad \text{with } |z'(\tau_*)|=\frac{\Lcal}{\sqrt{2}}\text{ and }\beta\text{ is continuous}.
\end{split}
\end{equation}

As far as the energy continuity is concerned, we have that, by \eqref{eq:LCnuova},
\[
h(t) := \frac12 |\dot x|^2 - \frac{1}{|x|}= \frac{2|z'|^2-\Lcal^2}{{\Lcal^2}|z|^2}.
\]
Using \eqref{eq:alpha} we obtain
\[
\lim_{t \to t_*^\pm} h(t)
 = \lim_{\tau \to \tau_*^{\pm}}  \frac{4\scalar{\beta(\tau)}{z'(\tau_*)}(\tau-\tau_*)^2 + o((\tau-\tau_*)^2)}{\Lcal^2 |\alpha(\tau)|^2(\tau-\tau_*)^2} = \frac{8}{\Lcal^4}
\scalar{\beta(\tau_*)}{z'(\tau_*)}.
\]	

Analogously,
\[
\lim_{t \to t_*^\pm} \frac{x}{|x|} = \lim_{\tau \to \tau_*^{\pm}}
\frac{z^2}{|z|^2}
= \lim_{\tau \to \tau_*^{\pm}}
\frac{\alpha^2(\tau)(\tau-\tau_*)^2}{|\alpha^2(\tau)|(\tau-\tau_*)^2}
= \frac{2}{\Lcal^2}z'(\tau_*)^2.
\qedhere
\]	
\end{proof}
%
\subsection{Analysis of \texorpdfstring{$\BKS$}{BKS}}\label{sec:ELeqBKS}
Finally, for $z=z_0+z_1i+z_2j+z_3k \in \HH\cong\R^4$, let $\Phi\from\HH\to\II\HH\cong\R^3$,
$\Phi(z) = \Phi_{\text{KS}}(z) = \bar z i z$, see Case 3 in Section \ref{sec:euristic}. In this case, it is natural to choose
$p\in \II\HH$. Now the corresponding
functional is
\[
\BKS : H^1(0,1; \HH) \to \R \cup \{+\infty\}
\]
\begin{equation}\label{eq:BKS}
\BKS(z) := \frac{1}{\Lcal(z)}
\Qcal(z) +
\Lcal(z)\Big[ 1+ \Rcal(z)\Big],
\qquad \Bcal(0):=+\infty,
\end{equation}
where
\begin{equation}\label{eq:tutti gli altriKS}
\begin{split}
\Lcal(z) &:= \frac{T}{\int_0^1|z|^2\,d\tau},\\
\Qcal(z) &:= 2 \int_0^1 | z'|^2\,d\tau,
\\
\Rcal(z) &:= \int_0^1
|z|^2 \scalar{p\circ t_z}{\bar z i z}\,d\tau,\\
t_z(\tau) &:= \Lcal(z) \int_{0}^{\tau}|z(\xi)|^2\,d\xi.
\end{split}
\end{equation}
Recall that, in Section \ref{sec:euristic}, we established the correspondence between $\BKS$ under the validity
of condition \eqref{eq:varietaperKS}. Actually, as we are going to show, this condition can be appreciably weakened.
\begin{proposition}\label{prop:EL_for_B_KS}
Let $p\in C^1([0,T];\II\HH)$, and let $z\in H^1(0,1;\HH)\setminus\{0\}$ satisfy
\begin{equation}\label{eq:varBKSvphi}
\frac{d}{d\eps} \left[\BKS(z+\eps \varphi)\right]_{\eps=0} = 0\qquad \text{for every }\varphi\in\Dcal(0,1;\HH).
\end{equation}
Then $z \in C^3([0,1])$, and the map $\tau \mapsto t_z(\tau)$ is invertible on $[0,1]$, with inverse $t\mapsto \tau_z(t)$.

Furthermore, if $z$ satisfies also
\begin{equation}\label{eq:constrBKS_point}
\scalar{z'(\tau^*)}{iz(\tau^*)}=0,\qquad \text{for some }\tau^*\in[0,1],
\end{equation}
then
\[
x(t)=\bar z(\tau_z(t))i z(\tau_z(t))
\]
is a generalized solution of equation \eqref{eq:perturbed_kepler}.
\end{proposition}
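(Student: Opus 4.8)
The plan is to follow, step by step, the scheme developed for $\BLC$ in Section~\ref{sec:ELeqBLC}, isolating the one genuinely new phenomenon produced by the non-injectivity of $\Phi_{\text{KS}}$. Since $\Qcal(z)=2\int_0^1|z'|^2\,d\tau$ is quadratic, the functional $\BKS$ is of class $C^1$ on $H^1(0,1;\HH)\setminus\{0\}$, and differentiating exactly as in Lemma~\ref{lem:EL_for_B_LC} I would obtain an Euler--Lagrange equation of the same shape as \eqref{eq:ELeqB_LC},
\[
z''=\frac{\Lcal}{2T}\big(\Qcal-\Lcal^2(1+\Rcal)\big)\,z+\delta_z,
\qquad
\delta_z=\frac{\Lcal^2}{2}\Big[\big(\scalar{p\circ t_z}{\bar z i z}+\tfrac{\Lcal^2}{T}\Delta_z\big)z-|z|^2\,iz\,(p\circ t_z)\Big],
\]
with $\Delta_z$ the nonlocal term analogous to that of Lemma~\ref{lem:EL_for_B_LC}. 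The only computation needing care is the variation of $\Rcal$: writing $\scalar{a}{b}=\Re(\bar a b)$ and using $\bar p=-p$ (because $p\in\II\HH$), the first variation of $\scalar{p}{\bar z i z}$ in the direction $\varphi$ equals $-2\scalar{\varphi}{izp}$, which produces the non-radial term $-\tfrac{\Lcal^2}{2}|z|^2\,izp$. A bootstrap on this equation then gives $z\in C^3([0,1])$. Since every term of the right-hand side carries a factor vanishing where $z$ does, the equation reads $z''=0$ at the zeros of $z$; repeating Lemma~\ref{le:zeri} verbatim, all zeros are simple and the zero set is finite. As $t_z'=\Lcal|z|^2\ge0$ vanishes only at these finitely many points, $t_z$ is a strictly increasing $C^4$ homeomorphism of $[0,1]$ onto $[0,T]$ with continuous inverse $\tau_z$, smooth off the images of the zeros, exactly as in Corollary~\ref{coro:regularityEL_LC}. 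This settles the first part of the statement.

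The heart of the second part is a conservation law: I claim the $C^1$ function
\[
\sigma(\tau):=\scalar{z'(\tau)}{iz(\tau)}
\]
is constant. Indeed $\sigma'=\scalar{z''}{iz}+\scalar{z'}{iz'}$, and both summands vanish. For the second, $\bar w i w\in\II\HH$ for every $w\in\HH$, so $\scalar{z'}{iz'}=\Re(\bar z' i z')=0$. For the first, I would substitute the Euler--Lagrange equation: the terms proportional to $z$ (the coefficient of $z$ and the radial part of $\delta_z$) die against $\scalar{z}{iz}=\Re(\bar z i z)=0$, while the remaining term dies because $\scalar{izp}{iz}=|z|^2\Re(\bar p)=0$, again since $p\in\II\HH$. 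Consequently, if \eqref{eq:constrBKS_point} holds at the single point $\tau^*$, then $\sigma\equiv0$, i.e. the pointwise condition \eqref{eq:varietaperKS} is recovered for every $\tau$. This is precisely the price of the non-injectivity of the Hopf map $\Phi_{\text{KS}}$: a critical $z$ need not a priori correspond to any curve $x$, and it is exactly the conservation of $\sigma$ that upgrades a single algebraic constraint into the identity that holds along the whole trajectory.

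Once \eqref{eq:varietaperKS} is valid everywhere, the identity $|\bar z' i z+\bar z i z'|=2|z'||z|$ of Section~\ref{sec:euristic} holds, so the kinetic simplification built into $\BKS$ is legitimate and one recovers the analogue of \eqref{eq:LCnuova},
\[
|\dot x(t)|^2\,|x(t)|=\frac{4}{\Lcal^2}\,|z'(\tau_z(t))|^2,\qquad |x|=|\bar z i z|=|z|^2 .
\]
From here I would transcribe the arguments of Lemmas~\ref{lem:ELconmu_LC} and \ref{lem:mu=1} and the final step in the proof of Proposition~\ref{prop:EL_for_B_LC}: on each collisionless subinterval $x=\bar z i z\circ\tau_z$ solves $\ddot x=-\mu_i x/|x|^3+p$ with $\mu_i>0$; continuity of $|\dot x|^2|x|$ across collisions forces all the $\mu_i$ to coincide, and integrating the energy identity over $[0,T]$ yields $\mu_i\equiv1$. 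Finally, at a collision $z(\tau_*)=0$ the simple-zero expansion $z(\tau)=(\tau-\tau_*)\alpha(\tau)$, with $\alpha$ continuous and $\alpha(\tau_*)=z'(\tau_*)\neq0$, shows that the collision direction $\bar z i z/|z|^2\to \overline{z'(\tau_*)}\,i\,z'(\tau_*)/|z'(\tau_*)|^2$ and the collision energy have finite two-sided limits, automatically equal because they depend only on $z'(\tau_*)$; this is the third condition of Definition~\ref{def:gen_sol}.

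I expect the main obstacle to be twofold. First, the quaternionic bookkeeping in the variation of $\Rcal$ and in the verification that $\scalar{\delta_z}{iz}=0$: since $\HH$ is non-commutative, the exact placement and conjugation of the imaginary unit $i$ are what make the orthogonality work, and a misplaced sign would destroy the conservation law. Second, and more conceptually, reconstructing $\ddot x$ from $z,z',z''$ in the four-to-three KS setting is only possible modulo the fibre direction of the Hopf map; controlling that direction is exactly the content of the conservation of $\sigma$. The delicate point is therefore to establish this conservation \emph{before} attempting the reconstruction, rather than postulating \eqref{eq:varietaperKS} from the outset as was done in the heuristic derivation.
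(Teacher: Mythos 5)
Your proposal is correct and follows essentially the same route as the paper: the same quaternionic computation for the first variation of $\Rcal$ (using $\bar p=-p$ to produce the non-radial term $-\tfrac{\Lcal^2}{2}|z|^2\,izp$), the same conservation law for $\scalar{z'}{iz}$ obtained by substituting the Euler--Lagrange equation and using $\Re(\bar w i w)=0$ and $\Re(p)=0$ (the paper's Lemma \ref{lem:constrBKS}), and the same transcription of the $\BLC$ arguments to get $\mu_i\equiv1$ and the collision limits. The only cosmetic imprecision is the remark that the collision energy limit ``depends only on $z'(\tau_*)$'' --- it in fact involves the next Taylor coefficient of $z'$, as in the paper's \eqref{eq:alpha} --- but the two one-sided limits still coincide by continuity of that coefficient, so the argument stands.
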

As usual, we start deducing the Euler-Lagrange equations and proving some regularity results.
\begin{lemma}\label{lem:EL_for_B_KS}
Let $z\in H^1(0,1; \HH) \setminus \{0\}$ satisfy \eqref{eq:varBKSvphi}. Then
\begin{equation}\label{eq:ELeqB_KS}
z'' =
\frac{\Lcal}{2T}\left(\Qcal-\Lcal^2(1+\Rcal)\right) z + \delta_z,
\end{equation}
where $\Lcal$, $\Qcal$ and $\Rcal$ are evaluated at $z$ and
\[
\delta_z  := \frac{\Lcal^2}{2}\left[ \scalar{p\circ t_z}{\bar z i z} z -
i z |z|^2(p\circ t_z) + \frac{\Lcal^2}{T} \Delta_z z\right]
\]
where
\[
\Delta_z  := \int_{0}^{1}|z(\xi)|^2
\left(\int_{\tau}^{\xi}|z(s)|^2\scalar{\dot p(t_z(s))}{\bar z(s) i z(s)}\right)\,d\xi.
\]
In particular {$z \in C^3([0,1])$}. Moreover, if $\tau^* \in [0,1]$ is such that $z(\tau^*)=0$ then $z'(\tau^*)\neq 0$ and the set $Z := \{\tau \in [0,1] : z(\tau)=0 \}$ is finite.
Furthermore, the function $t_z\in C^3([0,1])$ is invertible, with inverse $\tau_z\in C([0,T])$ which is $C^3$ outside the finite set $t_z(Z)$.
\end{lemma}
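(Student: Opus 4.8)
The plan is to compute the first variation of $\BKS$ exactly as was done for $\Bcal$ and $\BLC$, exploiting the fact that the kinetic term $\Qcal(z)=2\int_0^1|z'|^2\,d\tau$ is quadratic and hence $C^1$ on all of $H^1\setminus\{0\}$. Differentiating $\eps\mapsto\BKS(z+\eps\varphi)$ at $\eps=0$, the $\Qcal$-term contributes $4\int_0^1\scalar{z'}{\varphi'}/\Lcal$, and the $\Lcal$- and $\Rcal$-terms are handled by the same identities recorded in the proofs of Lemmas~\ref{lem:EL_for_B} and~\ref{lem:EL_for_B_LC} (the derivatives of $1/\Lcal$, of $\Lcal$, and of $t_z$ with respect to $\eps$). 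The only genuinely new computation is the variation of $\Rcal(z)=\int_0^1|z|^2\scalar{p\circ t_z}{\bar ziz}\,d\tau$. Here I must differentiate the quaternionic quadratic form $z\mapsto\bar ziz$, obtaining a term proportional to $\scalar{p\circ t_z}{\overline{\varphi}iz+\bar zi\varphi}$; using $2\scalar{p}{\bar\varphi iz}=\scalar{p}{\bar\varphi iz}+\scalar{\overline{\bar\varphi iz}}{p}$ and the identity $\overline{\bar\varphi iz}=-\bar ziz$-type conjugation rules, this should collect into the two terms $\scalar{p\circ t_z}{\bar ziz}z$ and $-iz|z|^2(p\circ t_z)$ displayed in $\delta_z$, while the $t_z$-variation produces the nonlocal $\Delta_z$-term exactly as the analogous $\Gamma_y$, $\Delta_z$ appeared before. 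Equating the first variation to zero against all $\varphi\in\Dcal(0,1;\HH)$ yields \eqref{eq:ELeqB_KS} in the distributional sense.

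Once the weak equation \eqref{eq:ELeqB_KS} is in hand, the regularity $z\in C^3([0,1])$ follows by bootstrap: the right-hand side of \eqref{eq:ELeqB_KS} is continuous (since $p\in C^1$, $z\in H^1\hookrightarrow C^0$, and the coefficients $\Lcal,\Qcal,\Rcal$ are constants depending on $z$), so $z''\in C^0$ gives $z\in C^2$, and differentiating once more, using that $t_z\in C^1$ and $\dot p$ is continuous, upgrades the right-hand side to $C^1$ and hence $z\in C^3$. This is precisely the reasoning of Lemma~\ref{lem:EL_for_B_LC}, and I would simply remark that it carries over verbatim.

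For the statements about zeros, I would rewrite \eqref{eq:ELeqB_KS} as a linear second-order system $z''=g(\tau)z+h(\tau)$ — but more carefully, since $\delta_z$ is not simply scalar-times-$z$: the term $-iz|z|^2(p\circ t_z)$ involves left-multiplication by $i$ and by $p\circ t_z$. Nonetheless the equation is a linear homogeneous-plus-inhomogeneous ODE in $z$ with continuous coefficients, so the Cauchy problem has unique solutions. The key point is that at a zero $\tau^*$ of $z$ the inhomogeneous part vanishes (every term in $\delta_z$ carries a factor $z$), so $z(\tau^*)=0$ together with $z'(\tau^*)=0$ would force $z\equiv0$ by uniqueness, contradicting $z\not\equiv0$; hence zeros are simple and therefore isolated, so $Z$ is finite in $[0,1]$. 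The invertibility and regularity of $t_z$ and $\tau_z$ then follow from $t_z'=\Lcal|z|^2>0$ off $Z$ together with the elementary inverse function theorem, exactly as in Corollary~\ref{coro:regularityEL_LC}.

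The main obstacle I anticipate is the variation of $\Rcal$: the noncommutativity of the quaternions means the product rule for $z\mapsto\bar ziz$ must be applied with care to the order of the factors, and the adjoint of the real bilinear form $(z,w)\mapsto\scalar{p}{\bar wiz}$ must be identified correctly to land the variational derivative in the stated form $\delta_z$ (in particular the sign and the placement of $i$ in $-iz|z|^2(p\circ t_z)$). Everything else is a transcription of the $\BLC$ argument, where the analogous computation with $z\mapsto z^2$ was commutative and hence easier; the quaternionic bookkeeping is where the real work lies. I would organize that computation so that the $\S^1$-invariance $\Phi_{\text{KS}}(e^{i\vartheta}z)=\Phi_{\text{KS}}(z)$ and the basic identity $|\bar ziz|=|z|^2$ are used to simplify wherever possible.
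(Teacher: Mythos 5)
Your proposal is correct and follows essentially the same route as the paper: the variation of $\Rcal$ is handled via the conjugation symmetry $\scalar{v}{w}=\Re(\bar wv)=\scalar{\bar v}{\bar w}$ together with $\bar p=-p$ (which turns $\int_0^1|z|^2\scalar{p\circ t_z}{\bar\varphi iz+\bar zi\varphi}$ into $-2\int_0^1|z|^2\scalar{izp\circ t_z}{\varphi}$), and the claims on zeros follow by rewriting \eqref{eq:ELeqB_KS} as a linear second-order system in $\R^4$ with continuous coefficients and invoking uniqueness for the Cauchy problem, exactly as in Lemma \ref{le:zeri} and Corollary \ref{coro:regularityEL_LC}. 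The only slip is one of bookkeeping: the term $\scalar{p\circ t_z}{\bar ziz}z$ in $\delta_z$ comes from varying the factor $|z|^2$, not from varying the quadratic form $\bar ziz$, whose variation accounts only for $-iz|z|^2(p\circ t_z)$.
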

\begin{proof}
The lemma follows reasoning as in Lemmas \ref{lem:EL_for_B}, \ref{lem:EL_for_B_LC} and taking into account that,
since $\scalar{v}{w} = \Re(\bar w v) = \scalar{\bar v}{\bar w}$,
\[
\begin{split}
\frac{d}{d\eps} &\left[ \int_0^1
|z|^2 \scalar{p\circ t_z}{ (\bar z+\eps \bar \varphi) i (z+\eps\varphi)}\,d\tau\right]_{\eps=0}
=
\int_0^1
|z|^2 \scalar{p\circ t_z}{ \bar \varphi i z + \bar z i \varphi} \,d\tau
\\
&=
\int_0^1
|z|^2 \scalar{\bar p\circ t_z}{- \bar z i\varphi } \,d\tau + \int_0^1
|z|^2 \scalar{p\circ t_z}{ \bar z i \varphi} \,d\tau
= -2 \int_0^1
|z|^2 \scalar{i z p\circ t_z}{ \varphi} \,d\tau
\end{split}
\]
where we used the fact that $\bar p = - p$.
We proceed in the proof writing equation \eqref{eq:ELeqB_KS} as a linear second order equation in $\R^4$ (with coefficients depending on $\tau$ via $z$),
and we conclude arguing as in the proofs of Lemma \ref{le:zeri} and Corollary \ref{coro:regularityEL_LC}.
\end{proof}
The equation satisfied by $z$ allows to relate conditions \eqref{eq:constrBKS_point} and \eqref{eq:varietaperKS}.
\begin{lemma}\label{lem:constrBKS}
Let $z\in H^1(0,1; \HH) \setminus \{0\}$ satisfy \eqref{eq:varBKSvphi}. Then
\[
\scalar{z'}{iz}\qquad\text{is constant on }[0,1].
\]
In particular, if $z$ satisfies also \eqref{eq:constrBKS_point}, then
\[
\scalar{z'}{iz}\equiv0\qquad\text{on }[0,1].
\]
\end{lemma}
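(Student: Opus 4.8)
The plan is to show that the scalar quantity $\tau\mapsto\scalar{z'}{iz}$ has identically vanishing derivative, and then to fix its constant value. Since $z\in C^3([0,1])$ by Lemma \ref{lem:EL_for_B_KS}, the function $\scalar{z'}{iz}$ is of class $C^2$, so differentiating is legitimate. First I would compute
\[
\frac{d}{d\tau}\scalar{z'}{iz} = \scalar{z''}{iz} + \scalar{z'}{iz'}
\]
and observe that the cross term vanishes identically. Indeed, for any quaternion $w$ one has $\bar w i w\in\II\HH$, hence $\Re(\bar w i w)=0$; applying this with $w=z'$ and recalling that $\scalar{v}{w}=\Re(\bar w v)$ gives $\scalar{z'}{iz'}=\Re(\overline{iz'}\,z')=-\Re(\bar z' i z')=0$. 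Thus the derivative reduces to $\scalar{z''}{iz}$.

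The second step is to insert the Euler--Lagrange equation \eqref{eq:ELeqB_KS} into $\scalar{z''}{iz}$ and to verify that every resulting term vanishes. The contribution of $\tfrac{\Lcal}{2T}(\Qcal-\Lcal^2(1+\Rcal))z$ is a real multiple of $\scalar{z}{iz}$, which is zero by the same identity $\bar z i z\in\II\HH$. In $\delta_z$ the first and third summands are again real multiples of $z$, so their pairing with $iz$ produces the factor $\scalar{z}{iz}=0$. The only genuinely new contribution comes from pairing $-\tfrac{\Lcal^2}{2}|z|^2\,(iz)(p\circ t_z)$ with $iz$; here I would use that $p\circ t_z\in\II\HH$ and compute $\scalar{(iz)\,p}{iz}=\Re\big(\overline{iz}\,(iz)\,p\big)=|z|^2\Re(p)=0$, since $\overline{iz}\,(iz)=|z|^2$ is real and $\Re(p)=0$. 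Therefore $\scalar{z''}{iz}=0$, so $\scalar{z'}{iz}$ is constant on $[0,1]$.

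Finally, if $z$ additionally satisfies \eqref{eq:constrBKS_point}, then the constant value equals $\scalar{z'(\tau^*)}{iz(\tau^*)}=0$, whence $\scalar{z'}{iz}\equiv0$ on $[0,1]$.

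I do not expect a genuine conceptual obstacle here: the argument is one differentiation followed by repeated use of the two algebraic identities $\Re(\bar w i w)=0$ and $\scalar{(iz)p}{iz}=|z|^2\Re(p)$. The only delicate point is the quaternionic bookkeeping---keeping track of conjugation and non-commutativity when rewriting $\scalar{v}{w}=\Re(\bar w v)$, and correctly factoring out the real prefactors ($|z|^2$, $\Lcal$, $\Qcal$, $\Rcal$, $\Delta_z$) so that each term collapses onto either $\scalar{z}{iz}$ or $\Re(p\circ t_z)$, both of which vanish.
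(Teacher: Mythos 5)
Your argument is correct and is essentially the paper's own proof: the paper likewise writes the Euler--Lagrange equation in the form $z''=a(\tau)z+b(\tau)izp$ with $a,b$ real and computes $\frac{d}{d\tau}\scalar{z'}{iz}=b(\tau)\scalar{izp}{iz}=b(\tau)|z|^2\Re(p)=0$, merely leaving implicit the vanishing of $\scalar{z'}{iz'}$ and $\scalar{z}{iz}$ that you spell out. Your quaternionic bookkeeping (in particular $\scalar{z'}{iz'}=-\Re(\bar z' i z')=0$ and $\overline{iz}\,(iz)=|z|^2$) is accurate, so no changes are needed.
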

\begin{proof}
	By Lemma \ref{lem:EL_for_B_KS} $z$ is regular up to the boundary, and it solves equation \eqref{eq:ELeqB_KS}. Such equation
can be written as
\[
z'' = a(\tau)z + b(\tau)izp
\]
for some real-valued functions $a$ and $b$. Then
\[
\frac{d}{dt} \scalar{z'}{iz} = b(\tau)\scalar{izp}{iz} = b(\tau)|z|^2 \scalar{p}{1} = 0. \qedhere
\]
\end{proof}
To proceed, we use Lemma \ref{lem:EL_for_B_KS} in order to write the interval $(0,T)$ as the union of disjoint interval
$(t_{i-1},t_i)$, $i=1,\ldots,N$,
for some $N \ge 1$, in such a way that $z(\tau_i)=0$, $\tau_i := \tau_z(t_i)$, at least for $i=1,\ldots,N-1$.
\begin{lemma}\label{lem:ELconmu_KS}
Let \eqref{eq:varBKSvphi} and \eqref{eq:constrBKS_point} hold true. Let $x$ be defined as in Proposition \ref{prop:EL_for_B_KS} and $\{t_0,\ldots,t_N\}$ as above. Then $x \in C([0,1])$ is $C^2$ outside collisions. Moreover, the function
\begin{equation} \label{eq:LCnuova_KS}
t \mapsto -\dot x^2(t) |x(t)| = |\dot x(t)|^2 |x(t)| = \frac{4}{\Lcal^2}|z'(\tau_z(t))|^2
\end{equation}
is continuous in $[0,1]$ and,
for every $i=1,\ldots,N$
there exists $\mu_i >0$ such that
\begin{equation}\label{eq:beta_LC_i_KS}
\ddot x = -\mu_i \frac{x}{|x|^3} + p(t),\qquad t\in(t_{i-1},t_i).
\end{equation}
\end{lemma}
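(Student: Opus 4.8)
The plan is to transpose the argument of Lemma~\ref{lem:ELconmu_LC} to the quaternionic setting, step by step. First I would set $\zeta(t):=z(\tau_z(t))$, so that $x(t)=\bar\zeta(t)\,i\,\zeta(t)$ and, by \eqref{eq:modPhi}, $|x(t)|=|\zeta(t)|^2$. Since $z\in C^3([0,1])$ and $\tau_z\in C([0,T])$ is $C^3$ off the finite set $t_z(Z)$ (both from Lemma~\ref{lem:EL_for_B_KS}), the map $\zeta$ is continuous on $[0,T]$ and $C^3$ outside collisions; hence $x$ is continuous on $[0,T]$ and $C^2$ on each $(t_{i-1},t_i)$. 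Note also that $x$ is $\II\HH$-valued, since $\Phi_{\text{KS}}$ takes values in $\II\HH$, so $\dot x\in\II\HH$ and therefore $-\dot x^2=|\dot x|^2$, which accounts for the first identity in \eqref{eq:LCnuova_KS}. All subsequent computations are carried out on a fixed collisionless interval, where $\zeta\neq0$, and then extended by continuity.

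Next I would invert the change of variables. Differentiating $\zeta=z\circ\tau_z$ gives $\dot x=\dot{\bar\zeta}\,i\,\zeta+\bar\zeta\,i\,\dot\zeta$, with $\dot\zeta=z'(\tau_z)\,\dot\tau_z$ and $\dot\tau_z=(\Lcal|\zeta|^2)^{-1}=(\Lcal|x|)^{-1}$. The decisive input here is Lemma~\ref{lem:constrBKS}: the critical point $z$ satisfies $\scalar{z'}{iz}\equiv0$ on $[0,1]$, hence $\scalar{\dot\zeta}{i\zeta}=0$, and the identity recorded after \eqref{eq:varietaperKS} applies pointwise to give $|\,\overline{\dot\zeta}\,i\,\zeta+\bar\zeta\,i\,\dot\zeta\,|=2|\dot\zeta||\zeta|$, that is $|\dot x|=2|\dot\zeta||\zeta|$. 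Multiplying by $|x|=|\zeta|^2$ and using $|\dot\zeta|=|z'(\tau_z)|(\Lcal|\zeta|^2)^{-1}$ yields $|\dot x|^2|x|=\tfrac{4}{\Lcal^2}|z'(\tau_z)|^2$ on each collisionless interval. Since $z'\in C^2([0,1])$ and $\tau_z\in C([0,T])$, the right-hand side is continuous on all of $[0,T]$, which simultaneously proves \eqref{eq:LCnuova_KS} and the asserted continuity.

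To obtain the equation for $x$, I would differentiate once more, expressing $z''\circ\tau_z$ through $x,\dot x,\ddot x$ as in the proof of Lemma~\ref{lem:ELconmu_LC}, and then substitute this together with the change-of-variable expressions of $\Lcal$, $\Qcal$, $\Rcal$ and of $\delta_z$ in terms of $x$ into the Euler--Lagrange equation \eqref{eq:ELeqB_KS}. The expected outcome is that all the $\tau$-dependent terms reorganize into exactly equation \eqref{eq:senzabeta2_LC}. From there the argument of Lemma~\ref{lem:ELconmu} produces, on each $(t_{i-1},t_i)$, a constant $\mu_i\in\R$ satisfying \eqref{eq:beta_LC_i_KS}, and the analogue of Lemma~\ref{coro:mu} gives $\mu_i\ge0$ (and $\mu_1=1$ in the single-interval case).

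Finally I would upgrade $\mu_i\ge0$ to $\mu_i>0$ by contradiction, say at an endpoint $t_i$ with $x(t_i)=0$: if $\mu_i=0$ then $\ddot x=p$ on $(t_{i-1},t_i)$, so $\dot x$ extends continuously up to $t_i$, whence $|\dot x|^2|x|\to0$ and, by \eqref{eq:LCnuova_KS}, $z'(\tau_i)=0$; combined with $z(\tau_i)=0$ this contradicts the statement of Lemma~\ref{lem:EL_for_B_KS} that $z'$ cannot vanish at a zero of $z$. The main obstacle is the second differentiation in the third step: because of quaternionic non-commutativity and the fixed factor $i$ in $x=\bar\zeta\,i\,\zeta$, the algebra is heavier than in the commutative Levi-Civita case, and one must exploit not only $\scalar{z'}{iz}\equiv0$ but also the relations it generates upon differentiation in order to secure the cancellations that collapse \eqref{eq:ELeqB_KS} to \eqref{eq:senzabeta2_LC}.
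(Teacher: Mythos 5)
Your proposal is correct and follows essentially the same route as the paper: define $\zeta=z\circ\tau_z$, use Lemma \ref{lem:constrBKS} to simplify $\dot x$, derive \eqref{eq:LCnuova_KS}, differentiate again and substitute into \eqref{eq:ELeqB_KS} to recover \eqref{eq:senzabeta2_KS}, then conclude via the arguments of Lemmas \ref{lem:ELconmu}, \ref{coro:mu} and the $z(\tau_i)=z'(\tau_i)=0$ contradiction. The only remark is that the ``heavy algebra'' you worry about in the second differentiation is tamed in the paper by upgrading your modulus identity to the full quaternionic one, $\dot x = 2\bar\zeta i\dot\zeta$ (equivalently $z'\circ\tau_z = -\tfrac{\Lcal}{2}i\zeta\dot x$), which follows directly from $\scalar{\dot\zeta}{i\zeta}=0$ and makes the computation of $z''\circ\tau_z$ a one-line differentiation, with no need for further differentiated constraint relations.
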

\begin{proof}
Let us define $\zeta(t) := z(\tau_z(t))$ so that
\[
x(t)=\bar \zeta(t)i \zeta(t),\qquad |x(t)|=|\zeta(t)|^2
\]
are continuous in $[0,T]$. Notice that, by Lemma \ref{lem:constrBKS},
\[
\scalar{\dot \zeta(t)}{i \zeta(t)}= \dot \tau_z(t) \scalar{z'(\tau_z(t))}{i z(\tau_z(t))} = 0
\]
for every $t$. Restricting to $(\tau_{i-1},\tau_i)$ and $(t_{i-1},t_i)$ respectively, we can compute
\[
\dot x(t)=\dot{\bar \zeta}(t) i \zeta(t)
+ \bar \zeta(t)i \dot \zeta(t) = 2\bar \zeta(t)i \dot \zeta(t).
\]
Since $\dot \tau_z(t) = 1/(\Lcal|x(t)|)$, we have
\[
\dot \zeta(t) = -\frac{1}{2|x(t)|}i\zeta(t) \dot x(t)
\qquad \text{and} \qquad
z'(\tau_z(t)) = \Lcal|x(t)|\dot \zeta(t) = -\frac{\Lcal}{2}i \zeta(t)\dot x(t)
\]
so that equation \eqref{eq:LCnuova_KS} follows on each $(t_{i-1},t_i)$. By Lemma \ref{lem:EL_for_B_KS}, $|\dot x|^2|x|$ can be extended to a continuous function in the whole $[0,T]$, still satisfying \eqref{eq:LCnuova_KS}.
Differentiating once more
we obtain
\[
\begin{split}
z''(\tau_z(t)) &= \Lcal|x(t)| \frac{d}{dt} z'(\tau_z(t)) = -\frac{\Lcal^2}{2} \left(
\frac12 \zeta(t)
\dot x^2(t) +
|x(t)|i\zeta(t)\ddot x(t)
\right)\\
 &=  \frac{\Lcal^2}{2} \left(
\frac12
|\dot x(t)|^2 \zeta(t)-
|x(t)|i\zeta(t)\ddot x(t)
\right)
\end{split}
\]
(recall that, since $x\in \II\HH$, then $\dot x^2 = - |\dot x|^2$). On the other hand, reasoning as in the proof of Lemmas \ref{lem:ELconmu}, \ref{lem:ELconmu_LC} we obtain
\[
\frac{\Lcal^3}{2T} \left[\frac{\Qcal}{\Lcal^2}-1-\Rcal + \Lcal \int_{0}^{1}|z(\xi)|^2
\left(\int_{\tau}^{\xi}|z|^2\scalar{\dot p \circ t_z}{\bar z i z}\right)\,d\xi\right]z
= \frac{\Lcal^2}{2}\left[ C
+ \int_{t}^{T}\scalar{\dot p}{x} \right]\zeta
\]
where, as usual,
\begin{equation}
\label{eq:defofC_KS}
C:= \frac{1}{{T}} \int_{0}^{T}\left(\frac12 |\dot x|^2 - \frac{1}{|x|} -\scalar{p}{x} -
t\scalar{\dot p}{x} \right)\,dt.
\end{equation}
Finally
\[
\frac{\Lcal^2}{2}\left[ \scalar{p\circ t_z}{\bar z i z} z -iz |z|^2(p\circ t_z) \right]
=
\frac{\Lcal^2}{2}\left[ \scalar{p}{x}\zeta - i  \zeta |x| \,p \right]\circ t_z.
\]
Substituting in \eqref{eq:ELeqB_KS} we obtain
\[
- |x|i\zeta\ddot x = \left[C - \frac12 |\dot x|^2
 + \scalar{p}{x} + \int_{t}^{T}\scalar{\dot p}{x} \right] \zeta - i  \zeta |x| \,p .
\]
and finally, multiplying on the left by $\bar \zeta i$,
\begin{equation}\label{eq:senzabeta2_KS}
\ddot x  =
\left[ C -\frac12 |\dot x|^2+ \scalar{p}{x}+ \int_{t}^{T}\scalar{\dot p}{x}
\right]|x|^{-2} x  +p,\qquad t\in(t_{i-1},t_i),
\end{equation}
which is the same equation obtained in \eqref{eq:senzabeta2} and \eqref{eq:senzabeta2_LC}. At this point,
we can conclude by reasoning as at the end of the proof of Lemma \ref{lem:ELconmu_LC}.
\end{proof}
\begin{remark}\label{rem:xdotKS}
We stress for future reference that the calculations above yield
\[
\dot x(t) = \frac{2}{|z (\tau_z(t) )|^2\Lcal(z)} \bar z (\tau_z(t) )i z'(\tau_z(t)).
\]
whenever $z$ satisfies \eqref{eq:varBKSvphi}, \eqref{eq:constrBKS_point} and $x(t)\neq 0$.
\end{remark}
\begin{proof}[End of the proof of Proposition \ref{prop:EL_for_B_KS}]
Starting from Lemmas \ref{lem:EL_for_B_KS}, \ref{lem:constrBKS}, \ref{lem:ELconmu_KS}, and in particular
from equations \eqref{eq:LCnuova_KS}, \eqref{eq:defofC_KS}, \eqref{eq:senzabeta2_KS}, we can first show an
analogue of Lemma \ref{lem:mu=1}, and then conclude as in the end  of the proof of Proposition \ref{prop:EL_for_B_LC}.
\end{proof}

\subsection{Critical points may not correspond to generalized solutions}
\label{sec:counterexamples}

In this section we provide examples of solutions of the Euler-Lagrange equations associated to $\Bcal$ and $\BKS$ which do
not correspond to generalized solutions of \eqref{eq:perturbed_kepler}. Of course, we will construct such examples
by violating the additional assumptions of Propositions \ref{prop:EL_for_B} and \ref{prop:EL_for_B_KS}.

\begin{examplem} Assume that $y\in H^1(0,1;\R^d)$ satisfies \eqref{eq:varBvphi} both on $(0,\tau_*)$ and $(\tau_*,1)$, with
$y(\tau_*)=0$. Assume that $y$ is differentiable in $\tau_*$, with $y'(\tau_*) = a\neq0$. Then it can not correspond to a generalized solution of \eqref{eq:perturbed_kepler}.
\end{examplem}
\noindent Indeed, let $x$ be defined on  $(0,t^*)$ and $(t^*,T)$, according to Proposition \ref{prop:EL_for_B}. Since
$y(\tau) =  a(\tau-\tau^*) + o(\tau-\tau^*)$ as $ \tau\to\tau_*$ we obtain
\[
\lim_{t\to t_*^\pm} \frac{x(t)}{|x(t)|} = \lim_{\tau\to \tau_*^\pm} \frac{y(\tau)|y(\tau)|}{|y(\tau)|^2} = \pm \frac{a}{|a|},
\]
so that $x$ does not satisfy Definition \ref{def:gen_sol} at $t_*$.

\begin{examplem}
In dimension $d=1$ the functional $\Bcal$ is of class $C^1$ on $H^1(0,1;\R)$. Nonetheless, its critical points do not
necessarily correspond to generalized solutions of \eqref{eq:perturbed_kepler}.
\end{examplem}

\noindent
We consider the functional $\Bcal$ in dimension $d=1$, with $p\equiv 0$:
\[
\Bcal (y) = \frac{2}{T}\int_0^1y^2 \int_0^1(y')^2 + \frac{T}{\int_0^1y^2}.
\]
For concreteness, we work with Dirichlet boundary conditions, i.e. we consider the restriction of $\Bcal$
to $H^1_0(0,1;\R)\setminus\{0\}$. Assume that $y\in H^1_0(0,1;\R)\setminus\{0\}$ satisfies $\Bcal'(y)[\varphi] = 0$
for every $\varphi \in \Dcal(0,1;\R)$. By direct computations we obtain that $y$ satisfies
\begin{equation}\label{eq:casofacile}
\begin{cases}
\frac{2}{T} A y - \frac{2}{T} B y'' - \frac{T}{B^2} y = 0\\
y(0)=y(1)=0,
\end{cases}
\end{equation}
where
\[
A:= \int_0^1(y')^2,\qquad B:= \int_0^1y^2.
\]
We obtain that problem \eqref{eq:casofacile}  admits nontrivial solutions if and only if
\[
\frac{T^2}{2B^3} - \frac{A}{B} = n^2\pi^2, \qquad n=1,2,\dots
\]
with solutions
\[
y_n(\tau) = k_n \sin ( n\pi\tau), \qquad\text{where } k_n = \pm\left(\frac{T \sqrt2}{\pi n}\right)^{1/3}.
\]
Now, if $n=1$ then $y_1(\tau)>0$ in $(0,1)$ and all the assumptions of Proposition \ref{prop:EL_for_B}
are satisfied on such interval; as a consequence, the corresponding $x$ is a generalized solution to the unperturbed Kepler problem
(indeed, it corresponds to a ejection-collision motion, with collisions in the first and last time instants). On the other hand, in case $n\ge2$ $y_n$ vanishes at some interior point, but it is $C^1$ and its zeroes are simple, therefore it can not correspond to a generalized solution by the previous example.

\begin{examplem}\label{rem:mudiversi}
Assume that $y\in H^1(0,1;\R^d)$ satisfies \eqref{eq:varBvphi} on any collisionless subinterval. Then \eqref{eq:beta}
may hold true with different values of $\mu$, depending on the corresponding interval.
\end{examplem}
%
%
%
%
%
%
%
%
%
%
\noindent
In general, if some internal collision occurs, one can not expect that $\mu=1$, even though
the assumptions of Proposition \ref{prop:EL_for_B} hold in any collision-free subinterval.
More precisely, let us assume that $y\in H^1(0,1;\R^d)$
is such that
\[
(0,1)\setminus\{\tau:y(\tau)=0\} =: \bigcup_{j\in\mathcal{J}} I_j,
\]
where each $I_j$ is an open interval and the index set $\mathcal{J}$ is at most countable. If the
assumptions of Proposition \ref{prop:EL_for_B} hold true in each $I_j$, then there exist
coefficients $\mu_j\ge0$ such that
\[
\ddot x = -\mu_j \frac{x}{|x|^3} + p(t),\qquad t\in I_j.
\]
Then, reasoning as in the proof of Lemma \ref{coro:mu}, we can only show that
\begin{equation}\label{eq:somme_mui}
\sum_{j} \int_{I_j} \frac{\mu_i}{|x|} = \int_{0}^T \frac{1}{|x|}.
\end{equation}

On the other hand, we can construct an example in which the above facts actually occur, with different $\mu_i$. Let us consider the
unperturbed Kepler problem $p\equiv0$, and let us assume that $y\in H^1_0(0,1;\R^d)$ satisfies the assumptions
of Proposition \ref{prop:EL_for_B},
with $(\tau_1,\tau_2)=(0,1)$. The existence of such a $y$, which corresponds to a degenerate Keplerian ellipse,
can be obtained for instance as a corollary of Theorem \ref{thm:direct_method} ahead. For every $h_1,h_2 >0$ such that
\[
h_1^2 + h_2^2 = 2^{1/3},
\]
we define the function $w \in H^1_0(0,1;\R^d)$ as
\[
w(\tau) :=
\begin{cases}
h_1\, y (2\tau)   & 0<\tau\le 1/2\\
h_2\, y (2-2\tau)   & 1/2 \le \tau <1.
\end{cases}
\]
We claim that
\begin{equation}\label{eq:doppiobump}
\frac{d}{d\eps} \left[\Bcal(w+\eps \varphi)\right]_{\eps=0} = 0\qquad \text{for every }\varphi\in\Dcal(0,1/2)\cup\Dcal(1/2,1).
\end{equation}
Then Proposition \ref{prop:EL_for_B} applies on both subintervals, so that we can define as usual the function $x(t) = x_w(t)$.
Finally, by direct calculations, one can show that
\begin{equation}\label{eq:doppiokeplero}
\begin{split}
\ddot x &= -2^{2/3}h_1^2\, \frac{x}{|x|^3}\qquad \text{ in }\left(0,2^{-1/3}h_1^2 T\right),\\
\ddot x &= -2^{2/3}h_2^2\, \frac{x}{|x|^3}\qquad \text{ in }\left(2^{-1/3}h_1^2 T,T\right).
\end{split}
\end{equation}
We postpone the proofs of \eqref{eq:doppiobump} and \eqref{eq:doppiokeplero} in the appendix.
\begin{examplem}\label{examp:BKS_not_per}
Periodic critical points of $\BKS$ may not correspond to generalized solutions of \eqref{eq:perturbed_kepler},
in case (\ref{eq:constrBKS_point}) fails.
\end{examplem}
\noindent
In Section \ref{sec:ELeqBKS} we proved that if $z$ is stationary for $\BKS$ with respect to
compactly supported variations, and furthermore \eqref{eq:constrBKS_point} holds true, i.e.
\[
\scalar{z'(\tau^*)}{iz(\tau^*)}=0,\qquad \text{for some }\tau^*\in[0,1],
\]
then $x(t) = \bar z(\tau_z(t)) i z(\tau_z(t))$ is a solution of the perturbed Kepler problem
(under the identification $\II\HH\cong\R^3$). Of course, if $z$ satisfies either
Dirichlet or Neumann homogeneous boundary conditions on $\{0,1\}$, then the above condition
follows. Then a natural question is whether periodic critical points of $\BKS$ do satisfy such
condition, too. In the following we provide an example showing that in general this is not the
case, and furthermore the corresponding $x$ is not a generalized solution of the perturbed Kepler problem.

For concreteness, let $p\equiv0$ and $T=1$, so that the functional $\BKS$ writes as
\[
\BKS(z) = \frac{1}{\Lcal(z)}\Qcal(z) + \Lcal(z) =
2\int_0^1|z|^2 \int_0^1|z'|^2 + \frac{1}{\int_0^1|z|^2}.
\]
Then, as before, $z$ is a critical point of $\BKS$ in $H^1(\R/\Z;\HH)$
if and only if
\begin{equation}\label{eq:easyBKS}
\begin{cases}
2 A z - 2 B z'' - \frac{1}{B^2} z = 0\\
z(0)=z(1), \quad z'(0)=z'(1),
\end{cases}
\qquad\text{where } A:= \int_0^1|z'|^2,\  B:= \int_0^1|z|^2.
\end{equation}
Now, let
\[
z(\tau) := \Lambda\left(\sin{2\pi \tau} + i \cos{2\pi \tau} + j \sin{2\pi \tau} + k\cos{2\pi \tau}\right)
\]
Then $A=8\pi^2\Lambda^2$, $B=2\Lambda^2$, and $z$ satisfies \eqref{eq:easyBKS} provided
$\Lambda^{-6} = 128 \pi^2$. On the other hand,
\[
\scalar{z'}{iz} = 2\pi\Lambda^2\left(-\cos^2{2\pi \tau} - \sin^2{2\pi \tau} - \cos^2{2\pi \tau}
- \sin^2{2\pi \tau}\right) = -4\pi\Lambda^2.
\]
Hence \eqref{eq:constrBKS_point} does not hold. Actually, if we try to perform the usual change of variable we obtain $\tau=\tau_z(t)=t$ and
\[
x(t) = \bar z(\tau_z(t)) i z(\tau_z(t)) = 2\Lambda^2 k,
\]
which is not a solution of the Kepler problem.

On the contrary, let us consider
\[
w(\tau) := \Lambda\left[\sin{2\pi \tau} + i \cos{2\pi \tau} + j \cos{2\pi \tau} + k\sin{2\pi \tau}\right].
\]
Also $w$ satisfies \eqref{eq:easyBKS}, with the same value of $\Lambda$. Then
$\scalar{w'}{iw}\equiv0$, and actually
\[
\begin{split}
x(t) &= \bar w(\tau_w(t)) i w(\tau_w(t)) = 2\Lambda^2 \left[ j (\cos^2 2\pi t - \sin^2 2\pi t)
+ k (2 \sin{2\pi t} \cos{2\pi t}) \right]\\
&= 2\Lambda^2 \left[ j \cos 4\pi t + k \sin{4\pi t} \right],
\end{split}
\]
which is a Keplerian circular motion in the $jk$-plane:
\[
\ddot x = - 16\pi^2 x = -\frac{x}{8\Lambda^6} = -\frac{x}{|x|^3}.
\]

\section{The Dirichlet problem for \texorpdfstring{$\Bcal$}{B}}\label{sec:Diri}

This section deals with the functional $\Bcal$ introduced in \eqref{eq:B} (corresponding to Case 1 discussed in Section \ref{sec:euristic}). Actually the same arguments can be applied with minor simplifications also to $\BLC$ and $\BKS$.

We will prove the following result.
\begin{theorem}\label{thm:direct_method}
Let $p\in C([0,T];\R^d)$. Then
\[
\min_{H^1_0(0,1;\R^d)} \Bcal
\]
is achieved and any minimizer $y$ is such that $y({\tau} )\neq 0$, for any ${\tau}  \in (0,1)$. In particular if $p \in C^1(0,T)$ then any minimizer $y$ corresponds to a generalized solution $x$ of \eqref{eq:perturbed_kepler}, with collisions in $t=0$ and $t=T$.
\end{theorem}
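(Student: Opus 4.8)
The plan is to establish the theorem in three stages: existence of a minimizer via the direct method, the crucial no-interior-collision property, and finally the consistency with the Kepler problem.
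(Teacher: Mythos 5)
Your proposal is only an outline: it names three stages (direct method, no interior collisions, consistency with the Kepler problem) that do coincide with the paper's overall strategy, but it supplies no argument for any of them, and each stage hides a genuine difficulty that cannot be dispatched by invoking standard machinery. First, the ``direct method'' step is not routine here: the kinetic term $\Qcal(y)=\tfrac12\int_0^1\bigl[3(|y|')^2+|y'|^2\bigr]$ contains the derivative of the modulus, and the map $y\mapsto\int_0^1(|y|')^2$ is \emph{not} weakly lower semicontinuous on $H^1_0$ in general (the map $y\mapsto |y|$ does not commute with weak limits near zeros of the limit). The paper resolves this by first proving that the weak limit $y_\infty$ of a minimizing sequence is nonvanishing on $(0,1)$ (Proposition \ref{prop:min_seq}), and only then deducing $|y_n|\rightharpoonup|y_\infty|$ in $H^1_0$ (Lemma \ref{lem:wc_moduli}, which needs the zero set of $y_\infty$ to be null), so that lower semicontinuity of $\Qcal$ becomes available. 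Your outline inverts no such circularity and gives no substitute.

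Second, the no-interior-collision property is the heart of the proof and requires constructing explicit competitors: the paper uses a time-rescaling competitor (Lemma \ref{lem:min_seq}) to show minimizing sequences stay uniformly away from $0$ on $[\theta,1]$, and then a geodesic-arc replacement on the sphere of radius $\delta$ (Proposition \ref{prop:min_seq}) whose energy gain beats the cost, quantified via the sharp constant $\tfrac12-2(9/10)^2<0$. Without some such quantitative comparison the claim is unsupported; note also that coercivity (Lemma \ref{lem:coercive}, using a Gagliardo--Nirenberg bound on the forcing term $\Rcal$) must be established before any of this, since otherwise minimizing sequences need not be bounded nor $\Lcal(y_n)$ bounded away from zero. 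Finally, the last stage is not automatic either: critical points of $\Bcal$ on a collisionless subinterval solve $\ddot x=-\mu x/|x|^3+p$ with an a priori unknown constant $\mu$, and one gets $\mu=1$ only because the interval is all of $(0,1)$ (Proposition \ref{prop:EL_for_B} and Lemma \ref{coro:mu}); the counterexamples in Section \ref{sec:counterexamples} show this step genuinely fails if interior collisions are present. As written, your proposal is a table of contents, not a proof.
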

\begin{corollary}
Assume that $p \in C^1(\R;\R^d)$ is $2T$-periodic and even. Then the minimizer $y$ in the above theorem can be extended as an even, $2$-periodic loop such that the corresponding $x$ is an
even, $2T$-periodic generalized solution of \eqref{eq:perturbed_kepler}, with collision at
$t=kT$, $k\in\Z$.
\end{corollary}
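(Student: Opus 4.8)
The plan is to build the global solution from the minimizer $y$ of Theorem~\ref{thm:direct_method} by a reflection argument, exploiting that $p$ is even and $2T$-periodic. Since $y(0)=y(1)=0$ and $y$ does not vanish in $(0,1)$, the associated $x$ is already a generalized solution on $[0,T]$ (with $\mu=1$), having collisions only at $t=0,T$ and, by Theorem~\ref{thm:direct_method} together with point~3 of Definition~\ref{def:gen_sol}, finite one-sided limits of collision direction and energy at $0^+$ and $T^-$. First I would define the even extension $Y(\tau):=y(|\tau|)$ on $[-1,1]$ and then extend it $2$-periodically to $\R$. Note that $\int_{-1}^{1}|Y|^2 = 2\int_0^1|y|^2$ while the physical period doubles to $2T$, so $\Lcal(Y)=\Lcal(y)$ and the Sundman time satisfies $t_Y(\tau)=-t_y(-\tau)$ for $\tau\in[-1,0]$. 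Consequently the curve $x(t)=|Y(\tau_Y(t))|\,Y(\tau_Y(t))$ is exactly the even, $2T$-periodic extension of the original $x$, characterised by $x(-t)=x(t)$ and $x(t+2T)=x(t)$, with $x(kT)=0$ and $x\neq0$ elsewhere.

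Next I would check that $x$ solves \eqref{eq:perturbed_kepler} on each collision-free interval. On $(0,T)$ this is Theorem~\ref{thm:direct_method}. On $(-T,0)$ one has $\ddot x(t)=\ddot x(-t)=-x(-t)/|x(-t)|^3+p(-t)=-x(t)/|x(t)|^3+p(t)$, where the last equality uses $x(t)=x(-t)$ and the evenness $p(-t)=p(t)$; the remaining intervals follow from the $2T$-periodicity of both $x$ and $p$. Since the collisions are exactly the discrete set $\{kT:k\in\Z\}$ and $x\in C^2$ between consecutive collisions, the first two conditions of Definition~\ref{def:gen_sol} hold.

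The heart of the matter is the third condition of Definition~\ref{def:gen_sol} at each $t_*=kT$. By construction $x$ is even about every $kT$ (evenness about $0$ together with $2T$-periodicity forces $x(kT+s)=x(kT-s)$), so it suffices to treat $t_*=0$. The one-sided limits from $0^+$ exist and are finite by Theorem~\ref{thm:direct_method}. For the direction, $x(t)/|x(t)|=x(-t)/|x(-t)|\to\lim_{s\to0^+}x(s)/|x(s)|$ as $t\to0^-$, so the two-sided limit exists; crucially, reflection makes the left and right limits coincide along the \emph{same} ray, in contrast with a transversal zero-crossing of $y$, which would flip the collision direction and destroy the generalized-solution property (cf.\ the first example of Section~\ref{sec:counterexamples}). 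For the energy, $\dot x(t)=-\dot x(-t)$ gives $\tfrac12|\dot x(t)|^2-1/|x(t)|=\tfrac12|\dot x(-t)|^2-1/|x(-t)|$, so the energy is even and its one-sided limits at $0$ agree. Hence the limits in \eqref{eq:gensol} exist and are finite, and $x$ satisfies all conditions of Definition~\ref{def:gen_sol}.

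I expect the one genuinely delicate point to be precisely this direction-matching at the collisions: one must verify that reflecting across a collision \emph{preserves} (rather than reverses) the limiting direction in \eqref{eq:gensol}, which is exactly what the evenness of $x$ about $kT$ provides. Everything else---the $C^2$ regularity on open intervals, the invariance of the equation under $t\mapsto -t$ and $t\mapsto t+2T$, and the bookkeeping $\Lcal(Y)=\Lcal(y)$ showing that $Y$ is a bona fide even $2$-periodic loop---is routine given Theorem~\ref{thm:direct_method} and Proposition~\ref{prop:EL_for_B}.
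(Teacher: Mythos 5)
Your proof is correct. The paper states this corollary without giving an explicit proof (it only remarks that Rabinowitz's construction of periodic solutions from brake--collision orbits is analogous), and the reflection-and-periodic-extension argument you give is clearly the intended one: the bookkeeping $t_Y(\tau)=-t_y(-\tau)$ showing that the even $2$-periodic extension of $y$ produces the even $2T$-periodic extension of $x$, the invariance of the equation under $t\mapsto -t$ using the evenness of $p$, and the key observation that evenness of $x$ about each $kT$ makes the left and right limits of both the collision direction and the collision energy in \eqref{eq:gensol} coincide with the one-sided limits already guaranteed by Theorem \ref{thm:direct_method}. Nothing is missing.
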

A result similar to the above corollary was obtained by Rabinowitz in \cite{Rab1994}, for a different class of problems. More precisely, he deals with autonomous Hamiltonian systems, although treating
more general singularities. His construction of periodic generalized solutions is based on that of brake-collision orbits, which can be extended to periodic ones. It is worth mentioning that, even though he deals with a weaker notion of generalized solution, by construction he finds solutions which fulfill also Definition \ref{def:gen_sol}.
\begin{lemma}\label{lem:coercive}
For every $y \in H^1_0(0,1;\R^d)$, $y \not\equiv 0$, it holds
\[
\Bcal(y) \geq \frac14 \frac{\| y'\|_2^2}{\Lcal(y)} + \Lcal(y) -4\|p\|_\infty^2{T}^3
\geq \|y'\|_2 -4\|p\|_\infty^2{T}^3.
\]
\end{lemma}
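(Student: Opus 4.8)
The plan is to split $\Bcal(y)=\Lcal(y)^{-1}\Qcal(y)+\Lcal(y)+\Lcal(y)\Rcal(y)$ into the two ``good'' coercive terms $\Lcal^{-1}\Qcal$ and $\Lcal$, and the sign-indefinite forcing term $\Lcal\Rcal$, which I would treat as a perturbation to be absorbed. For the kinetic part I would simply discard the nonnegative contribution $3(|y|')^2$ in the definition of $\Qcal$, so that $\Qcal(y)\ge\frac12\|y'\|_2^2$ and hence $\Lcal^{-1}\Qcal(y)\ge\frac{1}{2\Lcal(y)}\|y'\|_2^2$. The whole difficulty is then concentrated in bounding $|\Lcal(y)\Rcal(y)|$ by one half of this quantity plus a constant depending only on $\|p\|_\infty$ and $T$.

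For the forcing term I would start from the pointwise estimate $|y|^3|\scalar{p\circ t_y}{y}|\le\|p\|_\infty|y|^4$, which gives $|\Lcal\Rcal|\le\Lcal\|p\|_\infty\int_0^1|y|^4\,d\tau$. The key step is to estimate $\int_0^1|y|^4$ \emph{with the correct homogeneity}. Here I would use the interpolation inequality $\|y\|_\infty^2\le\|y\|_2\|y'\|_2$, valid on $H^1_0(0,1;\R^d)$: it follows by writing $|y(\tau)|^2$ as the symmetric combination $\frac12\left(\int_0^\tau 2\scalar{y}{y'}\,ds-\int_\tau^1 2\scalar{y}{y'}\,ds\right)$ and applying Cauchy-Schwarz, using $y(0)=y(1)=0$. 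This yields
\[
\int_0^1|y|^4\,d\tau\le\|y\|_\infty^2\,\|y\|_2^2\le\|y'\|_2\,\|y\|_2^3 .
\]
Substituting the identity $\|y\|_2^2=T/\Lcal$ then gives $|\Lcal\Rcal|\le\|p\|_\infty T^{3/2}\,\|y'\|_2\,\Lcal^{-1/2}=\|p\|_\infty T^{3/2}\sqrt{\|y'\|_2^2/\Lcal}$.

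The crucial point, and what makes the estimate close, is that this bound scales like the \emph{square root} of the good term $\|y'\|_2^2/\Lcal$. Writing $P:=\|y'\|_2^2/\Lcal$ and applying Young's inequality in the form $a\sqrt{P}\le\frac14 P+a^2$, I obtain $|\Lcal\Rcal|\le\frac14\frac{\|y'\|_2^2}{\Lcal}+\|p\|_\infty^2 T^3$. Combining with $\Lcal^{-1}\Qcal\ge\frac{1}{2\Lcal}\|y'\|_2^2$ and keeping the term $\Lcal$ untouched gives $\Bcal(y)\ge\frac14\frac{\|y'\|_2^2}{\Lcal}+\Lcal-\|p\|_\infty^2 T^3$, which already implies the first claimed inequality since $\|p\|_\infty^2 T^3\le 4\|p\|_\infty^2 T^3$ (so there is in fact room to spare, should one prefer to use cruder constants in the interpolation and Young steps). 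Finally, the second inequality is just the arithmetic-geometric mean estimate $\frac14\frac{\|y'\|_2^2}{\Lcal}+\Lcal\ge 2\sqrt{\frac14\|y'\|_2^2}=\|y'\|_2$.

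The main obstacle is the choice of interpolation in the bound for $\int_0^1|y|^4$: the naive estimate $\|y\|_\infty^2\le\frac12\|y'\|_2^2$ produces a term proportional to $\|y'\|_2^2$ with no compensating factor $\Lcal^{-1}$, and such a term cannot be absorbed into $\frac14\|y'\|_2^2/\Lcal$ for large $\Lcal$. Using instead $\|y\|_\infty^2\le\|y\|_2\|y'\|_2$ produces exactly the $\sqrt{\|y'\|_2^2/\Lcal}$ scaling that Young's inequality can absorb, which is what makes the argument work.
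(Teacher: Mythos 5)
Your proposal is correct and follows essentially the same route as the paper: discard the $3(|y|')^2$ term in $\Qcal$, bound $|\Rcal(y)|$ via the pointwise estimate and a Gagliardo--Nirenberg interpolation $\int_0^1|y|^4\le \|y\|_\infty^2\|y\|_2^2\lesssim \|y\|_2^3\|y'\|_2$, substitute $\|y\|_2^2=T/\Lcal(y)$, and absorb the resulting $\|y'\|_2/\sqrt{\Lcal(y)}$ term by Young's inequality, finishing with the arithmetic--geometric mean bound for the second inequality. The only difference is that you use the symmetric (two-sided) form of the interpolation inequality on $H^1_0$, which sharpens the constant from $4\|p\|_\infty^2T^3$ to $\|p\|_\infty^2T^3$; this is a harmless refinement, not a different argument.
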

\begin{proof}
To start with, we infer that
\[
\Bcal(y) \geq  \frac{\|y'\|_2^2}{2\Lcal(y)} + \Lcal(y) - \Lcal(y)\|p\|_\infty \int_0^1 |y|^4.
\]
Since
\[
|y({\tau})|^2 = \int_0^{\tau} 2\langle y,y'\rangle \leq
2 \|y\|_2 \|y'\|_2
\]
we deduce the Gagliardo-Nirenberg inequality
\[
\int_0^1 |y|^4 \leq  \|y\|^2_{\infty}\|y\|^2_2 \leq 2 \|y\|^3_2\|y'\|_2.
\]
Recalling the definition of $\Lcal(y)$ we obtain
\[
\Bcal(y) \geq \frac{\|y'\|_2^2}{2\Lcal(y)} + \Lcal(y) - 2\|p\|_\infty {T}^{3/2}\frac{\|y'\|_2}{\sqrt{\Lcal(y)}}.
\]
Recalling the elementary inequalities
\[
2\|p\|_\infty {T}^{3/2}\frac{\|y'\|_2}{\sqrt{\Lcal(y)}} \leq
\frac14 \frac{\|y'\|_2^2}{\Lcal(y)}  + 4\|p\|_\infty^2 {T}^3,
\]
and
\[
\|y'\|_2 \leq \frac14 \frac{\|y'\|_2^2}{\Lcal(y)} + \Lcal(y),
\]
we easily conclude.
\end{proof}
\begin{corollary}\label{cor:bondT}
Let $(y_n)_n \subset H^1_0(0,1;\R^d)$. Then
\[
\Bcal(y_n) \leq M
\quad \implies \quad
\|y_n'\|_2 \leq C_1(M), \; 0 < C_2(M) \leq \Lcal(y_n) \leq C_3(M),
\]
for some constants $C_1(M) = C_3(M) = M+4\|p\|_\infty^2{T}^3$ and $C_2(M) = \pi^2 {T}\left[C_1(M)\right]^{-2}$ (by Poincar\'e inequality).
\end{corollary}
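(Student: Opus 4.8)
The plan is to read off all three bounds directly from the two-sided estimate in Lemma~\ref{lem:coercive}, supplemented only by the Poincar\'e inequality for the remaining lower bound on $\Lcal$. Concretely, I would fix $n$ with $\Bcal(y_n)\le M$. Since $\Bcal(0)=+\infty$, the hypothesis forces $y_n\not\equiv0$, so every quantity below is well defined, and I may apply Lemma~\ref{lem:coercive} to $y_n$ and prepend $M\ge\Bcal(y_n)$ to obtain the chain
\[
M \ge \Bcal(y_n) \ge \frac14\frac{\|y_n'\|_2^2}{\Lcal(y_n)} + \Lcal(y_n) - 4\|p\|_\infty^2 T^3 \ge \|y_n'\|_2 - 4\|p\|_\infty^2 T^3.
\]

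First I would use the last inequality, which gives at once $\|y_n'\|_2 \le M + 4\|p\|_\infty^2 T^3 =: C_1(M)$. Next, for the upper bound on $\Lcal$ I would simply discard the nonnegative kinetic term $\tfrac14\|y_n'\|_2^2/\Lcal(y_n)$ in the middle expression, obtaining $\Lcal(y_n) \le M + 4\|p\|_\infty^2 T^3 = C_1(M) =: C_3(M)$. Finally, for the lower bound I would recall $\Lcal(y_n) = T/\|y_n\|_2^2$ and invoke the Poincar\'e inequality on $H^1_0(0,1)$, namely $\pi^2\|y_n\|_2^2 \le \|y_n'\|_2^2$ (the first Dirichlet eigenvalue being $\pi^2$); combined with the bound on $\|y_n'\|_2$ already established this yields
\[
\Lcal(y_n) = \frac{T}{\|y_n\|_2^2} \ge \frac{\pi^2 T}{\|y_n'\|_2^2} \ge \frac{\pi^2 T}{[C_1(M)]^2} =: C_2(M) > 0,
\]
which is exactly the claimed value of $C_2(M)$.

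The genuine analytic content — coercivity modulo an additive constant, obtained from the Gagliardo--Nirenberg control of the forcing term — is entirely packaged in Lemma~\ref{lem:coercive}, so I expect no real obstacle here: the corollary is essentially a bookkeeping statement that extracts explicit constants. The only item requiring a word of care is ensuring that all estimates are applied to $y_n\not\equiv0$, so that $\Lcal(y_n)$ and the quotients are meaningful; this is guaranteed by $\Bcal(y_n)\le M<+\infty$ together with the convention $\Bcal(0)=+\infty$. With that caveat the three displayed inequalities are precisely the asserted bounds, and the proof is complete.
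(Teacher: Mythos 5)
Your proof is correct and follows exactly the route the paper intends: the corollary is stated without a separate proof precisely because it is the direct bookkeeping consequence of Lemma \ref{lem:coercive} (last inequality for $\|y_n'\|_2$, middle inequality with the kinetic term discarded for the upper bound on $\Lcal$, and the Poincar\'e inequality $\pi^2\|y_n\|_2^2\le\|y_n'\|_2^2$ for the lower bound), yielding the stated constants. Your remark that $\Bcal(y_n)\le M<+\infty$ rules out $y_n\equiv0$ is a sensible precaution and consistent with the paper's convention $\Bcal(0)=+\infty$.
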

\begin{lemma}\label{lem:min_seq}
Let $(y_n)_n \subset H^1_0(0,1;\R^d)$ be a minimizing sequence for the functional $\Bcal$ on $H^1_0(0,1;\R^d)$. Then for any ${\theta} \in (0,1)$ there exist $\bar\delta >0$ and $N \in \mathbb{N}$ such that
\[
\max_{[{\theta},1]} |y_n| \geq \bar\delta,
\qquad
\forall n \geq N.
\]
An analogous result holds in $[0,{\theta}]$.
\end{lemma}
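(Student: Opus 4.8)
The plan is to argue by contradiction, exploiting an exact \emph{scaling identity} showing that compressing the support of a competitor strictly lowers $\Bcal$. The crucial observation is this: if $w\in H^1_0(0,1;\R^d)$ is supported on a subinterval of length $\ell\in(0,1)$, and $\hat w$ denotes the affine reparametrization of $w$ onto all of $[0,1]$, then
\[
\Bcal(\hat w) = \Bcal(w) + (\ell-1)\,\Lcal(w).
\]
This follows from the homogeneity of the building blocks under the substitution $\hat w(\tau)=w(b+\ell\tau)$: one checks that $\Qcal$ and $\Lcal$ both scale by $\ell$, so that the ratio $\Qcal/\Lcal$ is \emph{invariant}; that $t_{\hat w}(\tau)=t_w(b+\ell\tau)$, whence $\Lcal\,\Rcal$ is also invariant; and that therefore the only net effect comes from the isolated summand $\Lcal$ in $\Lcal(1+\Rcal)$. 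Since $\ell<1$, this yields a definite gain of $(1-\ell)\,\Lcal(w)$, which is the engine of the whole argument.

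Suppose the conclusion fails on $[\theta,1]$: then, along a subsequence, $\delta_n:=\max_{[\theta,1]}|y_n|\to0$. By Corollary \ref{cor:bondT}, $\|y_n'\|_2$ is bounded and $\Lcal(y_n)$ stays in a compact subset of $(0,\infty)$, so that $\|y_n\|_2^2=T/\Lcal(y_n)$ is bounded and bounded away from $0$; in particular $(y_n)$ is bounded in $H^1\hookrightarrow C^0$. Fix $\theta'\in(\theta,1)$ and define the cutoff competitor $w_n$ equal to $y_n$ on $[0,\theta]$, interpolating linearly from $y_n(\theta)$ to $0$ on $[\theta,\theta']$, and vanishing on $[\theta',1]$; then $w_n\in H^1_0(0,1;\R^d)$ is supported on $[0,\theta']$. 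Since $|y_n|\le\delta_n$ on $[\theta,1]$, the modification perturbs $\|y_n\|_2^2$ by $O(\delta_n^2)$, and it can only \emph{decrease} the kinetic integral $\Qcal$ up to an $O(\delta_n^2)$ error (the tail $\tfrac12\int_\theta^1[3(|y_n|')^2+|y_n'|^2]$ is discarded, while the linear bridge contributes $O(\delta_n^2)$).

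The delicate point, and the main obstacle, is the potential term $\Rcal$, which depends \emph{nonlocally} on $w_n$ through the Sundman time $t_{w_n}$, so that cutting off $y_n$ perturbs the entire reparametrization. However, since $t_{w_n}=t_{y_n}+O(\delta_n^2)$ uniformly, $p$ is uniformly continuous, and $\int_0^1|w_n|^4$ is bounded, one obtains $\Rcal(w_n)=\Rcal(y_n)+o(1)$. Combining the three estimates with the fact that $\|y_n\|_2^2$, $\Qcal(y_n)$ and $\Lcal(y_n)$ are bounded and bounded below gives $\Bcal(w_n)=\Bcal(y_n)+o(1)$. Applying the scaling identity to $w_n$ with $\ell=\theta'$, and using $\Lcal(w_n)\ge C_2/2>0$ for $n$ large, yields
\[
\Bcal(\hat w_n)\le \Bcal(y_n)+o(1)-(1-\theta')\tfrac{C_2}{2},
\]
so that for large $n$ the admissible competitor $\hat w_n\in H^1_0$ satisfies $\Bcal(\hat w_n)\le\Bcal(y_n)-c$ for a fixed $c>0$, contradicting that $(y_n)$ is a minimizing sequence. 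The statement on $[0,\theta]$ follows verbatim by cutting off near the left endpoint $\tau=0$ and compressing from the left, the scaling identity being insensitive to which end of the support is removed.
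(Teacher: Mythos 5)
Your argument is correct and follows essentially the same strategy as the paper's proof: argue by contradiction, replace $y_n$ by a competitor obtained by discarding the small tail and stretching the retained piece over $[0,1]$, and observe that under this time-rescaling the terms $\Qcal/\Lcal$ and $\Lcal\Rcal$ are (asymptotically) invariant while the isolated summand $\Lcal$ drops by a factor $\ell<1$, producing a uniform gain of order $(1-\ell)\Lcal$ that contradicts minimality. The only difference is organizational: the paper builds the stretched-plus-bridged test function in one step and estimates each of $\Lcal,\Qcal,\Rcal$ asymptotically, whereas you first truncate (with $o(1)$ error) and then invoke the exact scaling identity $\Bcal(\hat w)=\Bcal(w)+(\ell-1)\Lcal(w)$ --- a clean crystallization of the same mechanism, with all required bounds ($\Lcal(y_n)$ bounded away from $0$, uniform continuity of $p$ for the nonlocal term $\Rcal$) invoked exactly as in the paper.
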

\begin{proof}
By contradiction let  us assume that, for some ${\theta} \in (0,1)$, there exists a sequence $(\delta_n)_n$ such that
\[
\max_{[{\theta},1]} |y_n| = \delta_n
\quad \text{and} \quad
\delta_n \to 0 \text{ as } n \to +\infty.
\]
For $k=\frac{2{\theta}}{{\theta}+1}<1$ and $\ell_{n}({\tau}) = \frac{2y_n({\theta})}{{\theta}-1}({\tau}-1)$, we
define (see Fig. \ref{fig:var1})
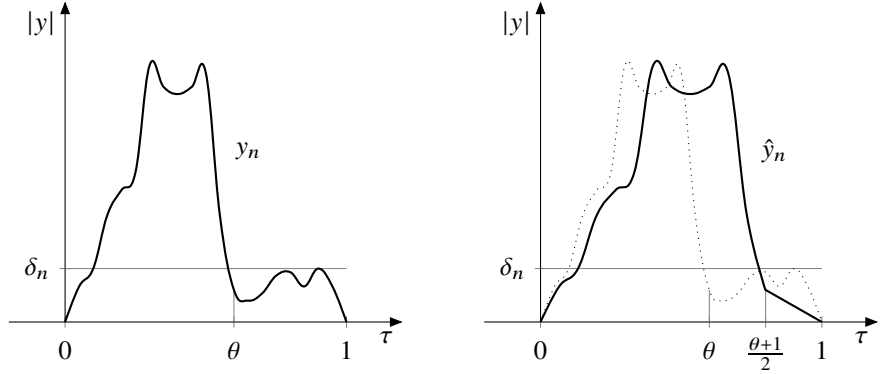
\begin{figure}[t]
\begin{center}
  \begin{tikzpicture}[baseline]
    \begin{axis}[
    every axis x label/.style={at={(current axis.right of origin)},anchor=north west},
      width=.46\linewidth,
      xmin=-0.2,
      xmax=1.2,
      ymin=0,
      ymax=.9,
    axis y line=middle, axis x line=bottom,
    typeset ticklabels with strut,
    axis line style={->},
    xlabel=${\tau}$, ylabel=$|y|$,
    x label style={below left},
    y label style={below left},
    xtick={0,.6,1},
    xticklabels={$0$,${\theta}$,$1$},
    ytick={.15},
    yticklabels={$\delta_n$},
    ]
\addplot[very thin, gray] coordinates{
(0,.15) (1,.15)};
\draw[very thin, gray] (0.6,0) -- (0.6,.09);
\addplot[smooth,tension=.7,thick] coordinates{
(	0   	,	0	)
(	0.05	,	0.1	)
(	0.10	,	0.15	)
(	0.15	,	0.3	)
(	0.20	,	0.37	)
(	0.25	,	0.42	)
(	0.30	,	0.72	)
(	0.35	,	0.66	)
(	0.40	,	0.64	)
(	0.45	,	0.66	)
(	0.50	,	0.7	)
(	0.55	,	0.3	)
(	0.60	,	0.09	)
(	0.65	,	0.06	)
(	0.70	,	0.08	)
(	0.75	,	0.13	)
(	0.80	,	0.14	)
(	0.85	,	0.1	)
(	0.90	,	0.15	)
(	0.95	,	0.1	)
(	1.00	,	0	)};
\draw (.65,.48)  node {$y_n$};
\end{axis}
\end{tikzpicture}
\hspace{.06\linewidth}
  \begin{tikzpicture}[baseline]
    \begin{axis}[
    every axis x label/.style={at={(current axis.right of origin)},anchor=north west},
      width=.46\linewidth,
      xmin=-0.2,
      xmax=1.2,
      ymin=0,
      ymax=.9,
    axis y line=middle, axis x line=bottom,
    typeset ticklabels with strut,
    axis line style={->},
    xlabel=${\tau}$, ylabel=$|y|$,
    x label style={below left},
    y label style={below left},
    xtick={0,.6,.8,1},
    xticklabels={$0$,${\theta}$,$\frac{{\theta}+1}{2}$,$1$},
    ytick={.15},
    yticklabels={$\delta_n$},
    ]
\addplot[very thin, gray] coordinates{
(0,.15) (1,.15)};
\draw[very thin, gray] (0.6,0) -- (0.6,.09);
\draw[very thin, gray] (0.8,0) -- (0.8,.09);
\addplot[smooth, tension=.7, dotted] coordinates{
(	0   	,	0	)
(	0.05	,	0.1	)
(	0.10	,	0.15	)
(	0.15	,	0.3	)
(	0.20	,	0.37	)
(	0.25	,	0.42	)
(	0.30	,	0.72	)
(	0.35	,	0.66	)
(	0.40	,	0.64	)
(	0.45	,	0.66	)
(	0.50	,	0.7	)
(	0.55	,	0.3	)
(	0.60	,	0.09	)
(	0.65	,	0.06	)
(	0.70	,	0.08	)
(	0.75	,	0.13	)
(	0.80	,	0.14	)
(	0.85	,	0.1	)
(	0.90	,	0.15	)
(	0.95	,	0.1	)
(	1.00	,	0	)};
\addplot[smooth,tension=.7,thick] coordinates{
(	0   	,	0	)
(	0.0666	,	0.1	)
(	0.1333	,	0.15	)
(	0.2	,	0.3	)
(	0.2666	,	0.37	)
(	0.3333	,	0.42	)
(	0.4	,	0.72	)
(	0.4666	,	0.66	)
(	0.5333	,	0.64	)
(	0.6	,	0.66	)
(	0.6666	,	0.7	)
(	0.7333	,	0.3	)
(	0.8	,	0.088885	)
};
\addplot[thick] coordinates{
(0.798,.0909) (1,0)};
\draw (.83,.48)  node {$\hat y_n$};
\end{axis}
\end{tikzpicture}
\caption{test function for Lemma \ref{lem:min_seq}, as defined
in equation \eqref{eq:var1}.\label{fig:var1}}
\end{center}
\end{figure}
\begin{equation}\label{eq:var1}
\hat y_n({\tau}) :=
\begin{cases}
y_n(k{\tau}), \; {\tau} \in \left[0,\frac{{\theta} +1}{2}\right],\smallskip \\
\ell_{n}({\tau}), \; {\tau} \in \left(\frac{{\theta} + 1}{2},1\right].
\end{cases}
\end{equation}
We claim that there exists $C>0$, independent of $n$, and a sequence $\gamma_n$,
with $\gamma_n\to0$ as $n\to+\infty$, such that
\begin{equation}\label{eq:contr}
\Bcal(\hat y_n) \leq \Bcal(y_n) -C +\gamma_n, \; \text{as } n \to +\infty.
\end{equation}
In order to do that, let us first estimate the terms involving first derivatives:
\begin{multline*}
\int_0^1 |\hat y_n'|^2 = k^2 \int_0^{\frac{{\theta}+1}{2}} |y_n'(k\tau)|^2 \,d\tau + \frac{4}{({\theta}-1)^2}|y_n({\theta})|^2\frac{1-{\theta}}{2} = \\
= k \int_0^{\theta} |y_n'|^2  + \frac{2}{{\theta}-1}|y_n({\theta})|^2
\leq k \int_0^1 |y_n'|^2+ \frac{2}{1-{\theta}}\delta_n^2,
\end{multline*}
and, with similar computations,
\[
\int_0^1 \frac{\langle \hat y_n,\hat y'_n\rangle^2}{|\hat y_n|^2}
\leq k \int_0^1 \frac{\langle y_n, y'_n\rangle^2}{| y_n|^2}+ \frac{2}{1-{\theta}}\delta_n^2.
\]
These estimates imply that
\begin{equation}\label{eq:stima3}
\Qcal(\hat y_n) = k\Qcal(y_n) + \frac{4}{1-{\theta}}\delta_n^2.
\end{equation}
Next, we obtain asymptotic expansions for the remaining terms.
Since $|\hat y_n|\le\delta_n$ on $\left[\frac{{\theta} + 1}{2},1\right]$ and $|y_n|\le\delta_n$ on $\left[{\theta},1\right]$, a direct computation shows that
\begin{equation*}\label{eq:tre}
\|\hat y_n\|_2^2 = \frac{1}{k} \|y_n\|_2^2+  O(\delta_n^2).
\end{equation*}
This is equivalent to
\begin{equation}\label{eq:stima1}
\frac{1}{\Lcal(\hat y_n)} = \frac1k \frac{1}{\Lcal(y_n)} + O(\delta_n^2).
\end{equation}
We know from Corollary \ref{cor:bondT} that $\Lcal(y_n)$ lies between two positive constants. Hence the previous expansion leads automatically to
\begin{equation}\label{eq:stima2}
{\Lcal(\hat y_n)} = k {\Lcal(y_n)} + O(\delta_n^2).
\end{equation}
The last term to estimate is $\Rcal(\hat y_n)$. First we define
\[
\hat t_n({\tau}) = \Lcal(\hat y_n) \int_0^{\tau} |\hat y_n(\xi)|^2\,d\xi,
\qquad
t_n({\tau}) = \Lcal(y_n) \int_0^{\tau} |y_n(\xi)|^2\,d\xi.
\]
These functions are related by the identity
\begin{equation}\label{eq:due}
\hat t_n({\tau}/k) = \frac{\Lcal(\hat y_n) }{k\Lcal(y_n)}\,t_n({\tau})\qquad\text{if }{\tau}\in\left[
0, \textcolor{red}{\theta}\right] .
\end{equation}
From the definition of $\Rcal$ and Corollary \ref{cor:bondT},
\begin{multline*}
\Rcal(\hat y_n) = \int_0^1 |\hat y_n|^3 \left\langle \hat y_n, p\circ \hat t_n \right\rangle  =\\
= \int_0^{\frac{{\theta}+1}{2}} |y_n(k{\tau})|^3 \left\langle y_n(k{\tau}), p\left(\hat t_n({\tau})\right)\right\rangle \,d\tau + O(\delta_n^4) = \\
= \frac{1}{k} \int_0^{{\theta}} |y_n({\tau})|^3 \left\langle y_n({\tau}), p\left(t_n({\tau})\right)\right\rangle \,d{\tau} + \alpha_n + O(\delta_n^4),
\end{multline*}
where
\[
\alpha_n =  \frac{1}{k} \int_0^{{\theta}} |y_n({\tau})|^3 \left\langle y_n({\tau}), p\left(\hat t_n({\tau}/k)\right)-p\left(t_n({\tau})\right)\right\rangle \,d{\tau} .
\]
In view of \eqref{eq:stima2}, \eqref{eq:due} and the uniform continuity of $p$, we conclude that $\alpha_n\to0$. Note that the rate of convergence will be of order of
$\omega(\delta_n^2)$, where $\omega$ is a modulus of continuity of the function $p$. Using once again that $|y_n|\le \delta_n$ on $[{\theta},1]$, we conclude that
\begin{equation}\label{eq:stima4}
\Rcal(\hat y_n) = \frac{1}{k} \Rcal(y_n) + o(1),
\;\text{ as } n \to +\infty.
\end{equation}
From \eqref{eq:stima1}, \eqref{eq:stima3}, \eqref{eq:stima2} and \eqref{eq:stima4} we
deduce that
\[
\Bcal(\hat y_n) \le \Bcal(y_n) - (1 - k)\Lcal(y_n) + \gamma_n,
\]
with $\gamma_n\to0$. Since $k \in (0,1)$ and $\Lcal(y_n)$ is bounded away from 0, we
deduce the existence of a strictly positive constant $C$ such that \eqref{eq:contr} holds, which contradicts the nature of the
sequence $(y_n)_n$.
\end{proof}
\begin{proposition}\label{prop:min_seq}
Let $(y_n)_n \subset H^1_0(0,1;\R^d)$ be a minimizing sequence for the functional $\Bcal$ on $H^1_0(0,1;\R^d)$, such that $y_n \rightharpoonup y_\infty$ in $H^1_0(0,1;\R^d)$. Then
\[
|y_{\infty}({\tau})| > 0, \qquad \forall {\tau} \in (0,1).
\]
\end{proposition}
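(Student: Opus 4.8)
The plan is to upgrade the weak convergence to uniform convergence, to deduce that $y_\infty$ realizes $\inf_{H^1_0}\Bcal$ by lower semicontinuity, and then to exclude an interior zero of $y_\infty$ by producing a competitor of strictly smaller energy that desingularizes the would-be collision. For the compactness: since $(y_n)$ is minimizing, $\Bcal(y_n)\le M$, so Corollary \ref{cor:bondT} gives $\|y_n'\|_2\le C_1$ and $0<C_2\le\Lcal(y_n)\le C_3$; thus $(y_n)$ is bounded in $H^1_0(0,1;\R^d)$ and, by the compact embedding $H^1_0\hookrightarrow C^0$, $y_n\to y_\infty$ uniformly on $[0,1]$. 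Uniform convergence forces $\|y_n\|_2\to\|y_\infty\|_2$, hence $\Lcal(y_n)\to\Lcal(y_\infty)\in[C_2,C_3]$, and also $t_{y_n}\to t_{y_\infty}$ uniformly and $\Rcal(y_n)\to\Rcal(y_\infty)$; moreover $y_\infty\not\equiv0$ by Lemma \ref{lem:min_seq}.

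Next I would show $\Bcal(y_\infty)=\inf_{H^1_0}\Bcal$. The key point is lower semicontinuity of $\Qcal$: writing $r=|y|$ and $U=y/|y|$ one has the clean identity $\Qcal(y)=2\int_0^1(|y|')^2+\tfrac12\int_0^1|y|^2|U'|^2$, and since $(|y_n|)$ is bounded in $H^1$ and converges uniformly to $|y_\infty|$ it converges weakly in $H^1$, so weak lower semicontinuity of the $L^2$--norm yields $\Qcal(y_\infty)\le\liminf\Qcal(y_n)$. Together with $\Lcal(y_n)\to\Lcal(y_\infty)>0$ and the continuity of $\Rcal$ this gives $\Bcal(y_\infty)\le\liminf\Bcal(y_n)=\inf_{H^1_0}\Bcal$, and since $y_\infty\in H^1_0$ the reverse inequality is trivial.

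Arguing by contradiction, suppose $y_\infty(\tau_0)=0$ for some $\tau_0\in(0,1)$, and let $a=y_\infty'(\tau_0)\ne0$ (the degenerate case $a=0$, in which $y_\infty$ and hence $y_n$ is uniformly small on a whole neighborhood of $\tau_0$, is handled by excising that neighborhood and rescaling exactly as in the proof of Lemma \ref{lem:min_seq}). Choosing a unit vector $v\perp a$ and a bump $\rho\ge0$ supported in $(\tau_0-\eta,\tau_0+\eta)\subset(0,1)$ with $\rho(\tau_0)=\rho_0$, I set $\tilde y=y_\infty+\rho v\in H^1_0$. Using the decomposition above, with $|\tilde y|^2\approx|a|^2(\tau-\tau_0)^2+\rho_0^2$ near $\tau_0$, the radial term $2\int(|y|')^2$ decreases by $\approx2\pi|a|\rho_0$ (the corner of $r$ at $\tau_0$ is rounded) while the angular term increases only by $\approx\tfrac{\pi}{2}|a|\rho_0$, for a net first--order gain $\approx-\tfrac{3\pi}{2}|a|\rho_0$ in $\Qcal$; since $\rho$ is transverse one has $\scalar{y_\infty}{v}=o(\tau-\tau_0)$, so the perturbations of $\|y_\infty\|_2^2$, $\Lcal$ and $\Rcal$ are of higher order once $\eta$ is fixed. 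Taking first $\eta$ small and then $\rho_0$ small, I obtain $\Bcal(\tilde y)<\Bcal(y_\infty)=\inf_{H^1_0}\Bcal$, a contradiction; hence $|y_\infty|>0$ on $(0,1)$.

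The main obstacle is this last competitor estimate: one must verify that the radial energy strictly decreases at first order in the amplitude $\rho_0$ while every remaining contribution -- the angular term and, above all, the nonlocal quantities $\Lcal$, $t_y$ and $\Rcal$ -- is controlled at higher order, which is what dictates the precise order of limits (fix the width $\eta$, then shrink $\rho_0$). A further delicate point is the topological obstruction when $d=1$: there a sign--changing zero cannot be removed by any local perturbation, and one must instead argue globally, optimizing the sign of $y_\infty$ on the (finitely many) nodal intervals -- on which $\Qcal$, $\Lcal$ and $t_y$ are unchanged and $\Bcal$ depends on the signs only through the affine term $\Rcal$ -- before lifting the remaining touching zeros.
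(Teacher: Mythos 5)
Your first two steps (uniform convergence of the minimizing sequence and the deduction that $y_\infty$ itself attains $\inf_{H^1_0}\Bcal$, via boundedness of $|y_n|$ in $H^1$ and identification of its weak limit) are a legitimate reduction, and in fact a slightly different starting point from the paper, which performs its surgery directly on the functions $y_n$ of the sequence and never needs to know beforehand that $y_\infty$ is a minimizer. The decisive gap is in the contradiction step. You set $a=y_\infty'(\tau_0)\neq 0$ and build a transverse bump whose first-order effect on the radial and angular parts of $\Qcal$ you compute from the expansion $y_\infty(\tau)=a(\tau-\tau_0)+o(\tau-\tau_0)$. But $y_\infty$ is only an $H^1_0$ function: its derivative is an $L^2$ class with no pointwise value at $\tau_0$, and no regularity can be imported from criticality, since $\Bcal$ need not even be G\^ateaux-differentiable at a function that vanishes somewhere (Remark \ref{rem:diffQ}); Proposition \ref{prop:EL_for_B} only gives regularity \emph{away} from zeros. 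Your dichotomy ``either $a\neq0$ exists, or $y_\infty$ is uniformly small near $\tau_0$'' is not exhaustive (e.g.\ a Lipschitz function vanishing only at $\tau_0$ with oscillating difference quotient), and the proposed treatment of the ``degenerate case'' by ``excising and rescaling as in Lemma \ref{lem:min_seq}'' does not transfer: that lemma's rescaling uses the Dirichlet condition at the endpoint $\tau=1$, whereas excising an \emph{interior} neighbourhood leaves two mismatched values $y(a)\neq y(b)$ that must be joined — which is precisely the problem. The zero set of $y_\infty$ may also contain a nondegenerate interval $[\tau^*,\tau^{**}]$, a case your construction does not address. Finally, the $d=1$ sign-optimization sketch still relies on a corner structure at the touching zero to produce a first-order gain, so it inherits the same unproved regularity.

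The paper's proof closes exactly this gap without any pointwise regularity. It isolates the connected component $[\tau^*,\tau^{**}]$ of the zero set, picks the last/first crossings $a_n<\tau^*$, $b_n>\tau^{**}$ of the level $\delta$ by $|y_n|$, and replaces $y_n$ on $(a_n,b_n)$ by $\delta$ times a constant-speed geodesic arc on $\S^{d-1}$. The gain is then purely quantitative: by Cauchy--Schwarz, the radial energy $y_n$ spends descending from $|y_n(a_n)|=\delta$ to $|y_n(\tau^*)|<\delta/10$ is at least $(9\delta/10)^2/(\tau^*-a_n)$, while the angular cost of the arc is at most $2\delta^2/(b_n-a_n)$ and all nonlocal terms change by $O(\delta^2)$; since $\tau^*-a_n$ and $b_n-\tau^{**}$ tend to $0$ (first $\delta\to0$, then $n\to\infty$), the gain $\delta^2\bigl(\tfrac{1}{\tau^*-a_n}+\tfrac{1}{b_n-\tau^{**}}\bigr)$ dominates. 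If you want to salvage your scheme, you should replace the first-order Taylor computation at $\tau_0$ by an estimate of this integral type, valid for arbitrary $H^1$ competitors; as written, the argument does not go through.
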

\begin{proof}
Recall that any minimizing sequence has a limit point, by Corollary \ref{cor:bondT}. By contradiction, let us assume that the limit function $y_\infty$ vanishes at some ${\tau}_0 \in
(0,1)$. We define the interval $[{\tau}^*,{\tau}^{**}]$ as the connected component of $\{{\tau}:y_\infty({\tau})=0\}$ containing ${\tau}_0$, in such a way that
\begin{equation}\label{eq:null_interval}
y_\infty|_{[{\tau}^*,{\tau}^{**}]}\equiv 0,\qquad
y_\infty|_{[a,b]}\not\equiv 0\text{ for any }[a,b]\supsetneq[{\tau}^*,{\tau}^{**}]
\end{equation}
(notice that it may happen that ${\tau}^*={\tau}^{**}={\tau}_0$). By Lemma \ref{lem:min_seq} and by the uniform convergence of the minimizing sequence, we deduce the existence of $\bar\delta>0$ such that
\[
\max_{[0,{\tau}^*]} |y_\infty({\tau})| \geq \bar\delta
\quad \text{and} \quad
\max_{[{\tau}^{**},1]} |y_\infty({\tau})| \geq \bar\delta,
\]
so that $0<{\tau}^*\leq {\tau}^{**}<1$. Let us now fix $\delta \in (0,\bar \delta)$ which will be specified in the following.
The uniform convergence of $y_n$ to $y_\infty$ guarantees that, for $n$ sufficiently large,
\begin{equation} \label{eq:delta/10}
\max_{[0,{\tau}^*]} |y_n({\tau})| > \delta, \quad
\max_{[{\tau}^{**},1]} |y_n({\tau})| > \delta, \quad
|y_n({\tau}^*)| < \frac{\delta}{10},
\quad \text{and} \quad
|y_n({\tau}^{**})| < \frac{\delta}{10}.
\end{equation}
Consequently, the following sequences are well defined
\[
\begin{split}
a_n=a_n(\delta) &:= \max \{{\tau} < {\tau}^* : |y_n({\tau})|= \delta\},
\\
b_n=b_n(\delta) &:= \min \{{\tau} > {\tau}^{**} : |y_n({\tau})|= \delta\};
\end{split}
\]
furthermore
\[
|y_n({\tau})| < \delta \text{ on } (a_n,b_n)
\qquad \text{and} \qquad
|y_n(a_n)| = |y_n(b_n)| = \delta.
\]
Note that, up to subsequences, we have
\[
a_n = a_n(\delta) \to a_\infty(\delta)
\quad \text{and} \quad
b_n = b_n(\delta) \to b_\infty(\delta),
\]
where, by uniform convergence, $|y_\infty(a_\infty)|=|y_\infty(b_\infty)|=\delta$, $|y_\infty|\leq \delta$ on
$(a_\infty,b_\infty)$.
Then \eqref{eq:null_interval} implies
\[
a_\infty(\delta) \to {\tau}^*
\quad \text{and} \quad
b_\infty(\delta) \to {\tau}^{**},
\qquad \text{as } \delta \to 0.
\]
Recalling that $(y_n)_n$ converges uniformly to $y_\infty$ we conclude that for any $\eps>0$
\begin{equation} \label{eq:small_intervals}
{\tau}^*-a_n <\eps
\quad \text{and} \quad
b_n - {\tau}^{**} < \eps,
\end{equation}
for $\delta$ sufficiently small and $n > N(\delta)$.

In order to obtain a contradiction we consider, for $n$ large, the sequence (see
Fig. \ref{fig:var2})
\begin{equation}\label{eq:var2}
\hat y_n({\tau}) :=
\begin{cases}
\delta \hat U_n({\tau}), \quad \text{on } (a_n,b_n), \\
y_n({\tau}), \quad \text{on } [0,1] \setminus (a_n,b_n),
\end{cases}
\end{equation}
where the path $\hat U_n({\tau})$, ${\tau} \in (a_n,b_n)$, traces the (shortest) arc of geodesic on the unitary sphere in $\R^d$ joining $y_n(a_n)/\delta$ with
$y_n(b_n)/\delta$ and $|\hat U'_n({\tau})|$ is constant.

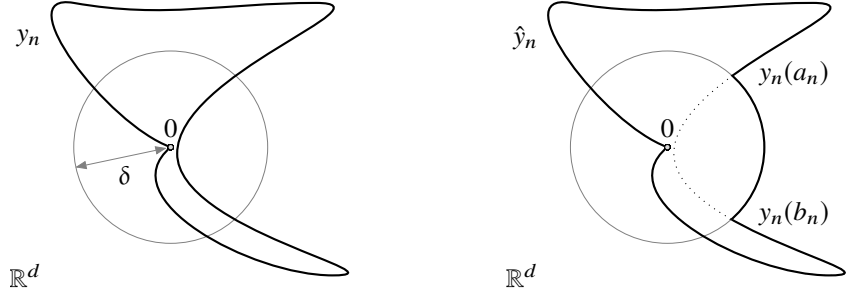
\begin{figure}[t]
\begin{center}
  \begin{tikzpicture}[baseline]
    \begin{axis}[
    disabledatascaling,
      width=.46\linewidth,
     axis equal={true},
    xmin=-.75, xmax=1.2,
    ymin=-1, ymax=1,
    axis lines=none,
    ]
\draw[very thin, gray] (0,0) circle[radius=.6];
\addplot[mark options={fill=gray!30!white}, only marks, mark size=1]
(0,0);
\draw[thick] (0,0) to[out=160, in=170] (-.6,.9) to[out=-10, in=180] (.9,.9) to[out=0, in=90] (.04,-.04)  to[out=270, in=0] (1.,-.8)  to[out=180, in=220] (0,0);
\draw[very thin, gray, <->] (-0.587,-.125) -- node[sloped,pos=.5,below,black] {$\delta$} (-0.02935,-.00625);
\draw (-.88,.68)  node {$y_n$};
\draw (-.9,-.8)  node {$\R^d$};
\draw (0,0)  node[above] {$0$};
\end{axis}
\end{tikzpicture}
\hspace{.08\linewidth}
  \begin{tikzpicture}[baseline]
    \begin{axis}[
    disabledatascaling,
      width=.46\linewidth,
     axis equal={true},
    xmin=-.75, xmax=1.2,
    ymin=-1, ymax=1,
    axis lines=none,
    ]
\draw[very thin, gray] (0,0) circle[radius=.6];
\addplot[mark options={fill=gray!30!white}, only marks, mark size=1]
(0,0);
\draw[dotted] (0,0) to[out=160, in=170] (-.6,.9) to[out=-10, in=180] (.9,.9) to[out=0, in=90] (.04,-.04)  to[out=270, in=0] (1.,-.8)  to[out=180, in=220] (0,0);
\begin{scope}
\path[clip]  (0.381,-.453) -- (0.3,-0.52) -- (0.02,-.35) -- (0.01,.4) -- (0.311,.587) --
(.506,.298) --  (1.1,0.5) -- (1.1,1) -- (-.8,1)
-- (-.8,-.9) -- (1.2,-.9) -- (1.2,-.3) -- cycle;
\draw[thick] (0,0) to[out=160, in=170] (-.6,.9) to[out=-10, in=180] (.9,.9) to[out=0, in=90] (.04,-.04)  to[out=270, in=0] (1.,-.8)  to[out=180, in=220] (0,0);
\end{scope}
\begin{scope}
\path[clip]
(.3,-.452) -- (.7,-.452) -- (.7,.678) -- (.3,.378) -- cycle;
\draw[thick] (0,0) circle[radius=.6];
\end{scope}
\draw (-.88,.68)  node {$\hat y_n$};
\draw (.78,.46)  node {$y_n(a_n)$};
\draw (.78,-.41)  node {$y_n(b_n)$};
\draw (-.9,-.8)  node {$\R^d$};
\draw (0,0)  node[above] {$0$};
\end{axis}
\end{tikzpicture}
\caption{test function for Proposition \ref{lem:min_seq}, as defined
in equation \eqref{eq:var2}.\label{fig:var2}}
\end{center}
\end{figure}
As in \eqref{eq:contr}, we claim to prove that $\Bcal(\hat y_n)$ strictly lowers $\Bcal(y_n)$ uniformly, at least for $n$ sufficiently large.
We argue similarly to the previous lemma estimating $\Lcal(\hat y_n)$, $\Qcal(\hat y_n)$, $\Rcal(\hat y_n)$ in terms of the same functionals evaluated at $y_n$.
In the present case the variation $\hat y_n$ differs from $y_n$ on the interval $(a_n,b_n)$ hence we have
\[
\|y_n\|_2^2 \leq \|\hat y_n\|_2^2\leq \|y_n\|_2^2 +\delta^2(b_n-a_n),
\]
so that, being $(a_n,b_n) \subset (0,1)$
\begin{equation}\label{eq:stima5}
0\leq \frac{1}{\Lcal(\hat y_n)} - \frac{1}{\Lcal(y_n)} \leq \frac{\delta^2}{{T}}(b_n - a_n)
\leq \frac{\delta^2}{{T}}
\end{equation}
and
\begin{equation}\label{eq:stima6}
\Lcal(y_n)(1-\beta_n) \leq \Lcal(\hat y_n) \leq \Lcal(y_n), \quad \beta_n = \delta^2\frac{(b_n-a_n)}{\|y_n\|_2^2+\delta^2(b_n-a_n)}.
\end{equation}
Furthermore, using the continuity of $p$ and equations \eqref{eq:stima5} and \eqref{eq:stima6} we obtain
\begin{equation}\label{eq:stima7}
\Rcal(\hat y_n) = \Rcal(y_n) + o(1) \text{ as } n \to +\infty.
\end{equation}
In order to compute the difference between the kinetic terms we introduce the radial and angular variables in $\R^d$; for any $y \in \R^d$ we write
\[
y = rU, \text{ with } r \geq 0, \text{ and } U \in \mathbb{S}^{d-1}.
\]
Given $y \in H^1_0(0,1;\R^d)$, we have that
$\scalar{U({\tau})}{U'({\tau})}=0$ and $|y({\tau})|'=r'({\tau})$,
hence
\[
\Qcal(y) = \frac12 \int_0^1 \left[ 4(r')^2 + r^2|U'|^2 \right].
\]
Since the radial part of $\hat y_n$ is constant and equal to $\delta$ on $(a_n,b_n)$, we
obtain
\begin{multline*}
\Qcal(\hat y_n) - \Qcal(y_n) = \frac12 \int_{a_n}^{b_n} \left[ \delta^2 |\hat U_n'|^2 -4(r'_n)^2 -r_n^2|U'_n|^2 \right] \leq \\
\leq \frac12 \delta^2 \frac{|U_n(b_n)-U_n(a_n)|^2}{b_n-a_n} -2\int_{a_n}^{b_n} (r'_n)^2
\leq 2\delta^2 \underbrace{\frac{1}{b_n-a_n}}_{(A)}-2 \underbrace{\int_{a_n}^{b_n}
(r'_n)^2}_{(B)}.
\end{multline*}
Now, on the one hand,
\[
4(A) = \min_{{\tau} \in (a_n,b_n)} \left(\frac{1}{{\tau}-a_n}+\frac{1}{b_n-{\tau}}\right)
\leq  \frac{1}{{\tau}^*-a_n}+\frac{1}{b_n-{\tau}^{**}};
\]
on the other hand, using Cauchy-Schwarz inequality and Eq. \eqref{eq:delta/10},
\[
\begin{split}
(B) &\geq \int_{a_n}^{{\tau}^*} (r'_n)^2  + \int_{{\tau}^{**}}^{b_n} (r'_n)^2  \geq
\frac{|r_n({\tau}^*)-r_n(a_n)|^2}{{\tau}^*-a_n}
+
\frac{|r_n(b_n)-r_n({\tau}^{**})|^2}{b_n-{\tau}^{**}}\\
&\geq
\left(\frac{9}{10}\right)^2 \delta^2
\left(\frac{1}{{\tau}^*-a_n}+\frac{1}{b_n-{\tau}^{**}}\right).
\end{split}
\]
We can then deduce
\begin{equation}\label{eq:stima8}
\Qcal(\hat y_n) - \Qcal(y_n) \leq \delta^2 \left[ \frac{1}{2} - 2\left(\frac{9}{10}\right)^2 \right]
\left(\frac{1}{{\tau}^*-a_n}+\frac{1}{b_n-{\tau}^{**}}\right)  < -\delta^2\left(\frac{1}{{\tau}^*-a_n}+\frac{1}{b_n-{\tau}^{**}}\right).
\end{equation}
Using Corollary \ref{cor:bondT} and Eqs. \eqref{eq:stima5}-\eqref{eq:stima6}-\eqref{eq:stima7}-\eqref{eq:stima8},
we can estimate the difference
\begin{multline*}
\Bcal(\hat y_n) - \Bcal(y_n) = \left[\Qcal(\hat y_n) - \Lcal(y_n)\Lcal(\hat y_n)\Rcal(\hat y_n)\right]\left(\frac{1}{\Lcal(\hat y_n)}-\frac{1}{\Lcal(y_n)}\right) + \\
\frac{1}{\Lcal(y_n)}\left[ \Qcal(\hat y_n) - \Qcal(y_n) \right] +
\Lcal(y_n) \left[\Rcal(\hat y_n) - \Rcal(y_n)\right] \leq  \\ \leq \delta^2
\left[ C_1 -C_2\left(\frac{1}{{\tau}^*-a_n}+\frac{1}{b_n-{\tau}^{**}}\right) \right],
\end{multline*}
where $C_1$ and $C_2$ are positive constants not depending on $\delta$ and $n$.
By virtue of Eq. \eqref{eq:small_intervals}, choosing $\delta>0$ sufficiently small, we contradict the minimality of the sequence $(y_n)_n$.
\end{proof}
In order to conclude the proof of Theorem \ref{thm:direct_method} we need the following quite general result.
\begin{lemma}\label{lem:wc_moduli}
Let $y_\infty \in H^1_0(0,1;\R^d)$ and $(y_n)_n \subset H^1_0(0,1;\R^d)$ be such that
\begin{enumerate}
\item[(i)] $y_n \rightharpoonup y_\infty$ in $H^1_0(0,1;\R^d)$;
\item[(ii)] $|Z|=0$, where $Z:=\left\{ {\tau} \in (0,1) : y_\infty({\tau})=0 \right\}$.
\end{enumerate}
Then $|y_n| \rightharpoonup |y_\infty|$ in $H^1_0(0,1;\R)$.
\end{lemma}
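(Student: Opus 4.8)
The plan is to exploit that $H^1_0(0,1;\R)$ is a Hilbert space, so that the desired weak convergence $|y_n|\rightharpoonup|y_\infty|$ amounts to the two statements $|y_n|\to|y_\infty|$ and $|y_n|'\rightharpoonup|y_\infty|'$ in $L^2(0,1)$. The first comes for free: by the compact embedding $H^1_0\hookrightarrow C([0,1])$, hypothesis (i) gives $y_n\to y_\infty$ uniformly, whence $|y_n|\to|y_\infty|$ uniformly and in particular in $L^2$. Before turning to the derivatives I would record two preliminary facts: that $|y_\infty|\in H^1_0(0,1;\R)$, by the remark around \eqref{eq:weak_der} together with $|y_\infty|(0)=|y_\infty|(1)=0$; and that the pointwise bound $\lvert |y|'\rvert\le|y'|$, immediate from \eqref{eq:weak_der} and Cauchy--Schwarz, gives $\||y_n|'\|_2\le\|y_n'\|_2$. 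Since a weakly convergent sequence is bounded, $(y_n)$ is bounded in $H^1_0$, hence so is $(|y_n|)$; in particular $(|y_n|')$ is bounded in $L^2$.

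The heart of the matter is the weak $L^2$ convergence of the derivatives, and this is where hypothesis (ii) should enter. Using \eqref{eq:weak_der}, and recalling the standard fact that $y_n'=0$ a.e.\ on $\{y_n=0\}$ (so that the direction field is irrelevant there), I would write $|y_n|'=\scalar{U_n}{y_n'}$ with $U_n:=y_n/|y_n|$, and similarly $|y_\infty|'=\scalar{U_\infty}{y_\infty'}$ with $U_\infty:=y_\infty/|y_\infty|$. On the complement of $Z$, which has full measure by (ii), the uniform convergence $y_n\to y_\infty$ together with $y_\infty\neq0$ forces $U_n\to U_\infty$ pointwise a.e.; since $|U_n|\le1$, dominated convergence then yields $U_n\to U_\infty$ strongly in $L^2$. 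For a test function $\phi\in L^\infty(0,1)$ I would split
\[
\int_0^1\bigl(|y_n|'-|y_\infty|'\bigr)\phi=\int_0^1\scalar{U_n-U_\infty}{y_n'}\phi+\int_0^1\scalar{U_\infty}{y_n'-y_\infty'}\phi.
\]
The first term is bounded by $\|\phi\|_\infty\,\|U_n-U_\infty\|_2\,\|y_n'\|_2\to0$ (strong times bounded); the second tends to $0$ because $U_\infty\phi\in L^2(0,1;\R^d)$ is a fixed vector and $y_n'\rightharpoonup y_\infty'$ in $L^2$ by (i). As $L^\infty$ is dense in $L^2$ and $(|y_n|')$ is bounded in $L^2$, this gives $|y_n|'\rightharpoonup|y_\infty|'$, and combined with the first paragraph the lemma follows.

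The step I expect to be the main obstacle is precisely this treatment of the unit direction field $U_n=y_n/|y_n|$ near the zeros of the limit, where the direction is genuinely indeterminate; it is only the measure-zero condition (ii) that allows one to upgrade pointwise convergence of $U_n$ to $L^2$ convergence. I would also keep in mind a duality shortcut that avoids (ii) altogether: for $\phi\in\Dcal(0,1)$ one has $\int_0^1|y_n|'\phi=-\int_0^1|y_n|\phi'\to-\int_0^1|y_\infty|\phi'=\int_0^1|y_\infty|'\phi$ by uniform convergence, so that together with the $L^2$-boundedness of $(|y_n|')$ this already identifies the weak limit of the derivatives. I would use this as a sanity check on the computation above.
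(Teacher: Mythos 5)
Your proof is correct, and while its core is the same decomposition the paper uses, the technical route differs in a way worth noting. The paper also splits $\int(|y_n|'-|y_\infty|')\varphi$ into the two terms $I_1=\int\scalar{U_n-U_\infty}{y_n'}\varphi$ and $I_2=\int\scalar{U_\infty}{y_n'-y_\infty'}\varphi$, but it localizes first: it removes a set of measure $<\eps$ containing $Z$, applies Egorov's theorem to get \emph{uniform} convergence of $U_n$ on a large compact set $S_\eps$, and controls the complement via absolute continuity of the integral, paying for this with an $\eps$-bookkeeping layer. You instead extend $U_n$ arbitrarily on $\{y_n=0\}$ (legitimately, since $y_n'=0$ a.e.\ there), observe that (ii) plus uniform convergence gives $U_n\to U_\infty$ a.e., and upgrade this to strong $L^2$ convergence by dominated convergence with the constant dominating function $2$; this makes the estimate of $I_1$ a one-line Cauchy--Schwarz and removes Egorov and the $K_\eps,S_\eps$ machinery entirely. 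Your closing ``sanity check'' deserves more credit than you give it: the identity $\int_0^1|y_n|'\phi=-\int_0^1|y_n|\phi'\to-\int_0^1|y_\infty|\phi'=\int_0^1|y_\infty|'\phi$ for $\phi\in\Dcal(0,1)$, combined with the bound $\||y_n|'\|_2\le\|y_n'\|_2\le C$ and density of $\Dcal$ in $L^2$, is already a complete proof of the weak convergence of the derivatives --- and it never invokes hypothesis (ii). So the lemma holds without assumption (ii), which is in fact superfluous (harmlessly so for the paper, since Proposition \ref{prop:min_seq} supplies (ii) anyway where the lemma is applied). The only point to state explicitly in your main argument is the standard fact $y_n'=0$ a.e.\ on $\{y_n=0\}$ (componentwise Stampacchia), which you do flag; with that, everything checks out.
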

\begin{proof}
We have already observed that $|y_\infty|,|y_n|
\in H^1_0(0,1;\R)$, for any $n$; we denote $|y_\infty|',|y_n|'$ their weak derivatives, as in
equation \eqref{eq:weak_der}. Our claim is to prove that for any $\psi \in H^1_0(0,1)$
\[
\left\langle |y_n|,\psi \right\rangle_{H^1_0(0,1)} \to \left\langle |y_\infty|,\psi \right\rangle_{H^1_0(0,1)}, \quad \text{as } n \to +\infty,
\]
or, equivalently, that given any $\varphi \in L^2(0,1)$ and  $\varepsilon >0$,
\[
\limsup_{n \to +\infty}\left|\int_0^1 |y_n|' \varphi - \int_0^1 |y_\infty|' \varphi\right| < \eps.
\]
Since the measure of the set $Z$ is zero, we can find a compact set $K_\eps \subset (0,1)$ such that
\[
K_\eps \cap Z = \emptyset
\quad \text{ and } \quad
|(0,1)\setminus K_\eps| < \eps.
\]
The uniform convergence of $(y_n)_n$ to $y_\infty$ implies that there exists $N=N(\eps)$ such that for any $n \geq N$
\[
y_n({\tau}) \neq 0,\quad
\forall {\tau} \in K_\epsilon,
\]
and hence
\[
\frac{y_n({\tau})}{|y_n({\tau})|} \to \frac{y_\infty({\tau})}{|y_\infty({\tau})|}, \quad
\forall {\tau} \in K_\epsilon.
\]
The pointwise convergence of the sequence $\frac{y_n}{|y_n|}$ to $\frac{y_\infty}{|y_\infty|}$ and Egorov's Theorem guarantee the existence of a measurable set $S_\eps \subset K_\eps$ such that
\[
|K_\eps \setminus S_\eps| < \eps
\]
and
\begin{equation}\label{eq:uniform_conv}
\frac{y_n}{|y_n|} \to \frac{y_\infty}{|y_\infty|},
\quad \text{uniformly on $S_\eps$}.
\end{equation}

Since weakly convergent sequences are bounded, there exists $C>0$
such that $\|y_n\|\le C$ for each $n$. We deduce that, given $\varphi \in L^2(0,1)$,
\[
\left| \int_0^1 |y_n|' \varphi - \int_{S_\eps} |y_n|' \varphi \right| \leq  \int_{(0,1)\setminus S_\eps} \left||y_n|'\right|\left| \varphi \right|
\leq C\|\varphi\|_{L^2\left((0,1)\setminus S_\eps \right)},
\]
and the continuity of the integral of a measurable function with respect to the measure of
the domain implies that
\[
\|\varphi\|_{L^2\left((0,1)\setminus S_\eps \right)} \leq C(\eps),
\]
where $C(\eps)$ (which actually depends on $\varphi$) vanishes as $\eps \to 0$.

Since $|y_\infty|'\varphi$ is integrable on $(0,1)$, there exists $c(\eps) \to 0$ as $\eps \to 0$ such that

\[
\left| \int_0^1 |y_\infty|' \varphi - \int_{S_\eps} |y_\infty|' \varphi \right| \leq c(\eps).
\]
We deduce
\[
\left|\int_0^1 |y_n|' \varphi - \int_0^1 |y_\infty|' \varphi\right|
\leq
CC(\eps) + c(\eps) + \left|\int_{S_\eps} (|y_n|'-|y_\infty|') \varphi \right|.
\]
In order to estimate the last term we compute the weak derivative of the absolute value as
\[
\int_{S_\eps} (|y_n|'-|y_\infty|') \varphi =
\underbrace{
\int_{S_\eps}  \left \langle  \frac{y_n}{|y_n|}-\frac{y_\infty}{|y_\infty|},y_n' \right\rangle \varphi}_{I_1}
+\underbrace{
\int_{S_\eps}  \left \langle  \frac{y_\infty}{|y_\infty|},y_n'-y'_\infty \right\rangle \varphi
}_{I_2}.
\]
By \eqref{eq:uniform_conv},
\[
|I_1| \leq \left\| \frac{y_n}{|y_n|}-\frac{y_\infty}{|y_\infty|} \right\|_{L^{\infty}(S_\eps)}C \|\varphi\|_{L^2} \to 0, \quad \text{as } n \to +\infty.
\]
Furthermore, denoting with $\chi_{S_\eps}$ the characteristic function of $S_\eps$, we write
\[
I_2 = \int_0^1 \langle  y_n'-y'_\infty, \psi \rangle, \quad \text{where }
\psi = \chi_{S_\eps}\varphi \frac{y_\infty}{|y_\infty|}
\]
and this quantity tends to 0 by weak convergence of $(y_n)_n$ to $y_\infty$.
\end{proof}
\begin{proof}[Proof of Theorem \ref{thm:direct_method}]
Let $(y_n)_n \subset H^1_0(0,1;\R^d)$ be a minimizing sequence for $\Bcal$ such that $y_n$ tends to $y_{\infty}$ weakly in $H^1_0(0,1;\R^d)$.
By uniform convergence, we deduce that $\Lcal(y_n) \to \Lcal(y_\infty)$ and, by dominated convergence, $\Rcal(y_n) \to \Rcal(y_\infty)$.
Furthermore, since $\Bcal(y_n)\leq M$ for some $M$, by Corollary \ref{cor:bondT}, $\Lcal(y_\infty) \neq 0$, and we also obtain, by the w.l.s.c. of the norm
\[
\frac{1}{\Lcal(y_\infty)} \|y'_\infty\|^2_2  \leq
\liminf_{n \to +\infty} \frac{1}{\Lcal(y_n)}
\|y'_n\|^2_2.
\]
To conclude we need to show that
\[
\frac{1}{\Lcal(y_\infty)} \int_0^1\left(|y_\infty|'\right)^2 \leq
\liminf_{n \to +\infty} \frac{1}{\Lcal(y_n)}
\int_0^1 \left(|y_n|'\right)^2.
\]
or, equivalently, that $\| |y_\infty|' \|_2
\leq \liminf \| |y_n|' \|_2$; we conclude applying Lemma \ref{lem:wc_moduli}, Proposition \ref{prop:min_seq}, Lemma \ref{lem:wc_moduli}, Proposition \ref{prop:EL_for_B} (and recalling 
the discussion after Definition \ref{def:gen_sol}).
\end{proof}

%

%
\section{Periodic generalized solutions in dimension 2
}\label{sec:periodicBLC}
%

In this section we assume that $p$ is $C^1$ on $\R$ and $T$-periodic, and we look for periodic generalized solutions to \eqref{eq:perturbed_kepler} in dimension $d=2$. To do this, one would be tempted to look for critical points of the functional $\BLC\from W_1 \to \R\cup\{+\infty\}$, where $W_1$ is the space of $1$-periodic loops:
\[
W_1 := \left\{ z\in  H^1([0,1];\C) : z(1) = z(0)\right\}.
\]
As mentioned in the introduction, the main obstruction in this direction is that the Palais-Smale condition is not satisfied
in this setting.

We recall that a sequence $(z_n)_n$ is a (PS) sequence at level $\sigma$ for $\BLC$ if
\[
\BLC(z_n)=\sigma + o(1),\qquad
\|\BLC'(z_n)\|=o(1) \text{ as } n\to\infty,
\]
and that $\BLC$ satisfies the (PS) condition at level $\sigma$ if any such a sequence
admits a strongly convergent subsequence.

To show that $\BLC\from W_1 \to \R\cup\{+\infty\}$ does not satisfy the (PS) condition, we take $p\equiv 0$ and observe that any sequence $(z_n)_n$ of constant
functions with $|z_n|\to+\infty$ satisfies the  (PS) condition. Indeed,
from the definition of $\BLC$ and Remark \ref{rem:BLC'},
\[
\BLC(z_n) = \Lcal(z_n),\qquad \BLC'(z_n)[v] = -\frac{2}{T} \Lcal(z_n)^2
\int_0^1\scalar{z_n}{v}.
\]
In particular, $\|\BLC'(z_n)\|\le 2T/|z_n|^3$.

To recover the Palais-Smale property, we will search
for critical points of $\BLC$ in the space of anti-periodic functions
\[
\splc := \left\{ z\in  H^1([0,1];\C) : z(1) = - z(0)\right\}.
\]
Notice that if $z \in\splc$ then $|z|^2$ can be extended as a $1$-periodic function;
as a consequence, the function $t_z$ defined in \eqref{eq:tutti gli altriLC} is such that
\[
t_z(\tau+k)= t_z(\tau)+kT, \qquad \forall k \in \Z,
\]
and finally the function $\tau \mapsto p \circ t_z$ is 1-periodic and $C^1(\R)$.

We will show the following result.
\begin{theorem}\label{thm:periodicLC}
Let $p\in C^1(\R/(T\Z);\R^2)$. Then there exist infinitely many distinct critical points of $\BLC$ in the
space $\splc$ of antiperiodic orbits, corresponding to infinitely many $T$-periodic generalized solutions of the perturbed Kepler problem \eqref{eq:perturbed_kepler} in dimension $d=2$.
\end{theorem}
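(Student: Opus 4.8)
The plan is to produce the critical points by Ljusternik--Schnirelmann theory for the even functional $\BLC$ on the punctured space $\splc\setminus\{0\}$, exploiting that $\BLC$ is coercive and bounded below while blowing up at the origin. First I would record the three structural facts. \emph{Evenness}: since $\Lcal$ and $\Qcal$ depend only on $|z|$ and $|z'|$, and $\Rcal$ only on $|z|^2$ and $z^2$ (both insensitive to $z\mapsto-z$), one has $\BLC(-z)=\BLC(z)$, and $\splc$ is a linear space invariant under the antipodal map. \emph{Coercivity and lower bound}: adapting Lemma \ref{lem:coercive} to the complex setting gives $\BLC(z)\ge\|z'\|_2-C$, so $\{\BLC\le M\}$ is bounded in $H^1$; at the same time $\BLC(z)\ge\Lcal(z)(1+\Rcal(z))\to+\infty$ as $\|z\|_2\to0$, so sublevel sets are bounded away from $0$. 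Here the choice of $\splc$ is decisive through the Poincar\'e inequality $\|z\|_2\le\pi^{-1}\|z'\|_2$, valid for antiperiodic functions because their Fourier frequencies are bounded below by $\pi$; this is exactly what rules out the escaping constant sequences that destroy (PS) on $W_1$.

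The heart of the argument is the Palais--Smale condition. Given $(z_n)_n$ with $\BLC(z_n)\to\sigma$ and $\BLC'(z_n)\to0$, coercivity forces $\|z_n'\|_2$ bounded and $\Lcal(z_n)$ bounded between two positive constants, whence, via Poincar\'e, $(z_n)_n$ is bounded in $H^1$ and bounded away from $0$ in $L^2$. Passing to a subsequence, $z_n\rightharpoonup z_\infty$ in $H^1$ and uniformly, with $z_\infty\neq0$. Testing $\BLC'(z_n)$ with $z_n-z_\infty$ and using the expression in Remark \ref{rem:BLC'}, every term except $\int_0^1\langle z_n',z_n'-z_\infty'\rangle$ passes to the limit by uniform convergence, including the nonlocal contributions carried by $t_{z_n}$ and $\Delta_{z_n}$. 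Since $\Lcal(z_n)$ is bounded below, one deduces $\int_0^1\langle z_n',z_n'-z_\infty'\rangle\to0$, and together with weak convergence this yields $\|z_n'-z_\infty'\|_2\to0$, so $z_n\to z_\infty$ strongly. I expect the continuous dependence of the reparametrization $t_z$ and of $\Delta_z$ on $z$ in the $C^0$ topology to require the most care, since these terms are genuinely nonlocal; this is the main obstacle.

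With (PS), evenness and boundedness below in hand, I would set $c_k=\inf_{\gamma(A)\ge k}\sup_{z\in A}\BLC(z)$, the infimum taken over compact symmetric $A\subset\splc\setminus\{0\}$ of Krasnoselskii genus at least $k$. Standard deformation arguments on $\splc\setminus\{0\}$ (legitimate because deformations may be kept inside a sublevel set, hence away from $0$) show each $c_k$ is a critical value. To prove $c_k\to+\infty$ I would invoke the intersection lemma underlying the Fountain Theorem: letting $Z_m$ be the closed subspace spanned by the antiperiodic modes of frequency $\ge m\pi$ (of finite codimension), any set of genus $\ge k$ meets some $Z_{m(k)}$ with $m(k)\to\infty$. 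On $Z_m$ one has $\|z'\|_2\ge m\pi\|z\|_2$, and combining this with the two lower bounds above and optimizing in $\|z\|_2$ (at scale $\|z\|_2\sim m^{-1/3}$) forces $\BLC$ to be of order $m^{2/3}$ there. Hence $\sup_A\BLC\to\infty$ and $c_k\to+\infty$, yielding infinitely many distinct critical values, and thus infinitely many critical points $z_k\in\splc$.

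Finally I would translate these into solutions. Each $z_k$ is critical for $\BLC$ with respect to $\Dcal(0,1;\C)\subset\splc$, so Proposition \ref{prop:EL_for_B_LC} gives that $x_k(t)=z_k^2(\tau_{z_k}(t))$ is a generalized solution of \eqref{eq:perturbed_kepler} on $(0,T)$. Since $z_k$ is antiperiodic, $z_k^2$ and $|z_k|^2$ are $1$-periodic and $t_{z_k}(\tau+1)=t_{z_k}(\tau)+T$, so $x_k$ extends to a $T$-periodic generalized solution; the collision condition at $t\equiv0$ follows from the continuity-of-energy-and-direction argument at the end of Proposition \ref{prop:EL_for_B_LC}, the left and right limits agreeing because $x_k=z_k^2$ is insensitive to the sign change imposed by antiperiodicity. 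Distinctness is ensured by the identity $\BLC(z_k)=\Acal(x_k)$: the infinitely many distinct critical values $c_k\to+\infty$ correspond to infinitely many distinct action levels, hence to infinitely many geometrically distinct periodic generalized solutions.
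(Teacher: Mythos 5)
Your argument is correct in substance, but it diverges from the paper's proof at the multiplicity step, and the comparison is worth making explicit. The paper never proves that the genus levels $\sigma_m$ of $\BLC$ diverge; it only shows each $\sigma_m$ is finite and that (PS) holds, and then invokes the full strength of Proposition \ref{prop:AM}: if two consecutive levels coincide there are already infinitely many critical points at that level. Since coinciding levels are then possible, the paper must separately show that distinct critical points (not related by $z\mapsto -z$) give distinct solutions $x$, which it does at the end of the proof of Theorem \ref{thm:periodicLC} by an ODE-uniqueness argument: if $x_1=x_2$ then $z_1,z_2$ solve the same linear second-order equation with proportional data at a non-collision instant, forcing $z_1=\pm z_2$. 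You instead prove $c_k\to+\infty$ via a Fountain-type intersection lemma on the Fourier decomposition of $\splc$ (any compact symmetric set of genus $\ge k$ meets the closed finite-codimension subspace $Z_{m(k)}$ of high antiperiodic modes, where the improved Poincar\'e inequality $\|z'\|_2\ge m\pi\|z\|_2$ pushes $\BLC$ up to order $m^{2/3}$); distinctness of the $x_k$ then follows for free from $\Acal(x_k)=\BLC(z_k)=c_k$. This is essentially the strategy the paper reserves for the three-dimensional case (Lemmas \ref{lem:pre_sigma>0} and \ref{lem:sigma>0}, with point evaluations in place of Fourier modes, and the final line of the proof of Theorem \ref{thm:periodicKS}), and it buys a stronger statement (unbounded critical levels) at the cost of the extra intersection lemma; the paper's route for $d=2$ is more economical but needs the uniqueness argument. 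Two small points you gloss over: the identity $\Acal(x_k)=\BLC(z_k)$, derived only formally in Section \ref{sec:euristic}, should be justified for critical points (it holds because collisions are finitely many and the change of variables is legitimate on each collisionless subinterval); and for $z(0)\neq 0$ the $T$-periodicity of $x$ requires $\dot x(T)=\dot x(0)$, which uses the natural boundary condition $z'(1)=-z'(0)$ (Lemma \ref{lem:boundcond_LC}) through the identity $z(1)z'(1)=z(0)z'(0)$, not merely the periodicity of $z^2$ and $t_z$.
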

Notice that the functional $\BLC$ is even. In order to prove the theorem, we will show that it satisfies the Palais-Smale (PS) condition at every level, and that it is bounded below. This will allow to exploit the theory of Krasnoselskii's genus, which we briefly recall here below (we follow \cite[Ch. 10]{AM}).

Let $\Acal:=\{A\in \splc\setminus\{0\}: A=-A, \ A\text{ is closed}\}$. The \emph{genus} of $A$ is defined as
\[
\gamma(A) := \inf\{n:\exists \phi \in C(A;\R^n\setminus\{0\}),\ \phi\text{ odd}\};
\]
if such a $\phi$ does not exist we define $\gamma(A) = +\infty$, while $\gamma(\emptyset) = 0$.
Moreover, let
\[
\Acal_m :=\{A \subset \Acal: A\text{ is compact and }\gamma(A)\ge m\},\qquad
\sigma_m := \inf_{A\in\Acal_m} \sup_{A} \BLC.
\]
We are going to exploit the following well-known result.
\begin{proposition}[{\cite[Prop. 10.8]{AM}}]\label{prop:AM}
Each finite $\sigma_m$ is a critical level for $\BLC$ provided the (PS) condition holds at level $\sigma_m$. Moreover, if $\sigma_m = \sigma_{m+1}$ for some $m$, then there exist infinitely many critical points at level $\sigma_m$.
\end{proposition}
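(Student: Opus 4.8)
The statement is the classical Krasnoselskii genus minimax principle, and the plan is to reproduce its proof in our setting, the two ingredients being the abstract properties of the genus $\gamma$ together with an \emph{equivariant (odd) deformation lemma} for $\BLC$. I would first record the properties of the genus to be used: (i) \emph{monotonicity}, namely if there is an odd continuous map $A\to B$ then $\gamma(A)\le\gamma(B)$ (in particular $A\subset B\Rightarrow\gamma(A)\le\gamma(B)$); (ii) \emph{subadditivity}, $\gamma(A\cup B)\le\gamma(A)+\gamma(B)$; (iii) \emph{continuity}, any compact $A\in\Acal$ admits a closed symmetric neighbourhood $N$ (disjoint from $0$) with $\gamma(N)=\gamma(A)$; and (iv) every nonempty finite set in $\Acal$ has genus $1$, so $\gamma(A)\ge2$ forces $A$ to be infinite. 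Next I would note two facts specific to $\BLC$: since $\Lcal(z)=T\|z\|_2^{-2}\to+\infty$ as $z\to0$, every sublevel set $\{\BLC\le c\}$ with $c$ finite is bounded away from $0$, hence lies in the open manifold $\splc\setminus\{0\}$ where $\BLC$ is $C^1$ and even; and, by (PS), the critical set $K_c:=\{z:\BLC(z)=c,\ \BLC'(z)=0\}$ is compact, symmetric and bounded away from $0$. These remarks let me invoke the odd deformation lemma: if (PS) holds at a finite level $c$, then for every symmetric open neighbourhood $U$ of $K_c$ there exist $\eps>0$ and an odd continuous map $\eta\from\splc\setminus\{0\}\to\splc\setminus\{0\}$ with $\eta\big(\{\BLC\le c+\eps\}\setminus U\big)\subset\{\BLC\le c-\eps\}$; when $K_c=\emptyset$ one takes $U=\emptyset$.

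For the first assertion I would argue by contradiction. Suppose $c:=\sigma_m$ is finite but not a critical value, so $K_c=\emptyset$. The deformation lemma (with $U=\emptyset$) gives $\eps>0$ and an odd $\eta$ with $\eta(\{\BLC\le c+\eps\})\subset\{\BLC\le c-\eps\}$. By definition of $\sigma_m$ as an infimum there is $A\in\Acal_m$ with $\sup_A\BLC\le c+\eps$, i.e.\ $A\subset\{\BLC\le c+\eps\}$. Then $A':=\eta(A)$ is compact (continuous image of a compact set), symmetric (image of a symmetric set under an odd map), contained in a sublevel set and hence bounded away from $0$, so $A'\in\Acal$; moreover $\eta|_A\from A\to A'$ is odd and onto, whence $\gamma(A')\ge\gamma(A)\ge m$ by (i), so $A'\in\Acal_m$. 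But $\sup_{A'}\BLC\le c-\eps<\sigma_m$ contradicts the definition of $\sigma_m$. Hence $\sigma_m$ is a critical level.

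For the multiplicity statement I would assume $\sigma_m=\sigma_{m+1}=:c$ and, by contradiction, that $\gamma(K_c)\le1$. By (iii) I pick a closed symmetric neighbourhood $N$ of $K_c$ with $\gamma(N)\le1$ and let $U$ be its interior; the deformation lemma yields $\eps>0$ and an odd $\eta$ with $\eta\big(\{\BLC\le c+\eps\}\setminus U\big)\subset\{\BLC\le c-\eps\}$. Choose $A\in\Acal_{m+1}$ with $A\subset\{\BLC\le c+\eps\}$. Since $A$ is compact and $U$ open, $A\setminus U$ is compact and symmetric, and from $A\subset N\cup(A\setminus U)$ together with (i) and (ii) I get $\gamma(A)\le\gamma(N)+\gamma(A\setminus U)\le1+\gamma(A\setminus U)$, so $\gamma(A\setminus U)\ge(m+1)-1=m$. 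Applying $\eta$ exactly as in the first part produces $A'':=\eta(A\setminus U)\in\Acal_m$ with $\sup_{A''}\BLC\le c-\eps<\sigma_m$, a contradiction. Hence $\gamma(K_c)\ge2$ and, by (iv), $K_c$ is infinite, giving infinitely many critical points at level $\sigma_m$.

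The only genuinely delicate point is the equivariant deformation lemma itself: one must construct an odd negative pseudo-gradient flow on the manifold $\splc\setminus\{0\}$, check that the flow cannot reach $0$ (guaranteed by $\BLC\to+\infty$ there, so trajectories stay in a fixed sublevel set bounded away from $0$), and use (PS) to obtain a uniform lower bound on $\|\BLC'\|$ away from the prescribed neighbourhood of $K_c$, which lets the flow lower $\BLC$ by the fixed amount $2\eps$ in finite time while fixing the complement of $\{|\BLC-c|<\eps\}$. Everything else is a formal consequence of properties (i)--(iv), whose verification in the present Hilbert-space setting is entirely standard; see \cite[Ch.~10]{AM}.
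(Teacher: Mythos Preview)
Your proof is correct and follows the same approach as the paper: the paper does not reprove this proposition but cites \cite[Prop.~10.8]{AM} and adds only a short discussion explaining that, although $\BLC$ is $C^1$ merely on $\splc\setminus\{0\}$, the property $\BLC(z)\to+\infty$ as $\|z\|_2\to0$ ensures that finite sublevel sets are bounded away from $0$, so the standard equivariant deformation argument goes through. You have reproduced precisely this classical genus argument (deformation lemma plus monotonicity/subadditivity/continuity of $\gamma$) in full detail, and you correctly single out the same delicate point the paper emphasizes, namely that the pseudo-gradient flow stays away from the origin.
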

The proof of this result relies on the fact that if a functional satisfies the Palais-Smale condition at some non-critical level $\sigma$, then it is possible to continuously deforme a $(\sigma+\eps)$-sublevel into a $(\sigma-\eps)$-one (and this deformation can be done preserving symmetry). In the previous context this would contradict the minimax definition of the levels
$\sigma_m$, since the genus of a set is not decreasing under continuous deformations.

Actually, even though the previous result was originally stated for functionals $J$ which are $C^1$  in the whole space and such that $0$ is not a critical point of $J$ at level $\sigma_m$, it readily applies to $\BLC$, which is $C^1$ only outside the origin but has the strong property of continuity at $z=0$,
\[
\BLC(z)\to+\infty\qquad\text{as }\|z\|_2\to0.
\]
Therefore, for each $a<b<+\infty$, the sets $\left\{z \in \splc : a \le \BLC(z) \le b\right\}$
are closed in $\splc$ and the deformation argument applies with no restriction.

In order to apply Proposition \ref{prop:AM} we need some preliminary lemmas.
\begin{lemma}\label{lem:GN_antiper}
For every $z\in\splc$,
\[
\|z\|_{\infty}^2 \le 2 \|z\|_{2}\|z'\|_{2}.
\]
\end{lemma}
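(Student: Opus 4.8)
The plan is to reduce to the classical one--dimensional interpolation estimate $\|w\|_\infty^2\le 2\|w\|_2\|w'\|_2$, valid for any $H^1$ function vanishing at a point, which is proved by integrating $w^2$ starting from such a zero. The obstacle is that a $\C$--valued antiperiodic function need not vanish anywhere: for instance $z(\tau)=e^{i\pi\tau}$ lies in $\splc$ but has $|z|\equiv1$. Hence the standard argument cannot be applied to $z$ itself. The remedy is to project $z$ onto the single direction realizing $\|z\|_\infty$, producing a \emph{real} antiperiodic scalar function, and such a function always has a zero.

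Concretely, assume $z\not\equiv0$ and choose $\tau_0\in[0,1]$ with $|z(\tau_0)|=\|z\|_\infty=:M>0$. Set $U:=z(\tau_0)/M$, so that $U\in\C\cong\R^2$ with $|U|=1$, and define the real--valued function $\phi(\tau):=\scalar{z(\tau)}{U}$. Then $\phi\in H^1(0,1)$ with $\phi'=\scalar{z'}{U}$ (since $U$ is constant), and $\phi(\tau_0)=M$. Because $z(1)=-z(0)$, the function $\phi$ is antiperiodic, $\phi(1)=-\phi(0)$.

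Since $\phi$ is real, continuous and satisfies $\phi(0)=-\phi(1)$, either $\phi(0)=0$ or $\phi(0)$ and $\phi(1)$ have opposite signs; in both cases the intermediate value theorem produces a point $\tau_1\in[0,1]$ with $\phi(\tau_1)=0$. The fundamental theorem of calculus then gives
\[
M^2=\phi(\tau_0)^2-\phi(\tau_1)^2=2\int_{\tau_1}^{\tau_0}\phi\,\phi'.
\]
Using the pointwise bounds $|\phi|\le|z|$ and $|\phi'|=|\scalar{z'}{U}|\le|z'|$, both consequences of Cauchy--Schwarz in $\R^2$ together with $|U|=1$, and recalling that the integration interval is contained in $[0,1]$ (its orientation being irrelevant once absolute values are taken), I obtain $M^2\le 2\int_0^1|z|\,|z'|$. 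A final application of Cauchy--Schwarz yields $M^2\le 2\|z\|_2\|z'\|_2$, which is the asserted inequality.

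Every step here is elementary; the only real content is the projection trick of the first paragraph, after which one is left with the textbook Gagliardo--Nirenberg computation. I therefore expect no genuine difficulty beyond recognizing that antiperiodicity must be exploited through a scalar projection rather than through a (possibly nonexistent) zero of $z$ itself.
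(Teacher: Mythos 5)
Your proof is correct, but it takes a genuinely different route from the paper's. You reduce to the scalar case: projecting $z$ onto the fixed direction $U=z(\tau_0)/\|z\|_\infty$ produces a real antiperiodic function $\phi=\scalar{z}{U}$, which must vanish by the intermediate value theorem, and then the textbook computation $\phi(\tau_0)^2=2\int_{\tau_1}^{\tau_0}\phi\,\phi'$ together with $|\phi|\le|z|$, $|\phi'|\le|z'|$ closes the argument. The paper instead works with the $\R^2$-valued function $|z|z$: antiperiodicity gives $|z(1)|z(1)+|z(0)|z(0)=0$, so $2|z(\tau)|z(\tau)$ equals the difference of the increments of $|z|z$ over $[0,\tau]$ and $[\tau,1]$, each of which is bounded by $\int_0^1 2|z||z'|\le 2\|z\|_2\|z'\|_2$; no zero of anything need be located. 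Both arguments exploit antiperiodicity in an essential way and yield the same constant $2$; your diagnosis that one cannot argue from a zero of $z$ itself (e.g.\ $z(\tau)=e^{i\pi\tau}$) is exactly right, and your projection trick is the more standard remedy, while the paper's manipulation of $|z|z$ is slightly slicker in that it bypasses the intermediate value theorem entirely.
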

\begin{proof}
To start with we notice that, for every $\tau_1\le\tau_2$,
\[
\Big| |z(\tau_2)|z(\tau_2) - |z(\tau_1)|z(\tau_1) \Big| \le
 \int_{\tau_1}^{\tau_2}\left| \frac{d}{d\tau}(|z|z) \right|\,d\tau \le
 \int_{\tau_1}^{\tau_2}2|z||z'|\,d\tau \le 2\|z\|_2\|z'\|_2
\]
(recall \eqref{eq:weak_der}).
Since $|z(1)|z(1) + |z(0)|z(0) = 0$ we obtain, for every $\tau\in[0,1]$,
\[
\begin{split}
2|z(\tau)|^2 \le \Big| |z(\tau)|z(\tau) - |z(0)|z (0)\Big| + \Big| |z(1)|z(1) - |z(\tau)|z(\tau) \Big| \le 4\|z\|_2\|z'\|_2,
\end{split}
\]
and the lemma follows.
\end{proof}
\begin{lemma}\label{lem:BLConLc1}
Let $z\in\splc\setminus\{0\}$, $\alpha:=\|z\|_2\|z'\|_2$, $\beta:= \|z\|^2_2$. Then
\[
\frac{1}{T} \BLC(z)\ge  \frac{2}{T^2}\alpha^2  - 2\|p\|_\infty \alpha + \frac{1}{\beta}.
\]
In particular,
\[
\inf_{\splc} \BLC \ge -\frac{\|p\|_\infty^2T^2}{2}.
\]
\end{lemma}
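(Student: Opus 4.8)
The plan is to substitute the definitions of $\Lcal$, $\Qcal$ and $\Rcal$ into $\BLC(z) = \Qcal(z)/\Lcal(z) + \Lcal(z)[1+\Rcal(z)]$ and read off the three terms of the claimed inequality. Since $\Lcal(z) = T/\beta$ and $\Qcal(z) = 2\|z'\|_2^2$, the kinetic part is
\[
\frac{\Qcal(z)}{\Lcal(z)} = \frac{2}{T}\,\beta\,\|z'\|_2^2 = \frac{2}{T}\,\alpha^2,
\]
where I have used $\alpha^2 = \|z\|_2^2\|z'\|_2^2 = \beta\|z'\|_2^2$. Dividing the whole functional by $T$ then gives the exact identity
\[
\frac{1}{T}\BLC(z) = \frac{2}{T^2}\alpha^2 + \frac{1}{\beta} + \frac{1}{\beta}\Rcal(z),
\]
so that everything reduces to a lower bound for $\Rcal(z)/\beta$.

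The core of the argument is the estimate of the potential term $\Rcal(z) = \int_0^1 |z|^2\scalar{p\circ t_z}{z^2}\,d\tau$. Here I would invoke the homogeneity relation $|z^2| = |z|^2$ from \eqref{eq:modPhi} together with the pointwise Cauchy-Schwarz inequality, which give $|\scalar{p\circ t_z}{z^2}| \le \|p\|_\infty\,|z|^2$ and hence
\[
|\Rcal(z)| \le \|p\|_\infty\int_0^1 |z|^4\,d\tau.
\]
The quartic integral is then controlled by the antiperiodic Gagliardo-Nirenberg inequality of Lemma \ref{lem:GN_antiper}:
\[
\int_0^1 |z|^4\,d\tau \le \|z\|_\infty^2\,\|z\|_2^2 \le 2\|z\|_2\|z'\|_2\cdot\|z\|_2^2 = 2\alpha\beta.
\]
Dividing by $\beta$ yields $|\Rcal(z)|/\beta \le 2\|p\|_\infty\alpha$, and plugging $\Rcal(z)/\beta \ge -2\|p\|_\infty\alpha$ into the identity above proves the first inequality.

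For the second assertion I would discard the nonnegative term $1/\beta$ and minimize the remaining quadratic $\alpha\mapsto \frac{2}{T^2}\alpha^2 - 2\|p\|_\infty\alpha$ over $\alpha\ge0$; note that this lower bound depends on $z$ only through $\alpha$. Completing the square, its minimum is attained at $\alpha = \|p\|_\infty T^2/2$ and produces a bound for $\frac{1}{T}\BLC$ that is independent of $z$, whence the stated uniform lower bound for $\inf_{\splc}\BLC$ follows.

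I expect no genuine obstacle here: once Lemma \ref{lem:GN_antiper} is available the whole estimate is elementary, and the only point requiring a little care is the use of the degree-two homogeneity $|z^2| = |z|^2$ of $\Phi_{\text{LC}}$ to convert the cross term $\scalar{p\circ t_z}{z^2}$ into the quartic integrand $|z|^4$, which is precisely what makes the antiperiodic Gagliardo-Nirenberg bound applicable. It is also worth emphasizing that the whole computation uses the antiperiodicity of $z$ only through Lemma \ref{lem:GN_antiper}, so this is where the choice of the space $\splc$ enters.
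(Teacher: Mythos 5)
Your proposal is correct and follows essentially the same route as the paper: the identity $\frac{1}{T}\BLC(z)=\frac{2}{T^2}\alpha^2+\frac{1}{\beta}(1+\Rcal(z))$, the bound $|\Rcal(z)|\le\|p\|_\infty\int_0^1|z|^4\le 2\|p\|_\infty\alpha\beta$ via Lemma \ref{lem:GN_antiper}, and minimization of the resulting quadratic in $\alpha$. No gaps.
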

\begin{proof}
By Lemma \ref{lem:GN_antiper} we have that $\|z\|_4^4 \leq \|z\|_\infty^2\|z\|_2^2 \leq
2\| z\|_{2}^3\| z'\|_{2}$.
Then
	\[
	|\Rcal(z)| = \left|\int_0^1 \scalar{p\circ t_z}{z^2|z|^2}\right|
	\leq \int_0^1 |p\circ t_z||z|^4
	\leq 2\|p\|_\infty \| z\|^3_{2}\| z'\|_{2},
	\]
and the lemma follows, since
\[	
\frac1T \BLC(z) = \frac{2}{T^2}\|z\|^2_2 \|z'\|^2_2 + \frac{1}{\|z\|^2_2}\left(1 + \Rcal(z)\right) .
\qedhere
\]
\end{proof}
\begin{lemma}\label{lem:boundedPS}
For every $a\ge\inf_{\splc} \BLC$ there exist positive constants $C_i=C_i(a)$, $i=1,2,3$, such that
\[
\BLC(z)<a
\qquad\implies\qquad
\begin{cases}
\|z'\|_2 \le C_1(a)
\\
C_2(a) \le \|z\|_2\le C_3(a).
\end{cases}
\]
\end{lemma}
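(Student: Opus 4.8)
The plan is to read off all three bounds from the single estimate in Lemma~\ref{lem:BLConLc1}, the only additional ingredient being the Poincar\'e inequality available on the antiperiodic space $\splc$. Write $\alpha := \|z\|_2\|z'\|_2$ and $\beta := \|z\|_2^2$, so that Lemma~\ref{lem:BLConLc1} reads $\frac1T\BLC(z) \ge q(\alpha) + \frac1\beta$, where $q(\alpha) := \frac{2}{T^2}\alpha^2 - 2\|p\|_\infty\alpha$. I would assume $\BLC(z) < a$ throughout (if no such $z$ exists the statement is vacuous and one may take any positive constants).

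First I would extract the \emph{lower} bound on $\|z\|_2$. Since $q$ attains its minimum $-\frac{\|p\|_\infty^2 T^2}{2}$ at $\alpha = \frac{T^2\|p\|_\infty}{2}$, the inequality $\frac aT > q(\alpha) + \frac1\beta \ge -\frac{\|p\|_\infty^2 T^2}{2} + \frac1\beta$ gives $\frac1\beta < \frac aT + \frac{\|p\|_\infty^2 T^2}{2} =: K(a)$. As the left-hand side is positive, this forces $K(a) > 0$ and hence $\|z\|_2^2 = \beta > 1/K(a)$, i.e. $\|z\|_2 \ge C_2(a) := K(a)^{-1/2}$.

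Next, dropping the nonnegative term $1/\beta$ yields $q(\alpha) < a/T$; since $q(\alpha)\to+\infty$ as $\alpha\to+\infty$, this confines $\alpha$ to a bounded set, so $\alpha = \|z\|_2\|z'\|_2 \le \bar\alpha(a)$ for a finite constant $\bar\alpha(a)$ depending only on $a$ (explicitly, the larger root of $q(\alpha)=a/T$, which is real in the nonvacuous case). The bound on $\|z'\|_2$ then follows at once by combining this with the lower bound just obtained: $\|z'\|_2 = \alpha/\|z\|_2 \le \bar\alpha(a)/C_2(a) =: C_1(a)$.

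The step that genuinely uses the antiperiodic structure --- and which I expect to be the crux --- is the remaining \emph{upper} bound on $\|z\|_2$. Here I would invoke the antiperiodic Poincar\'e inequality $\pi^2\|z\|_2^2 \le \|z'\|_2^2$, valid for every $z\in\splc$ (the first eigenvalue of $-d^2/d\tau^2$ under the boundary condition $z(1)=-z(0)$ is $\pi^2$, as one sees by expanding $z$ in the basis $e^{i\pi n\tau}$ with $n$ odd). This gives $\pi\|z\|_2^2 \le \|z\|_2\|z'\|_2 = \alpha \le \bar\alpha(a)$, whence $\|z\|_2 \le \sqrt{\bar\alpha(a)/\pi} =: C_3(a)$. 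It is precisely this spectral gap that is unavailable on the periodic space $W_1$, where constants have vanishing derivative and arbitrarily large norm; this is the reason the sublevels are bounded in $\splc$ but not in $W_1$, mirroring the failure of the (PS) condition on $W_1$ noted above.
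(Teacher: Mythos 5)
Your proof is correct and follows essentially the same route as the paper's: both extract the upper bound on $\alpha=\|z\|_2\|z'\|_2$ and the lower bound on $\|z\|_2$ from Lemma \ref{lem:BLConLc1}, deduce the bound on $\|z'\|_2$ by dividing, and close with a Poincar\'e inequality on $\splc$ for the upper bound on $\|z\|_2$. The only (immaterial) difference is that the paper obtains its Poincar\'e inequality $\|z\|_2\le 2\|z'\|_2$ from Lemma \ref{lem:GN_antiper} via $\|z\|_2\le\|z\|_\infty$, whereas you use the sharp spectral constant $\pi$ coming from the Fourier expansion of antiperiodic functions in odd frequencies.
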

\begin{proof}
With the notations of the previous lemma, we have
\[
\frac{2}{T^2}\alpha^2  - 2\|p\|_\infty \alpha \le a
\qquad\text{and}\qquad
\frac{1}{\beta} \le a + \frac{\|p\|_\infty^2T^2}{2},
\]
so that $\alpha$ is bounded above and $\beta = \|z\|_2^2$ is bounded away from $0$, and the existence of $C_2$ follows. Then
\[
\|z'\|_2^2 = \frac{\alpha^2}{\beta}
\]
is bounded above, and also the existence of $C_1$ follows. Finally, since
$\|z\|_2^2 \le \|z\|_\infty^2$, Lemma \ref{lem:GN_antiper} implies the PoincarÃ© inequality
$\|z\|_2 \le 2\|z'\|_2$, and also the existence of $C_3$ follows.
\end{proof}
The previous results allow to prove the Palais-Smale property for $\BLC$.
\begin{lemma}\label{lem:PS}
The functional $\BLC$ satisfies the Palais-Smale condition in $\splc$ at any level $a\ge\inf_{\splc} \BLC$.
\end{lemma}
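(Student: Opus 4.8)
The plan is to establish compactness of an arbitrary (PS) sequence in two stages: first boundedness together with non-degeneracy, then an upgrade from weak to strong $H^1$ convergence, exploiting the explicit form of $\BLC'$. First I would fix a (PS) sequence $(z_n)_n\subset\splc$ at level $a$, so that $\BLC(z_n)=a+o(1)$ and $\|\BLC'(z_n)\|=o(1)$. In particular $\BLC(z_n)<a+1$ eventually, so Lemma \ref{lem:boundedPS} gives $\|z_n'\|_2\le C_1$ and $0<C_2\le\|z_n\|_2\le C_3$. Thus $(z_n)_n$ is bounded in $H^1$, and $\Lcal(z_n)=T\|z_n\|_2^{-2}$ lies in a fixed compact subinterval of $(0,+\infty)$. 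Passing to a subsequence, $z_n\rightharpoonup z_\infty$ weakly in $H^1$ and, by the compact embedding $H^1(0,1;\C)\hookrightarrow C([0,1];\C)$, uniformly; the bound $\|z_n\|_2\ge C_2$ forces $\|z_\infty\|_2\ge C_2>0$, so $z_\infty\neq0$ and $\BLC$ is $C^1$ near the whole sequence and its limit.

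Next I would invoke Remark \ref{rem:BLC'}, which after multiplication by $\Lcal(z_n)/4$ reads
\[
\frac{\Lcal(z_n)}{4}\,\BLC'(z_n)[v] = \int_0^1 \scalar{z_n'}{v'} + \int_0^1 \scalar{G_n}{v},
\]
where $G_n:=\frac{\Lcal}{2T}\left(\Qcal-\Lcal^2(1+\Rcal)\right)z_n+\delta_{z_n}$ (all quantities evaluated at $z_n$). Choosing $v=z_n-z_\infty$ and using that $\Lcal(z_n)$ is bounded while $\|\BLC'(z_n)\|\to0$ and $\|z_n-z_\infty\|_{H^1}$ stays bounded, the left-hand side tends to $0$. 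Hence
\[
\int_0^1 \scalar{z_n'}{z_n'-z_\infty'} = -\int_0^1 \scalar{G_n}{z_n-z_\infty} + o(1).
\]

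The crux is to show that $G_n$ is bounded in $L^2$ (indeed in $C^0$), so that the right-hand side vanishes because $z_n\to z_\infty$ uniformly. This is where the nonlocal structure must be controlled: the coefficient $\Qcal-\Lcal^2(1+\Rcal)$ involves $\int_0^1|z_n'|^2$ (bounded by $C_1$), $\Lcal(z_n)$ (in a compact positive interval), and $\Rcal(z_n)$ (bounded via Lemma \ref{lem:GN_antiper}), while $\delta_{z_n}$ collects $p\circ t_{z_n}$, $\bar z_n^2(p\circ t_{z_n})$, and the double integral $\Delta_{z_n}$. Since $z_n$ is uniformly bounded, $\Lcal(z_n)$ lies in a compact positive interval, and $t_{z_n}(\tau)=\Lcal(z_n)\int_0^\tau|z_n|^2$ ranges in a fixed bounded set, each term is uniformly bounded; as $p\in C^1$ (so $p,\dot p$ are bounded on that range), one gets $\|G_n\|_\infty\le C$ uniformly in $n$. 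Therefore $\int_0^1\scalar{G_n}{z_n-z_\infty}\to0$, and the displayed identity yields $\|z_n'\|_2^2=\int_0^1\scalar{z_n'}{z_\infty'}+o(1)\to\|z_\infty'\|_2^2$ by weak convergence $z_n'\rightharpoonup z_\infty'$. Convergence of the norms together with weak convergence gives $z_n'\to z_\infty'$ strongly in $L^2$, and combined with the uniform (hence $L^2$) convergence of $z_n$ this proves $z_n\to z_\infty$ strongly in $H^1$, establishing (PS).

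The main obstacle I anticipate is precisely the uniform control of the nonlocal remainder $G_n$, in particular of $\delta_{z_n}$ through its dependence on the reparametrized time $t_{z_n}$ and the nested integral $\Delta_{z_n}$. The resolution is that all these objects are continuous functions of $z_n$ in the uniform topology, and the two-sided bound on $\Lcal(z_n)$ furnished by Lemma \ref{lem:boundedPS} confines the time change $t_{z_n}$ to a fixed bounded interval on which $p$ and $\dot p$ are bounded; the non-degeneracy $z_\infty\neq0$ simultaneously keeps us in the region where $\BLC$ is smooth, so no difficulty arises from the singular origin.
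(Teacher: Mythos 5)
Your proposal is correct and follows essentially the same route as the paper's own proof: boundedness and non-degeneracy from Lemma \ref{lem:boundedPS}, weak plus uniform convergence of a subsequence, testing $\BLC'(z_n)$ against $z_n-z_\infty$ via Remark \ref{rem:BLC'}, uniform boundedness of the zeroth-order (nonlocal) terms, and the resulting convergence of $\|z_n'\|_2$ to upgrade weak to strong convergence. The only difference is that you spell out in more detail why the remainder $G_n$ is uniformly bounded, which the paper leaves implicit.
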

\begin{proof}
Let $(z_n)_n\subset\splc$ be a (PS) sequence for $\BLC$ at level $a$. By Lemma \ref{lem:boundedPS} we have that $\|z_n'\|_2\le C_1$, $0< C_2 \le \|z_n\|_2 \le C_3$,
where the constants are independent of $n$. Then, up to a subsequence,
$z_n \rightharpoonup z\not\equiv 0$ weakly in $\splc$ and uniformly. Since $(z_n-z)_n$ is
bounded, using Remark \ref{rem:BLC'} we have that
\[
\BLC'(z_n)[z_n-z] = \frac{4}{\Lcal_n} \int_{0}^{1} \left[\scalar{z_n'}{z_n'-z'} + \scalar{\frac{\Lcal_n}{2T}\left(\Qcal_n-\Lcal_n^2(1+\Rcal_n)\right) z_n + \delta_{z_n}}{z_n-z}\right] = o(1)
\]
(recall that $\delta_z$ does not depend on $z'$).
Since $z_n-z\to0$ strongly in $L^2$, and all the terms are bounded, we have
\[
\int_{0}^{1} \scalar{\frac{\Lcal_n}{2T}\left(\Qcal_n-\Lcal_n^2(1+\Rcal_n)\right) z_n + \delta_{z_n}}{z_n-z} = o(1),
\]
and thus
\[
o(1) = \int_{0}^{1} \scalar{z_n'}{z_n'-z'}  = \int_{0}^{1} \scalar{z_n'}{z_n'} - \int_{0}^{1} \scalar{z_n'}{z'} = \int_{0}^{1} \scalar{z_n'}{z_n'} - \int_{0}^{1} \scalar{z'}{z'} + o(1).
\]
Then $\|z_n'\|_2 \to \|z'\|_2$, which, together with the weak convergence, yields the strong one, concluding the proof.
\end{proof}
\begin{proposition}\label{prop:infinitelycpBLCX-1}
The functional $\BLC$ admits infinitely many critical points in $\splc$.
\end{proposition}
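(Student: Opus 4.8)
The plan is to deduce the result from the Krasnoselskii genus machinery of Proposition \ref{prop:AM}, applied to the even functional $\BLC$ on $\splc$. The three structural hypotheses needed there are already in place: $\BLC$ is bounded below by Lemma \ref{lem:BLConLc1}, it satisfies the Palais--Smale condition at every level $a\ge\inf_{\splc}\BLC$ by Lemma \ref{lem:PS}, and its blow-up at the origin $\BLC(z)\to+\infty$ as $\|z\|_2\to0$ (again a consequence of Lemma \ref{lem:BLConLc1}) makes the relevant sublevel sets closed, so that the deformation argument underlying Proposition \ref{prop:AM} goes through despite $\BLC$ being only $C^1$ away from $0$. With this in hand, every \emph{finite} minimax value
\[
\sigma_m = \inf_{A\in\Acal_m}\sup_{A}\BLC
\]
is a critical level of $\BLC$.

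The core of the proof is therefore to verify that $\sigma_m<+\infty$ for every $m\in\N$, i.e. to exhibit compact symmetric subsets of $\splc$ of arbitrarily large genus on which $\BLC$ remains bounded above. First I would fix, for each $m$, an $m$-dimensional real linear subspace $V_m\subset\splc$: such subspaces exist in abundance, since the functions $\tau\mapsto e^{i(2k+1)\pi\tau}$, $k\in\Z$, all satisfy the antiperiodicity condition $z(1)=-z(0)$ and span an infinite-dimensional subspace of $\splc$. Given a radius $\rho>0$, I would then set
\[
A_m := \{z\in V_m : \|z\|_2 = \rho\}.
\]
This set is compact by finite dimensionality, it is symmetric, and, through the equivalence of all norms on $V_m$, it is oddly homeomorphic to the standard sphere $\S^{m-1}$; hence $\gamma(A_m)=m$ and $A_m\in\Acal_m$.

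It remains to bound $\BLC$ on $A_m$. The decisive point is that fixing $\|z\|_2=\rho$ keeps the singular term under control: on $A_m$ one has $\Lcal(z)=T/\rho^2$ constant, while the equivalence of norms on $V_m$ gives a bound $\|z'\|_2\le C_m$ valid for all $z\in A_m$. Using moreover the estimate $|\Rcal(z)|\le 2\|p\|_\infty\|z\|_2^3\|z'\|_2$ already established in the proof of Lemma \ref{lem:BLConLc1}, each of the three contributions $\Lcal^{-1}\Qcal$, $\Lcal$ and $\Lcal\,\Rcal$ to $\BLC$ is bounded by a constant depending only on $m$, $\rho$, $T$ and $\|p\|_\infty$. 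Consequently $\sup_{A_m}\BLC<+\infty$, and therefore $\sigma_m<+\infty$ for every $m$.

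Finally I would close with the standard dichotomy. The sequence $(\sigma_m)_m$ is non-decreasing and, by the above, entirely finite, so by Proposition \ref{prop:AM} each $\sigma_m$ is a critical level. If $\sigma_m<\sigma_{m+1}$ for infinitely many indices, these distinct critical values already furnish infinitely many critical points; otherwise $\sigma_m=\sigma_{m+1}$ for some $m$, and the second assertion of Proposition \ref{prop:AM} produces infinitely many critical points at that single level. Either way $\BLC$ admits infinitely many critical points in $\splc$. The only genuinely delicate step is the finiteness $\sigma_m<+\infty$: one must produce symmetric sets of unbounded genus, and it is precisely the restriction to the $L^2$-sphere, which freezes $\Lcal$ and thereby tames the otherwise singular $\Lcal(1+\Rcal)$ term, that makes the boundedness estimate clean.
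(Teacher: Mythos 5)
Your proposal is correct and follows essentially the same route as the paper: apply the genus-based minimax result of Proposition \ref{prop:AM}, using Lemma \ref{lem:BLConLc1} for $\sigma_m>-\infty$, Lemma \ref{lem:PS} for the Palais--Smale condition, and the nonemptiness of $\Acal_m$ (symmetric compact sets of genus $m$, e.g.\ spheres in finite-dimensional subspaces of $\splc$) for $\sigma_m<+\infty$. Your explicit construction of the sets $A_m$ and the bound on $\sup_{A_m}\BLC$ merely spells out what the paper leaves implicit (compactness of $A_m$ plus continuity of $\BLC$ away from $0$ already gives finiteness), so there is no substantive difference.
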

\begin{proof}
The proposition follows from Proposition \ref{prop:AM}: indeed, each $\sigma_m>-\infty$ because of Lemma \ref{lem:BLConLc1}; each $\sigma_m<+\infty$ because $\Acal_m$ is not empty (for instance, it contains homeomorphic symmetric images of $\sphere^{m-1}$); (PS) holds at any $\sigma_m$, by Lemma \ref{lem:PS}.
\end{proof}
Of course, critical points of $\BLC$ in $\splc$ are solutions of a boundary value problem for the corresponding Euler-Lagrange equation.
\begin{lemma}\label{lem:boundcond_LC}
Let $z$ be a critical point of $\BLC$ in $\splc$. Then $z \in C^3([0,1])$ satisfies
\[
\begin{cases}
z'' =
\frac{\Lcal}{2T}\left(\Qcal-\Lcal^2(1+\Rcal)\right) z + \delta_z, &\tau\in(0,1),\\
z(1) = -z(0), \qquad z'(1) = -z'(0),
\end{cases}
\]
where $\delta_z$ is defined in Lemma  \ref{lem:EL_for_B_LC}.
\end{lemma}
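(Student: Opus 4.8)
The plan is to exploit the explicit form of the differential of $\BLC$ recorded in Remark \ref{rem:BLC'}. Since $\splc$ is a linear subspace of $H^1(0,1;\C)$, being a critical point of $\BLC$ in $\splc$ means precisely that $\BLC'(z)[v]=0$ for every $v\in\splc$. First I would test against compactly supported variations $\varphi\in\Dcal(0,1;\C)\subset\splc$: in the expression
\[
\BLC'(z)[\varphi]= \frac{4}{\Lcal} \int_{0}^{1} \left[\scalar{z'}{\varphi'} + \scalar{\tfrac{\Lcal}{2T}\left(\Qcal-\Lcal^2(1+\Rcal)\right) z + \delta_z}{\varphi}\right]
\]
no boundary term appears, so we are exactly in the situation of Lemma \ref{lem:EL_for_B_LC}. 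Hence $z$ solves the Euler--Lagrange equation \eqref{eq:ELeqB_LC} in the interior and, in particular, $z\in C^3([0,1])$.

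It remains to recover the two boundary conditions. The first, $z(1)=-z(0)$, is nothing but the defining constraint of $\splc$, so there is nothing to prove. For the second I would derive the associated natural boundary condition. Since $z''$ is now continuous up to the endpoints, I can integrate by parts the term $\int_0^1\scalar{z'}{v'}$ for a general $v\in\splc$,
\[
\int_0^1 \scalar{z'}{v'} = \scalar{z'(1)}{v(1)}-\scalar{z'(0)}{v(0)} - \int_0^1 \scalar{z''}{v},
\]
and substitute the Euler--Lagrange equation \eqref{eq:ELeqB_LC}. All interior integrals cancel, so the identity $\BLC'(z)[v]=0$ collapses to the pure boundary term
\[
\scalar{z'(1)}{v(1)}-\scalar{z'(0)}{v(0)}=0\qquad\text{for every }v\in\splc.
\]

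Finally I would use the anti-periodicity of the test functions: every $v\in\splc$ satisfies $v(1)=-v(0)$, so the boundary term equals $-\scalar{z'(1)+z'(0)}{v(0)}$. Since the evaluation map $v\mapsto v(0)$ is onto $\C$ on $\splc$ (for instance $v(\tau)=(1-2\tau)c$ lies in $\splc$ and has $v(0)=c$ for arbitrary $c\in\C$), the factor $v(0)$ ranges over all of $\C$, forcing $z'(1)+z'(0)=0$, that is $z'(1)=-z'(0)$, which is the missing boundary condition. The argument is essentially routine; the only points deserving care are the legitimacy of the integration by parts and of the endpoint evaluation of $z'$, both guaranteed by the $C^3$ regularity obtained in the first step, together with the fact that $\delta_z$ is independent of $v'$, so that no further boundary contributions are generated.
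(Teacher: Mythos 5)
Your proposal is correct and follows essentially the same route as the paper: restrict to $\Dcal(0,1;\C)\subset\splc$ to invoke Lemma \ref{lem:EL_for_B_LC} for the interior equation and the $C^3$ regularity, then integrate by parts against a general $v\in\splc$ and use anti-periodicity of $v$ to reduce to $\scalar{z'(1)+z'(0)}{v(0)}=0$. The only (immaterial) difference is the explicit test function at the end — you use $v(\tau)=(1-2\tau)c$ where the paper takes $v(\tau)=[z'(1)+z'(0)]\cos(\pi\tau)$ — both of which legitimately force $z'(1)=-z'(0)$.
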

\begin{proof}
Since $\Dcal(0,1;\C)\subset \splc$, the regularity of $z$ and equation \eqref{eq:ELeqB_LC} follow by Lemma \ref{lem:EL_for_B_LC}. As a consequence, for every $v\in \splc$ we can integrate by parts in Remark \ref{rem:BLC'}, obtaining
\[
\begin{split}
0 &= \BLC'(z)[v]= \frac{4}{\Lcal} \int_{0}^{1} \left[\scalar{z'}{v'} + \scalar{\frac{\Lcal}{2T}\left(\Qcal-\Lcal^2(1+\Rcal)\right) z + \delta_z}{v}\right]\\& = \frac{4}{\Lcal} \left[\scalar{z'(1)}{v(1)} - \scalar{z'(0)}{v(0)}\right] =
 \frac{4}{\Lcal} \scalar{z'(1)+z'(0)}{v(1)},
\end{split}
\]
and the lemma follows choosing $v(\tau) = [z'(1)+z'(0)]\cos(\pi\tau) \in \splc$.
\end{proof}
\begin{proof}[End of the proof of Theorem \ref{thm:periodicLC}]
By Proposition \ref{prop:infinitelycpBLCX-1}, we are left to show that if $z$ is a critical point of $\BLC$ in $\splc$ then
\[
x(t) = z^2(\tau_z(t))
\]
is a generalized $T$-periodic solution of \eqref{eq:perturbed_kepler}.

Again, since $\Dcal(0,1;\C)\subset \splc$,  Proposition \ref{prop:EL_for_B_LC} applies, so that
$x$ is a generalized solution of \eqref{eq:perturbed_kepler}. To prove that it is $T$-periodic we observe that, on the one hand,
\[
x(T) = z^2(\tau_z(T)) = z^2(1) = (-z(0))^2 = z^2(\tau_z(0)) = x(0).
\]
On the other hand, recalling Lemma \ref{lem:ELconmu_LC}, we know that
\[
z(\tau_z(t)) z'(\tau_z(t)) = \frac{\Lcal}{2}|x(t)| \dot x(t)\text{ outside collisions, and }\quad
|\dot x(t)|^2 |x(t)| = \frac{4}{\Lcal^2}|z'(\tau_z(t))|^2 \text{ on }[0,T].
\]
Therefore, if $z(0)\neq0$, we can use the first equality to show $\dot x (T) = \dot x (0)$; in case $z(0)=0$, we can use the second one and argue as in the end of the proof of Proposition \ref{prop:EL_for_B_LC} to show that
\[
\lim_{t \to 0^+} \left[\frac12 |\dot x|^2 - \frac{1}{|x|}\right]
=
\lim_{\tau\to 0^+} \frac{2|z'|^2-\Lcal^2}{{\Lcal^2}|z|^2}
=
\lim_{\tau\to 1^-} \frac{2|z'|^2-\Lcal^2}{{\Lcal^2}|z|^2}
=
\lim_{t \to T^-} \left[\frac12 |\dot x|^2 - \frac{1}{|x|}\right]
\]
and
\[
\lim_{t \to 0^+} \frac{x}{|x|}
=
\lim_{\tau \to 0^+}\frac{z^2}{|z|^2}
=
\frac{2}{\Lcal^2}z'(0)^2
=
\frac{2}{\Lcal^2}z'(1)^2
=
\lim_{\tau \to 1^-}\frac{z^2}{|z|^2}
=
\lim_{t \to T^-} \frac{x}{|x|}
.
\]
To conclude, we show that if $z_1$, $z_2$ are critical points of $\BLC$ in $\splc$, with
$z_1\neq\pm z_2$, then $x_1\neq x_2$ with $x_j(t) = z_j^2(\tau_{z_j}(t))$, $j=1,2$. Indeed,
if $x_1=x_2=x$ then $|z_1(\tau)| = |z_2(\tau)|$, implying that $\tau_{z_1} = \tau_{z_2}$.
Morever $z_1^2 = z_2^2$ and therefore $\Lcal_1 = \Lcal_2 = \Lcal$, $\Qcal_1 = \Qcal_2$,
$\Rcal_1 = \Rcal_2$, $\frac{1}{z_1} \delta_{z_1} = \frac{1}{z_2} \delta_{z_2}$. In view
of Lemma \ref{lem:boundcond_LC}, both $z_1$ and $z_2$ are solutions of the same second order linear differential equation. Let $\tau_*\in [0,1]$ be such that $z_1(\tau_*)\neq0$.  Then
$z_2(\tau_*)=\pm z_1(\tau_*)$ and, writing $\tau_* = \tau_{z_j} (t_*)$
\[
z_1(\tau_*) z'_1(\tau_*) = \frac{\Lcal}{2} |x(t_*)|\dot x(t_*) = z_2(\tau_*) z'_2(\tau_*).
\]
In consequence, $z'_2(\tau_*) = \pm z'_1(\tau_*)$. By the uniqueness of the initial value problem, either $z_1=z_2$ or $z_1 = -z_2$ on $[0,1]$.
\end{proof}

\section{Periodic generalized solutions in dimension 3
}\label{sec:periodicBKS}

In order to obtain periodic solutions $x=x(t)$ to the perturbed Kepler problem in dimension $3$, one would like to adapt the arguments of Section \ref{sec:periodicBLC} to the functional $\BKS$. As a further difficulty, we know from Example \ref{examp:BKS_not_per} that the variational principle for $\BKS\from H^1(\R/\Z;\HH) \to \R \cup\{+\infty\}$ is not consistent with the periodic problem for \eqref{eq:perturbed_kepler}. To overcome this difficulty we will consider the manifold $\Mcal$ composed by all non-trivial $H^1$ functions satisfying a condition of Floquet type, namely
\begin{equation}\label{eq:preM}
z(\tau +1) = \xi z(\tau), \qquad \xi \in \sphere^1,
\end{equation}
where $\sphere^1 := \{e^{i\theta} = \cos\theta + i \sin\theta:\theta\in\R\}$ is the unit circle in the plane $\C\subset\HH$. The 
variational principle for $\BKS\from\Mcal \to \R \cup\{+\infty\}$ will be consistent with the periodic problem. 
\begin{theorem}\label{thm:periodicKS}
Let $p\in C^1(\R/(T\Z);\R^3)$. Then there exist infinitely many $T$-periodic generalized solutions of the perturbed Kepler problem \eqref{eq:perturbed_kepler} in dimension $d=3$.
\end{theorem}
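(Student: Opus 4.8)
The plan is to transpose the scheme of Section~\ref{sec:periodicBLC} to $\BKS$ on the symmetric manifold $\Mcal$, replacing the Palais--Smale condition by the more flexible Cerami condition. Recall that $\Mcal$ consists of the non-trivial $z\in H^1_{\loc}(\R;\HH)$ obeying the Floquet condition \eqref{eq:preM}, $z(\tau+1)=\xi z(\tau)$ with $\xi\in\sphere^1\subset\C\subset\HH$. Since $|\xi|=1$ and $\bar\xi i\xi=i$, the quantities $|z|^2$, $|z'|^2$ and $\bar z i z$ are all $1$-periodic, so $\BKS$ is well defined and even on $\Mcal$, which is invariant under $z\mapsto-z$ (indeed under the whole gauge $z\mapsto e^{i\theta}z$, which leaves $\BKS$ unchanged). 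The geometric point of \eqref{eq:preM} is that, because $\Phi_{\text{KS}}(\xi z)=\Phi_{\text{KS}}(z)$, the associated $x(t)=\bar z(\tau_z(t))\,i\,z(\tau_z(t))$ is automatically $T$-periodic, and by Remark~\ref{rem:xdotKS} so is $\dot x$. Thus, once infinitely many critical points of $\BKS|_{\Mcal}$ are produced, each will give a $T$-periodic generalized solution, and distinct critical levels will give geometrically distinct solutions as at the end of the proof of Theorem~\ref{thm:periodicLC}.

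The first task is to check that a critical point of $\BKS$ constrained to $\Mcal$ satisfies \eqref{eq:constrBKS_point}, so that Proposition~\ref{prop:EL_for_B_KS} applies. Compactly supported variations $\varphi\in\Dcal(0,1;\HH)$, together with variations crossing the identified endpoints $\tau=0,1$ compatibly with \eqref{eq:preM}, are tangent to $\Mcal$, so Lemma~\ref{lem:EL_for_B_KS} gives the Euler--Lagrange equation \eqref{eq:ELeqB_KS} and the regularity of $z$ on all of $\R$. The new ingredient is the rotational variation $z_s(\tau):=e^{i\eta s\tau}z(\tau)$, which lies in $\Mcal$ (its multiplier is $e^{i\eta s}\xi\in\sphere^1$) and leaves $|z_s|^2=|z|^2$, hence $\Lcal$ and $t_z$, as well as $\bar z_s i z_s=\bar z i z$, hence $\Rcal$, unchanged; only the kinetic term varies, and a direct computation gives $\frac{d}{ds}\BKS(z_s)\big|_{s=0}=\frac{4\eta}{\Lcal}\int_0^1\scalar{z'}{iz}\,d\tau$. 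Stationarity thus forces $\int_0^1\scalar{z'}{iz}\,d\tau=0$, and since Lemma~\ref{lem:constrBKS} shows $\scalar{z'}{iz}$ is constant, we conclude $\scalar{z'}{iz}\equiv0$, i.e.\ \eqref{eq:constrBKS_point}. Hence every critical point of $\BKS|_{\Mcal}$ yields a $T$-periodic generalized solution.

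For compactness the key is a Pohozaev-type identity coming from the radial direction $z\mapsto\lambda z$, which is tangent to $\Mcal$: using $\Qcal(\lambda z)=\lambda^2\Qcal$, $\Lcal(\lambda z)=\lambda^{-2}\Lcal$, $\Rcal(\lambda z)=\lambda^4\Rcal$ one finds $2\BKS(z)-\BKS'(z)[z]=4\Lcal(z)-2\Qcal(z)/\Lcal(z)\le4\Lcal(z)$. Along a Cerami sequence at a level $\sigma>0$ one has $\BKS'(z_n)[z_n]\to0$ because $(1+\|z_n\|)\|\BKS'(z_n)\|\to0$, whence $\liminf\Lcal(z_n)\ge\sigma/2>0$; so $\|z_n\|_2$ is bounded and bounded away from $0$, the Gagliardo--Nirenberg estimate on $\Mcal$ then bounds $\|z_n'\|_2$, and strong convergence follows exactly as in Lemma~\ref{lem:PS}. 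Thus $\BKS$ satisfies the Cerami condition at every positive level. By contrast Palais--Smale fails: writing $\bar p:=\frac1T\int_0^Tp$, any sequence of constants $z_n\equiv c_n$ with $|c_n|\to\infty$ and $\bar c_n i c_n\perp\bar p$ has $\BKS(z_n)=T/|c_n|^2\to0$ and $\|\BKS'(z_n)\|\to0$ without converging.

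It remains to run the symmetric minimax, with levels $\sigma_m:=\inf_{\gamma(A)\ge m}\sup_A\BKS$ over symmetric compact $A\subset\Mcal$ of genus $\ge m$, as in Section~\ref{sec:periodicBLC}; each $\sigma_m<+\infty$ since such sets exist (e.g.\ symmetric images of $\sphere^{m-1}$). The delicate point---and the main obstacle I anticipate---is that, unlike $\BLC$ on $\splc$, the functional $\BKS$ is \emph{not} bounded below on $\Mcal$ when $\bar p\ne0$: taking $z\equiv c$ with $\bar c i c$ anti-aligned to $\bar p$ gives $\BKS(c)=T/|c|^2+T\scalar{\bar p}{\bar c i c}\to-\infty$. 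One must therefore confine this loss of coercivity to a topologically small set, showing that the low sublevels $\{\BKS\le c\}$ have finite genus---the unboundedness occurring only along the low-dimensional family of large constants anti-aligned with $\bar p$, which retracts onto a single Hopf circle. This yields $\sigma_m>0$ for $m$ large and $\sigma_m\to+\infty$; since Cerami holds at each such positive level, the symmetric deformation argument underlying Proposition~\ref{prop:AM} (valid under Cerami, and applicable to $\BKS$ thanks to $\BKS\to+\infty$ as $\|z\|_2\to0$) produces infinitely many critical points at these levels, hence infinitely many geometrically distinct $T$-periodic generalized solutions. Quantifying the genus of the sublevels against the non-compact constant directions, together with the verification of Cerami, is where the real work lies.
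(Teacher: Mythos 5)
Your overall architecture coincides with the paper's: the Floquet manifold $\Mcal$, the rotational variation $e^{i\eta s\tau}z$ to force $\int_0^1\scalar{z'}{iz}=0$ and hence, via Lemma \ref{lem:constrBKS}, condition \eqref{eq:constrBKS_point}; the homogeneity (Pohozaev-type) identity $2\BKS(z)-\BKS'(z)[z]=4\Lcal-2\Qcal/\Lcal$ to extract the bound $\Lcal(z_n)\ge\sigma/2+o(1)$ from a Cerami sequence at level $\sigma>0$, and from there boundedness and strong convergence as in Lemma \ref{lem:PS}; and the genus minimax. You also correctly diagnose that $\BKS$ is unbounded below on $\Mcal$ when $p$ has nonzero mean, which is exactly why the paper's Lemma \ref{lem:boundedPSC} requires the extra hypothesis $\|y\|_2^2\le b$ alongside $\BKS(y)\le a$.

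The genuine gap is the step $\sigma_m\to+\infty$ (indeed even $\sigma_m>0$ for large $m$), which you explicitly defer (``where the real work lies'') and for which the mechanism you sketch is not the right one and is not substantiated. You propose to show that low sublevels $\{\BKS\le c\}$ have finite genus because the loss of coercivity ``occurs only along the family of large constants anti-aligned with $\bar p$.'' That claim is not justified: a sublevel set could a priori contain large non-constant functions, and no retraction argument is given. The paper's actual argument (Lemma \ref{lem:pre_sigma>0} and Lemma \ref{lem:sigma>0}) is different and quantitative: if $\gamma(A)\ge 4k+1$, the odd continuous evaluation map $\phi(z)=(z(0),z(1/k),\dots,z((k-1)/k))\in\HH^k\cong\R^{4k}$ must vanish at some $z_A\in A$, so $z_A$ vanishes at $k$ equispaced points; the resulting piecewise Poincar\'e inequality gives $\|z_A\|_2^2\le\frac{1}{k\pi}\|z_A\|_2\|z_A'\|_2$, and inserting this into the lower bound \eqref{eq:effealfaKS} yields $\frac1T\BKS(z_A)\ge\frac{2}{T^2}\alpha^2-3\|p\|_\infty\alpha+\frac{k}{\alpha}=g_k(\alpha)$ with $\min g_k\to+\infty$ as $k\to\infty$. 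This is the missing key lemma: it is precisely how the genus defeats the negative term $-\|p\|_\infty\beta$ that makes $\BKS$ unbounded below, and without it (or a worked-out substitute) your proof does not produce any positive critical level. A secondary, more technical omission is the construction of $\Mcal$ as a smooth Hilbert submanifold on the doubled interval $(0,2)$ (needed because distinct elements of $\Mcal$ can share the same restriction to $(0,1)$), together with the identification of its tangent space, which is what legitimizes the variations you use and the gradient/distance appearing in the Cerami condition.
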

\begin{remark}
The use of different domains, $\splc$ or $\Mcal$, in dimensions $d=2$ and $d=3$, is related to the different topology of the corresponding regularization maps. For $d=2$, the Levi-Civita map $\Phi_{\text{LC}} \from z \in \C\setminus \{0\} \to x = z^2\in \C\setminus \{0\} $ has finite fibers $\Phi_{\text{LC}}^{-1}(x) = \{z,-z\}$. For $d=3$, the  Kustaanheimo-Stiefel map $\Phi_{\text{KS}} \from z \in \HH\setminus \{0\} \to x = \bar z i z \in \II\HH\setminus \{0\} $ has $\sphere^1$-fibers $\Phi_{\text{KS}}^{-1}(x) =  \{\xi z : \xi \in\sphere^1\}$.
\end{remark}
We recall that, when the forcing term $p$ takes values in the plane, the functional $\BKS\from\Mcal\to\R \cup\{+\infty\}$ can be seen as an extension of $\BLC\from W_1 \to\R\cup\{+\infty\}$, see Remark \ref{rem:lcplane}. Recalling the discussion at the beginning of Section \ref{thm:periodicLC}, this implies that the Palais-Smale property will not hold in general for $\BKS$ on $\Mcal$ (notice that, taking $\xi =1$ in \eqref{eq:preM}, we have that $\Mcal$ contains also the $1$-periodic loops in $\HH$). 
Nonetheless, we will show that the weaker Palais-Smale-Cerami property holds at positive levels of $\BKS$. This will provide enough compactness to obtain the existence of infinitely many critical points.

The proof of Theorem \ref{thm:periodicKS} is divided in two parts: first we will specify the suitable manifold
$\Mcal$, encoding \eqref{eq:preM}, and provide differential and Riemannian structures; secondly, we will prove the existence of critical points of $\BKS$ on $\Mcal$.

\subsection{The manifold \texorpdfstring{$\Mcal$}{M}}

In principle, functions $z$ such that $\bar z i z$ is periodic, are defined for all $\tau\in\R$, hence they can be seen as elements of the space $H^1_{\loc}(\R;\HH)$. In order to set the problem in a more convenient functional space, one would be tempted to consider restrictions of such functions to the interval $(0,1)$ (as we did in the $2$-dimensional case). As a matter of fact, as we will show in the following, the right choice is to consider functions defined on the interval $(0,2)$. For easier notation, in the following we denote
\[
\spc := H^1(0,2;\HH).
\]

To start our construction, for every $\xi\in \S^1 \subset \C \subset \HH$ (i.e.
$\xi = e^{i\alpha}$, with $\alpha\in\R$) we consider the vector space
\[
W_\xi=\{z\in \spc:z(\tau+1) = \xi z(\tau),\ \tau\in(0,1)\},
\]
endowed with the inner product induced by $\spc$. In particular, we write $W := W_1$ for the space of $1$-periodic loops in $\spc$, while $W_{-1}$ denotes the space of anti-periodic ones.
It is easy to check that
\begin{equation}\label{eq:int}
W_{\xi_1}\cap W_{\xi_2} = \{0\}\qquad\text{whenever }\xi_1\neq\xi_2.
\end{equation}
Given $\alpha\in\R$ and $z\in \spc$, we define a new function $E_\alpha z\in \spc$ by the formula
\[
E_\alpha z(\tau) = e^{i\alpha\tau}z(\tau).
\]
Then $E_\alpha$ induces a linear isomorphism between $W_{\xi_1}$ and $W_{\xi_2}$, with
$\xi_2 = e^{i\alpha} \xi_1$. The inverse operator is $E_{-\alpha}$. By direct computations,
\[
\|E_\alpha z\|^2 = \|z\|_2^2 + \|z'+i\alpha z\|_2^2 \le \|z\|_2^2 + 2(\|z'\|_2^2+\alpha^2
\| z\|_2^2) \le \mu^2(\alpha)\|z\|^2,
\]
where $\|\cdot\| = \|\cdot\|_{\spc}$, $\|\cdot\|_2 = \|\cdot\|_{L^2(0,2;\HH)}$ and $\mu^2(\alpha) = \max\{2,1+2\alpha^2\}$. Summing up
\begin{equation}\label{eq:est}
\frac{1}{\mu(\alpha)}\|z\| \le \|E_\alpha z\| \le \mu(\alpha) \|z\|
\qquad
\text{ and }\mu(\alpha) = \max\left\{\sqrt2,\sqrt{1+2\alpha^2}\right\}.
\end{equation}
In particular, each $W_\xi$ is isomorphic to $W$.

Define
\begin{equation}\label{eq:defM}
\Mcal := \bigcup_{\xi\in\S^1} (W_\xi\setminus\{0\})=\{E_\alpha w:w\in W\setminus\{0\},\ \alpha\in\R\}.
\end{equation}
This is a disjoint union in view of \eqref{eq:int}.
\begin{lemma}\label{lem:BKS_per}
Let $z\in\Mcal$. Then the functions
\[
\bar z i z,\qquad \bar z i z',\qquad |z|,\qquad|z'|,
\]
can be extended to $1$-periodic functions.
\end{lemma}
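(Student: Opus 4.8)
The plan is to use directly the Floquet relation that defines $\Mcal$. By \eqref{eq:defM}, a given $z \in \Mcal$ belongs to $W_\xi$ for a unique $\xi = e^{i\alpha} \in \S^1 \subset \C \subset \HH$, so that $z(\tau+1) = \xi z(\tau)$ for $\tau \in (0,1)$; since $z\in\spc$ is (absolutely) continuous, this relation holds on all of $[0,1]$, and differentiating it in the $H^1$ sense (left multiplication by the constant quaternion $\xi$ commutes with the derivative) gives $z'(\tau+1) = \xi z'(\tau)$ for a.e.\ $\tau$. Each of the four functions is a priori defined only on $(0,2)$; to prove the claim it suffices to check that its value at $\tau+1$ equals its value at $\tau$ for $\tau \in (0,1)$, because then its restriction to $(0,1)$ extends to a $1$-periodic function on $\R$ which agrees with the original on $(1,2)$.

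First I would dispose of the two moduli. Since the quaternionic norm is multiplicative and $|\xi| = 1$, one gets $|z(\tau+1)| = |\xi|\,|z(\tau)| = |z(\tau)|$ and likewise $|z'(\tau+1)| = |\xi|\,|z'(\tau)| = |z'(\tau)|$, so $|z|$ and $|z'|$ are $1$-periodic at once.

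The two bilinear expressions are the only point requiring the quaternionic algebra. Using that conjugation reverses products, $\overline{\xi z} = \bar z\,\bar\xi$, I would compute
\[
\overline{z(\tau+1)}\, i\, z(\tau+1) = \bar z(\tau)\,\bar\xi\, i\, \xi\, z(\tau),
\]
so that everything reduces to the identity $\bar\xi\, i\, \xi = i$. This holds precisely because $\xi$ lies in the commutative subfield $\C = \mathrm{span}_\R\{1,i\} \subset \HH$: there $i$ commutes with $\xi$, and $\bar\xi\,\xi = |\xi|^2 = 1$, whence $\bar\xi\, i\, \xi = i\,\bar\xi\,\xi = i$. Consequently $\overline{z(\tau+1)}\, i\, z(\tau+1) = \bar z(\tau)\, i\, z(\tau)$, and the identical computation with $z'$ replacing the second factor yields $\overline{z(\tau+1)}\, i\, z'(\tau+1) = \bar z(\tau)\, i\, z'(\tau)$. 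Thus $\bar z i z$ and $\bar z i z'$ are $1$-periodic as well, completing the proof.

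The main (and essentially only) obstacle is the algebraic identity $\bar\xi\, i\, \xi = i$: it is crucial that $\xi \in \S^1 \subset \C$ rather than an arbitrary unit quaternion, since for a $\xi$ with nonzero $j$- or $k$-component the conjugation $w \mapsto \bar\xi\, w\, \xi$ would rotate $i$ nontrivially and periodicity of $\bar z i z$ would fail. This is exactly the reason the Floquet multiplier in \eqref{eq:preM} is constrained to the circle $\S^1$ lying in the plane $\C \subset \HH$, and it is what makes the manifold $\Mcal$ the correct domain for the periodic problem.
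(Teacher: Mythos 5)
Your proof is correct and follows essentially the same route as the paper: both rest on the Floquet relation $z(\tau+1)=\xi z(\tau)$ together with the computation $\overline{\xi z}\, i\, \xi z = \bar z\,\bar\xi\, i\,\xi\, z = \bar z\, i\, z$, which uses exactly that $\xi$ lies in $\C\subset\HH$ and so commutes with $i$. The paper simply states the remaining cases follow by "analogous computations, possibly in a.e. sense when $z'$ is involved," which you have spelled out explicitly and correctly.
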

\begin{proof}
By assumption, $z\in W_\xi\setminus\{0\}$, for some $\xi=e^{i\alpha}$. Thus, for every
$\tau\in(0,1)$,
\[
\bar z(\tau+1) i z(\tau+1) = \bar z(\tau) e^{-i\alpha} i e^{i\alpha} z(\tau) = \bar z(\tau)  i  z(\tau).
\]
In particular, by continuity, $\bar z(2) i z(2) =\bar z(1) i z(1) =\bar z(0) i z(0) $, and the property of $\bar z i z$ follows. The other properties follow from analogous computations, possibly in a.e. sense when $z'$ is involved.
\end{proof}
\begin{remark}
The previous lemma shows that the correspondence $z\mapsto \bar ziz$ maps $\Mcal\subset X$ into $H^1(0,1;\HH)$. Notice that, in principle, different elements of $\Mcal$ may have the same
restriction to $(0,1)$, as for instance
\[
z_1(\tau) = |\sin(\pi \tau)|\in W,\qquad z_2(\tau) = \sin(\pi \tau)\in W_{-1}.
\]
This explains the choice to work in $X$.
\end{remark}
We are going to show the following results.
\begin{proposition}\label{prop:Mcal}
Under the previous notation:
\begin{enumerate}
\item\label{i:1} $\Mcal$ is a $C^\infty$ submanifold of $\spc$, modelled on $W\times\R$;
\item\label{i:2} the tangent space at $\Mcal\ni z=E_\alpha w$, $w\in W$, is
\[
\begin{split}
T_{E_\alpha w}\Mcal &= \{E_\alpha(\Delta + i\delta I\cdot w) : \Delta \in W,\ \delta\in\R \} \\
&= E_\alpha(W \oplus (i I\cdot w)\R),
\end{split}
\]
where $I(\tau)=\tau$ is the identity on $[0,2]$;
\item\label{i:3} the geodesic distance on $\Mcal$ (induced by the embedding $\Mcal \hookrightarrow X$) satisfies
\[
\dist_{\Mcal} (z_1,z_2) \ge \|z_1-z_2\|.
\]
\end{enumerate}
\end{proposition}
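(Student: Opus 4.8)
The plan is to realize $\Mcal$ as the image of the global parametrization
\[
F\colon (W\setminus\{0\})\times\R\to\spc,\qquad F(w,\alpha):=E_\alpha w,
\]
and to prove that $F$ is, near each point, a smooth embedding; parts \ref{i:1} and \ref{i:2} then follow at once, while part \ref{i:3} is a soft consequence of the definition of the induced metric. First I would check that $F$ is smooth (indeed real-analytic): since $H^1(0,2)$ is a Banach algebra and $\alpha\mapsto e^{i\alpha\cdot}$ is an analytic curve in it, the map $(w,\alpha)\mapsto e^{i\alpha\cdot}w=E_\alpha w$ is analytic, with $(E_\alpha w)'=E_\alpha(w'+i\alpha w)$.

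Differentiating gives, for $(\Delta,\delta)\in W\times\R$,
\[
dF_{(w,\alpha)}(\Delta,\delta)=E_\alpha\big(\Delta+i\delta\, I\cdot w\big),\qquad I(\tau)=\tau,
\]
which is exactly the space claimed in part \ref{i:2}. To see that $dF_{(w,\alpha)}$ is injective with complemented range I would note that $E_\alpha$ is a linear isomorphism of $\spc$, so it suffices to treat $W\oplus(iI\cdot w)\R$: the function $iI\cdot w$, i.e.\ $\tau\mapsto i\tau\,w(\tau)$, fails to be $1$-periodic whenever $w\not\equiv0$ (from $i(\tau+1)w(\tau+1)=i\tau w(\tau)$ and $w(\tau+1)=w(\tau)$ one gets $w\equiv0$), so $(iI\cdot w)\R\cap W=\{0\}$ and the sum is direct. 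Since $W$ is closed in the Hilbert space $\spc$ and $(iI\cdot w)\R$ is a line meeting $W$ only at $0$, the sum $W\oplus(iI\cdot w)\R$ is closed (a closed subspace plus a line), hence complemented; as $E_\alpha$ is an isomorphism, $\range dF_{(w,\alpha)}$ is closed and complemented, and $F$ is an immersion whose range splits.

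It remains to upgrade the immersion to a local embedding, which is the delicate point. The map $F$ is not globally injective, since $F(E_{2\pi k}w,\alpha-2\pi k)=F(w,\alpha)$ for every $k\in\Z$; however, on any slab $|\alpha-\alpha_0|<\pi$ it is injective, because $E_\alpha w=E_{\alpha'}w'$ forces $w'=E_{\alpha-\alpha'}w\in W$, whence by \eqref{eq:int} $e^{i(\alpha-\alpha')}=1$, and the slab width gives $\alpha=\alpha'$, $w=w'$. To see that the local inverse is continuous, so that $F$ restricts to a homeomorphism onto its image with the subspace topology, I would extract the multiplier $\xi$ explicitly: for $z\in W_\xi\cap\Mcal$ one has $z(\tau+1)\overline{z(\tau)}=\xi|z(\tau)|^2$, hence
\[
\xi=\Xi(z):=\frac{\int_0^1 z(\tau+1)\overline{z(\tau)}\,d\tau}{\int_0^1|z(\tau)|^2\,d\tau},
\]
a formula defining a smooth map on $\Mcal$ (the denominator is strictly positive, since a nontrivial $z\in\Mcal$ cannot vanish identically on $(0,1)$ without vanishing on all of $(0,2)$ by the Floquet relation). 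Choosing a continuous branch of $\alpha=\arg\xi$ near $\alpha_0$ and setting $w=E_{-\alpha}z$ recovers $(w,\alpha)$ continuously from $z$, which is the inverse chart. Consequently $\Mcal$ is a $C^\infty$ submanifold of $\spc$ modelled on $W\times\R$, proving parts \ref{i:1} and \ref{i:2}. I expect this continuity of the chart inverse, i.e.\ the smooth dependence of $\xi$ on $z$, together with the bookkeeping of the $2\pi\Z$-ambiguity, to be the main obstacle.

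Finally, part \ref{i:3} is a general property of path metrics induced by an embedding into a normed space and uses no special structure of $\Mcal$. For any piecewise $C^1$ curve $\gamma\colon[0,1]\to\Mcal\subset\spc$ joining $z_1$ to $z_2$, its length for the induced Riemannian metric is $\int_0^1\|\gamma'(s)\|\,ds$, so by the fundamental theorem of calculus for $\spc$-valued curves and $\big\|\int_0^1\gamma'\,ds\big\|\le\int_0^1\|\gamma'\|\,ds$,
\[
\|z_1-z_2\|=\Big\|\int_0^1\gamma'(s)\,ds\Big\|\le\int_0^1\|\gamma'(s)\|\,ds=\mathrm{length}(\gamma).
\]
Taking the infimum over all such curves yields $\dist_{\Mcal}(z_1,z_2)\ge\|z_1-z_2\|$.
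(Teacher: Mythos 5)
Your proposal is correct and follows essentially the same route as the paper: the global parametrization $\Phi(w,\alpha)=E_\alpha w$, its smoothness via the continuous bilinear product on $H^1$, the formula $\Phi'(w,\alpha)[\Delta,\delta]=E_\alpha(\Delta+i\delta I\cdot w)$ with trivial kernel and closed (hence complemented) range, injectivity on $\alpha$-slabs of width $2\pi$, continuity of the inverse chart through the explicit integral formula for $\xi(z)$, and the standard chord-versus-arclength inequality for part 3. The only cosmetic difference is that the paper packages the continuity of $z\mapsto\xi(z)$ as a quantitative Lipschitz estimate (Lemma \ref{lem:functional}) and covers $\Mcal$ by the two explicit charts $\Ucal_{(-\pi,\pi)}$, $\Ucal_{(0,2\pi)}$ with an explicit transition map, but the substance is identical.
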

\begin{remark}
It is worth noticing that both the dimension and the codimension of $\Mcal$ are infinite: this follows from
the splitting
\[
X = W \oplus W_{-1} \oplus V_4,
\]
where $V_4$ is the $4$-dimensional space
\[
V_4 = \{qI:q\in\HH\}.
\]
Indeed, given $z$ in $\spc$, we have the decomposition
\[
z = P_W z + P_{W_{-1}} z + z_{**},
\]
where $z_{**} = \frac{z(2)-z(0)}{2}I \in V_4$, $z = z_* + z_{**}$, and
\(
P_{W_{\pm1}} z = \frac12 \left[z_*(\tau) \pm z_{*}(\tau+1) \right].
\)
Notice that $z_*$ can be extended to a $2$-periodic function, so that the previous formulas are well-defined.
\end{remark}
\begin{remark}\label{rem:norm_tangent}
Notice that, as a submanifold, $\Mcal$ inherits the Riemannian structure
of $\spc$. In particular, for every $v\in T_z \Mcal$,
\[
\|v\|_{T_z \Mcal} = \|v\|_{\spc} = \|v\|.
\]
Moreover the geodesic distance on $\Mcal$ is defined as
\[
\dist_\Mcal (z_0,z_1) = \inf \left\{\int_0^1 \left\|\frac{d}{ds}\gamma(s)\right\|\,ds :
\gamma : [0,1]\to\Mcal\text{ smooth, }\gamma(i) = z_i \right\},
\]
where as usual $\|\cdot\|$ denotes the norm in $\spc$.
\end{remark}
The rest of the section is devoted to the proof of the above proposition. To start with we notice that, since $\Mcal$ is a subset of $X$,
it is a metric space with the induced distance.
\begin{lemma}\label{lem:functional}
The functional
\[
\Mcal\ni z \mapsto \xi\in \sphere^1,
\]
where $z\in W_\xi$,
satisfies
\[
|\xi(z_1) - \xi(z_2)| \le \frac{4}{\max_i\|z_i\|_2}
\|z_1-z_2\|.
\]
In particular, $\xi$ is continuous.
\end{lemma}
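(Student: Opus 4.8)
The plan is to extract $\xi$ from $z$ directly through the defining Floquet relation and convert this into a pointwise, then integral, estimate, using that $\HH$ is a normed division algebra. First, recall that by \eqref{eq:int} the value $\xi = \xi(z)\in\sphere^1$ is well defined for $z\in\Mcal$, since the spaces $W_\xi$ are in direct sum. Fix $z_1\in W_{\xi_1}$ and $z_2\in W_{\xi_2}$, write $\xi_i=\xi(z_i)$ and $w=z_1-z_2$. For a.e.\ $\tau\in(0,1)$ we have $z_i(\tau+1)=\xi_i z_i(\tau)$, so adding and subtracting $\xi_2 z_1(\tau)$ gives the algebraic identity
\[
(\xi_1-\xi_2)\,z_1(\tau) = w(\tau+1) - \xi_2\, w(\tau),\qquad \tau\in(0,1).
\]

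The second step is to pass to quaternion norms. Since $|pq|=|p|\,|q|$ in $\HH$ and $|\xi_2|=1$, the identity yields the pointwise bound
\[
|\xi_1-\xi_2|\,|z_1(\tau)| \le |w(\tau+1)| + |w(\tau)|,\qquad \tau\in(0,1).
\]
I would then multiply by $|z_1(\tau)|$, integrate over $(0,1)$, and apply Cauchy--Schwarz to each of the two resulting terms (performing the substitution $s=\tau+1$ in the shifted one), obtaining
\[
|\xi_1-\xi_2|\int_0^1|z_1|^2\,d\tau \le \Big(\int_0^1|z_1|^2\Big)^{1/2}\Big(\|w\|_{L^2(0,1)}+\|w\|_{L^2(1,2)}\Big).
\]

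The third step identifies the weight. Since $|\xi_1|=1$, the relation $z_1(\tau+1)=\xi_1 z_1(\tau)$ gives $|z_1(\tau+1)|=|z_1(\tau)|$, so $\int_1^2|z_1|^2=\int_0^1|z_1|^2$ and therefore $\int_0^1|z_1|^2=\tfrac12\|z_1\|_2^2$. Dividing through, and using $\|w\|_{L^2(0,1)}+\|w\|_{L^2(1,2)}\le\sqrt2\,\|w\|_{L^2(0,2)}\le\sqrt2\,\|w\|$, I obtain
\[
|\xi_1-\xi_2| \le \frac{2}{\|z_1\|_2}\,\|z_1-z_2\|.
\]
Exchanging the roles of $z_1$ and $z_2$ produces the same bound with $\|z_2\|_2$ in the denominator, and taking the better of the two gives the stated estimate (in fact with constant $2$, hence a fortiori with the claimed constant $4$). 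Continuity of $\xi$ on $\Mcal$ follows immediately.

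There is no serious obstacle here: the argument is a short chain of elementary estimates. The only points demanding care are genuinely algebraic, namely the non-commutativity of quaternionic multiplication (one must add and subtract $\xi_2 z_1$ on the correct side and rely on multiplicativity of the quaternion norm rather than on any commutation), and the bookkeeping that converts the natural $L^2(0,1)$ weight into the full $L^2(0,2)$ norm $\|z_i\|_2$ appearing in the denominator, which is exactly what the modulus-periodicity $|z_i(\tau+1)|=|z_i(\tau)|$ supplies.
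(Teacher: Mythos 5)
Your proof is correct, and it reaches the stated bound with room to spare. The algebraic identity $(\xi_1-\xi_2)z_1(\tau)=w(\tau+1)-\xi_2 w(\tau)$ with $w=z_1-z_2$ is verified by direct substitution of the two Floquet relations, the passage to norms uses only $|pq|=|p|\,|q|$ and $|\xi_2|=1$, and the weight identification $\int_0^1|z_1|^2=\tfrac12\|z_1\|_2^2$ follows from $|z_1(\tau+1)|=|z_1(\tau)|$. The route differs from the paper's: there, one multiplies $z(\tau+1)=\xi z(\tau)$ on the right by $\bar z(\tau)$ and integrates to obtain the explicit representation
\[
\xi(z)=\frac{2}{\|z\|_2^2}\int_0^1 z(\tau+1)\bar z(\tau)\,d\tau,
\]
then exploits the $0$-homogeneity of $z\mapsto\xi(z)$ to reduce to the normalized case $\|z_i\|_2^2=2$, where a telescoping estimate gives the constant $\sqrt2$; undoing the normalization costs an extra factor and produces the constant $4$. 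Your pointwise telescoping of the Floquet relation itself bypasses the integral representation and the normalization step entirely, and this is precisely where you gain: you obtain the Lipschitz-type bound with constant $2$ instead of $4$ (and with the $L^2$ norm of the difference rather than the full $H^1$ norm, which is again stronger). What the paper's approach buys in exchange is the closed integral formula for $\xi(z)$, which makes the continuity (indeed smoothness away from $0$) of the functional transparent; your argument establishes the Lipschitz estimate, which is all the lemma asserts.
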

\begin{proof}
We multiply the identity $z(\tau+1) = \xi z(\tau)$ by $\bar z(\tau)$ to the right and integrate
over $[0,1]$. We obtain
\[
\xi = \frac{2}{\|z\|_2^2} \int_0^1 z(\tau+1)\bar z(\tau)\,d\tau
\]
(recall that, by Lemma \ref{lem:BKS_per}, $\|z\|_2^2 = 2\int_0^1|z|^2 = 2\int_1^2|z|^2$).
The functional $z\mapsto\xi$ is $0$-homogeneous:
\[
\xi(\lambda z) = \xi(z)\text{ if }\lambda>0.
\]
Assuming first that both $\|z_i\|_2^2=2$ we obtain
\[
\begin{split}
|\xi(z_1) - \xi(z_2)| & = \left|\int_0^1\left[z_1(\tau+1)\bar z_1(\tau) - z_2(\tau+1)\bar z_2(\tau)\right]\right|\,d\tau\\
&\le \int_0^1\left|z_1(\tau+1) - z_2(\tau+1)\right|\cdot\left|\bar z_1(\tau)\right|\,d\tau
+\int_0^1\left|z_2(\tau+1)\right|\cdot\left|\bar z_1(\tau) - \bar z_2(\tau)\right|\,d\tau
\\
&\le \|z_1\|_{L^2(0,1)}\|z_1-z_2\|_{L^2(1,2)} + \|z_2\|_{L^2(1,2)}\|z_1-z_2\|_{L^2(0,1)}
\le \sqrt2 \|z_1-z_2\|_{L^2(0,2)}.
\end{split}
\]
In the general case, assume for concreteness $\|z_2\|_2\le\|z_1\|_2$. Then
\[
\begin{split}
|\xi(z_1) - \xi(z_2)| & = \left|\xi\left(\frac{z_1\sqrt2}{\|z_1\|_2}\right) - \xi\left(\frac{z_2\sqrt2}{\|z_2\|_2}\right)\right|
\le 2 \left\|\frac{z_1}{\|z_1\|_2}-\frac{z_2}{\|z_2\|_2}\right\|_{2}\\
&= \frac{2}{\|z_1\|_2\|z_2\|_2} \Big\|\|z_2\|_2 z_1-\|z_2\|_2 z_2+\|z_2\|_2 z_2-\|z_1\|_2 z_2\Big\|_{2}
\le \frac{4}{\|z_1\|_2} \left\| z_1- z_2\right\|_{2}.\qedhere
\end{split}
\]\end{proof}
By now, $\Mcal$ is a metric space. Now we are going to induce on it a structure of smooth submanifold of $\spc$, modeled on the Hilbert space $W\times\R$. To this end, we consider the map
\[
\Phi : W \times \R \to \spc,\qquad (w,\alpha) \mapsto E_\alpha w.
\]
Since $W \times \R$ is an Hilbert space, in the following we identify it with its
tangent space. Morever, we recall that $W$ is a subspace of $\spc$, thus we use in it the norm
$\|\cdot\| = \|\cdot\|_{H^1(0,2;\HH)}$.
\begin{lemma}\label{lem:Phismooth}
$\Phi$ is $C^\infty$. Moreover, for every $(w,\alpha)$, $(\Delta,\delta)\in W \times \R$
\[
\Phi'(w,\alpha)[\Delta,\delta] = E_\alpha \left(\Delta + i\delta I \cdot w \right).
\]
\end{lemma}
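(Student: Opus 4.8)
The plan is to factor $\Phi$ through the operator exponential, so that the smoothness of $\Phi$ reduces to that of the scalar map $\alpha\mapsto e^{i\alpha\tau}$. First I would record that multiplication by a fixed smooth $\C$-valued multiplier is bounded on $\spc$: for $m\in C^1([0,2];\C)$, the map $z\mapsto mz$ (quaternionic product, $m$ acting on the left) satisfies $(mz)'=m'z+mz'$ a.e., whence
\[
\|mz\|_{\spc}\le \sqrt2\,\bigl(\|m\|_\infty+\|m'\|_\infty\bigr)\,\|z\|_{\spc},
\]
so it defines a bounded operator on $\spc$. Here it is essential that $m(\tau)\in\C$ commutes with $i$, a fact used repeatedly below.

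Next I would expand $E_\alpha$ as an operator-valued power series. For $n\ge0$ let $M_n$ denote multiplication by $(i\tau)^n=i^nI^n$; by the previous step each $M_n$ is a bounded operator on $\spc$, and since $|I^n|\le 2^n$ and $|(I^n)'|=nI^{n-1}\le n2^{n-1}$ on $[0,2]$, one obtains an operator-norm bound $\|M_n\|\le C\,(n+1)2^{n}$ with $C$ absolute. As $e^{i\alpha\tau}=\sum_{n\ge0}\frac{\alpha^n}{n!}(i\tau)^n$ together with all its $\tau$-derivatives converges uniformly on $[0,2]$, locally uniformly in $\alpha$, the operator series $\sum_{n\ge0}\frac{\alpha^n}{n!}M_n$ converges in operator norm to $E_\alpha$, because
\[
\sum_{n\ge0}\frac{|\alpha|^n}{n!}\,\|M_n\|\le C\sum_{n\ge0}\frac{(2|\alpha|)^n(n+1)}{n!}=C\,(1+2|\alpha|)\,e^{2|\alpha|}<\infty.
\]
Applying the same estimate to the formally differentiated series (and using the semigroup identity $E_{\alpha+h}=E_\alpha\circ E_h$ to recentre at an arbitrary $\alpha$) shows that $\alpha\mapsto E_\alpha$ is real-analytic, hence $C^\infty$, as a map from $\R$ into the bounded operators on $\spc$, with derivative $\frac{d}{d\alpha}E_\alpha=E_\alpha\circ M_1$, i.e. $\frac{d}{d\alpha}E_\alpha w=E_\alpha(iI\cdot w)$.

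Finally I would assemble $\Phi$ as a composition of smooth maps. Writing $\iota\from W\hookrightarrow\spc$ for the (bounded, linear, hence $C^\infty$) inclusion and letting $\mathrm{ev}(A,z)=Az$ denote the evaluation of a bounded operator $A$ on $z\in\spc$, one has $\Phi(w,\alpha)=\mathrm{ev}(E_\alpha,\iota w)$. Since $\mathrm{ev}$ is bounded bilinear (as $\|Az\|\le\|A\|\,\|z\|$), it is smooth, and composing it with the smooth map $\alpha\mapsto E_\alpha$ and with $\iota$ yields $\Phi\in C^\infty(W\times\R;\spc)$. The derivative then follows from the product rule: differentiating in $w$ at fixed $\alpha$ gives $E_\alpha\Delta$ by linearity, while differentiating in $\alpha$ gives $\delta\,\frac{d}{d\alpha}E_\alpha w=\delta\,E_\alpha(iI\cdot w)$; since $i$ commutes with $e^{i\alpha\tau}$, adding the two contributions gives
\[
\Phi'(w,\alpha)[\Delta,\delta]=E_\alpha\Delta+E_\alpha(i\delta I\cdot w)=E_\alpha\bigl(\Delta+i\delta I\cdot w\bigr).
\]

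The main obstacle is the convergence of the operator series in the second step, and specifically its $H^1$ part: differentiating the multiplier $(i\tau)^n$ produces the extra factor $nI^{n-1}$, so the operator norm of $M_n$ grows like $n2^n$ rather than $2^n$. The bound $\|M_n\|\le C(n+1)2^n$ shows this polynomial growth is dominated by the $1/n!$ coming from the exponential, so convergence and term-by-term differentiation survive. An alternative, more hands-on route avoids the series entirely: set $L[\Delta,\delta]:=E_\alpha(\Delta+i\delta I\cdot w)$, use $E_{\alpha+\delta}=E_\alpha\circ E_\delta$ together with the pointwise estimates $|e^{i\delta\tau}-1-i\delta\tau|\le\frac12(\delta\tau)^2$ and the analogous bound for the $\tau$-derivative, and estimate the first-order Taylor remainder of $\Phi$ directly in $\spc$; smoothness of all higher orders follows in the same way.
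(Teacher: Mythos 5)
Your argument is correct and follows essentially the same route as the paper: both factor $\Phi$ through a continuous bilinear multiplication composed with the smooth exponential curve $\alpha\mapsto e^{i\alpha\tau}$ (the paper takes the curve $\Ecal(\alpha)=E_\alpha 1$ in $\spc$ together with the bilinear map $T(w,\varphi)=\varphi\cdot w$, while you take the curve in the bounded operators on $\spc$ together with the evaluation map). Your operator-norm power-series estimate is just a more explicit verification of the smoothness of that curve, which the paper dispatches by directly differentiating the identity $\Ecal(\alpha)(\tau)=e^{i\alpha\tau}$.
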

\begin{proof}
The parameterized curve $\Ecal:\R \to \spc$, $\alpha\mapsto E_\alpha 1$ can be differentiated and the velocity vector
is $\dot \Ecal (\alpha) = i I \cdot \Ecal(\alpha)$. This is a direct consequence of the identity $\Ecal(\alpha)(\tau)
= e^{i\alpha\tau}$. It is now easy to deduce that $\Ecal$ is $C^\infty$. In fact the successive derivatives are easily
obtained from the formula for the velocity vector. Furthermore, let the map $T$ be defined as
\[
W \times \spc \ni (w,\varphi) \mapsto T(w,\varphi) := \varphi \cdot w \in \spc.
\]
We have that $T$ is bilinear and continuous, thus it is smooth as well. Since $\Phi = T \circ (\mathrm{Id}\times \Ecal)$,
the chain rules implies that $\Phi$ is $C^\infty$. Moreover
\[
\Phi'(w,\alpha)[\Delta,\delta] = E_\alpha \Delta + \delta \dot\Ecal (\alpha) w = E_\alpha \left(\Delta + i\delta I \cdot w \right).
\qedhere
\]
\end{proof}
Next we address the injectivity and surjectivity properties of $\Phi$. Given an open interval $I\subset\R$, we denote with $|I|$ its lenght and with
\[
\Phi_I := \left.\Phi\right|_{(W\setminus\{0\})\times I},\qquad
\Ucal_I := \Phi((W\setminus\{0\})\times I).
\]
\begin{lemma}\label{lem:inj_Phi}
Under the previous notation,
\begin{itemize}
\item $\Phi(w,\alpha+2\pi) = \Phi(E_{2\pi} w , \alpha)$, for every $(w,\alpha)\in W \times \R$;
\item if $|I|> 2\pi$ then $\Phi_I$ is not injective and $\Ucal_I = \Mcal$;
\item if $|I|\le 2\pi$ then $\Phi_I$ is injective and $\Ucal_I \subsetneq \Mcal$.
\end{itemize}
\end{lemma}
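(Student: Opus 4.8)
The plan is to treat the first identity as the structural backbone and then read off the other two statements from it, together with the injectivity of the angle functional of Lemma \ref{lem:functional} and the disjointness \eqref{eq:int}. First I would verify $\Phi(w,\alpha+2\pi) = \Phi(E_{2\pi}w,\alpha)$ by the direct computation $E_{\alpha+2\pi}w(\tau) = e^{i(\alpha+2\pi)\tau}w(\tau) = e^{i\alpha\tau}\bigl(e^{2\pi i\tau}w(\tau)\bigr) = E_\alpha(E_{2\pi}w)(\tau)$, after noting that $E_{2\pi}$ maps $W$ into itself (since $e^{2\pi i(\tau+1)} = e^{2\pi i\tau}$, the function $E_{2\pi}w$ is again $1$-periodic when $w$ is). Iterating, $\Phi(w,\alpha) = \Phi(E_{-2\pi k}w,\alpha+2\pi k)$ for every $k\in\Z$, so the fibre of $\Phi$ through $(w,\alpha)$ is exactly $\{(E_{-2\pi k}w,\alpha+2\pi k):k\in\Z\}$; in particular two preimages of the same point always differ by an integer multiple of $2\pi$ in the $\alpha$-variable.

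Next I would establish the injectivity dichotomy. The key point is that, by Lemma \ref{lem:functional}, a nonzero $z=E_\alpha w\in\Mcal$ lies in $W_{e^{i\alpha}}$, so $\xi(z)=e^{i\alpha}$ is intrinsic and $\alpha$ is determined by $z$ modulo $2\pi$; once $\alpha$ is fixed, $w=E_{-\alpha}z$ is determined as well. Concretely, if $\Phi(w_1,\alpha_1)=\Phi(w_2,\alpha_2)\neq 0$ then the common value lies in $W_{e^{i\alpha_1}}\cap W_{e^{i\alpha_2}}$, so \eqref{eq:int} forces $e^{i\alpha_1}=e^{i\alpha_2}$, i.e. $\alpha_1-\alpha_2\in 2\pi\Z$, and then injectivity of the isomorphism $E_{\alpha_1}$ gives $w_1=w_2$. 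For an \emph{open} interval $I$ with $|I|\le 2\pi$ any two of its points satisfy $|\alpha_1-\alpha_2|<2\pi$, so the only admissible multiple is $0$; thus $\alpha_1=\alpha_2$, $w_1=w_2$, and $\Phi_I$ is injective. For $|I|>2\pi$ I would pick $\alpha$ with $\alpha,\alpha+2\pi\in I$ and any $w\in W\setminus\{0\}$ with $E_{2\pi}w\neq w$ (e.g. a nonzero constant $w$, for which $E_{2\pi}w(\tau)=e^{2\pi i\tau}w$ is nonconstant); the first identity then exhibits two distinct preimages of one point, so $\Phi_I$ is not injective.

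Finally I would settle the ranges, noting $\Ucal_I\subseteq\Mcal$ always holds so only the reverse inclusions are at stake. For $|I|>2\pi$: given $z\in\Mcal$ write $z=E_{\alpha_0}w$; the coset $\alpha_0+2\pi\Z$ meets $I$ (a progression of step $2\pi$ cannot avoid an interval of length $>2\pi$), say $\alpha_0+2\pi k\in I$, and by the first identity $z=\Phi(E_{-2\pi k}w,\alpha_0+2\pi k)\in\Ucal_I$; hence $\Ucal_I=\Mcal$. For $|I|\le 2\pi$: the arc $\{e^{i\alpha}:\alpha\in I\}$ is a proper subset of $\sphere^1$ (an open interval of length $\le 2\pi$ covers at most $\sphere^1$ minus a point), so choosing $\xi^*=e^{i\alpha^*}\notin\{e^{i\alpha}:\alpha\in I\}$ and any $w^*\in W\setminus\{0\}$, the element $E_{\alpha^*}w^*\in W_{\xi^*}\setminus\{0\}\subset\Mcal$ has $\xi(E_{\alpha^*}w^*)=\xi^*$ and therefore cannot lie in $\Ucal_I$; hence $\Ucal_I\subsetneq\Mcal$.

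The computations are all elementary, and the semigroup identity $E_\alpha E_\beta=E_{\alpha+\beta}$ does most of the work; the only real care is needed at the borderline $|I|=2\pi$, where I would exploit the openness of $I$ twice — to get the strict inequality $|\alpha_1-\alpha_2|<2\pi$ in the injectivity step, and to note that the image arc still misses a point in the properness step. This boundary case is the single place where the hypotheses are used sharply, and is the part I would write out most carefully.
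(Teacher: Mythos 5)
Your proposal is correct and follows essentially the same route as the paper: the first identity by the direct computation $e^{i(\alpha+2\pi)\tau}w=e^{i\alpha\tau}(e^{2\pi i\tau}w)$, the injectivity dichotomy from the characterization $\Phi(w_1,\alpha_1)=\Phi(w_2,\alpha_2)\iff\alpha_2=\alpha_1+2k\pi$ and $w_2=E_{-2k\pi}w_1$ (which you correctly derive from \eqref{eq:int}), and the range statements from the surjectivity of $\Phi$ onto each $W_\xi$ with $\xi=e^{i\alpha}$. The paper's proof is just a terser statement of these same equivalences; your added care at the borderline $|I|=2\pi$ (using openness of $I$) is a detail the paper leaves implicit.
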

\begin{proof}
The first part is trivial, since
\[
\Phi(w,\alpha+2\pi)(\tau) = e^{i(\alpha+2\pi)\tau}w(\tau) = e^{i\alpha\tau}e^{i2\pi\tau}w(\tau) = \Phi(E_{2\pi} w , \alpha).
\]
Concerning the injectivity of $\Phi_I$, we observe that
\[
\Phi(w_1,\alpha_1) = \Phi(w_2,\alpha_2)
\iff
\exists k\in\Z : \alpha_2 = \alpha_1 + 2k\pi\text{ and }
w_2 = E_{-2k\pi}w_1.
\]
Finally, concerning the surjectivity, we have that
\[
z\in W_\xi
\iff
\exists (w,\alpha)\in W\times\R : z=\Phi(w,\alpha)\text{ and }
\xi = e^{i\alpha}. \qedhere
\]
\end{proof}
\begin{lemma}\label{lem:Phi_open}
$\Phi$ is open as a map from $(W\setminus\{0\}) \times \R$ onto $\Mcal$.
\end{lemma}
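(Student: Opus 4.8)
The plan is to deduce openness from the existence of a \emph{continuous local section} of $\Phi$ near every point of $\Mcal$. The obstruction to a global inverse is only the $2\pi$-periodicity in $\alpha$ recorded in Lemma \ref{lem:inj_Phi}; locally this ambiguity disappears, and the Floquet multiplier $z\mapsto\xi(z)$, which is continuous by Lemma \ref{lem:functional}, will let us recover $\alpha$ (hence $w$) continuously from $z$.

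So fix $z_0=\Phi(w_0,\alpha_0)\in\Mcal$, whence $\xi(z_0)=e^{i\alpha_0}$. Since $t\mapsto e^{it}$ is a covering of $\sphere^1$, it admits a continuous local inverse $\lambda$ on an open arc $O\subset\sphere^1$ containing $e^{i\alpha_0}$, with $\lambda(e^{i\alpha_0})=\alpha_0$ and $e^{i\lambda(\eta)}=\eta$ on $O$. By continuity of $\xi$, the set $U:=\{z\in\Mcal:\xi(z)\in O\}$ is an open neighbourhood of $z_0$ in $\Mcal$, on which $\alpha(z):=\lambda(\xi(z))$ is continuous and satisfies $e^{i\alpha(z)}=\xi(z)$, $\alpha(z_0)=\alpha_0$. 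On $U$ I would set
\[
\Psi(z):=\bigl(E_{-\alpha(z)}z,\ \alpha(z)\bigr).
\]
Since $z\in W_{\xi(z)}=W_{e^{i\alpha(z)}}$ and $E_{-\alpha(z)}$ maps $W_{e^{i\alpha(z)}}$ isomorphically onto $W_1=W$, the first component lies in $W\setminus\{0\}$; moreover $\Phi(\Psi(z))=E_{\alpha(z)}E_{-\alpha(z)}z=z$ for all $z\in U$, and $\Psi(z_0)=(w_0,\alpha_0)$. Thus $\Psi\from U\to(W\setminus\{0\})\times\R$ is a section of $\Phi$.

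For the continuity of $\Psi$ it suffices to know that $(z,\beta)\mapsto E_\beta z$ is continuous from $\spc\times\R$ to $\spc$: this follows exactly as in Lemma \ref{lem:Phismooth}, writing $E_\beta z=T(z,\Ecal(\beta))$ with $T$ bilinear and continuous and $\Ecal$ smooth (the bound \eqref{eq:est} ensuring local uniform control). Composing with the continuous map $z\mapsto\alpha(z)$ gives continuity of $z\mapsto E_{-\alpha(z)}z$ into $\spc$; as $W$ is a closed subspace of $\spc$ carrying the induced norm and the values lie in $W$, this is continuity into $W$, so $\Psi$ is continuous. Finally, given any open $\Omega\subset(W\setminus\{0\})\times\R$ and any $z_0\in\Phi(\Omega)$, choose $(w_0,\alpha_0)\in\Omega$ with $\Phi(w_0,\alpha_0)=z_0$ and build $U,\Psi$ as above, so that $\Psi(z_0)=(w_0,\alpha_0)\in\Omega$. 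Then $\Psi^{-1}(\Omega)$ is open in $U$ (hence in $\Mcal$), contains $z_0$, and lies in $\Phi(\Omega)$ because $z=\Phi(\Psi(z))$ on $U$. Since $z_0\in\Phi(\Omega)$ was arbitrary, $\Phi(\Omega)$ is open in $\Mcal$.

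The only genuinely non-formal step---and thus the main obstacle---is the construction of the section $\Psi$: one has to lift the $\sphere^1$-valued quantity $\xi(z)$ to a real parameter $\alpha(z)$ continuously, which can be done only locally owing to the $2\pi$-ambiguity, and this is exactly where the quantitative continuity of $\xi$ from Lemma \ref{lem:functional} and the local invertibility of the exponential covering enter. Once $\Psi$ is in hand, openness is a purely topological consequence.
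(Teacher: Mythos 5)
Your proof is correct and follows essentially the same route as the paper: both recover $\alpha$ locally from the continuous Floquet multiplier $\xi(z)$ of Lemma \ref{lem:functional} via a branch of the logarithm (your local inverse of the exponential covering) and then set $w_1=E_{-\alpha_1}z_1$. Your packaging of this as a continuous local section $\Psi$ with $\Phi\circ\Psi=\mathrm{id}$ is just a cleaner topological wrapper around the paper's direct $\varepsilon$--$\delta$ verification of openness.
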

\begin{proof}
Let us fix $(w,\alpha)\in W\times\R$, $z=\Phi(w,\alpha)$ and $r>0$. We will prove that there exists $\rho>0$ such that if $z_1\in\Mcal$ and $\|z-z_1\|<\rho$ then there exists $(w_1,\alpha_1)\in W\times \R$ with $\|w-w_1\|+|\alpha-\alpha_1|<r$ such that $z_1 =\Phi(w_1,\alpha_1)$. Define $\xi=e^{i\alpha}$ and let $\log$ denote the holomorphic branch of the logarithm defined on $\C\setminus\{\lambda\xi:\lambda<0\}$ with $\log\xi = i\alpha$. In view of Lemma \ref{lem:functional}, we can find $\rho_1>0$ such that if $z_1\in\Mcal$ and $\|z-z_1\| <\rho_1$ then $\xi(z_1)$ belongs to the above domain and $\alpha_1 = -i \log \xi(z_1)$ is such that $|\alpha-\alpha_1|<r/2$. Define $w_1(\tau) = e^{-i\alpha_1\tau} z(\tau)$. Then $w_1\in W$, and $\|w-w_1\|<r/2$ if $z$ and $z_1$ are sufficiently close, say $\|z-z_1\|<\rho_2$.
\end{proof}
As a consequence of the above lemma, $\Ucal_I$ is open in $\Mcal$, for every open $I$.
\begin{lemma}\label{lem:Usubman}
If $|I|\le 2\pi$ then $\Ucal_I$ is a $C^\infty$ submanifold in $\spc$, modeled on $W\times\R$.
\end{lemma}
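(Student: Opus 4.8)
The plan is to show that the restriction $\Phi_I$ is a $C^\infty$ embedding of $(W\setminus\{0\})\times I$ onto $\Ucal_I$, so that its inverse supplies a single global chart exhibiting $\Ucal_I$ as a submanifold modeled on $W\times\R$. Since $\spc$ is a Hilbert space, it suffices to check three facts: that $\Phi_I$ is a $C^\infty$ immersion with closed (hence split) image at every point; that $\Phi_I$ is injective; and that $\Phi_I$ is a homeomorphism onto $\Ucal_I$ endowed with the subspace topology inherited from $\spc$. Granting these, the standard characterization of embeddings between Hilbert manifolds (an injective immersion with split differential that is a homeomorphism onto its image) yields that $\Ucal_I$ is a $C^\infty$ submanifold and that $\Phi_I^{-1}$ is an admissible chart.

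First I would verify the immersion property. By Lemma \ref{lem:Phismooth}, $\Phi$ is $C^\infty$ with $\Phi'(w,\alpha)[\Delta,\delta] = E_\alpha(\Delta + i\delta I\cdot w)$. The key point is the transversality $iI\cdot w\notin W$ for $w\neq0$: indeed $(iI\cdot w)(\tau+1) - (iI\cdot w)(\tau) = i\,w(\tau)$ on $(0,1)$, which vanishes identically only if $w\equiv0$. Hence the sum $W + \R(iI\cdot w)$ is direct, and if $E_\alpha(\Delta + i\delta I\cdot w)=0$ then, since $E_\alpha$ is a linear isomorphism, $\Delta + i\delta I\cdot w=0$ forces first $\delta=0$ and then $\Delta=0$; thus $\Phi'(w,\alpha)$ is injective. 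Its image is $E_\alpha\bigl(W\oplus\R(iI\cdot w)\bigr)$, a finite-dimensional extension of the closed subspace $W$ transported by the isomorphism $E_\alpha$, hence closed; in the Hilbert space $\spc$ every closed subspace is complemented, so the image splits. This also identifies the tangent space as in item \ref{i:2} of Proposition \ref{prop:Mcal}.

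Next I would establish that $\Phi_I$ is a homeomorphism onto its image. Injectivity for $|I|\le2\pi$ is exactly Lemma \ref{lem:inj_Phi}, and surjectivity onto $\Ucal_I$ holds by definition. For the continuity of the inverse, given $z=E_\alpha w\in\Ucal_I$ one recovers $\alpha$ from the continuous functional $z\mapsto\xi(z)=e^{i\alpha}$ of Lemma \ref{lem:functional}: since $|I|\le2\pi$, the map $\alpha\mapsto e^{i\alpha}$ is injective on $I$ and admits a continuous inverse there through a fixed branch of the logarithm (as in the proof of Lemma \ref{lem:Phi_open}), so $\alpha$ depends continuously on $z$; then $w=E_{-\alpha}z$ depends continuously on $(\alpha,z)$ by the bound \eqref{eq:est}. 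Recalling that $\Ucal_I$ is open in $\Mcal$ by Lemma \ref{lem:Phi_open}, this shows $\Phi_I^{-1}$ is continuous, completing the proof that $\Phi_I$ is an embedding and hence that $\Ucal_I$ is a $C^\infty$ submanifold modeled on $W\times\R$.

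I would expect the main obstacle to be the continuity of the inverse chart $\Phi_I^{-1}$, equivalently that $\Phi_I$ is a homeomorphism onto its image rather than merely a continuous injective immersion. This is precisely where the length restriction $|I|\le2\pi$ is indispensable, since it makes $\xi$ determine $\alpha$ unambiguously, and where the continuity estimate for $z\mapsto\xi(z)$ from Lemma \ref{lem:functional} does the decisive work; the immersion and injectivity steps, by contrast, are essentially algebraic once the transversality $iI\cdot w\notin W$ is observed.
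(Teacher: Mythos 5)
Your proposal is correct and follows essentially the same route as the paper: both reduce the claim to (i) injectivity of $\Phi_I$ via Lemma \ref{lem:inj_Phi}, (ii) the homeomorphism/openness property onto $\Ucal_I$ via Lemma \ref{lem:Phi_open} (with the logarithm branch and Lemma \ref{lem:functional}), and (iii) the immersion property, proved by the same periodicity computation showing $\ker\Phi'(w,\alpha)$ is trivial and that the range $E_\alpha\bigl(W\oplus(iI\cdot w)\R\bigr)$ is closed, with the Hilbert-space structure supplying the splitting. The paper simply packages these three facts as the hypotheses of the embedding theorems in \cite[Thms.~3.5.7, 3.5.9]{AMR98}, exactly as you do.
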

\begin{proof}
The lemma follows by \cite[p. 178-179]{AMR98}, in particular by Thms. 3.5.7 and 3.5.9 there.
According to such results, to prove the lemma we have to show that
\begin{enumerate}
\item $\Phi_I$ is injective;
\item $\Phi_I$ is open as a map from $(W\setminus\{0\})\times I$ onto $\Ucal_I$;
\item $\Phi_I$ is an immersion, i.e. $\Phi'_I(w,\alpha)$ has trivial kernel and closed range
for every $(w,\alpha)\in (W\setminus\{0\})\times I$.
\end{enumerate}
Notice that the above properties are sufficient because the manifolds are modeled over Hilbert spaces; for ones modeled on Banach spaces, we should ask for a closed split range $\Phi'_I$.

The first property follows from Lemma \ref{lem:inj_Phi}. As for the second one, it is a consequence of Lemma \ref{lem:Phi_open}.

To show 3., we recall from Lemma \ref{lem:Phismooth}
that
\[
\Phi'(w,\alpha)[\Delta,\delta] = E_\alpha \left(\Delta + i\delta I \cdot w \right).
\]
Since $E_\alpha$ is an isomorphism, the points $(\Delta,\delta)\in \ker\Phi'_I(w,\alpha)$ should satisfy
$\Delta + i\delta I \cdot w=0$. Assume that $(\Delta,\delta) \neq 0$, then $\delta\neq0$ and the previous identity is equivalent to
\[
\Delta(\tau+1) + i\delta(\tau+1) w(\tau+1) =
\Delta(\tau) + i\delta \tau w(\tau) = 0,
\]
for every $\tau\in[0,1]$. Since $\Delta$ and $w$ are $1$-periodic, we are led to $w\equiv0$. This is a contradiction, since $w\in W\setminus\{0\}$. Once we have proved that the kernel is trivial, we observe that the range of $\Phi'(w,\alpha)$ is $E_\alpha(W\oplus (iI\cdot w)\R)$. Since  $W\oplus (iI\cdot w)\R$ is closed in $\spc$ and $E_\alpha$ is an isomorphism, we deduce that the range is also closed in $\spc$.
\end{proof}
\begin{proof}[Proof of Proposition \ref{prop:Mcal}.]
We first show that $\Mcal$ is a submanifold of $\spc$. Let us take the intervals $I_1 = (-\pi,\pi)$
and $I_2 = (0,2\pi)$. By Lemma \ref{lem:inj_Phi} we have that $\Ucal_{I_1} \cup \Ucal_{I_2} = \Ucal_{(-\pi,2\pi)}
=\Mcal$, where each $\Ucal_{I_i}$ is open in $\Mcal$ by Lemma
\ref{lem:Phi_open}. In view of Lemma \ref{lem:Usubman}, we are left to show that the diffeomorphisms $\Phi_{I_i}$
from $W\times I_i$ onto $\Ucal_{I_i}$,  $i=1,2$, are compatible. This is again a consequence of Lemma \ref{lem:inj_Phi},
since
\[
\Phi_{I_2}^{-1} \circ \Phi_{I_1} (w,\alpha) =
\begin{cases}
(E_{-2\pi}w,\alpha + 2\pi) & \alpha \in (-\pi,0)\\
(w,\alpha) & \alpha \in (0,\pi),
\end{cases}
\]
and a similar formula also holds for $\Phi_{I_1} \circ \Phi_{I_2}^{-1}$.

The expression of $T_{E_\alpha w}\Mcal$ follows from Lemma \ref{lem:Phismooth}, since $\Phi'(w,\alpha)$ is an isomorphism
between $W\times\R$ and such tangent space.

Finally, once $\Mcal$ is a submanifold of an Hilbert space, it inherits the corresponding Riemmanian structure.
Then the geodesic distance $\dist_\Mcal$ is well defined, and of course
\[
\dist_\Mcal(z_1,z_2) \ge \|z_1-z_2\|.
\]
\end{proof}

\subsection{Variational principles for \texorpdfstring{$\BKS$}{BKS} on
\texorpdfstring{$\Mcal$}{M}}

In the following, we assume that $p$ is $C^1$ and $T$-periodic. Moreover, for any $z\in\Mcal$, when
we write $z=E_\alpha w$ we understand that $w$ is $1$-periodic.

By definition, functions in the manifold $\Mcal$ are defined in the interval
$0<\tau<2$, while in Section \ref{sec:ELeqBKS} we developed the theory
for the functional $\BKS$ on functions defined in $0<\tau<1$. To proceed, we have
two possibilities: either we have to restrict
the functions of $\Mcal$ on $(0,1)$, or to extend the definition of $\BKS$ to functions
defined on $(0,2)$. Actually, the two points of view turn out to be equivalent: indeed,
retracing the arguments in Section \ref{sec:euristic} and \ref{sec:ELeqBKS}, it is possible to
see that the natural definition for doubled intervals is
\[
\begin{split}
\BKS_2 &: H^1(0,2; \HH) \to \R \cup \{+\infty\}\\
\BKS_2(z) &:= \frac{1}{\Lcal_2(z)}
\Qcal_2(z) +
\Lcal_2(z)\Big[ 2+ \Rcal_2(z)\Big],
\qquad \Bcal_2(0):=+\infty,
\end{split}
\]
where
\[
\Lcal_2(z) := \frac{2T}{\int_0^2|z|^2},\quad
\Qcal_2(z) := 2 \int_0^2 | z'|^2,
\quad
\Rcal_2(z) := \int_0^2
|z|^2 \scalar{p\circ t_z}{\bar z i z},\quad
{t_z}_2(\tau) := \Lcal_2(z) \int_{0}^{\tau}|z|^2
\]
(in such a way that ${t_z}_2(2)=2T$). Then, since $p$ is $T$-periodic, direct
computations show that
\[
\BKS_2(z) = 2\BKS(\pro{z})\qquad\text{for every  }z\in\Mcal,
\]
where $\BKS$ is defined as usual (recall Lemma \ref{lem:BKS_per}). For this reason we will work directly with $\widetilde\BKS(z) = \BKS(\pro{z})$. We first check that critical points of $\widetilde\BKS$ correspond to generalized solutions of the perturbed Kepler problem; next, we will address the existence of such critical points.
\begin{lemma}\label{lem:crit_BKS_ass1}
Let $z\in\Mcal$, $z=E_\alpha w$, be a critical point for $\widetilde\BKS$ on $\Mcal$. Then $\pro{z}$ satisfies assumption \eqref{eq:varBKSvphi} of
Proposition \ref{prop:EL_for_B_KS}, namely
\[
\frac{d}{d\eps} \left[\BKS(\pro{z}+\eps \varphi)\right]_{\eps=0} = 0\qquad \text{for every }\varphi\in \Dcal(0,1;\HH).
\]
As a consequence, $\pro{z}\in C^3([0,1])$ satisfies the Euler-Lagrange equation \eqref{eq:ELeqB_KS},
\begin{equation}\label{eq:critrestr_bis}
\scalar{z'(1^-)}{v(1)} - \scalar{z'(0^+)}{v(0)} = 0\qquad \text{for every }v\in W_\xi,\text{ where }\xi = e^{i\alpha},
\end{equation}
and
\begin{equation}\label{eq:curve}
\scalar{z'(1)}{iz(1)}=0.
\end{equation}
\end{lemma}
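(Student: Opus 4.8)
My starting point is the identity $\widetilde\BKS(z)=\BKS(\pro{z})$, which exhibits $\widetilde\BKS$ as the restriction to $\Mcal$ of the ambient map $z\mapsto\BKS(\pro{z})$, defined and of class $C^1$ on $\{z\in\spc:\pro{z}\neq0\}$ (recall that $\BKS$ is $C^1$ away from the origin by Lemma \ref{lem:EL_for_B_KS}, and that $\pro{z}\neq0$ whenever $z\in\Mcal$, since $z\in W_\xi\setminus\{0\}$ cannot vanish identically on $(0,1)$). Viewing $\Mcal$ as an embedded submanifold of $\spc$, for every $v\in T_z\Mcal$ the manifold differential equals the ambient directional derivative,
\[
d\widetilde\BKS(z)[v]=\BKS'(\pro{z})[\pro{v}],
\]
so criticality of $z$ says exactly that this vanishes for all $v\in T_z\Mcal$. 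The plan is to test this against the two families of tangent vectors furnished by Proposition \ref{prop:Mcal}(\ref{i:2}): the horizontal vectors $E_\alpha\Delta$ with $\Delta\in W$, which (since $E_\alpha$ is an isomorphism of $W$ onto $W_\xi$) fill exactly $W_\xi$, and the single vertical direction $E_\alpha(iI\cdot w)$.

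\emph{Step 1: assumption \eqref{eq:varBKSvphi} and regularity.} Given $\varphi\in\Dcal(0,1;\HH)$, I would set $\Delta(\tau)=e^{-i\alpha\tau}\varphi(\tau)$ on $(0,1)$ and extend it $1$-periodically to $(0,2)$. Since $\varphi$ is smooth and compactly supported in $(0,1)$, this extension is smooth and $1$-periodic, hence $\Delta\in W$ and $v:=E_\alpha\Delta\in W_\xi\subset T_z\Mcal$. Moreover $\pro{v}(\tau)=e^{i\alpha\tau}\Delta(\tau)=\varphi(\tau)$, so criticality gives $\BKS'(\pro{z})[\varphi]=d\widetilde\BKS(z)[v]=0$. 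As $\varphi$ is arbitrary this is precisely \eqref{eq:varBKSvphi}, whence Proposition \ref{prop:EL_for_B_KS} (via Lemma \ref{lem:EL_for_B_KS}) yields $\pro{z}\in C^3([0,1])$ together with the Euler--Lagrange equation \eqref{eq:ELeqB_KS}.

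\emph{Step 2: boundary and curve conditions.} Next I would use the expression for $\BKS'$ (the analogue of Remark \ref{rem:BLC'}): for every $v\in H^1(0,1;\HH)$,
\[
\BKS'(\pro{z})[\pro{v}]=\frac{4}{\Lcal}\int_0^1\left[\scalar{z'}{v'}+\scalar{\tfrac{\Lcal}{2T}\bigl(\Qcal-\Lcal^2(1+\Rcal)\bigr)z+\delta_z}{v}\right].
\]
Because $\pro{z}\in C^3([0,1])$ solves \eqref{eq:ELeqB_KS}, the zeroth-order bracket equals $z''$; integrating the first term by parts on $(0,1)$ and inserting the equation cancels all interior contributions, leaving the boundary term
\[
\BKS'(\pro{z})[\pro{v}]=\frac{4}{\Lcal}\Bigl(\scalar{z'(1^-)}{v(1)}-\scalar{z'(0^+)}{v(0)}\Bigr).
\]
Testing against the horizontal directions $v\in W_\xi$ and using criticality gives \eqref{eq:critrestr_bis}. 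Testing against the vertical direction $v=E_\alpha(iI\cdot w)$, i.e. $v(\tau)=i\tau z(\tau)$ (using that $e^{i\alpha\tau}$ and $i$ commute in $\C\subset\HH$), one has $v(0)=0$ and $v(1)=iz(1)$, so the boundary identity collapses to $\scalar{z'(1^-)}{iz(1)}=0$, which is \eqref{eq:curve}.

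\emph{Main obstacle.} The analytic content is light; the care lies in the manifold bookkeeping. The points that need genuine attention are: (i) justifying the chain rule $d\widetilde\BKS(z)[v]=\BKS'(\pro{z})[\pro{v}]$ by realizing $\widetilde\BKS$ as a restriction of an ambient $C^1$ functional along curves in $\Mcal$; (ii) checking that the variations I construct are really tangent to $\Mcal$ and restrict correctly to $(0,1)$ --- in particular that the periodic extension defining $\Delta$ remains in $W$, which relies on $\varphi$ vanishing near the endpoints; and (iii) tracking the quaternionic left multiplication in $E_\alpha$ so that the vertical tangent vector restricts to $i\tau z(\tau)$ with the stated boundary values. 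This last identification of $T_z\Mcal$ with $W_\xi\oplus(iI\cdot w)\R$ is the one place where a sign or ordering slip would spoil exactly \eqref{eq:curve}, so it is where I would be most careful.
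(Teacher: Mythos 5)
Your proof is correct and follows essentially the same route as the paper: the paper tests criticality along the linear paths $z+\eps v$ with $v\in W_\xi$ (extending $\varphi\in\Dcal(0,1;\HH)$ to $\tilde\varphi\in W_\xi$ exactly as your $E_\alpha\Delta$ does) and along the curve $\gamma_\eps(\tau)=e^{i\eps\tau}z(\tau)$, whose velocity $\dot\gamma_0(\tau)=i\tau z(\tau)$ is precisely your vertical tangent vector, then integrates by parts using the Euler--Lagrange equation to isolate the boundary terms. Your reformulation via $T_z\Mcal=E_\alpha(W\oplus(iI\cdot w)\R)$ and the chain rule is only a cosmetic repackaging of the same computation, and your sign/ordering checks for the quaternionic multiplication are sound.
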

\begin{proof}
First, let us take $v\in W_\xi$, where $\xi = e^{i\alpha}$. Then $z + \eps v \in W_\xi\subset\Mcal$, for every $\eps$ small.
Since $z = E_\alpha w$ is a critical point of $\widetilde\BKS$ on $\Mcal$ we have that
\[
\frac{d}{d\eps} \left[\widetilde\BKS(z+\eps v)\right]_{\eps=0} = 0\qquad \text{for every }v\in W_\xi.
\]
Since the restriction $z\mapsto \pro{z}$ is linear, this is equivalent to
\begin{equation}\label{eq:critrestr}
\frac{d}{d\eps} \left[\BKS(\pro{z}+\eps \pro{v})\right]_{\eps=0} = 0\qquad \text{for every }v\in W_\xi.
\end{equation}
Given $\varphi\in\Dcal(0,1;\HH)$, we extend it to a function $\tilde\varphi\in W_{\xi}$:
\[
\tilde\varphi(\tau):=
\begin{cases}
 \varphi(\tau) & \text{if }\tau\in[0,1]\\
e^{i\alpha} \varphi(\tau-1) & \text{if }\tau\in[1,2].
\end{cases}
\]
From the previous identity we deduce that
\[
\frac{d}{d\eps} \left[\BKS(\pro{z}+\eps \varphi)\right]_{\eps=0} = 0\qquad \text{for every }\varphi\in \Dcal(0,1;\HH).
\]

Once \eqref{eq:varBKSvphi} is satisfied, Lemma \ref{lem:EL_for_B_KS} implies that $\pro{z}$ is $C^3([0,1])$ and satisfies the Euler-Lagrange equation \eqref{eq:ELeqB_KS}. Then, reasoning as in Lemma \ref{lem:boundcond_LC}, we can integrate by parts in \eqref{eq:critrestr} and use the equation to obtain \eqref{eq:critrestr_bis}.

It remains to prove the identity \eqref{eq:curve}. We consider the path $[0,1]\ni \eps \mapsto \gamma_\eps \in\ \Mcal$,
$\gamma_\eps(\tau)=e^{i\eps \tau} z(\tau)$. Then
\[
\frac{d}{d\eps} \left[\widetilde\BKS(\gamma_\eps)\right]_{\eps=0} = 0.
\]
Using again that $z$ solves the Euler-Lagrange equation, we conclude that
\[
0 = \widetilde\BKS'(z) [\dot\gamma_0] = \frac{4}{\Lcal} \left[ \scalar{z'(1^-)}{\dot\gamma_0(1)} - \scalar{z'(0^+)}{\dot\gamma_0(0)} \right],
\]
where $\dot\gamma_0 (\tau) =  \left.\frac{d\gamma_\eps(\tau) }{d\eps}\right|_{\eps=0} = i\tau z(\tau)$.
\end{proof}
We have all the ingredients to prove the following result.
\begin{proposition}\label{prop:tilde}
Let $p$ be $C^1$ and $T$-periodic, and let $z\in\Mcal$ be a critical point for $\widetilde\BKS$ on $\Mcal$. Then
\[
x(t)=\bar z(\tau_z(t))i z(\tau_z(t)),
\]
where $\tau_z$ is defined as in Proposition \ref{prop:EL_for_B_KS}, is a $T$-periodic generalized solution of equation \eqref{eq:perturbed_kepler}.
\end{proposition}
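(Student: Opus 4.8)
The plan is to reduce everything to the single-interval theory already developed for $\BKS$, and to treat the one genuinely new issue, $T$-periodicity, by exploiting the invariance $\Phi_{\text{KS}}(e^{i\vartheta}z)=\Phi_{\text{KS}}(z)$ together with the Floquet structure $z(\tau+1)=\xi z(\tau)$ built into $\Mcal$. Throughout I write $z=E_\alpha w$ with $w$ $1$-periodic and $\xi=e^{i\alpha}\in\sphere^1\subset\C\subset\HH$, and I use repeatedly that $i$ commutes with $\xi$, so that $\bar\xi i\xi=i$, and that left multiplication by the unit quaternion $\xi$ preserves the real inner product $\scalar{\cdot}{\cdot}$ on $\HH$.

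First I would check that $x$ is a generalized solution on $(0,T)$. By Lemma \ref{lem:crit_BKS_ass1}, $\pro{z}$ satisfies \eqref{eq:varBKSvphi}, is of class $C^3([0,1])$, and fulfils both the natural boundary condition \eqref{eq:critrestr_bis} and the identity \eqref{eq:curve}. Combining \eqref{eq:curve} with Lemma \ref{lem:constrBKS} (constancy of $\scalar{z'}{iz}$) yields $\scalar{z'}{iz}\equiv0$ on $[0,1]$, so \eqref{eq:constrBKS_point} holds and Proposition \ref{prop:EL_for_B_KS} applies: $t_z$ is invertible and $x(t)=\bar z(\tau_z(t))iz(\tau_z(t))$ is a generalized solution of \eqref{eq:perturbed_kepler} on $(0,T)$.

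Next I would prove periodicity. For the position, $z(1)=\xi z(0)$ and $\sphere^1$-invariance give $x(T)=\Phi_{\text{KS}}(z(1))=\Phi_{\text{KS}}(\xi z(0))=\Phi_{\text{KS}}(z(0))=x(0)$. The key step is to extract a matching condition for the derivatives from \eqref{eq:critrestr_bis}: every $v\in W_\xi$ has $v(1)=\xi v(0)$ with $v(0)\in\HH$ arbitrary, so \eqref{eq:critrestr_bis} reads $\scalar{z'(1)}{\xi h}=\scalar{z'(0)}{h}$ for all $h\in\HH$; since $\scalar{a}{\xi h}=\scalar{\bar\xi a}{h}$ and the inner product separates points, this forces $z'(1)=\xi z'(0)$. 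When $z(0)\neq0$ (no collision at the joint), $x$ is smooth across $t\in\{0,T\}$ and, by Remark \ref{rem:xdotKS} together with $|z(1)|=|z(0)|$ and $\bar\xi i\xi=i$,
\[
\dot x(T)=\frac{2}{|z(1)|^2\Lcal}\bar z(1)iz'(1)=\frac{2}{|z(0)|^2\Lcal}\bar z(0)\,\bar\xi i\xi\,z'(0)=\frac{2}{|z(0)|^2\Lcal}\bar z(0)iz'(0)=\dot x(0),
\]
which yields the $C^1$ matching, hence $T$-periodicity in this case.

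Finally, the case $z(0)=z(1)=0$, a collision at the joint $t=0\equiv T$, is where I expect the main obstacle. Here I would mimic the end of the proof of Proposition \ref{prop:EL_for_B_LC}: by the analogue of Lemma \ref{lem:mu=1} for $\BKS$ (announced in the proof of Proposition \ref{prop:EL_for_B_KS}) one has $|z'(\tau_*)|^2=\Lcal^2/2$ at each collision $\tau_*$, and since $z''$ vanishes at collisions the expansion analogous to \eqref{eq:alpha}, $z(\tau)=(\tau-\tau_*)\alpha(\tau)$ and $z'(\tau)=z'(\tau_*)+(\tau-\tau_*)^2\beta(\tau)$ with $\alpha(\tau_*)=z'(\tau_*)$, gives
\[
\lim_{t\to t_*}\frac{x}{|x|}=\frac{\Phi_{\text{KS}}(z'(\tau_*))}{|z'(\tau_*)|^2},\qquad \lim_{t\to t_*}\left(\tfrac12|\dot x|^2-\tfrac{1}{|x|}\right)=\frac{8}{\Lcal^4}\scalar{\beta(\tau_*)}{z'(\tau_*)}.
\]
Comparing $\tau_*=1$ (left limit at $T$) with $\tau_*=0$ (right limit at $0$): for the direction, $\Phi_{\text{KS}}(z'(1))=\Phi_{\text{KS}}(\xi z'(0))=\Phi_{\text{KS}}(z'(0))$ and $|z'(1)|=|z'(0)|$; for the energy I would use the covariance $\beta(1)=\xi\beta(0)$ (equivalently $z'''(1)=\xi z'''(0)$, since $\beta(\tau_*)=\tfrac12 z'''(\tau_*)$) together with invariance of $\scalar{\cdot}{\cdot}$ under $\xi$. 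The delicate point, which I would treat most carefully, is precisely this covariance of the higher-order collision data: it amounts to showing that $z$ is genuinely $C^3$ across $\tau=1$ with $z^{(k)}(1)=\xi z^{(k)}(0)$. The case $k=1$ is the relation $z'(1)=\xi z'(0)$ above, which shows $z'(1^-)=z'(1^+)$ and thus, through the Euler--Lagrange equation \eqref{eq:ELeqB_KS}, promotes the $H^1$ Floquet function to a smooth twisted-periodic solution on $[0,2]$; differentiating the Floquet identity $z(\tau+1)=\xi z(\tau)$ then propagates the twist to all derivatives, so the $\sphere^1$ ambiguity cancels in both the direction and the energy limits and $x$ extends to a $T$-periodic generalized solution.
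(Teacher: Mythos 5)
Your proposal follows the paper's proof essentially step by step: the same reduction to Lemma \ref{lem:crit_BKS_ass1} and Proposition \ref{prop:EL_for_B_KS} for the generalized-solution property on $(0,T)$, the same use of the $\sphere^1$-invariance of $\Phi_{\text{KS}}$ together with Remark \ref{rem:xdotKS} for the collision-free seam, and the same reduction of the collision-at-the-seam case to the interior-collision expansion already carried out for $\BLC$ and $\BKS$. If anything you are more explicit than the paper at the two points it leaves implicit --- the derivation of $z'(1)=\xi z'(0)$ from \eqref{eq:critrestr_bis} and the covariance of the higher-order collision data --- with the caveat that ``differentiating the Floquet identity'' by itself only controls the right-hand derivatives $z^{(k)}(1^+)=\xi z^{(k)}(0^+)$, so the matching with $z^{(k)}(1^-)$ must indeed be extracted from the natural boundary condition and the Euler--Lagrange equation, exactly as the first clause of your final paragraph indicates.
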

\begin{proof}
In view of Lemma \ref{lem:crit_BKS_ass1} and Proposition \ref{prop:EL_for_B_KS}, we have that
$\tau_z$ and $x$ are well defined, of class $C^2$, and that $x$ is a generalized solution of equation \eqref{eq:perturbed_kepler}
in $(0,T)$. The fact that $x$ is (extendable as) a $T$-periodic function follows from Lemma \ref{lem:BKS_per} and
Remark \ref{rem:xdotKS}, in case $x(0) = x(T) \neq0$. Hence we are left to prove that, in case $x(0) = x(T) = 0$,
\[
\lim_{t \to 0^+} \frac12 |\dot x|^2 - \frac{1}{|x|}
=
\lim_{t \to T^-} \frac12 |\dot x|^2 - \frac{1}{|x|}
\qquad
\text{and}
\qquad
\lim_{t \to 0^+} \frac{x}{|x|}
=
\lim_{t \to T^-} \frac{x}{|x|}
.
\]
This can be done as for collisions in $(0,T)$, which were treated in Proposition \ref{prop:EL_for_B_KS}. More precisely,
we exploit the fact
that the map defined in equation \eqref{lem:ELconmu_KS}, namely
\[
t \mapsto -\dot x^2(t) |x(t)| = |\dot x(t)|^2 |x(t)| = \frac{4}{\Lcal^2}|z'(\tau_z(t))|^2
\]
tends to the same limit as $t\to0^+$ and $t\to T^-$ (see also the end of the proof of Theorem \ref{thm:periodicLC}).
\end{proof}
Once the role of $\TBKS$ is clarified, we turn to the variational framework. Since
$\Mcal$ is symmetric, i.e. $-\Mcal = \Mcal$, we can follow the lines of Section
\ref{sec:periodicBLC}, defining the genus of  $A\in\Acal:=\{A\in \Mcal\setminus\{0\}: A=-A, \ A\text{ is closed}\}$ as
\[
\gamma(A) := \inf\{n:\exists \phi \in C(A;\R^n\setminus\{0\}),\ \phi\text{ odd}\},
\]
and, accordingly,
\[
\Acal_m:=\{A \subset \Acal: A\text{ is compact and }\gamma(A)\ge m\},\qquad
\sigma_m := \inf_{A\in\Acal_m} \sup_{A} \widetilde\BKS.
\]
Since $W\subset \Mcal$, also in this case we have that $\Acal_m\neq \emptyset$, for every $m$. In particular, $\sigma_m<+\infty$ for every $m$.

As we already mentioned, the main difference with the planar case is that now the Palais-Smale
condition can not hold true: for instance, if $p$ has non-zero average on $[0,T]$, it is easy to
construct diverging (PS) sequences at any level of $\widetilde\BKS$, by considering constant loops.
To overcome this difficulty, we show that a weaker compactness property holds true, at least at
positive levels. A sequence $(z_n)_n\subset \Mcal$ is said to be a Palais-Smale-Cerami (PSC) sequence at level $\sigma$ for $\widetilde\BKS$ if, for some fixed $\hat z\in\Mcal$,
\begin{equation}\label{eq:PSC}
\widetilde\BKS(z_n)=\sigma + o(1),\qquad
\|\nabla \TBKS(z_n)\|(1+\dist_\Mcal (z_n,\hat z))=o(1) \text{ as } n\to\infty
\end{equation}
(recall Remark \ref{rem:norm_tangent}). Accordingly, $\TBKS$ satisfies the (PSC) condition at level $\sigma$ if any such a sequence admits a
strongly convergent subsequence. Such condition was introduced in \cite{MR581298}, see also
\cite{MR591554,MR713209}. The (PSC) condition is slightly weaker than the (PS) one, while the
most important implications are retained, see \cite[Ch. II, Rmk. 2.5]{struwe}. In particular, it is possible to show the following result.
\begin{proposition}\label{prop:AMCerami}
Each finite $\sigma_m$ is a critical level for $\TBKS$ provided the (PSC) condition holds at level $\sigma_m$.
\end{proposition}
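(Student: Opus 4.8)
The plan is to reproduce the minimax deformation argument underlying Proposition \ref{prop:AM} (that is, \cite[Prop.~10.8]{AM}), replacing the Palais--Smale condition by the Cerami condition \eqref{eq:PSC} and the linear ambient space by the manifold $\Mcal$. I argue by contradiction: assume $\sigma_m$ is finite, (PSC) holds at level $\sigma_m$, but the set of critical points of $\TBKS$ at that level is empty.

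First I would extract from (PSC) a quantitative slope estimate. Since $\TBKS$ has no critical point at level $\sigma_m$ and every (PSC) sequence at $\sigma_m$ is precompact, there exist $\eps_0>0$ and $\delta>0$ with
\[
\|\nabla\TBKS(z)\|\,\bigl(1+\dist_\Mcal(z,\hat z)\bigr)\ge\delta
\qquad\text{whenever }|\TBKS(z)-\sigma_m|\le\eps_0;
\]
otherwise one could select a (PSC) sequence converging to a critical point at level $\sigma_m$, a contradiction.

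Next I would build an equivariant deformation. On the strip $\{|\TBKS-\sigma_m|\le\eps_0\}$ I construct a locally Lipschitz pseudo-gradient vector field $V$ for $\TBKS$, tangent to $\Mcal$ (by projecting onto $T_z\Mcal$, using Remark \ref{rem:norm_tangent}) and with $\|V(z)\|\le 1+\dist_\Mcal(z,\hat z)$. Because $\TBKS$ is even and $\Mcal=-\Mcal$, the gradient is odd and $V$ may be chosen odd via the symmetrization $V(z)\mapsto\tfrac12\bigl(V(z)-V(-z)\bigr)$. The linear growth bound on $V$, matched with the Cerami weight, guarantees through a Gronwall estimate on $t\mapsto\dist_\Mcal(\eta(t,z),\hat z)$ that the associated flow $\eta$ does not blow up in finite time, hence is globally defined; this is exactly where Cerami's condition replaces (PS). The blow-up property $\TBKS(z)\to+\infty$ as $\|z\|_2\to0$ (noted after Proposition \ref{prop:AM}) keeps the relevant sublevel sets confined in $\Mcal$, so the flow never reaches the excluded degenerate boundary. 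Flowing for a suitable time then yields an odd continuous map $\eta_1:=\eta(1,\cdot):\Mcal\to\Mcal$ and some $\eps\in(0,\eps_0)$ with $\eta_1(\{\TBKS\le\sigma_m+\eps\})\subset\{\TBKS\le\sigma_m-\eps\}$.

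Finally I would close with the genus. By definition of $\sigma_m$ there is $A\in\Acal_m$ with $\sup_A\TBKS<\sigma_m+\eps$. Then $\eta_1(A)$ is compact and symmetric, and monotonicity of the genus under odd continuous maps gives $\gamma(\eta_1(A))\ge\gamma(A)\ge m$, so $\eta_1(A)\in\Acal_m$; but $\sup_{\eta_1(A)}\TBKS\le\sigma_m-\eps<\sigma_m$, contradicting the definition of $\sigma_m$ as an infimum over $\Acal_m$. The main obstacle is the construction of the flow: producing a \emph{complete, odd, $\Mcal$-tangent} pseudo-gradient flow under only the Cerami condition. Tangency and oddness are routine, while completeness hinges entirely on coupling the $(1+\dist_\Mcal)$ factor in \eqref{eq:PSC} with a linear-growth bound on $V$, following the scheme of \cite{MR581298} and \cite[Ch.~II, Rmk.~2.5]{struwe}.
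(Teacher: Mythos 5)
Your proposal is correct and follows essentially the same route as the paper: the paper's own proof is a sketch that defers to the literature (Cerami's original papers, Struwe, and the standard pseudo-gradient/deformation machinery) for exactly the points you spell out, namely that the $(1+\dist_\Mcal(\cdot,\hat z))$ weight in \eqref{eq:PSC} yields a complete, odd, $\Mcal$-tangent pseudo-gradient flow, after which the genus monotonicity argument of Proposition \ref{prop:AM} goes through unchanged. Your explicit treatment of the quantitative slope estimate, the Gronwall completeness argument, and the confinement of sublevels away from $z=0$ is a faithful expansion of what the paper leaves implicit.
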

\begin{proof}[Sketch of the proof]
Actually, this proposition is a version of Proposition \ref{prop:AM}, with (PS) replaced by (PSC). Again, the argument is based on the deformation lemma, therefore it is enough to assume the regularity of $\TBKS$ and the completeness of $\Mcal$ only on sublevels of  $\TBKS$, see \cite[Ch. II, Remarks after Thm. 5.7]{struwe}. The fact that (PSC) is enough to define a pseudo-gradient flow, and hence to prove a deformation lemma, is very well known in the literature, see e.g. \cite[Thm. 4.7]{MR1313161}. Actually, our proposition can be proved also applying directly Teorema $(*)$ in the original papers by Cerami \cite{MR581298,MR591554} to the quotient manifold $\Mcal/\sim$,
where $\sim$ is the equivalence relation induced by the involution $z\mapsto -z$.
\end{proof}
In order to prove the (PSC) condition for $\TBKS$ on $\Mcal$ we need some preliminary lemmas.
\begin{lemma}\label{lem:boundedPSC}
For every positive $a,b$ there exist positive constants $C_i=C_i(a,b)$, $i=1,2$, such that, for every $y\in  H^1(0,1;\HH)$,
\[
\begin{cases}
\BKS(y)\le a\\
\|y\|_2^2\le b
\end{cases}
\qquad\implies\qquad
\begin{cases}
\|y'\|_2 \le C_1(a,b)
\\
C_2(a,b) \le \|y\|_2\le b^{1/2}.
\end{cases}
\]
\end{lemma}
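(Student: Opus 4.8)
The plan is to follow the scheme of Lemma \ref{lem:boundedPS}, the obstacle being that now $y$ ranges over all of $H^1(0,1;\HH)$ with no boundary condition, so that the clean antiperiodic estimate of Lemma \ref{lem:GN_antiper} is unavailable and the extra hypothesis $\|y\|_2^2\le b$ must do the compensating work. First I would record the explicit shape of the functional coming from \eqref{eq:BKS}--\eqref{eq:tutti gli altriKS}, namely
\[
\BKS(y)=\frac{2}{T}\,\|y\|_2^2\,\|y'\|_2^2+\frac{T}{\|y\|_2^2}\bigl(1+\Rcal(y)\bigr),
\]
together with the pointwise identity $|\bar y i y|=|y|^2$, which yields $|\Rcal(y)|\le\|p\|_\infty\|y\|_4^4$.

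The next step is an interpolation inequality valid without boundary conditions. Since $(0,1)$ has unit length, the mean value theorem provides $\tau_0$ with $|y(\tau_0)|^2=\|y\|_2^2$; integrating $\tfrac{d}{d\tau}|y|^2=2\scalar{y}{y'}$ and using Cauchy--Schwarz gives $\|y\|_\infty^2\le\|y\|_2^2+2\|y\|_2\|y'\|_2$, hence
\[
\|y\|_4^4\le\|y\|_\infty^2\|y\|_2^2\le\|y\|_2^4+2\|y\|_2^3\|y'\|_2.
\]
Dividing the resulting bound for $|\Rcal(y)|$ by $\|y\|_2^2$ and writing $\Lcal(y)=T/\|y\|_2^2$ I obtain $\Lcal(y)\,|\Rcal(y)|\le T\|p\|_\infty\bigl(\|y\|_2^2+2\|y\|_2\|y'\|_2\bigr)$.

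Finally I would combine everything with the hypothesis $\BKS(y)\le a$. Bounding $\|y\|_2^2\le b$ in the first term and absorbing the cross term via Young's inequality, $2T\|p\|_\infty\|y\|_2\|y'\|_2\le\frac1T\|y\|_2^2\|y'\|_2^2+T^3\|p\|_\infty^2$, leaves
\[
\frac{1}{T}\,\|y\|_2^2\,\|y'\|_2^2+\frac{T}{\|y\|_2^2}\le M,\qquad M:=a+T\|p\|_\infty b+T^3\|p\|_\infty^2.
\]
From $T/\|y\|_2^2\le M$ I read off the lower bound $\|y\|_2\ge\sqrt{T/M}=:C_2(a,b)$, and then $\|y\|_2^2\|y'\|_2^2\le TM$ together with $\|y\|_2^2\ge T/M$ forces $\|y'\|_2\le M=:C_1(a,b)$; the upper bound $\|y\|_2\le b^{1/2}$ is the hypothesis itself. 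The only delicate point is that, unlike the antiperiodic case, $\|y\|_2$ is a priori allowed to be small, so the argument must keep the term $T/\|y\|_2^2$ intact (rather than discarding it) and use the $L^2$ bound $b$ solely to tame the perturbation $\Rcal$; the Young absorption is calibrated precisely so that a genuine positive multiple of $\frac1T\|y\|_2^2\|y'\|_2^2$ survives.
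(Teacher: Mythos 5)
Your proof is correct and follows essentially the same route as the paper: the same boundary-condition-free interpolation $\|y\|_\infty^2\le\|y\|_2^2+2\|y\|_2\|y'\|_2$ (the paper averages over $\tau_0$ where you invoke the mean value theorem, to the same effect), the same bound $|\Rcal(y)|\le\|p\|_\infty(\|y\|_2^2+2\|y\|_2\|y'\|_2)\|y\|_2^2$, and the same endgame in which the hypothesis $\|y\|_2^2\le b$ tames the $\Rcal$ term, the $1/\|y\|_2^2$ term gives the lower bound $C_2$, and the surviving kinetic term gives $C_1$. The only cosmetic difference is that you absorb the cross term by Young's inequality where the paper completes the square in $\alpha=\|y\|_2\|y'\|_2$; both yield the same constants up to normalization.
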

\begin{proof}
The lemma follows in three steps, by reasoning as in Lemmas \ref{lem:GN_antiper}, \ref{lem:BLConLc1} and \ref{lem:boundedPS}, respectively. We refer to such lemmas for further details. For the sake of simplicity, we write $\alpha:=\|y\|_2\cdot\|y'\|_2$, $\beta:= \|y\|^2_2$.

\textbf{Step 1.} To start with, we claim that, for any $y \in H^1(0,1;\HH)$, the following Gagliardo-Nirenberg inequality holds:
\[
\|y\|_\infty^2 \le  2 \alpha + \beta.
\]
Indeed, this follows by integrating with respect to $\tau_0$ the elementary inequality
\[
|y(\tau)|^2 \le |y(\tau_0)|^2 +  \int_{0}^{1} 2|y||y'|\,d\tau.
\]

\textbf{Step 2.} As a consequence of Step 1, we have that
\[
	|\Rcal(y)| \leq \int_0^1 |p\circ t_y||y|^4
	\leq \|p\|_\infty \|y\|^2_{\infty} \| y\|^2_{2} \le \|p\|_\infty
	\left(2\alpha + \beta\right)\beta,
\]
and therefore
\begin{equation}\label{eq:effealfaKS}
\frac1T \BKS(y) = \frac{2}{T^2}\alpha^2 + \frac{1}{\beta}\left(1 + \Rcal(z)\right)
\ge \frac{2}{T^2}\alpha^2  - 2\|p\|_\infty \alpha + \frac{1}{\beta}
- \|p\|_\infty \beta \ge -\frac{\|p\|_\infty^2T^2}{2} - \|p\|_\infty \beta.
\end{equation}

\textbf{Step 3.} Finally, using the assumptions, the last estimate implies
\[
\frac{2}{T^2}\alpha^2  - 2\|p\|_\infty \alpha \le \frac1T a + \|p\|_\infty b
\qquad\text{and}\qquad
\frac{1}{\beta} \le \frac1T a + \|p\|_\infty b + \frac{\|p\|_\infty^2T^2}{2},
\]
so that $\alpha$ is bounded above and $\beta = \|y\|_2^2$ is bounded away from $0$, and the existence of $C_2$ follows. Then
\[
\|y'\|_2^2 = \frac{\alpha^2}{\beta}
\]
is bounded above, and also the existence of $C_1$ follows.
\end{proof}
\begin{lemma}\label{lem:weakPS_KS}
Let $(z_n)_n\subset\Mcal$ be a (PSC) sequence for  $\TBKS$ at level $\sigma>0$. Then there
exists $z\in\Mcal$ such that, up to subsequences, $z_n\rightharpoonup z$ weakly in $\spc$ and uniformly. In particular, $z_n$ is both bounded and bounded away from $0$.
\end{lemma}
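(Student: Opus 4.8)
The plan is to reduce the whole statement to a single \emph{a~priori} bound: as soon as I know that $(z_n)_n$ is bounded in $\spc$ and bounded away from $0$, the conclusion follows by soft arguments. A bounded sequence in $\spc=H^1(0,2;\HH)$ has a subsequence converging weakly in $\spc$ and, by the compact embedding $H^1(0,2)\hookrightarrow C([0,2])$, uniformly to some $z$. To see that $z\in\Mcal$ I would use that each $z_n$ lies in some $W_{\xi_n}$, i.e. $z_n(\tau+1)=\xi_n z_n(\tau)$ with $\xi_n\in\S^1$; by compactness of $\S^1$ I may assume $\xi_n\to\xi$, and passing to the uniform limit gives $z(\tau+1)=\xi z(\tau)$, so $z\in W_\xi$. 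A lower bound on $\|z_n\|_2$, which passes to the limit by uniform convergence, then forces $z\not\equiv0$, hence $z\in W_\xi\setminus\{0\}\subset\Mcal$.

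The heart of the matter, and the main obstacle, is therefore the boundedness of $(z_n)_n$: this is exactly where the Cerami correction $(1+\dist_\Mcal(z_n,\hat z))$ in \eqref{eq:PSC} is indispensable, since $\BKS$ is not coercive on $\Mcal$ (which contains the $1$-periodic, hence constant, loops). The idea is to test the gradient against the radial direction. Writing $z_n=E_{\alpha_n}w_n$, the vector $z_n$ itself belongs to $T_{z_n}\Mcal$ (take $\Delta=w_n$, $\delta=0$ in Proposition~\ref{prop:Mcal}), and the curve $s\mapsto sz_n$ stays in $W_{\xi_n}\setminus\{0\}\subset\Mcal$. Using Proposition~\ref{prop:Mcal}\eqref{i:3}, namely $\dist_\Mcal(z_n,\hat z)\ge\|z_n-\hat z\|$, together with the triangle inequality I obtain $\|z_n\|\le(1+\|\hat z\|)(1+\dist_\Mcal(z_n,\hat z))$, so that, by Remark~\ref{rem:norm_tangent} and \eqref{eq:PSC},
\[
\bigl|\TBKS'(z_n)[z_n]\bigr|\le \|\nabla\TBKS(z_n)\|\,\|z_n\|\le(1+\|\hat z\|)\,\|\nabla\TBKS(z_n)\|\,\bigl(1+\dist_\Mcal(z_n,\hat z)\bigr)=o(1).
\]

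Next I would exploit the homogeneity of $\BKS$ under the scaling $y\mapsto sy$. Setting $y_n=\pro{z_n}$ and using the crucial scale invariance $t_{sy}=t_y$ of the Sundman time, one computes $\Qcal(sy)/\Lcal(sy)=s^{4}\,\Qcal(y)/\Lcal(y)$, $\Lcal(sy)=s^{-2}\Lcal(y)$ and $\Lcal(sy)\Rcal(sy)=s^{2}\,\Lcal(y)\Rcal(y)$. Abbreviating $K_n=\Qcal(y_n)/\Lcal(y_n)\ge0$, $P_n=\Lcal(y_n)>0$ and $R_n=\Lcal(y_n)\Rcal(y_n)$, differentiation at $s=1$ gives $\TBKS'(z_n)[z_n]=4K_n-2P_n+2R_n$, while the level condition reads $K_n+P_n+R_n=\sigma+o(1)$. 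Eliminating $R_n$ between these two relations yields the Pohozaev-type identity
\[
2P_n-K_n=\sigma+o(1).
\]
Since $K_n\ge0$ and $\sigma>0$, this forces $P_n\ge\sigma/2+o(1)$, i.e. $\beta_n:=\|y_n\|_2^2=T/P_n$ is bounded above. On the other hand, the Gagliardo--Nirenberg lower bound \eqref{eq:effealfaKS}, valid for every $y\in H^1(0,1;\HH)$, together with $\BKS(y_n)=\TBKS(z_n)=\sigma+o(1)$ bounded, rules out $\beta_n\to0$, so $\beta_n$ is also bounded below. Consequently $P_n$ is bounded away from $0$ and $\infty$, $K_n=2P_n-\sigma+o(1)$ is bounded, and from $K_n=\tfrac{2}{T}\beta_n\|y_n'\|_2^2$ with $\beta_n$ bounded below I deduce that $\|y_n'\|_2$ is bounded. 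Recalling from Lemma~\ref{lem:BKS_per} that $|z_n|$ and $|z_n'|$ are $1$-periodic, whence $\|z_n\|_\spc^2=2\beta_n+2\|y_n'\|_2^2$, this delivers the required bound: $(z_n)_n$ is bounded in $\spc$ and $\|z_n\|_2\ge c>0$, which closes the argument via the soft step of the first paragraph. The only genuinely delicate point is the boundedness itself, and it hinges precisely on combining the distance factor in \eqref{eq:PSC} with the radial (scaling) direction allowed by the conical structure of $\Mcal$.
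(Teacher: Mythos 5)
Your proof is correct and follows essentially the same route as the paper's: you test the gradient against the radial direction $z_n\in T_{z_n}\Mcal$, use the Cerami factor to conclude $\TBKS'(z_n)[z_n]=o(1)$, combine the resulting Pohozaev-type identity $2\Lcal(y_n)-\Qcal(y_n)/\Lcal(y_n)=\sigma+o(1)$ with $\sigma>0$ to bound $\|y_n\|_2^2$ from above, and then conclude via the a priori estimates (your inline Gagliardo--Nirenberg argument is exactly the content of Lemma \ref{lem:boundedPSC}, which the paper cites instead) and the soft compactness step with $\xi_n\to\xi$ in $\S^1$. No gaps.
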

\begin{proof}
We notice that $z_n \in
T_{z_n} \Mcal$ (indeed, writing $z_n = E_{\alpha_n} w_n$, then it corresponds to the choice
$(\Delta,\delta)=(w,0)$ in Proposition \ref{prop:Mcal}, \ref{i:2}). We deduce that
\[
\begin{split}
\left|\TBKS'(z_n)[z_n]\right| &\le \|\nabla \TBKS(z_n)\|\|z_n\| \le
\|\nabla \TBKS(z_n)\|\left(\|\hat z\|+\|z_n-\hat z\|\right)\\ &\le
\|\nabla \TBKS(z_n)\|(\|\hat z\|+\dist_\Mcal (z_n,\hat z)) = o(1),
\end{split}
\]
as $n\to+\infty$, where $\hat z\in\Mcal$ is such that \eqref{eq:PSC} holds.
For easier notation we write $y_n = \pro{z_n}$, so that
\[
\TBKS(z_n) = \BKS (y_n),\qquad
\TBKS'(z_n)[z_n] = \BKS' (y_n)[y_n].
\]
We deduce
\[
\begin{cases}
\BKS(y_n) = \dfrac{\Qcal(y_n)}{\Lcal(y_n)} + \Lcal(y_n) + \Lcal(y_n)\Rcal(y_n) = \sigma + o(1)\smallskip\\
\dfrac12\BKS'(y_n)[y_n] = 2\dfrac{\Qcal(y_n)}{\Lcal(y_n)} -\Lcal(y_n) + \Lcal(y_n)\Rcal(y_n)  = o(1),
\end{cases}
\]
as $n\to+\infty$ (the expression of $\BKS'(y_n)[y_n]$ can be obtained by using Lemma \ref{lem:EL_for_B_KS} or, in an easier way, by noticing that the functionals $\Qcal$, $\Lcal$ and $\Rcal$ are homogeneous of degree $2,-2$ and $4$ respectively). Subtracting the
above relations we infer
\[
\frac{T}{\|y_n\|_2^2} = \Lcal(y_n) \ge \Lcal(y_n) - \dfrac{\Qcal(y_n)}{2\Lcal(y_n)}
= \frac12\sigma + o(1).
\]
Resuming, we have that
\[
\begin{cases}
\BKS(y_n) \le \sigma + 1 \smallskip\\
\|y_n\|_2^2 \le \dfrac{2T}{\sigma} +1,
\end{cases}
\]
for $n$ large, and Lemma \ref{lem:boundedPSC} yields $\|y_n'\|_2\le C_1$, $0< C_2 \le \|y_n\|_2 \le C_3$, where the constants are independent of $n$. Then, up to a subsequence,
$y_n \rightharpoonup y\not\equiv 0$ weakly in $H^1(0,1;\HH)$ and uniformly. To conclude the proof of the lemma, we are left to show that $y$ can be extended to a function in $z\in\Mcal$, and
that $z_n\rightharpoonup z$ (again possibly up to subsequences).

Let $(\xi_n)_n\subset\S^1$ be such that $z_n \in W_{\xi_n}$. By compactness of $\S^1$ we can assume that, up to subsequences, $\xi_n\to \xi$. In particular, by uniform convergence,
\[
y(1) \leftarrow y_n(1) = z_n(1) = \xi_n z_n(0) = \xi_n y_n(0) \to \xi y(0).
\]
We deduce that
\[
z_n \rightharpoonup z :=
\begin{cases}
y(\tau) & \tau\in[0,1],\\
\xi y(\tau-1) & \tau\in[1,2].
\end{cases}
\in\Mcal
\qedhere
\]
\end{proof}
\begin{lemma}\label{lem:PS_KS}
The functional $\TBKS$ satisfies the (PSC) condition in $\Mcal$ at any level $\sigma>0$.
\end{lemma}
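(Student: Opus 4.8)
The plan is to upgrade the weak convergence provided by Lemma~\ref{lem:weakPS_KS} to strong convergence in $\spc$. Let $(z_n)_n\subset\Mcal$ be a (PSC) sequence at level $\sigma>0$; passing to the subsequence given by that lemma, we have $z_n\rightharpoonup z$ weakly in $\spc$ and uniformly, with $z\in\Mcal$, $\|z_n\|$ bounded and $\|\pro{z_n}\|_{L^2(0,1)}$ bounded away from $0$. Since $1+\dist_\Mcal(z_n,\hat z)\ge1$, condition \eqref{eq:PSC} yields in particular $\|\nabla\TBKS(z_n)\|\to0$. Because $\spc$ is a Hilbert space and $z_n\rightharpoonup z$, it suffices to prove the norm convergence $\|z_n\|\to\|z\|$; as $z_n\to z$ strongly in $L^2(0,2;\HH)$ by Rellich, this reduces to showing $\|z_n'\|_{L^2(0,2)}\to\|z'\|_{L^2(0,2)}$.

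To exploit $\|\nabla\TBKS(z_n)\|\to0$ we must test the differential of $\TBKS$ against \emph{tangent} vectors, since $\nabla\TBKS(z_n)$ is only the tangential part of the ambient gradient. Write $z_n=E_{\alpha_n}w_n$ and $z=E_\alpha w$ with $w_n,w\in W$; as in the proof of Lemma~\ref{lem:weakPS_KS} we may assume $e^{i\alpha_n}\to e^{i\alpha}$ and, choosing the branch consistently, $\alpha_n\to\alpha$. Set $v_n:=E_{\alpha_n}(w_n-w)$. Since $w_n-w\in W$ and $E_{\alpha_n}W\subset T_{z_n}\Mcal$ (Proposition~\ref{prop:Mcal}, part~\ref{i:2}), $v_n$ is tangent at $z_n$; by \eqref{eq:est} and the boundedness of $(\alpha_n)_n$ and $(w_n)_n$ it is bounded in $\spc$. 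Finally $v_n-(z_n-z)=(E_\alpha-E_{\alpha_n})w\to0$ in $\spc$ by the smoothness of $\alpha\mapsto E_\alpha w$ (cf. Lemma~\ref{lem:Phismooth}), so that $v_n=(z_n-z)+o(1)$ in $\spc$ and hence $\pro{v_n}=\pro{z_n}-\pro{z}+o(1)$ in $H^1(0,1;\HH)$.

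Now we test. Set $y_n=\pro{z_n}$, $y=\pro{z}$. Recalling $\TBKS(\cdot)=\BKS(\pro{\,\cdot\,})$ and Remark~\ref{rem:norm_tangent}, the tangency and boundedness of $v_n$ give
\[
\BKS'(y_n)[\pro{v_n}]=\scalar{\nabla\TBKS(z_n)}{v_n}=o(1).
\]
On the other hand, arguing as in Remark~\ref{rem:BLC'} (with the Kustaanheimo--Stiefel term $\delta_z$ of Lemma~\ref{lem:EL_for_B_KS}),
\[
\BKS'(y_n)[\pro{v_n}]=\frac{4}{\Lcal(y_n)}\int_0^1\scalar{y_n'}{(\pro{v_n})'}+\int_0^1\scalar{G_{y_n}}{\pro{v_n}},
\]
where $G_{y_n}$ gathers the zeroth-order terms and is bounded in $L^2(0,1;\HH)$ (all of $\Lcal(y_n)$, $\Qcal(y_n)$, $\Rcal(y_n)$, $\|y_n\|_\infty$, $\|p\|_\infty$, $\|\dot p\|_\infty$ being bounded along the sequence). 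Since $\pro{v_n}=y_n-y+o(1)\to0$ strongly in $L^2(0,1)$ (uniform convergence), the last integral is $o(1)$; and since $(\pro{v_n})'=y_n'-y'+o(1)$ in $L^2(0,1)$ with $y_n'\rightharpoonup y'$, we obtain $\int_0^1\scalar{y_n'}{(\pro{v_n})'}=\|y_n'\|_{L^2(0,1)}^2-\|y'\|_{L^2(0,1)}^2+o(1)$. As $\Lcal(y_n)$ is bounded and bounded away from $0$, we conclude $\|y_n'\|_{L^2(0,1)}\to\|y'\|_{L^2(0,1)}$.

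It remains to pass from $(0,1)$ to $(0,2)$. By Lemma~\ref{lem:BKS_per}, for every element of $\Mcal$ the function $|z'|$ extends $1$-periodically, whence $\int_0^2|z_n'|^2=2\int_0^1|y_n'|^2$ and likewise for $z$. Therefore $\|z_n'\|_{L^2(0,2)}^2=2\|y_n'\|_{L^2(0,1)}^2\to2\|y'\|_{L^2(0,1)}^2=\|z'\|_{L^2(0,2)}^2$, which combined with the Rellich convergence in $L^2(0,2)$ gives $\|z_n\|\to\|z\|$ and hence $z_n\to z$ strongly in $\spc$, with $z\in\Mcal$. The delicate step is the construction in the second paragraph: producing a bounded tangent test vector differing from $z_n-z$ only by a strongly vanishing term, which is what allows the Cerami information $\|\nabla\TBKS(z_n)\|\to0$ to be turned into convergence of the Dirichlet norms.
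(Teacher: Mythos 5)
Your proposal is correct and follows essentially the same route as the paper: the test vector $v_n=E_{\alpha_n}(w_n-w)$ is exactly the paper's $z_n-E_{\alpha_n-\alpha}z$, it is tangent at $z_n$ and bounded, the Cerami condition gives $\TBKS'(z_n)[v_n]=o(1)$, and expanding $\BKS'$ yields $\|y_n'\|_{L^2(0,1)}\to\|y'\|_{L^2(0,1)}$ and hence strong convergence. Your final paragraph just makes explicit the passage from $(0,1)$ to $(0,2)$ that the paper dispatches by referring back to Lemma~\ref{lem:weakPS_KS}.
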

\begin{proof}
Let $(z_n)_n\subset\Mcal$ be a (PSC) sequence at level $\sigma>0$ and, say, $z_n \in \Mcal$.
By Lemma \ref{lem:weakPS_KS} $z_n\rightharpoonup z$ in $X$, and we can assume $z\in W_\xi$.
Moreover, we can choose $\alpha_n\to\alpha$ in such a way that $\xi_n = e^{i\alpha_n}$, $\xi = e^{i\alpha}$. In particular, $E_{-\alpha}z\in W$. We notice that, by direct computations,
\begin{equation}\label{eq:strong_in_PSC}
E_{\alpha_n-\alpha} z \to z\text{ strongly in }X.
\end{equation}
Let us consider the sequence $(z_n - E_{\alpha_n-\alpha} z)_n$: on the one hand, it is weakly convergent, and hence bounded; on the other hand, $z_n - E_{\alpha_n-\alpha} z\in
T_{z_n} \Mcal$ (indeed, writing $z_n = E_{\alpha_n} w_n$, then it corresponds to the choice
$(\Delta,\delta)=(w_n - E_{-\alpha}z,0)$ in Proposition \ref{prop:Mcal}, \ref{i:2}). Since
$(z_n)_n$ is a (PSC) sequence we deduce that
\[
\TBKS'(z_n)[z_n - E_{\alpha_n-\alpha} z] = o (1) \qquad\text{as }n\to+\infty.
\]
Let us write again $y_n=\pro{z_n}$, $y=\pro{z}$, and, with some abuse, $E_{\alpha_n-\alpha} y=\pro{E_{\alpha_n-\alpha} z}$; we can reason as in the proof of Lemma
\ref{lem:PS}, using the appropriate expression of $\BKS'$, to infer
\[
o(1) = \frac{\Lcal(y_n)}{4}\BKS'(y_n)[y_n - E_{\alpha_n-\alpha} y] =
\int_0^1\scalar{y_n'}{y_n' - \left(E_{\alpha_n-\alpha} y\right)'} + o(1) = \|y_n'\|_2^2 -
\|y'\|_2^2 + o(1),
\]
where we used \eqref{eq:strong_in_PSC}, $y_n\rightharpoonup y$, and the fact that $\Lcal(y_n)$  is bounded since we have a positive lower bound of $\|y_n\|_2$. We deduce that
$y_n\to y$ strongly in $H^1(0,1;\HH)$ and thus, reasoning as in Lemma \ref{lem:weakPS_KS},
that $z_n\to z$ strongly in $\Mcal$.
\end{proof}
To conclude, we have to show that $\sigma_m>0$, for some $m$.
Since $\sigma_{m+1}\ge \sigma_m$, for every $m$, this will imply the existence of infinitely
many positive critical points.
\begin{lemma}\label{lem:pre_sigma>0}
Let  $k\in\N^+$ and $A\in \Acal_{4k+1}$. Then there exists $z_A\in A$ such that
\begin{equation}\label{eq:zA1}
z_A(i/k) = 0\qquad\text{ for every }i=0,1,\dots,k-1
\end{equation}
(since $z_A\in\Mcal$, $z_A(i/k) = 0 $, $i=k,k+1,\dots,2k$, as well). In particular,
\begin{equation}\label{eq:zA2}
\|\pro{z_A}\|_2 \le \frac{1}{k\pi}\|\pro{z_A'}\|_2.
\end{equation}
\end{lemma}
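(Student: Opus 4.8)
The plan is to extract $z_A$ from the genus lower bound $\gamma(A)\ge 4k+1$ by applying the defining property of the Krasnoselskii genus to a suitable odd evaluation map, and then to obtain \eqref{eq:zA2} from a Poincaré inequality on the subintervals between consecutive zeros. Concretely, I would introduce
\[
\Psi \from A \to \HH^{k}\cong\R^{4k},\qquad
\Psi(z) := \bigl( z(0),\, z(1/k),\, \dots,\, z((k-1)/k) \bigr).
\]
Since $A\subset\Mcal\subset \spc = H^1(0,2;\HH)$ and one-dimensional $H^1$ functions are continuous (by the embedding $H^1\hookrightarrow C^0$), each evaluation $z\mapsto z(i/k)$ is continuous in the $\spc$-topology inherited by $\Mcal$; being linear, $\Psi$ is also odd. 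The crucial observation is purely one of counting: each quaternionic evaluation contributes four real coordinates, so $\Psi$ takes values in a space of real dimension exactly $4k$, one short of the genus bound.

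First I would argue by contradiction. If $\Psi$ never vanished on $A$, then $\Psi$ would be a continuous odd map $A\to\R^{4k}\setminus\{0\}$, which by definition of the genus forces $\gamma(A)\le 4k$, contradicting $A\in\Acal_{4k+1}$. Hence there exists $z_A\in A$ with $\Psi(z_A)=0$, i.e. $z_A(i/k)=0$ for $i=0,1,\dots,k-1$. The additional vanishing at the remaining nodes is then immediate from the Floquet structure of $\Mcal$: writing $z_A\in W_\xi$, so that $z_A(\tau+1)=\xi z_A(\tau)$, the relation $z_A(1)=\xi z_A(0)=0$ completes the list of zeros in $[0,1]$, namely $z_A(j/k)=0$ for $j=0,\dots,k$, and a single application of the shift then yields $z_A(j/k)=0$ for $j=k,\dots,2k$ as well.

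For the concluding estimate I would use that $\pro{z_A}$ vanishes at each node $i/k$, $i=0,\dots,k$, so on every subinterval $(i/k,(i+1)/k)$ of length $1/k$ it satisfies homogeneous Dirichlet conditions. Applying the sharp Poincaré (Wirtinger) inequality componentwise in $\HH\cong\R^4$ on each subinterval gives
\[
\int_{i/k}^{(i+1)/k}|z_A|^2\,d\tau \le \frac{1}{k^2\pi^2}\int_{i/k}^{(i+1)/k}|z_A'|^2\,d\tau,
\]
and summing over $i=0,\dots,k-1$ produces $\|\pro{z_A}\|_2^2 \le (k\pi)^{-2}\|\pro{z_A'}\|_2^2$, which after taking square roots is precisely \eqref{eq:zA2}.

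I do not expect a genuine analytic obstacle here: the Poincaré step is entirely routine and the continuity of the evaluation maps and the admissibility of $z_A$ as a competitor are both immediate. The only point requiring care is the dimension bookkeeping in the first step, since the hypothesis $\gamma(A)\ge 4k+1$ is tuned exactly so that the $4k$-dimensional target of $\Psi$ cannot avoid the origin; this is where the quaternionic nature of the problem (four real coordinates per node) enters decisively.
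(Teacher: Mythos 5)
Your proposal is correct and follows essentially the same route as the paper: the same odd evaluation map into $\R^{4k}$, the same appeal to the definition of the genus to force a zero of that map, and the same subintervalwise Poincar\'e inequality summed over the $k$ nodes. The only cosmetic difference is that you phrase the genus step as a contradiction, whereas the paper invokes the definition directly.
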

\begin{proof}
Let $\phi : A \to \HH^{k}\cong\R^{4k}$ be defined as
\[
\phi(z) = (z(0),z(1/k),z(2/k),\dots,z((k-1)/k)).
\]
Since $\phi$ is odd and continuous, and $\gamma(A)\ge 4k+1$, by definition of genus we infer the existence of $z_A$ such that
$\phi(z_A)=0$, and \eqref{eq:zA1} follows. Then $\left.z_A\right|_{((i-1)/k,i/k)}\in
H^1_0((i-1)/k,i/k)$. As a consequence
\[
\int_{(i-1)/k}^{i/k} |z_A|^2\,d\tau \le \frac{1}{k^2\pi^2} \int_{(i-1)/k}^{i/k} |z'_A|^2\,d\tau,
\]
and also \eqref{eq:zA2} follows.
\end{proof}
\begin{lemma}\label{lem:sigma>0}
$\sigma_{m}\to+\infty$ as $m\to+\infty$.
\end{lemma}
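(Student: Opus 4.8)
The plan is to bound $\sigma_m$ from below along the subsequence $m=4k+1$ and then invoke monotonicity. Since $\gamma(A)\ge m+1$ implies $\gamma(A)\ge m$, we have $\Acal_{m+1}\subseteq\Acal_m$ and hence $\sigma_m\le\sigma_{m+1}$; so it suffices to prove that $\sigma_{4k+1}\to+\infty$ as $k\to+\infty$. First I would fix $k\in\N^+$ and an arbitrary $A\in\Acal_{4k+1}$. By Lemma \ref{lem:pre_sigma>0} there is a point $z_A\in A$ obeying the Poincar\'e-type bound $\|\pro{z_A}\|_2\le\frac{1}{k\pi}\|\pro{z_A'}\|_2$, and since $z_A\in A$ we get $\sup_A\TBKS\ge\TBKS(z_A)=\BKS(\pro{z_A})$. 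The whole point is to show that this last quantity is large, uniformly in $A$, once $k$ is large.

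Writing $y=\pro{z_A}$ and setting $\alpha:=\|y\|_2\|y'\|_2$, $\beta:=\|y\|^2_2$ as in Lemma \ref{lem:boundedPSC}, the bound above reads $\alpha\ge k\pi\beta$, equivalently $\beta\le\alpha/(k\pi)$ and $1/\beta\ge k\pi/\alpha$. I would start from the a priori estimate \eqref{eq:effealfaKS} established in the proof of Lemma \ref{lem:boundedPSC}, namely
\[
\frac{1}{T}\BKS(y)\ge\frac{2}{T^2}\alpha^2-2\|p\|_\infty\alpha+\frac{1}{\beta}-\|p\|_\infty\beta,
\]
and then eliminate $\beta$ using the two inequalities. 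Since $k\pi\ge\pi>1$, the destabilizing term $\|p\|_\infty\beta\le\|p\|_\infty\alpha/(k\pi)\le\|p\|_\infty\alpha$ can be absorbed into the linear term, leaving the single-variable lower bound
\[
\frac{1}{T}\BKS(y)\ge G_k(\alpha):=\frac{2}{T^2}\alpha^2-3\|p\|_\infty\alpha+\frac{k\pi}{\alpha},\qquad\alpha>0.
\]
As $A\in\Acal_{4k+1}$ was arbitrary, this yields $\sigma_{4k+1}\ge T\min_{\alpha>0}G_k(\alpha)$.

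It then remains to show $\min_{\alpha>0}G_k(\alpha)\to+\infty$, which I would do by a dichotomy on the size of $\alpha$. If $\alpha\le k^{1/4}$, the quadratic part is bounded below by its vertex value $-\tfrac{9}{8}\|p\|_\infty^2 T^2$, while $k\pi/\alpha\ge\pi k^{3/4}$, so $G_k(\alpha)\ge\pi k^{3/4}-\tfrac{9}{8}\|p\|_\infty^2 T^2$. If instead $\alpha>k^{1/4}$, I would drop the positive term $k\pi/\alpha$ and factor $G_k(\alpha)>\alpha\bigl(\tfrac{2}{T^2}\alpha-3\|p\|_\infty\bigr)$; once $k$ is large enough that $\tfrac{2}{T^2}k^{1/4}\ge 3\|p\|_\infty$, this exceeds $k^{1/4}\bigl(\tfrac{2}{T^2}k^{1/4}-3\|p\|_\infty\bigr)$, which also diverges. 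Hence $\min_{\alpha>0}G_k(\alpha)\to+\infty$, so $\sigma_{4k+1}\to+\infty$ and, by monotonicity, $\sigma_m\to+\infty$.

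The only genuinely delicate point is this one-variable minimization: the substitution $\beta\le\alpha/(k\pi)$ is precisely what renders the term $-\|p\|_\infty\beta$ harmless, but one must still verify that the coercive quadratic $\tfrac{2}{T^2}\alpha^2$ dominates the linear term for large $\alpha$ while the singular term $k\pi/\alpha$ excludes small $\alpha$. The dichotomy above is exactly what balances these two regimes and forces the infimum to grow with $k$; everything else reduces to routine bookkeeping with estimates already proved.
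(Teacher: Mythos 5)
Your proposal is correct and follows essentially the same route as the paper: extract $z_A$ via Lemma \ref{lem:pre_sigma>0}, feed the resulting Poincar\'e bound $\beta\le\alpha/(k\pi)$ into estimate \eqref{eq:effealfaKS} to get a one-variable lower bound $\frac{2}{T^2}\alpha^2-3\|p\|_\infty\alpha+\frac{k\pi}{\alpha}$, and show its infimum diverges in $k$. The only (immaterial) difference is that the paper locates the minimizer asymptotically, $\alpha_k\sim(T^2k/4)^{1/3}$, whereas you use a dichotomy on $\alpha\lessgtr k^{1/4}$; both are valid.
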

\begin{proof}
Let $A$ be such that $\gamma(A)\ge 4k+1$, and let $z_A \in A$ be such that Lemma \ref{lem:pre_sigma>0} holds
true. As usual, we write $y_A=\pro{z_A}$, $\alpha:=\|y_A\|_2\cdot\|y_A'\|_2$, $\beta:= \|y_A\|^2_2$.
Then Lemma \ref{lem:pre_sigma>0} implies
\[
\beta \le \frac{1}{k\pi}\alpha.
\]
Using \eqref{eq:effealfaKS} we obtain
\[
\begin{split}
\frac1T \BKS(y_A) &\ge \frac{2}{T^2}\alpha^2  - 2\|p\|_\infty \alpha + \frac{1}{\beta}
- \|p\|_\infty \beta \ge \frac{2}{T^2}\alpha^2  - \|p\|_\infty\left(2 + \frac{1}{k\pi} \right)\alpha + \frac{k\pi}{\alpha}\\
&\ge \frac{2}{T^2} \alpha^2  - 3 \|p\|_\infty \alpha + \frac{k}{\alpha}=: g_k(\alpha).
\end{split}
\]
Now, by direct computation one checks that $g_k$ has a unique critical point $\alpha_k>0$, with 
$\min_{\alpha>0} g_k(\alpha) = g_k(\alpha_k)$. Moreover, $\alpha_k \sim \left(
\frac{T^2 k}{4}\right)^{1/3}$ as $k\to+\infty$, and therefore $g_k(\alpha_k)\to+\infty$ too. 
Then, we have that
\[
A\in \Acal_{4k+1}
\qquad\implies\qquad
\sup_{A} \widetilde\BKS \ge \widetilde\BKS(z_A) \ge T g_k(\alpha_k),
\]
so that $\sigma_m \ge T g_k(\alpha_k)$ whenever $m\ge 4k+1$ and the lemma follows.
\end{proof}
\begin{proof}[Proof of Theorem \ref{thm:periodicKS}]
By Lemma \ref{lem:sigma>0} we know that infinitely many values $\sigma_m$ are positive.
Then, Proposition \ref{prop:AMCerami} and Lemma \ref{lem:PS_KS} insures that such
$\sigma_m$ are critical values for $\TBKS$, with (distinct) critical points, say, $z_m$. By 
Proposition \ref{prop:tilde} we obtain that the corresponding $x_m (t) = \bar z_m(\tau_{z_m}(t))i z_m(\tau_{z_m}(t))$ are periodic generalized solutions of \eqref{eq:perturbed_kepler}.
Finally, since $\Acal(x_m) = \TBKS (z_m) = \sigma_m \to +\infty$, there exist infinitely 
many different solutions $x_m$.
\end{proof}

\appendix

\section{Some auxiliary results}

\subsection{Proof of Lemma \ref{lem:elementary}}\label{app:lemma}

Let us observe that the typical example for Lemma \ref{lem:elementary} is $a(t) = \frac32t^{1/3}$, with $A(t) = t^{2/3}$ and $B(\tau) = \tau^{3/2}$.

\begin{proof}[Proof of Lemma \ref{lem:elementary}] Since $\frac{1}{a(t)}$ is integrable, the set
\[
Z := \left\{ t \in [0,T] : a(t) =0 \right\}
\]
has zero measure. Hence, the function $A:[0,T] \to [0,\Xi]$ is continuous and strictly increasing. Therefore the inverse $B(\tau)$ is continuous. To prove that $B$ has a derivative we first observe that the quotient
\[
\Delta(\tau,h) = \frac{B(\tau+h)-B(\tau)}{h},
\qquad
\tau \in[0,\Xi],\, h \neq 0
\]
satisfies
\[
\frac{1}{\Delta(\tau,h)}
= \frac{1}{B(\tau+h)-B(\tau)}
\int_{B(\tau)}^{B(\tau+h)} \frac{d\xi}{a(\xi)}.
\]
We now distinguish two cases; first, we take $\tau \in [0,\Xi]$ such that $a(B(\tau))=0$. Then, for any $\varepsilon >0$ there exists $\delta>0$ such that
$a(\zeta)=|a(\zeta)|<\varepsilon$ if $|\zeta - B(\tau)| < \delta$. Then, if $|h|$ is so small that $|B(\tau+h) - B(\tau)| < \delta$, the inequality
\[
\left|\int_{B(\tau)}^{B(\tau+h)} \frac{d\xi}{a(\xi)}\right|
> \frac{|B(\tau+h) - B(\tau)|}{\varepsilon}
\]
holds. As a consequence, for any $\varepsilon>0$ we can choose $h$ sufficiently small such that  $|\Delta(\tau,h)| < \varepsilon$. This implies that $B'(\tau)=0$.

Second, let us take $\tau \in [0,\Xi]$ be such that $a(B(\tau))>0$. The continuity of $\frac{1}{a(t)}$
at $t=B(\tau)$ implies that
\[
\frac{1}{\Delta(\tau,h)} \to \frac{1}{a(B(\tau))},
\quad \text{as } h \to 0,
\]
therefore $B'(\tau) = a(B(\tau))$.

Summing up, the derivative of $B$ exists everywhere and $B'(\tau) = a(B(\tau))$ is a continuous function.
\end{proof}

\subsection{Details of Remark \ref{rem:mudiversi}}

By assumption, $y\in H^1_0((0,1);\R^d)$ satisfies $|y(\tau)|>0$ for $\tau\in(0,1)$ and
\begin{equation}\label{eq:app_el}
\frac{d}{d\eps} \left[\Bcal(y+\eps \psi)\right]_{\eps=0} = 0\qquad \text{for every }\psi\in\Dcal(0,1),
\end{equation}
where
\[
\Bcal(y) := \frac{1}{\Lcal(y)}\Qcal(y) + \Lcal(y).
\]
Moreover, by Proposition \ref{prop:EL_for_B}, $x_y=x_y(t)$ satisfies
\begin{equation}\label{eq:app_kep}
\ddot x_y = -\frac{x_y}{|x_y|^3}\qquad \text{in }(0,T).
\end{equation}
Now, let $\varphi\in\Dcal(0,1/2)$. By definition of $w$, we obtain
\[
\begin{split}
\int_0^{1/2} |w(\tau)+\eps\varphi(\tau)|^2\,d\tau &= h_1^2 \int_0^{1/2} \left|y(2\tau)+\frac{\eps}{h_1^2}\varphi(\tau)\right|^2\,d\tau =
\frac{h_1^2}{2} \int_0^{1} |y+\eps\psi|^2 \\
\int_{1/2}^1 |w(\tau)+\eps\varphi(\tau)|^2\,d\tau &= h_2^2 \int_{1/2}^1 |y(2-2\tau)|^2\,d\tau =  \frac{h_2^2}{2} \int_0^{1} |y|^2,
\end{split}
\]
where we wrote $\varphi(\tau) = h_1^2 \psi(2\tau)$, $\psi\in\Dcal(0,1)$. Analogously,
\[
\Qcal(w+\eps\varphi) = 2h_1^2\Qcal(y+\eps\psi) + 2 h_2^2 \Qcal(y).
\]
Direct calculations yield
\[
\frac{d}{d\eps} \left[\Bcal(w+\eps \varphi)\right]_{\eps=0}  = 2^{1/3}h_1^2 \frac{d}{d\eps} \left[\Bcal(y+\eps \psi)\right]_{\eps=0}=0
\]
by \eqref{eq:app_el}. Since analogous arguments hold when $\varphi\in\Dcal(1/2,1)$, \eqref{eq:doppiobump} follows.

Taking $\eps=0$ un the previous computations, and recalling that $h_1^2 + h_2^2 = 2^{1/3}$, we obtain
\[
\Lcal(w) = 2^{2/3} \Lcal(y), \qquad \Qcal(w) = 2^{4/3} \Qcal(y).
\]
Now, let $0<\tau<1/2$. We have
\[
t_w(\tau):= \Lcal(w) \int_{0}^{\tau}|w(\xi)|^2\,d\xi =
2^{-1/3} h_1^2 t_y(2\tau)
\qquad\iff\qquad
\tau_w(t) = \frac12 \tau_y\left(2^{1/3}h_1^{-2} t\right),
\]
whenever $0<t< 2^{-1/3}h_1^{2}T$, and
\[
x_w(t) := |w(\tau_w(t))|w(\tau_w(t)) = h_1^2 x_y\left(2^{1/3}h_1^{-2} t\right).
\]
Substituting into \eqref{eq:app_kep} we finally obtain
\[
\ddot x_w = -2^{2/3}h_1^2 \,\frac{x_w}{|x_w|^3}\qquad \text{in }(0,2^{-1/3}h_1^{2}T).
\]
Performing similar calculations in $(1/2,1)$ (or changing variable as $\tau \leftrightarrow 1-\tau$) we have
\[
\ddot x_w = -2^{2/3}h_2^2 \,\frac{x_w}{|x_w|^3}\qquad \text{in }(2^{-1/3}h_1^{2}T,T).
\]
In particular, this implies \eqref{eq:doppiokeplero} with $\mu_i = 2^{2/3} h_i^2$, and this holds for any choice
of positive $\mu_i$ satisfying
\[
\mu_1+\mu_2=2,
\]
in agreement with \eqref{eq:somme_mui}.

\section*{Acknowledgments} V.B. and G.V. have been partially supported
by the Italian PRIN-2015KB9WPT Grant: ``Variational methods, with applications to problems in 
mathematical physics and geometry'', by the ERC Advanced Grant 2013 n. 339958: ``Complex 
Patterns for Strongly Interacting Dynamical Systems -- COMPAT'', and by the INDAM-GNAMPA group.
R.O. has been partially supported by Spanish MINECO Grant with FEDER funds 
MTM2017-82348-C2-1-P. G.V. has been partially supported by the project Vain-Hopes  within the program VALERE - Università degli Studi della Campania "Luigi Vanvitelli".

\bibliography{bibliog_desing}

\begin{thebibliography}{10}

\bibitem{AMR98}
R.~Abraham, J.~E. Marsden, and T.~Ratiu.
\newblock {\em Manifolds, tensor analysis, and applications}, volume~75 of {\em
  Applied Mathematical Sciences}.
\newblock Springer-Verlag, New York, second edition, 1988.

\bibitem{MR1077264}
A.~Ambrosetti and V.~Coti~Zelati.
\newblock Closed orbits of fixed energy for singular {H}amiltonian systems.
\newblock {\em Arch. Rational Mech. Anal.}, 112(4):339--362, 1990.

\bibitem{AM}
A.~Ambrosetti and A.~Malchiodi.
\newblock {\em Nonlinear analysis and semilinear elliptic problems}, volume 104
  of {\em Cambridge Studies in Advanced Mathematics}.
\newblock Cambridge University Press, Cambridge, 2007.

\bibitem{bahrab}
A.~Bahri and P.~Rabinowitz.
\newblock Periodic solutions of {H}amiltonian systems of {$3$}-body type.
\newblock {\em Ann. Inst. H. Poincar\'e Anal. Non Lin\'eaire}, 8(6):561--649,
  1991.

\bibitem{MR713209}
P.~Bartolo, V.~Benci, and D.~Fortunato.
\newblock Abstract critical point theorems and applications to some nonlinear
  problems with ``strong'' resonance at infinity.
\newblock {\em Nonlinear Anal.}, 7(9):981--1012, 1983.

\bibitem{BDP20}
A.~Boscaggin, W.~Dambrosio, and D.~Papini.
\newblock Periodic solutions to a forced {K}epler problem in the plane.
\newblock {\em Proc. Amer. Math. Soc.}, 148(1):301--314, 2020.

\bibitem{BOZ}
A.~Boscaggin, R.~Ortega, and L.~Zhao.
\newblock Periodic solutions and regularization of a kepler problem with
  time-dependent perturbation.
\newblock {\em Transactions of the American Mathematical Society},
  372(1):677--703, 2019.

\bibitem{MR581298}
G.~Cerami.
\newblock An existence criterion for the critical points on unbounded
  manifolds.
\newblock {\em Istit. Lombardo Accad. Sci. Lett. Rend. A}, 112(2):332--336
  (1979), 1978.

\bibitem{MR591554}
G.~Cerami.
\newblock On the existence of eigenvalues for a nonlinear boundary value
  problem.
\newblock {\em Ann. Mat. Pura Appl. (4)}, 124:161--179, 1980.

\bibitem{SerraCZ1994}
V.~Coti~Zelati and E.~Serra.
\newblock Collision and non-collision solutions for a class of {K}eplerian-like
  dynamical systems.
\newblock {\em Ann. Mat. Pura Appl. (4)}, 166:343--362, 1994.

\bibitem{cushman}
R.~Cushman and L.~Bates.
\newblock {\em Global Aspects of Classical Integrable Systems}.
\newblock Birkh{\"a}user Basel, second edition.

\bibitem{Mc_CMH1981}
R.~McGehee.
\newblock Double collisions for a classical particle system with
  nongravitational interactions.
\newblock {\em Commentarii Mathematici Helvetici}, 56(1):524--557, 1981.

\bibitem{olver86}
P.~Olver.
\newblock {\em Applications of Lie Groups to Differential Equations}, volume
  107 of {\em Graduate Texts in Mathematics}.
\newblock Springer-Verlag New York, 1986.

\bibitem{OrtegaPort}
R.~Ortega.
\newblock Linear motions in a periodically forced {K}epler problem.
\newblock {\em Port. Math.}, 68(2):149--176, 2011.

\bibitem{Rab1994}
P.~Rabinowitz.
\newblock A note on periodic solutions of prescribed energy for singular
  {H}amiltonian systems.
\newblock {\em J. Comput. Appl. Math.}, 52(1-3):147--154, 1994.

\bibitem{MR1313161}
M.~Ramos and C.~Rebelo.
\newblock A unified approach to min-max critical point theorems.
\newblock {\em Portugal. Math.}, 51(4):489--516, 1994.

\bibitem{RebeloSimoes}
C.~Rebelo and A.~Sim\~{o}es.
\newblock Periodic linear motions with multiple collisions in a forced {K}epler
  type problem.
\newblock {\em Discrete Contin. Dyn. Syst.}, 38(8):3955--3975, 2018.

\bibitem{sper2body}
H.~J. Sperling.
\newblock The collision singularity in a perturbed two-body problem.
\newblock {\em Celestial Mech.}, 1:213--221, 1969/1970.

\bibitem{struwe}
M.~Struwe.
\newblock {\em Variational methods}, volume~34 of {\em Ergebnisse der
  Mathematik und ihrer Grenzgebiete. 3. Folge. A Series of Modern Surveys in
  Mathematics [Results in Mathematics and Related Areas. 3rd Series. A Series
  of Modern Surveys in Mathematics]}.
\newblock Springer-Verlag, Berlin, fourth edition, 2008.
\newblock Applications to nonlinear partial differential equations and
  Hamiltonian systems.

\bibitem{sundman1907}
K.~F. Sundman.
\newblock Recherches sur le probl\`eme des trois corps.
\newblock {\em Acta Societatis Scienticae Fennicae}, 34, 1907.

\bibitem{MR1218100}
K.~Tanaka.
\newblock A prescribed energy problem for a singular {H}amiltonian system with
  a weak force.
\newblock {\em J. Funct. Anal.}, 113(2):351--390, 1993.

\bibitem{LZQuaternions}
L.~Zhao.
\newblock Kustaanheimo-{S}tiefel regularization and the quadrupolar conjugacy.
\newblock {\em Regul. Chaotic Dyn.}, 20(1):19--36, 2015.

\bibitem{ZhaoANS}
L.~Zhao.
\newblock Some collision solutions of the rectilinear periodically forced
  kepler problem.
\newblock {\em Adv. Nonlinear Stud.}, 16(1):45--49, 2016.

\end{thebibliography}
\bibliographystyle{abbrv}

\medskip
\small
\begin{flushright}
\noindent Vivina Barutello\\
Dipartimento di Matematica ``Giuseppe Peano'', Universit\`a di Torino,\\
Via Carlo Alberto 10, 10123 Torino, Italy\\
\verb"vivina.barutello@unito.it"\\
\medskip
\noindent Rafael Ortega\\
Departamento de Matem\'atica Aplicada, Universidad de Granada,\\
E-18071 Granada, Spain\\
\verb"rortega@ugr.es"\\
\medskip
\noindent Gianmaria Verzini\\
Dipartimento di Matematica, Politecnico di Milano\\
piazza Leonardo da Vinci 32, 20133 Milano, Italy\\
\verb"gianmaria.verzini@polimi.it"
\end{flushright}

\end{document}